\newcommand{\N}{{\mathds{N}}}
\newcommand{\Z}{{\mathds{Z}}}
\newcommand{\R}{{\mathds{R}}}
\newcommand{\C}{{\mathds{C}}}
\newcommand{\T}{{\mathds{T}}}
\newcommand{\D}{{\mathfrak{D}}}
\newcommand{\A}{{\mathfrak{A}}}
\newcommand{\B}{{\mathfrak{B}}}
\newcommand{\M}{{\mathfrak{M}}}
\newcommand{\bigslant}[2]{{\raisebox{.2em}{$#1$}\left/\raisebox{-.2em}{$#2$}\right.}}
\newcommand{\Lip}{{\mathsf{L}}}
\newcommand{\dist}{{\mathsf{dist}}}
\newcommand{\propinquity}[1]{{\mathsf{\Lambda}_{#1}}}
\newcommand{\modpropinquity}[1]{{\mathsf{\Lambda}^{\mathsf{mod}}_{#1}}}
\newcommand{\Kantorovich}[1]{{\mathsf{mk}_{#1}}}
\newcommand{\KantorovichMod}[1]{{\mathsf{k}_{#1}}}
\newcommand{\Haus}[1]{{\mathsf{Haus}_{#1}}}
\newcommand{\StateSpace}{{\mathscr{S}}}
\newcommand{\MongeKant}{{Mon\-ge-Kan\-to\-ro\-vich metric}}
\newcommand{\Lqcms}{{\JLL} quantum compact metric space}
\newcommand{\Qqcms}[1]{{$#1$}--\gQqcms}
\newcommand{\gQqcms}{quasi-Leibniz quantum compact metric space}
\newcommand{\gQVB}{metrized quantum vector bundle}
\newcommand{\QVB}[3]{$\left({#1},{#2},{#3}\right)$--\gQVB}
\newcommand{\unit}{1}
\newcommand{\sa}[1]{{\mathfrak{sa}\left({#1}\right)}}
\newcommand{\inner}[3]{{\left<{#1},{#2}\right>_{#3}}}
\newcommand{\JLL}{Lei\-bniz}
\newcommand{\bridge}[1]{#1} 
\newcommand{\dom}[1]{{\operatorname*{dom}\left({#1}\right)}}
\newcommand{\codom}[1]{{\operatorname*{codom}\left({#1}\right)}}
\newcommand{\diam}[2]{{\mathrm{diam}\left({#1},{#2}\right)}}
\newcommand{\bridgeset}[3]{{\text{\calligra Bridges}\,\left[{#1}\stackrel{#3}{\longrightarrow}{#2}\right]}}
\newcommand{\bridgereach}[1]{{\varrho\left(\bridge{#1}\right)}}
\newcommand{\bridgebasicreach}[1]{{\varrho_\flat\left({#1}\right)}}
\newcommand{\bridgemodularreach}[1]{{\varrho^\sharp\left({#1}\right)}}
\newcommand{\bridgeheight}[1]{{\varsigma\left(\bridge{#1}\right)}}
\newcommand{\bridgeimprint}[1]{{\varpi\left(\bridge{#1}\right)}}
\newcommand{\bridgelength}[1]{{\lambda\left(\bridge{#1}\right)}}
\newcommand{\bridgepivot}[1]{{\operatorname*{pivot}\left(\bridge{#1}\right)}}
\newcommand{\bridgeanchors}[1]{{\operatorname*{anchors}\left(\bridge{#1}\right)}}
\newcommand{\bridgecoanchors}[1]{{\operatorname*{coanchors}\left(\bridge{#1}\right)}}
\newcommand{\treklength}[1]{{\lambda\left({#1}\right)}}
\newcommand{\trekset}[3]{{\text{\calligra Treks}\left[{#2}\stackrel{#1}{\longrightarrow}{#3}\right]}}
\newcommand{\bridgenorm}[2]{{\mathsf{bn}_{ \bridge{#1}  }\left({#2}\right)}}
\newcommand{\decknorm}[2]{{\mathsf{dn}_{ \bridge{#1}  }\left({#2}\right)}}
\newcommand{\itineraries}[4]{{\text{\calligra Itineraries}\left({#2}\stackrel{#1}{\longrightarrow} {#3}  \middle\vert {#4} \right)}}
\newcommand{\Jordan}[2]{{{#1}\circ{#2}}} 
\newcommand{\Lie}[2]{{\left\{{#1},{#2}\right\}}} 
\newcommand{\targetsetbridge}[3]{{\mathfrak{t}_{\bridge{#1}}\left({#2}\middle\vert{#3}\right)}}
\newcommand{\targetsettrek}[3]{{\mathfrak{T}_{#1}\left({#2}\middle\vert{#3}\right)}}
\newcommand{\CDN}{{\mathsf{D}}}
\newcommand{\worknote}[1]{} 
\newcommand{\opnorm}[3]{{\left|\mkern-1.5mu\left|\mkern-1.5mu\left| {#1} \right|\mkern-1.5mu\right|\mkern-1.5mu\right|_{#3}^{#2}}}
\newcommand{\basespace}[1]{{\operatorname*{bqs}\left({#1}\right)}}
\newcommand{\alg}[1]{{\mathfrak{#1}}}
\newcommand{\alglip}[2]{{\alg{L}_{#1}\left({#2}\right)}}
\newcommand{\modlip}[2]{{\module{D}_{#1}\left({#2}\right)}}
\newcommand{\module}[1]{{\mathscr{#1}}}
\newcommand{\qt}[1]{{\mathcal{A}_{#1}}}
\newcommand{\HeisenbergGroup}{{\mathds{H}_3}}
\newcommand{\HeisenbergMod}[2]{{\module{H}_{#1}^{#2}}}
\theoremstyle{plain}
\newtheorem{theorem}{Theorem}[section]
\newtheorem{claim}[theorem]{Claim}
\newtheorem{corollary}[theorem]{Corollary}
\newtheorem{lemma}[theorem]{Lemma}
\newtheorem{proposition}[theorem]{Proposition}
\newtheorem{theorem-definition}[theorem]{Theorem-Definition}
\theoremstyle{definition}
\newtheorem{definition}[theorem]{Definition}
\newtheorem{example}[theorem]{Example}
\newtheorem{convention}[theorem]{Convention}
\newtheorem{hypothesis}[theorem]{Hypothesis}
\theoremstyle{remark}
\newtheorem{remark}[theorem]{Remark}
\newtheorem{notation}[theorem]{Notation}
\renewcommand{\geq}{\geqslant}
\renewcommand{\leq}{\leqslant}
\numberwithin{equation}{subsection}
\begin{document}

\title{The Modular Gromov-Hausdorff Propinquity}
\author{Fr\'{e}d\'{e}ric Latr\'{e}moli\`{e}re}
\thanks{This work is part of the project supported by the grant H2020-MSCA-RISE-2015-691246-QUANTUM DYNAMICS}
\email{frederic@math.du.edu}
\urladdr{http://www.math.du.edu/\symbol{126}frederic}
\address{Department of Mathematics \\ University of Denver \\ Denver CO 80208}

\date{\today}
\subjclass[2000]{Primary:  46L89, 46L30, 58B34.}
\keywords{Noncommutative metric geometry, Gromov-Hausdorff convergence, Monge-Kantorovich distance, Quantum Metric Spaces, Lip-norms, D-norms, Hilbert modules, noncommutative connections, noncommutative Riemannian geometry, unstable $K$-theory.}

\begin{abstract}
Motivated by the quest for an analytic framework to study classes of C*-algebras and associated structures as geometric objects, we introduce a metric on Hilbert modules equipped with a generalized form of a differential structure, thus extending Gromov-Hausdorff convergence theory to vector bundles and quantum vector bundles --- not convergence as total space but indeed as quantum vector bundle. Our metric is new even in the classical picture, and creates a framework for the study of the moduli spaces of modules over C*-algebras from a metric perspective. We apply our construction, in particular, to the continuity of Heisenberg modules over quantum $2$-tori.



\end{abstract}
\maketitle

\tableofcontents



\section{Introduction}

Noncommutative metric geometry, as pioneered by Connes \cite{Connes80,Connes} and advanced by Rieffel \cite{Rieffel98a,Rieffel99,Rieffel00}, enables the construction of new topologies over classes of quantum metric spaces, namely, of noncommutative algebras seen as generalizations of the algebras of Lipschitz functions over metric spaces. These new topologies, in turn, permit the discussion of such problems as finite dimensional approximations of quantum spaces \cite{Rieffel01,Latremoliere05,Latremoliere13c,Latremoliere16, Latremoliere15d} as motivated by problems from mathematical physics, as well as quantitative analysis of quantum metric perturbations \cite{Latremoliere15b,Latremoliere15c}, continuity of families of C*-algebras in a topological sense \cite{Latremoliere05,li03, Latremoliere13c,Latremoliere15c,Latremoliere16c}, and topological properties of classes of quantum spaces --- in particular, generalization of the Gromov compactness theorem \cite{Latremoliere15}. The topologies defined on quantum metric spaces are induced by various noncommutative analogues of the Gromov-Hausdorff distance \cite{Gromov81,Gromov}, and open new avenues in the study of noncommutative geometry by means of topological and geometric methods applied to entire classes of C*-algebras. 

We propose in this paper to construct a topology on classes of metrized Hilbert modules over quantum metric spaces, addressing a long standing question about the behavior of modules under metric convergence \cite{Rieffel09,Rieffel10c,Rieffel11,Rieffel15}. As modules over C*-algebras are a core ingredient to much of the study of C*-algebras, from K-theory to KK-theory, from Morita equivalence to the noncommutative generalizations of vector bundles, our new metric opens an important new field  of investigation on the geometry of quantum spaces. A core motivation of our construction is to provide a framework for the analysis of approximations of modules in mathematical physics.

Our construction starts with our understanding of Gromov-Hausdorff convergence generalized to quantum compact metric spaces, as developed in \cite{Latremoliere13,Latremoliere13b,Latremoliere14} with the introduction of the Gromov-Hausdorff propinquity. The propinquity defines a metric on the class of {\gQqcms s} and induces the same topology as the Gromov-Hausdorff distance on classical compact metric spaces, yet its construction is functional in nature, and thus provides a new perspective on convergence of spaces. In this paper, We construct a far-reaching extension of the quantum Gromov-Hausdorff propinquity to Hilbert modules \cite{Rieffel74} over quantum metric spaces, equipped with a metric generalization of a connection, called a D-norm. We prove that our new metric, called the modular Gromov-Hausdorff propinquity, is indeed a distance on our class of {\gQVB s} up to a strong notion of isomorphism, which preserves the module structure, the inner product --- hence the $C^\ast$-Hilbert norm --- and the D-norm. We check that our new distance extends the topology of the quantum propinquity, since C*-algebras are canonically Hilbert modules over themselves. There are no analogues of the Gromov-Hausdorff distance on vector bundles even classically, hence the modular propinquity introduces new possibilities even in the classical picture. However, our first main application for our metric is given in \cite{Latremoliere17a} and concerns Heisenberg modules \cite{Connes80, Rieffel83} over quantum tori.

Our research program began with the observation by A. Connes in \cite{Connes89} that an noncommutative analogue of the {\MongeKant} could be defined on the state space of a C*-algebra by means of a spectral triple. Rieffel laid the foundation for the study of quantum metric spaces \cite{Rieffel98a}, recognizing that a quantum metric is encoded in a type of seminorm whose dual seminorm induces a noncommutative {\MongeKant} which, crucially, Rieffel requires to induce the weak* topology on the state space, just as its classical counter part. Such seminorms are called Lip-norms. This observation laid the groundwork for the construction of noncommutative analogues of the Gromov-Hausdorff distance --- a task fraught with unexpected challenges, and which gave rise to a succession of candidates. The first construction is also due to Rieffel who defined the quantum Gromov-Hausdorff distance \cite{Rieffel00}. 

We introduce the Gromov-Hausdorff propinquity in \cite{Latremoliere13,Latremoliere13b,Latremoliere14} as our answer to the challenge of devising an analogue of the Gromov-Hausdorff distance which was well-behaved with respect to the C*-algebraic structure: the propinquity only involves C*-algebras and encode the connection between the quantum topology --- encoded in the algebraic structure --- and the quantum metric --- encoded in a Lip-norm --- in some form of the Leibniz inequality; quantum compact metric spaces which satisfy this additional requirement are called {\gQqcms s} \cite{Latremoliere15}. This idea actually raises some serious difficulties when attempting to construct an analogue of the Gromov-Hausdorff distance, as seen with the early work of Kerr \cite{kerr02} or, later on, with the quantum proximity of Rieffel \cite{Rieffel10c}, where in each case, working exclusively with Leibniz Lip-norms, or even their generalizations, lead to analogues of the Gromov-Hausdorff distance which are not satisfying, as far as we know, the triangle inequality. 

We thus assume given a sequence of {\gQqcms s} converging for the quantum propinquity --- the quantum propinquity is a relatively strong version of the Gromov-Hausdorff propinquity, which is actually a family of metrics. There are many interesting such sequences already known: quantum tori \cite{Latremoliere13c}, fuzzy tori converging to quantum tori \cite{Latremoliere13c}, matrix algebras converging to the sphere \cite{Rieffel01,Rieffel10c}, certain sequences of AF-algebras \cite{Latremoliere15d}, finite dimensional {\gQqcms s} converging to any nuclear quasi-diagonal {\gQqcms} \cite{Latremoliere15}, conformal deformations of {\Lqcms s} constructed from spectral triples \cite{Latremoliere15b}, curved quantum tori \cite{Latremoliere15c}, noncommutative solenoids \cite{Latremoliere16}, to name but a few. Given such a sequence, we wish to make sense of the statement: each {\gQqcms s} in the sequence carries a module, presumably equipped with additional metric data, such that the resulting sequence of modules converge in some sense to a module over the limit {\gQqcms s}.

Our approach is best presented by first discussing it informally in the context of manifolds and their vector bundles, where our work is already new. A vector bundle $V$ over a compact, connected Riemannian manifold $M$, may be equipped with a metric --- which in this context, means a smooth section of the bundle of inner products over each fiber of $V$. In other words, we pick an inner product on the $C(M)$-module $\Gamma$ of continuous sections of $V$, where $C(M)$ is the C*-algebra of $\C$-valued continuous functions over $M$. This is a particular example of a Hilbert module over a C*-algebra, and thus we will work in this paper with left Hilbert modules. Yet, there is one more essential tool of differential geometry when working with modules: namely, the notion of a connection. Indeed, given a metric on a bundle, one may always find a so-called metric connection, and under stronger assumption, this connection is in fact unique. In other words, to a metric corresponds a natural notion of parallel transport. 

In noncommutative geometry, there still exists metric connections, appropriately defined, on left Hilbert modules, under rather generous conditions. The matter of uniqueness issue is less clear. In any event, we adopt the perspective that the metric information needed to work with vector bundles include not only an inner product on its module of sections, but also a choice of a connection, or rather an additional norm on the space of smooth functions which encode some aspects of the connection which are of use to define our metric. We do not need the full strength of a connection in this work, and we will address the issue of convergence for differential structures in a forthcoming paper.

For our purpose, therefore, the metric data which we will consider to define a {\gQVB} includes a Hilbert module equipped with an additional, densely defined norm called a D-norm with a natural topological condition inspired by the commutative picture above. There is a clear relation between our notion of a {\gQVB} and the notion of a {\gQqcms} which we take as a sign that our approach is sensible. Moreover, a {\gQVB} will of course be defined over a particular {\gQqcms}, its base space. Just as is the case with proving that certain semi-norms are Lip-norms, establishing that a given norm is a D-norm may be challenging. 

The modular Gromov-Hausdorff propinquity is thus defined on the class of all {\gQVB s}. Our idea begins with the concept of a bridge between {\gQVB s}, as an extension of the idea of bridges used in the construction of the quantum propinquity. A bridge between {\gQqcms s} is a particular type of embedding of the C*-algebras underlying the quantum metric spaces, which allows to quantify how far two quantum metric spaces may be. The idea is that, if we start from a convergent sequence of spaces for the quantum propinquity, then we already have access to bridges between spaces. We then augment these bridges by adding some families of elements from modules over these spaces, resulting what we call modular bridges. We associate a quantity to these bridges, in we then prove that our idea indeed leads to a metric on {\gQVB s}. Naturally, there are many new challenges raised by working with modules.

In summary, we present in this paper the class of {\gQVB s}, on which we then define a metric akin to a Gromov-Hausdorff distance for (noncommutative) vector bundles. We prove that our metric, the modular Gromov-Hausdorff propinquity, is indeed a metric up to full quantum isometry of {\gQVB s} --- i.e. an appropriate notion of morphism between left Hilbert modules over possibly different C*-algebras which also preserves all the metric data. We then prove that we apply our metric to the subclass of the {\gQVB s} canonically constructed from {\gQqcms s} --- extending the observation that any C*-algebra is a left Hilbert module over itself --- we recover the topology of the quantum Gromov-Hausdorff topology. We also show that, reassuringly, the modular propinquity between free modules over {\gQqcms s} behave as expected --- close base spaces in the quantum propinquity and same rank of free modules imply close in the modular propinquity. We discuss a sufficient condition for the direct sum of {\gQVB s} to be continuous on certain classes called iso-pivotal.

We include as a last section the main result of \cite{Latremoliere17a}, which is our main application of the metric presented in this paper: a result concerning the continuity of the family of Heisenberg modules over quantum $2$-tori. This result involves much technicalities --- including proving that Heisenberg modules, when equipped with their natural connection \cite{Connes80}, actually fit within our framework. Nonetheless, the last section contains the principal conclusion of \cite{Latremoliere17a}, as a non-trivial application of the modular Gromov-Hausdorff distance.

\section{Quantum Compact Metric Spaces}

A quantum compact metric space is a noncommutative generalization of the algebras of Lipschitz functions over a metric space. Our work on the Gromov-Hausdorff propinquity \cite{Latremoliere13,Latremoliere13b,Latremoliere14,Latremoliere15,Latremoliere15c} emphasizes the role of a relation between the generalized Lipschitz seminorms and the multiplicative structure of the underlying algebra, though this relation can be quite general. We will thus work in the category of {\gQqcms s}, defined as follows.

\begin{notation}
The norm of a normed vector space $E$ will be denoted by $\|\cdot\|_E$ by default.
\end{notation}

\begin{notation}
Let $\A$ be a unital C*-algebra. The space of self-adjoint elements of $\A$ is denoted by $\sa{\A}$, while the state space of $\A$ is denoted by $\StateSpace(\A)$. The unit of $\A$ is denoted by $\unit_\A$.
\end{notation}

\begin{definition}[\cite{Latremoliere15}]\label{admissible-fn-def}
A function $F : [0,\infty)^4 \rightarrow [0,\infty)$ is \emph{admissible} when for all:
\begin{equation*}
(x_1,x_2,x_3,x_4), (y_1,y_2,y_3,y_4) \in [0,\infty)^4
\end{equation*}
such that $x_j \leq y_j$ for all $j \in \{1,2,3,4\}$, we have:
\begin{equation*}
F(x_1,x_2,x_3,x_4)\leq F(y_1,y_2,y_3,y_4)
\end{equation*}
and $x_1x_3 + x_2x_4 \leq F(x_1,x_2,x_3,x_4)$.
\end{definition}

\begin{definition}[\cite{Rieffel98a,Rieffel99,Latremoliere13,Latremoliere15}]\label{Qqcms-def}
An \emph{\Qqcms{F}} $(\A,\Lip)$, for some admissible function $F$, is a unital C*-algebra $\A$ and a seminorm $\Lip$ defined on a dense Jordan-Lie subalgebra $\dom{\Lip}$ of $\sa{\A}$, such that:
\begin{enumerate}
\item $\{a\in\dom{\Lip} : \Lip(a) = 0 \} = \R\unit_\A$,
\item the \emph{\MongeKant} $\Kantorovich{\Lip}$, defined on the state space $\StateSpace(\A)$ by setting for all $\varphi, \psi \in \StateSpace(\A)$:
\begin{equation*}
\Kantorovich{\Lip}(\varphi,\psi) = \sup\left\{ |\varphi(a) - \psi(a)| : a\in\sa{\A}, \Lip(a)\leq 1 \right\}\text{,}
\end{equation*}
metrizes the weak* topology on $\StateSpace(\A)$,
\item $\Lip$ is lower semi-continuous, i.e. $\{a\in\sa{\A} : \Lip(a)\leq 1\}$ is norm closed,
\item for all $a,b \in \dom{\Lip}$, we have:
\begin{equation*}
\max\left\{ \Lip\left(\Jordan{a}{b}\right), \Lip\left(\Lie{a}{b}\right) \right\} \leq F\left(\|a\|_\A,\|b\|_\A,\Lip(a),\Lip(b)\right) \text{.}
\end{equation*}
\end{enumerate}

The seminorm $\Lip$ of a {\Qqcms{F}} $(\A,\Lip)$ is called an \emph{L-seminorm} of type $F$. If $F : x,y,l_x,l_y \mapsto x l_y + y l_x$, then $(\A,\Lip)$ is simply called an {\Lqcms} and $\Lip$ is said to be of Leibniz type.
\end{definition}

We adopt the following convention in our exposition to keep our notations simple:
\begin{convention}
If $\Lip$ is a seminorm defined on a subspace $\dom{\Lip}$ of a vector space $E$ then, for all $x\in E\setminus\dom{\Lip}$, we set $\Lip(x) = \infty$. Thus $\dom{\Lip} = \{ x\in E : \Lip(x) < \infty \}$ with this convention. With the additional convention that $0\cdot\infty = 0$ and $\infty + r = r + \infty = \infty$ for all $r\in [0,\infty]$ when working with seminorms, we can see $\Lip$ as a seminorm on $E$ taking the value $\infty$.
\end{convention}

The classical prototype of a {\Lqcms} is given by the ordered pair $(C(X),\mathrm{Lip})$ of the C*-algebra $C(X)$ of $\C$-valued continuous functions over a compact metric space $(X,\mathrm{d})$ and the Lipschitz seminorm induced on $\sa{C(X)}$ --- the Banach algebra of $\R$-continuous functions --- by the metric $\mathrm{d}$. In this case, the metric $\Kantorovich{\Lip}$ is indeed the {\MongeKant} on the space of regular Borel probability measures over $X$, a fundamental object introduced by Kantorovich \cite{Kantorovich40} in the study of Monge's transportation problem. The form of the {\MongeKant} which we generalize in Definition (\ref{Qqcms-def}) was discovered by Kantorovich and Rubinstein \cite{Kantorovich58}.

Important examples of {\gQqcms s} include the quantum tori \cite{Rieffel98a,Rieffel02}, Connes-Landi sphere \cite{li05}, full C*-algebras of Hyperbolic groups \cite{Ozawa05}, AF-algebras with a faithful tracial state \cite{Latremoliere15d}, curved quantum tori \cite{Latremoliere15c}, conformal perturbations of quantum metric spaces obtained from Dirac operators \cite{Latremoliere15b}, C*-algebras of nilpotent groups \cite{Rieffel15b}, noncommutative solenoids \cite{Latremoliere16}, among many other. Moreover, finite dimensional C*-algebras can be endowed with many quantum metric structures which play an important role when approximating C*-algebras of continuous functions over coadjoint orbits of semisimple Lie groups \cite{Rieffel01}, quantum tori \cite{Latremoliere05,Latremoliere13c}, AF-algebras \cite{Latremoliere15d}, and arbitrary nuclear quasi-diagonal {\gQqcms s} \cite{Latremoliere15}. 

The first occurrence of a noncommutative version of the {\MongeKant} is due to Connes in \cite{Connes89}, where it was observed that a spectral triple give rise to a metric on the state space of a C*-algebra. Rieffel initiated the study of compact quantum metric spaces in \cite{Rieffel98a} by requiring that the {\MongeKant} in noncommutative geometry should metrize the weak* topology on the state space, and can be built without appealing to the theory of spectral triple, but rather using a generalized Lipschitz seminorm. As a matter of terminology, a seminorm $\Lip$ satisfying properties (1) and (2) is called a \emph{Lip-norm}. We choose our new terminology to avoid writing the rather long expression ``quasi-Leibniz lower semi-continuous Lip-norm'' too often. A pair $(\A,\Lip)$ where $\Lip$ is a Lip-norm is called a compact quantum metric space. 

Quantum locally compact metric spaces were introduced in \cite{Latremoliere12b}, building on our work in \cite{Latremoliere05b}, and provide a far-reaching generalization of Definition (\ref{Qqcms-def}).

Definition (\ref{Qqcms-def}) evolved with the role of the algebraic structure of a compact quantum metric spaces. In \cite{Rieffel98a}, Rieffel's original notion of Lip-norm was defined over normed vector spaces (and the notion of state was replaced with a more general notion). In \cite{Rieffel99}, the focus was on order-unit spaces, and this was the setting for the construction of the quantum Gromov-Hausdorff distance \cite{Rieffel00}, and the first examples of continuity for that metric \cite{Rieffel01,Latremoliere05,li05}. As research in noncommutative metric geometry became focused on the relationship between convergence for analogues of the Gromov-Hausdorff distance and C*-algebraic structures --- in particular modules \cite{Rieffel09,Rieffel10c} --- it became apparent that Lip-norms should be connected to the underlying C*-algebraic structure. We proposed Definition (\ref{Qqcms-def}) by adapting the idea of $F$-Leibniz seminorms of Kerr's \cite{kerr02}, with two differences. 

L-seminorms are defined on a dense *-subspace of the self-adjoint part of C*-algebras, and in general $\sa{\A}$ is not a *-subalgebra of $\A$ for non-Abelian C*-algebras. It is a Jordan-Lie algebra, and thus we use the Jordan and Lie product in the definition of the quasi-Leibniz property. Our insistence on working with L-seminorms defined only on self-adjoint elements will be justified when discussing quantum isometries later on. Second, we require that the quasi-Leibniz property be no sharper than the Leibniz property, which actually ensures that for any given choice of an admissible function $F$, the quantum propinquity can be restricted to the class of {\Qqcms{F}s} and never involve any space outside of this class. We refer to \cite{Latremoliere13,Latremoliere13b,Latremoliere14,Latremoliere15b,Latremoliere15c} for details.

The class of quantum compact metric spaces form a category when morphisms are defined using a natural Lipschitz condition. In fact, there are at least three ideas one may consider when defining a notion of a Lipschitz morphism between compact quantum spaces, and these notions will not agree in general. However, as we impose that L-seminorms are lower-semicontinuous, all three ideas agree for {\gQqcms s}.

We choose the following definition for morphisms of {\gQqcms s}. 

\begin{definition}\label{Lipschitz-mor-def}
A \emph{$k$-Lipschitz morphism} $\pi : (\A,\Lip_\A) \rightarrow (\B,\Lip_\B)$ between two {\gQqcms s} $(\A,\Lip_\A)$ and $(\B,\Lip_\B)$ is a *-morphism $\pi : \A\rightarrow \B$ such that for all $a\in\dom{\Lip_\A}$:
\begin{equation*}
\Lip_\B \circ \pi (a) \leq k \Lip_\A(a) \text{.}
\end{equation*}
\end{definition}

We then show that other natural ideas for morphisms of {\gQqcms s} agree with Definition (\ref{Lipschitz-mor-def}).

\begin{theorem}[{\cite{Rieffel99, Latremoliere16b}}]\label{Lipschitz-mor-thm}
Let $(\A,\Lip_\A)$ and $(\B,\Lip_\B)$ be two {\gQqcms s} and $\pi : \A\rightarrow\B$ be a *-morphism. The following assertions are equivalent for any $k \geq 0$:
\begin{enumerate}
\item $\pi$ is a $k$-Lipschitz morphism,
\item $\pi^\ast : \varphi\in\StateSpace(\B) \mapsto \varphi\circ\pi \in \StateSpace(\A)$ is a $k$-Lipschitz map from $(\StateSpace(\B),\Kantorovich{\Lip_\B})$ to $(\StateSpace(\A),\Kantorovich{\Lip_\A})$,
\item $\pi(\dom{\Lip_\A}) \subseteq \dom{\Lip_\B}$.
\end{enumerate}
\end{theorem}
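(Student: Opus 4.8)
The plan is to prove the cycle $(1)\Rightarrow(2)\Rightarrow(1)$, which gives $(1)\Leftrightarrow(2)$ for each fixed $k$, together with the trivial $(1)\Rightarrow(3)$ and the substantive $(3)\Rightarrow(1)$ \emph{for some} $k\geq 0$. Throughout I use that $\pi$ is unital --- forced if $\pi^\ast$ is to take values in $\StateSpace(\A)$, the case $\pi=0$ being vacuous --- that $\pi$ is norm-contractive as a $\ast$-morphism, and that $\pi(\sa{\A})\subseteq\sa{\B}$. For $(1)\Rightarrow(2)$: fix $\mu,\nu\in\StateSpace(\B)$; for any $a\in\sa{\A}$ with $\Lip_\A(a)\leq 1$ we have $\pi(a)\in\sa{\B}$ with $\Lip_\B(\pi(a))\leq k$, whence $|(\pi^\ast\mu)(a)-(\pi^\ast\nu)(a)|=|\mu(\pi(a))-\nu(\pi(a))|\leq k\,\Kantorovich{\Lip_\B}(\mu,\nu)$, and taking the supremum over such $a$ gives $\Kantorovich{\Lip_\A}(\pi^\ast\mu,\pi^\ast\nu)\leq k\,\Kantorovich{\Lip_\B}(\mu,\nu)$. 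The implication $(1)\Rightarrow(3)$ is immediate from the convention $\dom{\Lip}=\{\Lip<\infty\}$.

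For $(2)\Rightarrow(1)$ I would invoke Monge--Kantorovich duality for lower semicontinuous L-seminorms: since $\Lip_\B$ is lower semicontinuous, for every $b\in\sa{\B}$ one has $\Lip_\B(b)=\sup\{|\mu(b)-\nu(b)| : \mu,\nu\in\StateSpace(\B),\ \Kantorovich{\Lip_\B}(\mu,\nu)\leq 1\}$, a bipolar-type statement recovering the norm-closed convex set $\{\Lip_\B\leq 1\}$ from the metric it induces on the state space (this is precisely where lower semicontinuity is needed --- without it one only recovers the closure of the Lip-ball). Granting this, for $a\in\dom{\Lip_\A}$ and any $\mu,\nu\in\StateSpace(\B)$ with $\Kantorovich{\Lip_\B}(\mu,\nu)\leq 1$, we estimate $|\mu(\pi(a))-\nu(\pi(a))|=|(\pi^\ast\mu)(a)-(\pi^\ast\nu)(a)|\leq \Lip_\A(a)\,\Kantorovich{\Lip_\A}(\pi^\ast\mu,\pi^\ast\nu)\leq k\,\Lip_\A(a)$ using $(2)$, and the supremum over $\mu,\nu$ yields $\Lip_\B(\pi(a))\leq k\,\Lip_\A(a)$.

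For $(3)\Rightarrow(1)$ I would argue by the closed graph theorem. First I would check that the norm $a\mapsto\norm{a}{\A}+\Lip_\A(a)$ makes $\dom{\Lip_\A}$ a Banach space, and likewise for $\B$: a Cauchy sequence is norm-Cauchy, hence norm-convergent in $\sa{\A}$, and lower semicontinuity of $\Lip_\A$ (closedness of $\{\Lip_\A\leq r\}$ for each $r$) forces the limit into $\dom{\Lip_\A}$ with convergence of the L-values. By hypothesis $(3)$, $\pi$ restricts to a linear map between these two Banach spaces, and its graph is closed: if $a_n\to a$ in the finer norm then $\pi(a_n)\to\pi(a)$ in $\norm{\cdot}{\B}$ by contractivity, so any finer-norm limit of $(\pi(a_n))$ must equal $\pi(a)$. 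The closed graph theorem then provides $C\geq 0$ with $\Lip_\B(\pi(a))\leq \norm{\pi(a)}{\B}+\Lip_\B(\pi(a))\leq C\bigl(\norm{a}{\A}+\Lip_\A(a)\bigr)$ for all $a\in\dom{\Lip_\A}$.

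It remains to replace the $\norm{a}{\A}$ term by a multiple of $\Lip_\A(a)$, by the standard constant-shift trick. Fix $\varphi_0\in\StateSpace(\A)$ and set $D_\A=\diam{\StateSpace(\A)}{\Kantorovich{\Lip_\A}}$, which is finite since $\Kantorovich{\Lip_\A}$ metrizes the compact weak-$*$ topology. For $a\in\dom{\Lip_\A}$, the element $a'=a-\varphi_0(a)\unit_\A$ lies in $\dom{\Lip_\A}$, satisfies $\Lip_\A(a')=\Lip_\A(a)$, and, being self-adjoint, $\norm{a'}{\A}=\sup_{\psi\in\StateSpace(\A)}|\psi(a)-\varphi_0(a)|\leq D_\A\,\Lip_\A(a)$; moreover $\Lip_\B(\pi(a))=\Lip_\B(\pi(a'))$ because $\pi$ is unital. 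Combining, $\Lip_\B(\pi(a))\leq C\bigl(\norm{a'}{\A}+\Lip_\A(a')\bigr)\leq C(D_\A+1)\,\Lip_\A(a)$, so $(1)$ holds with $k=C(D_\A+1)$. I expect the main obstacle to be the duality identity invoked in $(2)\Rightarrow(1)$ --- the point where lower semicontinuity is genuinely used --- together with the completeness of the Lip-graph norm in $(3)\Rightarrow(1)$; once these are in place the remaining estimates are routine bookkeeping.
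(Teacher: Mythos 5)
Your proof is correct. The paper does not actually prove this theorem --- it defers $(1)\Leftrightarrow(2)$ to \cite{Rieffel99} and $(1)\Leftrightarrow(3)$ to \cite{Latremoliere16b} --- and your argument is essentially the one in those references: Rieffel's recovery of a lower semicontinuous L-seminorm from its Monge--Kantorovich metric for $(2)\Rightarrow(1)$, and the closed graph theorem on the Banach space $(\dom{\Lip},\|\cdot\|+\Lip)$ (whose completeness the present paper also records, citing \cite{Latremoliere16b}) followed by the constant-shift trick for $(3)\Rightarrow(1)$. Your observation that $\pi$ must be unital for the statement to hold (and for $(2)$ to be well-posed) is a legitimate and necessary reading of the hypotheses.
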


Assertion (2) in Theorem (\ref{Lipschitz-mor-thm}) was the initial definition of a Lipschitz morphism in \cite{Rieffel99,Rieffel00}. The equivalence between Assertion (1) and Assertion (3) was the subject of \cite{Latremoliere16b} while Rieffel proved the equivalence between Assertion (1) and (2) in \cite{Rieffel99}, where the importance of lower semicontinuity for Lip-norms was discovered.

It is straightforward to check that, taking for objects our {\gQqcms s} and for morphisms our Lipschitz morphisms give rise to a category.

The stronger notion of isometry between {\gQqcms s} must be well-understood in our context, since the Gromov-Hausdorff propinquity is a metric up to isometry, properly defined. The original notion of isometry for compact quantum metric space \cite{Rieffel00} did not involve *-morphisms, since two Rieffel's distance could be null between compact quantum metric spaces which were not *-isomorphic. 

Rieffel's insight into the proper notion of an isometric embedding rests on Bla\-schke's theorem \cite[Theorem 7.3.8]{burago01}, which states that a \emph{real valued} $k$-Lipschitz function on some nonempty subset of a metric space can be extended to a $k$-Lipschitz function on the whole space. For our purpose, the main consequence of Blaschke's theorem is that, if $\pi : (X,\mathsf{d})\hookrightarrow (Z,\mathsf{m})$ is an injection between two compact metric spaces, then $\pi$ is an isometry if and only if the Lipschitz seminorm on $C(X)$ induced by $\mathrm{d}$ is the quotient seminorm of the Lipschitz seminorm on $C(Z)$ induced by $\mathrm{m}$. This is the origin of the definition of a quantum isometry.

Blaschke's theorem is not valid as stated for $\C$-valued Lipschitz functions: in general, the best statement for $\C$-value Lipschitz functions is that a $k$-Lipschitz function over a subset can be extended to the whole space as a $\frac{4 k}{\pi}$-Lipschitz function \cite{Rieffel09}. It means that the relationship between Lipschitz seminorms provided, on the $\R$-valued functions, by isometries, does not hold for $\C$-valued function, rendering the generalization of these ideas to the noncommutative realm less obvious, and thus justifying in large part the choice to work with L-seminorms defined only for self-adjoint elements in general.

The construction of the propinquity was in large part motivated by ensuring that *-isomorphism is necessary for distance zero, and thus we arrive at the notion of quantum isometries which we have used since our work in \cite{Latremoliere13}:

\begin{definition}[{\cite{Rieffel00,Latremoliere13}}]\label{quantum-isometry-def}
Let $(\A,\Lip_\A)$ and $(\B,\Lip_\B)$ be two {\gQqcms s}. A \emph{quantum isometry} $\pi : (\A,\Lip_\A) \twoheadrightarrow (\B,\Lip_\B)$ is a *-epimorphism such that for all $b\in\B$ we have:
\begin{equation*}
\Lip_\B(b) = \inf\{ \Lip_\A(a) : \pi(a) = b\} \text{.}
\end{equation*}
A \emph{full quantum isometry} $\pi$ is a quantum isometry and a *-isomorphism such that $\pi^{-1}$ is also a quantum isometry.
\end{definition}

If $\pi : (\A,\Lip_\A) \twoheadrightarrow (\B,\Lip_\B)$ is a quantum isometry, then in particular $\Lip_\B\circ\pi(a) \leq \Lip_\A(a)$ for all $a\in\sa{\A}$ and thus $\pi$ is a $1$-Lipschitz morphism.

If $\pi$ is a full quantum isometry and $a\in\sa{\A}$ then $\Lip_\B\circ\pi(a) = \Lip_\A(a)$, since:
\begin{equation*}
\Lip_\B(\pi(a)) \leq \Lip_\A(a) = \Lip_\A(\pi^{-1}(\pi(a))) \leq \Lip_\B(\pi(a))\text{.}
\end{equation*}

One may therefore define a subcategory of {\gQqcms s} whose morphisms are quantum isometries, as quantum isometries compose to quantum isometries by \cite[Proposition 3.7]{Rieffel00}. In this category, full quantum isometries are the isomorphisms. The Gromov-Hausdorff propinquity is null between two {\gQqcms s} if and only if they are fully quantum isometric \cite{Latremoliere13,Latremoliere13b,Latremoliere15}.

We now turn to the following question: what metric structure may we equip modules over {\gQqcms s}, so that we then might develop a generalized notion of convergence for such metrized modules?

\section{D-norms}

Our work in this article is concerned with the construction of a metric on modules over {\gQqcms s}, appropriately defined. For the current research, a module over a {\gQqcms} is a left Hilbert $\A$-module over the underlying C*-algebra, equipped with an additional norm defined on some dense subspace (which is not in general a submodule) satisfying a particular topological requirement, and with various basic inequalities connecting all the ingredients of such a structure. These inequalities generalize the Leibniz inequality for L-seminorms.

As a matter of fixing our notations, we recall the definition of left Hilbert modules:
\begin{definition}[{\cite{Rieffel74}}]
A \emph{left pre-Hilbert module} $(\module{M},\inner{\cdot}{\cdot}{\module{M}})$ over a C*-algebra $\A$ is a left module $\module{M}$ over $\A$ equipped with a sesquilinear map $\inner{\cdot}{\cdot}{\module{M}} : \module{M}\times\module{M} \rightarrow \A$ such that for all $\omega,\eta\in\module{M}$ and $a\in\A$ we have:
\begin{enumerate}
\item $\inner{a\omega}{\eta}{\module{M}} = a\inner{\omega}{\eta}{\module{M}}$,
\item $\left(\inner{\omega}{\eta}{\module{M}}\right)^\ast = \inner{\eta}{\omega}{\module{M}}$,
\item $\inner{\omega}{\omega}{\module{M}} \geq 0$.
\item $\inner{\omega}{\omega}{\module{M}} = 0$ if and only if $\omega = 0$.
\end{enumerate}
\end{definition}

Let $(\module{M},\inner{\cdot}{\cdot}{\module{M}})$ be a left pre-Hilbert module over a C*-algebra $\A$. We note that Conditions (1) and (2) together prove that $\inner{\cdot}{\cdot}{\module{M}}$ possesses a modular form of sesquilinearity, i.e. $\inner{\omega}{a \eta}{\module{M}} = \inner{\omega}{\eta}{\module{M}} a^\ast$ for all $a\in\A$ and $\omega,\eta\in\module{M}$.

A version of the Cauchy-Schwarz inequality is valid for left pre-Hilbert modules, so that for all $\omega,\eta \in \module{M}$ we have:
\begin{equation*}
\inner{\omega}{\eta}{\module{M}}\inner{\eta}{\omega}{\module{M}} \leq \left\|\inner{\omega}{\omega}{\module{M}}\right\|_\A \inner{\eta}{\eta}{\module{M}}
\end{equation*}
and thus, together with the rest of the properties of the inner product, we may define a module norm on $\module{M}$ from the inner product $\inner{\cdot}{\cdot}{\module{M}}$:

\begin{proposition}[\cite{Rieffel74}]\label{Hilbert-norm-prop}
If $(\module{M},\inner{\cdot}{\cdot}{\module{M}})$ is a left pre-Hilbert module over a C*-algebra $\A$, and if, for all $\omega\in\module{M}$, we set:
\begin{equation*}
\|\omega\|_{\module{M}} = \sqrt{\left\|\inner{\omega}{\omega}{\module{M}}\right\|_\A}
\end{equation*}
then $\|\cdot\|_{\module{M}}$ is a module norm on $\module{M}$. i.e. a norm such that for all $a\in\A$ and $\omega\in\module{M}$, the following holds:
\begin{equation*}
\|a\omega\|_{\module{M}} \leq \|a\|_\A \|\omega\|_{\module{M}} \text{.}
\end{equation*}

Moreover, for any $\omega,\eta\in\module{M}$, we also have:
\begin{equation*}
\left| \inner{\omega}{\eta}{\module{M}} \right| \leq \|\omega\|_{\module{M}} \|\eta\|_{\module{M}} \text{.}
\end{equation*}
\end{proposition}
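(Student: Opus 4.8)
The plan is to prove the ``moreover'' clause $\left|\inner{\omega}{\eta}{\module{M}}\right| \leq \|\omega\|_{\module{M}}\|\eta\|_{\module{M}}$ first, as it supplies exactly what is needed for the triangle inequality, and then to check in turn that $\|\cdot\|_{\module{M}}$ is definite, homogeneous, subadditive, and a module norm.

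For the Cauchy--Schwarz clause, I would start from the inequality $\inner{\omega}{\eta}{\module{M}}\inner{\eta}{\omega}{\module{M}} \leq \left\|\inner{\omega}{\omega}{\module{M}}\right\|_\A \inner{\eta}{\eta}{\module{M}}$ recorded just above and take C*-norms of both sides. On the right, using that $0 \leq x \leq y$ in $\A$ forces $\|x\|_\A \leq \|y\|_\A$, one obtains $\|\inner{\omega}{\omega}{\module{M}}\|_\A\,\|\inner{\eta}{\eta}{\module{M}}\|_\A = \|\omega\|_{\module{M}}^2\|\eta\|_{\module{M}}^2$. On the left, Condition (2) gives $\inner{\eta}{\omega}{\module{M}} = \inner{\omega}{\eta}{\module{M}}^\ast$, so the product is $\inner{\omega}{\eta}{\module{M}}\inner{\omega}{\eta}{\module{M}}^\ast$, whose norm is $\|\inner{\omega}{\eta}{\module{M}}\|_\A^2$ by the C*-identity. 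Taking square roots yields the claim.

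The remaining axioms are then routine. Definiteness is immediate from Condition (4): $\|\omega\|_{\module{M}} = 0$ iff $\inner{\omega}{\omega}{\module{M}} = 0$ iff $\omega = 0$. Homogeneity follows from sesquilinearity, since $\inner{\lambda\omega}{\lambda\omega}{\module{M}} = |\lambda|^2\inner{\omega}{\omega}{\module{M}}$. For subadditivity I would expand $\inner{\omega+\eta}{\omega+\eta}{\module{M}}$ into its four sesquilinear terms, apply the triangle inequality for $\|\cdot\|_\A$, use $\|\inner{\eta}{\omega}{\module{M}}\|_\A = \|\inner{\omega}{\eta}{\module{M}}\|_\A$ (Condition (2)) together with the Cauchy--Schwarz clause just proved, and recognize the bound as $\left(\|\omega\|_{\module{M}}+\|\eta\|_{\module{M}}\right)^2$. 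Finally, for the module-norm inequality, Condition (1) and the derived identity $\inner{\omega}{a\eta}{\module{M}} = \inner{\omega}{\eta}{\module{M}}a^\ast$ give $\inner{a\omega}{a\omega}{\module{M}} = a\inner{\omega}{\omega}{\module{M}}a^\ast$; since $\inner{\omega}{\omega}{\module{M}} \leq \|\inner{\omega}{\omega}{\module{M}}\|_\A\,\unit_\A$ and conjugation by $a$ preserves the order, this is dominated by $\|\inner{\omega}{\omega}{\module{M}}\|_\A\,aa^\ast$, of norm $\|a\|_\A^2\|\omega\|_{\module{M}}^2$, and one takes square roots.

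I do not expect a genuine obstacle: the only step that is not purely formal is the passage from an operator inequality in $\A$ to a scalar inequality between norms, which rests on order-monotonicity of the C*-norm and on the C*-identity, both standard. The only point requiring care is applying Conditions (1) and (2) in the right order so that the conjugate-linear slot of the inner product is handled correctly.
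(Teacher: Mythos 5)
Your argument is correct and is exactly the route the paper intends: the paper gives no proof (it cites Rieffel) beyond recording the operator Cauchy--Schwarz inequality $\inner{\omega}{\eta}{\module{M}}\inner{\eta}{\omega}{\module{M}} \leq \|\inner{\omega}{\omega}{\module{M}}\|_\A \inner{\eta}{\eta}{\module{M}}$, which you take as your starting point and combine with the C*-identity, order-monotonicity of the norm, and $\inner{a\omega}{a\omega}{\module{M}} = a\inner{\omega}{\omega}{\module{M}}a^\ast$ in the standard way. The only cosmetic caveat is that $\A$ is not assumed unital in the definition of a left pre-Hilbert module, so the step $\inner{\omega}{\omega}{\module{M}} \leq \|\inner{\omega}{\omega}{\module{M}}\|_\A\unit_\A$ should be read in the unitization (or replaced by the equivalent observation that $a x a^\ast \leq \|x\|_\A\, a a^\ast$ for positive $x$), which does not affect the conclusion.
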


\begin{notation}
For a left pre-Hilbert module $(\module{M},\inner{\cdot}{\cdot}{\module{M}})$, we adopt the convention that $\|\cdot\|_{\module{M}}$ always refer to the norm defined in Proposition (\ref{Hilbert-norm-prop}) and we call this norm the \emph{Hilbert morm} of $(\module{M},\inner{\cdot}{\cdot}{\module{M}})$.
\end{notation}

We thus may require completeness of a left pre-Hilbert module for its $C^\ast$-Hilbert norm, leading to the following definition. 
\begin{definition}[{\cite{Rieffel74}}]
A \emph{left Hilbert module} $(\module{M},\inner{\cdot}{\cdot}{\module{M}})$ over a C*-algebra $\A$ is a left pre-Hilbert module over $\A$ which is complete for its $C^\ast$-Hilbert norm $\|\cdot\|_{\module{M}}$.
\end{definition}

We define our notion of a morphism between left Hilbert modules. Our morphisms can be defined between left Hilbert modules over different base algebras, and this concept will be folded in our notion of a morphism for a {\gQVB}.

\begin{definition}\label{module-morphism-def}
Let $(\module{M},\inner{\cdot}{\cdot}{\module{M}})$ and $(\module{N},\inner{\cdot}{\cdot}{\module{N}})$ be two left Hilbert modules over, respectively, two C*-algebras $\A$ and $\B$. A \emph{module morphism} $(\Theta,\theta)$ is given by a a *-morphism $\theta : \A \rightarrow \B$, and a $\C$-linear map $\Theta : \module{M} \rightarrow \module{N}$, such that for all $a\in\A$ and $\omega,\eta\in\module{M}$, we have:
\begin{enumerate}
\item $\Theta(a\omega)=\theta(a)\Theta(\omega)$,
\item $\inner{\Theta(\omega)}{\Theta(\eta)}{\module{N}} = \inner{\omega}{\eta}{\module{M}}$. 
\end{enumerate}
The module morphism $(\Theta,\theta)$ is \emph{unital} when $\theta$ is a unital *-morphism.
\end{definition}
We note that if $(\Theta,\theta)$ is a module morphism, $\Theta$ is continuous of norm $1$ by definition.

In continuing with the tradition in noncommutative geometry to name structures after their commutative analogues, we shall thus define a {\gQVB} as follows. We first extend the notion of an admissible function to a triple of functions, as we shall have three versions of the Leibniz inequality in our definition of {\gQVB s}.

\begin{definition}\label{admissible-triple-def}
A triple $(F,G,H)$ is \emph{admissible} when:
\begin{enumerate}
\item $F : [0,\infty)^4 \rightarrow [0,\infty)$ is admissible,
\item $G : [0,\infty)^3 \rightarrow [0,\infty)$ satisfies $G(x,y,z) \leq G(x',y',z')$ if $x,y,z,x',y',z'\in [0,\infty)$ and $x\leq x', y\leq y', z\leq z'$, while:
\begin{equation*}
(x + y)z \leq G(x,y,z) \text{.}
\end{equation*}
\item $H : [0,\infty)^2 \rightarrow [0,\infty)$ satisfies $H(x,y) \leq H(x',y')$ if $x,y,x',y'\in [0,\infty)$ and $x\leq x'$ and $y\leq y'$ while $2 x y \leq H(x,y)$.
\end{enumerate}
\end{definition}

The structure of a {\gQVB} is thus given by the following definition.

\begin{notation}
Let $a\in \A$ where $\A$ is a C*-algebra. We denote $\frac{a+a^\ast}{2}$ by $\Re a$ and $\frac{a-a^\ast}{2i}$ by $\Im a$. Note that $\Re a, \Im a \in \sa{\A}$.
\end{notation}

\begin{definition}\label{qvb-def}
A \emph{\QVB{F}{G}{H}} $(\module{M},\inner{\cdot}{\cdot}{\module{M}},\CDN_{\module{M}},\A,\Lip_\A)$, for some admissible triplet $(F,G,H)$, is given by a {\Qqcms{F}} $(\A,\Lip_\A)$, as well as a left Hilbert module $(\module{M},\inner{\cdot}{\cdot}{\module{M}})$ over $\A$ and a norm $\CDN_{\module{M}}$ defined on a dense $\C$-subspace $\dom{\CDN_{\module{M}}}$ of $\module{M}$ such that:
\begin{enumerate}
\item we have $\|\cdot\|_{\module{M}} \leq \CDN_{\module{M}}$,
\item the set:
\begin{equation*}
\left\{ \omega \in \module{M} : \CDN_{\module{M}}(\omega) \leq 1 \right\}
\end{equation*}
is compact for $\|\cdot\|_{\module{M}}$.
\item for all $a \in \sa{\A}$ and for all $\omega \in \module{M}$, we have:
\begin{equation*}
\CDN_{\module{M}}\left(a\omega\right) \leq G\left(\|a\|_\A,\Lip_\A(a), \CDN_{\module{M}}(\omega)\right)\text{,}
\end{equation*}
which we call the \emph{inner quasi-Leibniz inequality} for $\CDN_{\module{M}}$,
\item for all $\omega,\eta \in \module{M}$, we have:
\begin{equation*}
\max\left\{\Lip_\A\left(\Re\inner{\omega}{\eta}{\module{M}}\right),\Lip_\A\left(\Im\inner{\omega}{\eta}{\module{M}}\right)\right\} \leq H\left(\CDN_{\module{M}}(\omega), \CDN_{\module{M}}(\eta) \right)\text{,}
\end{equation*}
which we call the \emph{modular quasi-Leibniz inequality} for $\CDN_{\module{M}}$.
\end{enumerate}
The {\Qqcms{F}} $(\A,\Lip_\A)$ is called the \emph{base quantum space} of $\Omega$ and is denoted by $\basespace{\Omega}$. The norm $\CDN_{\module{M}}$ will be called the \emph{D-norm} of $\Omega$.
\end{definition}

If $\Omega$ is a {\QVB{F}{G}{H}} then its D-norm is said to be of type $(G,H)$. In general, we say that $\Omega$ is a {\gQVB} when it is a {\QVB{F}{G}{H}} for some admissible triple $(F,G,H)$. We make a simple observation which also applies, in a simpler form, to our work in \cite{Latremoliere15} as we shall discuss after the proof of Theorem (\ref{main-thm}).
\begin{notation}
Let $(F_1,G_1,H_1)$ and $(F_2,G_2,H_2)$ be two admissible triple. If $F = \max\{F_1,F_2\}$, $G = \max\{G_1,G_2\}$ and $H = \max\{H_1,H_2\}$, then $(F,G,H)$ is also an admissible triple, denoted by $(F_1,G_1,H_1)\vee(F_2,G_2,H_2)$. In particular, $F$ is admissible. Therefore, given any pair of {\gQqcms s} or {\gQVB s}, one may always assume that they share the same quasi-Leibniz properties.
\end{notation}

The classical picture which inspired our Definition (\ref{qvb-def}) of a {\gQVB} is provided by Riemannian geometry and locally trivial complex vector bundles. Our idea is that the choice of a metric connection for a hermitian metric on a complex vector bundle is in fact a part of the metric data of the associated module --- and gives rise to a prototypical D-norm.

\begin{example}\label{Riemannian-ex}
Let $M$ be a compact connected differentiable manifold of dimension $n$. As a matter of convention, we assume in this example that vector bundle is meant for locally trivial vector bundle, and that all our vector bundles have complex vector spaces as fibers; in particular, the tangent and cotangent bundles are complexified (by taking their tensor product with the trivial bundle $M\times \C$).

A natural derivation, namely the exterior differential, acts on the dense *-sub\-algebra $C^1(M)$ of $C^1$, $\C$-valued functions over $M$, inside the C*-algebra $C(M)$ of $\C$-valued continuous functions over $M$. This derivation is valued in the $C(M)$-$C(M)$-bimodule $\Omega_1$ of continuous sections of the cotangent bundle $T_\C^\ast M$ of $M$. We recall that for all pair $f ,g$ of $C^1$ functions on $M$, we have $d(f g) = f \wedge dg + df \wedge g = f dg + df \cdot g$.

We are interested in metric structures, and thus we naturally endow $M$ with some Riemannian metric $g$. Formally, a metric $g^V$ on a vector bundle $V$ is a smooth section of the vector bundle over $M$ of sesquilinear functionals over each fiber of $V$ such that for all $x\in\M$, the map $g^V_x$ over the fiber $V_x$ of $V$ at $x$ is in fact a hermitian inner product. In particular, a Riemannian metric is given as a metric over the cotangent bundle $T_\C^\ast M$ (or equivalently over the tangent bundle $T_\C M$ of $M$). 

Now, if $\Gamma V$ is the $C(M)$-left module of continuous sections of a vector bundle $V$ over $M$, then setting, for all $\omega,\eta\in \Gamma V$:
\begin{equation*}
x\in M \mapsto g^V_x(\omega_x,\eta_x)
\end{equation*}
defines a $C(X)$-valued inner product on $\Gamma V$, which we still denote by slight abuse of notation by $g^V$. Thus, $(\Gamma,g^V)$ is a left Hilbert module with norm, for all $\omega\in \Gamma V$:
\begin{equation*}
\|\omega\|_{\Gamma V} = \sup_{x\in M} \sqrt{g^V_x(\omega_x,\omega_x)} \text{.}
\end{equation*}
Notably, $\|\cdot\|_{\Gamma_V}$ is a module norm, i.e. for all $f\in C(X)$ and $\omega\in \Gamma V$ we can trivially check that $\|f \omega\|_{\Gamma_V} \leq\|f\|_{C(X)} \|\omega\|_{\Gamma_V}$.

We note that we will simply write $g^V_x(\omega,\eta)$ for $g^V_x(\omega_x,\eta_x)$ in the rest of this example, whenever $x\in M$, and $\omega$ and $\eta$ are in $\Gamma V$.

Let us focus for a moment on the case where $V$ is the cotangent bundle $T_\C^\ast M$ of $M$, and $g$ some Riemannian metric for $M$. We note that the right action of $C(M)$ on $\Omega_1$ is by so-called adjoinable operators, and in fact $(\Omega_1,g)$ is a Hilbert C*-bimodule over $C(M)$. Consequently, if we define:
\begin{equation*}
\Lip : f\in C^1(M) \mapsto \|d f\|_{\Omega_1}
\end{equation*}
then $\Lip$ is a seminorm defined on a dense subalgebra of $C(M)$, taking the value $0$ exactly on the constant functions over $M$ since $M$ is connected, and satisfying the Leibniz inequality:
\begin{equation*}
\Lip(f g) \leq \|f\|_{C(X)} \Lip(g) + \Lip(f) \|g\|_{C(X)}\text{.}
\end{equation*}
Thanks to the Arz{\'e}la-Ascoli theorem and since $M$ is compact, we note that:
\begin{equation*}
\mathfrak{BL}_1 = \left\{ f\in C(M) : \Lip(f) \leq 1, \|f\|_{C(X)}\leq 1 \right\}
\end{equation*}
is totally bounded in $C(X)$. Moreover, the closure of $\mathfrak{BL}_1$ consists of the $1$-Lipschitz functions with respect to the Riemannian path distance induced by $g$ on $M$, and the Lipschitz seminorm for this distance is the Minkowsky gauge functional of the closure of $\mathfrak{BL}_1$. As it agrees with $\Lip$ on $C^1(M)$, we denote the Lipschitz seminorm simply by $\Lip$ as well. Its closed unit ball is  now compact (as it is closed since $\Lip$ is lower semicontinuous), and it is trivial to check that is satisfies the Leibniz inequality. Thus $(C(X),\Lip)$ is a {\Lqcms}.

Our purpose is to introduce data on modules which will allow us to define a Gromov-Hausdorff distance between them, and thus we now return to our discussion of the metric structure of a generic vector bundle $V$ over $M$. Given a metric $g^V$ on $V$, a very important fact of Riemannian geometry is the existence of a metric connection, i.e. a connection $\nabla$ on $V$ with the property that for all tangent vector fields $X$ of $M$, and for all $\omega,\eta\in \Gamma V$, we have:
\begin{equation*}
d_X g^V(\omega,\eta) = g^V (\nabla_X \omega,\eta) + g^V(\omega,\nabla_X \eta) \text{.}
\end{equation*}
If we require the connection to be torsion free when $V = T_\C^\ast M$, then the connection $\nabla$ is unique and called the L{\'e}vi-Civita connection; we will however work on general vector bundles and not require any condition on the torsion of our metric connections. Instead, we look at the connection as part of the metric information of our vector bundle $V$.

Thus, let us fix a complex bundle $V$ over $M$ with a metric $g^V$ and let $\nabla$ be a $g^V$-metric connection on $V$. Let $\Gamma V$ be the $C(M)$-module of continuous sections of $V$ over $M$ and $\Gamma^1 V$ the $C^1(M)$-module of differentiable sections over $M$.

We already have endowed $T^\ast M$ with a Riemannian metric $g$, and thus we also have a metric on the tangent bundle $T M$ by (fiber-wise) duality; we denote this metric by $g_\ast$. The connection $\nabla$ defined, for all differentiable section $\omega$ of $V$, the linear map:
\begin{equation*}
\nabla\omega : X \in TM \mapsto \nabla_X \omega \in \Gamma V\text{.}
\end{equation*}
We define for all differentiable section $\omega$ the norm:
\begin{equation*}
\opnorm{\nabla\omega}{}{} = \sup_{x\in M} \sup\left\{ \sqrt{g^V_x(\nabla_X \omega, \nabla_X \omega)} : X \in TM, g_{\ast x}(X,X) = 1 \right\}\text{,}
\end{equation*}
i.e. the operator norm of $\nabla\omega$ for the underlying inner products valued in $C(M)$ on the module of vector fields and the module of sections of $V$.

For all differentiable $\omega \in \Gamma V$, we set:
\begin{equation*}
\CDN(\omega) = \max\left\{ \|\omega\|_{\Gamma V}, \opnorm{\nabla \omega}{}{} \right\} \text{.}
\end{equation*}

We now explicit some formulas which we will need. Since $M$ is compact and $V$ is locally trivial, there exists a finite atlas $\mathscr{U}$ such that for any chart $(U,\psi) \in \mathscr{U}$, we also have a local frame for $V$ over $U$, i.e. $k$ functions $\{e_1^U,\ldots,e_k^U\}$ such that for all $x\in U$, the set $\{e_1^U(X), \ldots, e_k^U(x)\}$ is a basis for the fiber $V_x$.

We moreover let $\mathscr{V}$ be an open cover of $M$ with the property that for all $O \in \mathscr{V}$, there exists $(U,\psi) \in \mathscr{U}$ such that the closure $\mathrm{cl}(O) \subseteq U$. This can always be achieved since $M$ is compact.

Let us fix $(U,\psi) \in \mathscr{U}$. Let $\{ \partial_1,\ldots,\partial_n \}$ be some set of tangent vector fields on $U$ such that for all $x\in U$, the set $\{\partial_1(x),\ldots,\partial_n(x)\}$ is a basis for $T_x M$.

We also note that $\nabla$ restricts to a metric connection for $(V,g^V)$ restricted to $U$, as a vector bundle over $U$. We shall tacitly identify $\nabla$ with its restriction.

For any $\omega \in \Gamma_V$, we now write $\omega = \sum_{j = 1}^n \omega_j e^U_j$ for $\omega_1,\ldots,\omega_k \in C^1(M)$. Now, if we write, for all $p,r$ in $\{1,\ldots, k\}$ and $q\in \{1,\ldots,n\}$:
\begin{equation*}
\nabla_{\partial_q} e_p = \sum_{r=1}^k \Gamma_{p q}^r e_r
\end{equation*}
noting that $e_1,\ldots,e_k$ are smooth so that the above expression makes sense, then, for all smooth $\omega$:
\begin{equation*}
\nabla_{\partial_q} \omega  = \sum_{r=1}^k \left(\partial_q \omega_r + \sum_{p=1}^k \Gamma_{p q}^r \omega_{p}\right)e_r \text{.}
\end{equation*}

In particular, for any $x\in U$, $q\in\{1,\ldots,n\}$ and $j\in\{1,\ldots,k\}$, we thus have:
\begin{equation*}
\left| \left( \partial_q \omega \right)_j (x) \right| \leq \left| \left( \nabla_{\partial_q} \omega (x) \right)_j  - \sum_{p=1}^k  \Gamma_{p q}^r \omega_{p} \right|\text{.}
\end{equation*}

We now need a few estimates. We first note that by construction, if for all $x\in U$ we set:
\begin{equation*}
G_x = \begin{pmatrix} g^V_x(e^U_1,e^U_1) & \hdots & g_x^V(e_k^U,e^U_1) \\
\vdots  & & \vdots \\
g_x^V(e^U_1, e^U_k) & \hdots & g_x^U(e^U_k, e^U_k)\end{pmatrix}
\end{equation*}
then $G: x\in U \mapsto G_x$ is a continuous function valued in the $k\times k$ (positive symmetric invertible) matrices. We endow the algebra of $k\times k$ matrices with the norm $\opnorm{\cdot}{}{k}$ induced by the usual inner product on $\C^d$.

Moreover by construction, if we set:
\begin{equation*}
\inner{\sum_{j=1}^k \omega_j e_j}{\sum_{r=1}^k \eta_k e_k}{x} = \sum_{j=1}^k \omega_j(x) \overline{\eta_j(x)}
\end{equation*}
then we have $g^V_x(\omega,\eta) = \inner{G \omega}{\eta}{x}$. Now, for all $x\in U$, we have:
\begin{equation*}
\begin{split}
\max\left\{ |\omega_j(x)| : j\in\{1,\ldots,k\} \right\} &\leq \sqrt{\inner{\omega}{\omega}{x}} \\ 
&\leq \sqrt{\opnorm{G_x^{-1}}{}{k} \inner{G \omega}{\omega}{x}} \\
&= \sqrt{\opnorm{G_x^{-1}}{}{k}} \sqrt{g^V_x(\omega,\omega)} \text{.}
\end{split}
\end{equation*}

We now pick any $O \subseteq \mathscr{V}$ such that the closure $\mathrm{cl}(O)$ lies inside our chosen $U$. In this case, $G$, and therefore $G^{-1}$, are continuous on the compact $\mathrm{cl}(O)$ and thus, $x\in \mathrm{O} \mapsto \opnorm{G_x^{-1}}{}{k}$ is bounded below and above; since the bounds are reached and $G_x$ is never null, we conclude that there exists $w > 0$ such that for all $x\in \mathrm{cl}(O)$ we have:
\begin{equation*}
\max\left\{ |\omega_j(x)| : j\in\{1,\ldots,k\} \right\} \leq w\sqrt{\inner{G\omega}{\eta}{x}} = w\sqrt{g^V_x(\omega,\omega)}\text{.}
\end{equation*}

In particular, we note that if $\CDN(\omega)\leq 1$ and $p \in \{1,\ldots,k\}$ then:
\begin{equation}\label{bounded-norm-eq}
\|\omega_p \|_{C(\mathrm{cl}(O))} \leq w \text{.}
\end{equation}

We also note that the functions $x\in \mathrm{cl}(O) \mapsto \sqrt{g_{\ast x}(\partial_q,\partial_q)}$ are continuous on a compact as well for all $q\in \{1,\ldots,d\}$, and thus we can choose $K > 0$ such that:
\begin{equation*}
\sup\left\{ \sqrt{g_{\ast x}(\partial_q,\partial_q)} : x \in \mathrm{cl}(O), q\in \{1,\ldots, n\} \right \} \leq K \text{.}
\end{equation*}

Last, we note that since the Christoffel symbols of our connection in our local frame are continuous as well, there exists $K_2 > 0$ such that:
\begin{equation*}
\sup\left\{ \left|\Gamma_{p q}^r\right| : p,r \in \{1,\ldots,k\}, q\in\{1,\ldots,n\}, x\in\mathrm{cl}(O) \right\} \leq K_2\text{.}
\end{equation*}

We have thus for all $\omega\in \Gamma V$ with $\CDN(\omega)\leq 1$, $x\in\mathrm{cl}(O)$ and $q\in\{1,\ldots,k\}$:
\begin{equation*}
\begin{split}
|\nabla_{\partial_q} \omega_j (x)| &\leq \max\left\{ |\nabla_{\partial_q}\omega_j(x)| : j\in\{1,\ldots,k\} \right\} \\
&\leq w \sqrt{g^V_x(\nabla_{\partial_q}\omega,\nabla_{\partial_q}\omega)} \\
&\leq w \opnorm{\nabla \omega}{}{} \sqrt{g_{\ast x}(\partial_q,\partial_q)} \\
&\leq w K  \text{.}
\end{split}
\end{equation*}
Therefore for all $q \in \{1,\ldots,n\}$ and $j\in\{1,\ldots,k\}$, and for all $\omega\in\Gamma V$ with $\CDN(\omega)\leq 1$, we estimate:

\begin{equation}\label{bounded-partial-der-eq}
\begin{split}
\left\| (\partial_q \omega)_j \right\|_{C(\mathrm{cl}(O))} &= \sup_{x\in \mathrm{cl}(O)} |\partial_q \omega (x)| \\
&\leq \sup_{x\in \mathrm{cl}(O)} \left| \left(\nabla_{\partial_q}\omega\right)_j(x)\right| \\
&\quad + k \sup\left\{ \left|\Gamma_{p q}^r (x)\right| : p,r \in \{1,\ldots,k\}, q\in \{1,\ldots,n\}, x\in\mathrm{cl}(O) \right\} \\
&\quad \times \max_{p=1}^k \|\omega_p\|_{C(\mathrm{cl}(O))} \\
&\leq w K + w k K_2 \text{.} 
\end{split}
\end{equation}

Let:
\begin{equation*}
\modlip{1}{\CDN,O} = \left\{ \text{the restriction of $\omega_j$ to $\mathrm{cl}(O)$} : \CDN(\omega)\leq 1, j \in \{1,\ldots,k \} \right\}\text{.}
\end{equation*}
We deduce from Equations (\ref{bounded-partial-der-eq}) and (\ref{bounded-norm-eq}) that $\modlip{1}{\CDN,O}$ is an equicontinuous family (in fact, a collection of $(K w + w k K_2)$-Lipschitz functions). 

We may now apply Arz{\'e}la-Ascoli theorem to the set $\modlip{1}{\CDN,O}$, viewed as an equicontinuous set of functions on the compact $\mathrm{O}$ and valued in a fixed compact in $\C$. Thus, $\modlip{1}{\CDN,O}$ is totally bounded for the uniform norm $\|\cdot\|_{\mathrm{cl}(O)}$.

Now, we also observe that since $G$ is bounded above as well, there exists $w_2 > 0$ such that for all $\omega\in\Gamma_V$ and $x\in \mathrm{cl}(O)$:
\begin{equation*}
g^V_x(\omega,\omega) \leq \opnorm{G_x}{}{k} \inner{\omega}{\omega}{x} \leq k\opnorm{G_x}{}{k} \max\{|\omega_1(x)|,\ldots,|\omega_k(x)|\}\text{.}
\end{equation*}

Therefore, we conclude that $\modlip{1}{\CDN}$ is totally bounded for the seminorm:
\begin{equation*}
\left\|\omega\right\|_{\Gamma V, O} = \omega\in \Gamma V \mapsto \sup_{x\in \mathrm{cl}(O)} \sqrt{g^V_x(\omega,\omega)} \text{.}
\end{equation*}

Last, we note that:
\begin{equation*}
\left\|\omega\right\|_{\Gamma V} = \sup_{x\in M} \sqrt{g^V_x(\omega,\omega)} = \max_{O\in\mathscr{V}} \|\omega\|_{\Gamma V, O}
\end{equation*}
from which it is easy to deduce that $\modlip{1}{\CDN}$ is totally bounded for $\|\cdot\|_{\Gamma V}$.

Our reasoning proves that for all $O \in \mathscr{V}$, the set of all the restrictions of elements $\omega\in\Gamma V$ with $\CDN(\omega)\leq 1$ is equicontinuous and obviously bounded. Since $\mathscr{V}$ is a finite cover of $M$, we then conclude that $\{\omega\in\Gamma V : \CDN(\omega)\leq 1\}$ is equicontinuous on the compact $M$ and valued in the common compact thus by Arz{\'e}la-Ascoli, we conclude that $\{\omega\in\Gamma V : \CDN(\omega)\leq 1\}$ is compact for the supremum norm.

Furthermore, since $\nabla$ is a connection, we check that:
\begin{equation*}
\CDN(f \omega) \leq \Lip(f) \|\omega\|_{\Gamma V} + \|f\|_{C(M)} \CDN(\omega)
\end{equation*}
for all $f\in C^1(X)$ and smooth $\omega\in\Gamma V$ (and using the fact that $\|\cdot\|_{\Gamma V}$ is a module norm), while since $\nabla$ is a metric connection, we also check that:
\begin{equation*}
\Lip(g^V(\omega,\eta)) \leq \CDN(\omega)\|\eta\|_{\Gamma V} + \|\omega\|_{\Gamma V} \CDN(\eta)
\end{equation*}
for all smooth $\omega,\eta\in\Gamma V$. 

As we did with $\Lip$, we extend $\CDN$ by defining $\CDN_V$ as the Minkowsky functional of the norm closure of $\modlip{1}{\CDN}$, which is compact. Thus $(\Gamma V, g^V, \CDN_V, C(M), \Lip)$ is a {\gQVB}.

We note that it is not clear in general what $\ker\nabla = \{ \omega \in \Gamma^1 V : \nabla \omega = 0\}$ might be; it is a key reason why we actually define our D-norms to dominate the underlying norm: by taking the maximum of the module norm and the norm of a connection, we remove the question of what the kernel should be and we can make a clear requirement of compactness for the closed unit ball of  our D-norm.
\end{example}

Example (\ref{Riemannian-ex}) justifies the following terminology:
\begin{definition}
A {\QVB{F}{G}{H}} is \emph{Leibniz} when:
\begin{enumerate}
\item $F = (x,y,l_x,l_y) \in [0,\infty)^4 \mapsto x l_y + y l_x$,
\item $G = (x,l,d) \in [0,\infty)^3 \mapsto (x + l) d$,
\item $H = (x,y) \in [0,\infty)^2 \mapsto 2 x y \text{.}$
\end{enumerate}
\end{definition}

We note that a the admissible triple for a Leibniz {\gQVB} is chosen to be the lower allowed bounds in Definition (\ref{admissible-triple-def}).

We now turn to examples of {\gQVB s} over general {\gQqcms s}. We begin with the observation that Definition (\ref{qvb-def}) contains, in the statement of the inner quasi-Leibniz inequality, a canonical extension of the L-seminorm to a dense *-subalgebra of the entire base quantum space. This extension possess properties which will prove helpful for our next few examples. 

\begin{lemma}\label{extension-of-Leibniz-lemma}
Let $(\A,\Lip)$ be a {\Qqcms{F}} for some admissible function $F$. The seminorm:
\begin{equation*}
\mathrm{M} : a\in\A\mapsto\max\{ \Lip(\Re a), \Lip(\Im a) \}
\end{equation*}
satisfies:
\begin{equation*}
\mathrm{M} (a b) \leq 8 F (\|a\|_\A, \|b\|_\A, \mathrm{M}(a), \mathrm{M}(b) ) \text{.}
\end{equation*}
Moreover the domain of $\mathrm{M}$ is a dense *-subalgebra of $\A$, while $\mathrm{M}$ satisfies $\mathrm{M}(a^\ast) = \mathrm{M}(a)$ for all $a\in\A$ and $\{a\in\A : \mathrm{M}(a) = 0\} = \C\unit_\A$.
\end{lemma}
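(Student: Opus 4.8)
The plan is to trace everything back to the quasi-Leibniz inequality of $\Lip$ from Definition (\ref{Qqcms-def}), applied to the Jordan and Lie products of self-adjoint elements, once $ab$ has been split along the real/imaginary decomposition. First I would record the relevant identities. Writing $a = \Re a + i\Im a$ and $b = \Re b + i\Im b$ with $\Re a,\Im a,\Re b,\Im b \in \sa{\A}$, and using the elementary facts that $\Re(xy) = \Jordan{x}{y}$ and $\Im(xy) = \Lie{x}{y}$ for $x,y\in\sa{\A}$, together with $\Re(iz) = -\Im z$ and $\Im(iz) = \Re z$ for any $z\in\A$, expanding the product yields
\[
\Re(ab) = \Jordan{\Re a}{\Re b} - \Lie{\Re a}{\Im b} - \Lie{\Im a}{\Re b} - \Jordan{\Im a}{\Im b}
\]
and
\[
\Im(ab) = \Lie{\Re a}{\Re b} + \Jordan{\Re a}{\Im b} + \Jordan{\Im a}{\Re b} - \Lie{\Im a}{\Im b}
\]
(up to the harmless scalar factors coming from the normalization of $\Jordan{\cdot}{\cdot}$ and $\Lie{\cdot}{\cdot}$ used in Definition (\ref{Qqcms-def})).

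Since $\dom{\Lip}$ is a Jordan--Lie subalgebra of $\sa{\A}$, and $\Re a,\Im a \in \dom{\Lip}$ exactly when $\mathrm{M}(a) < \infty$, each of the eight summands above lies in $\dom{\Lip}$; hence $ab \in \dom{\mathrm{M}}$. Together with the observations that $\dom{\mathrm{M}} = \dom{\Lip} + i\dom{\Lip}$ is visibly a $\C$-subspace stable under the adjoint (because $\Re(a^\ast) = \Re a$ and $\Im(a^\ast) = -\Im a$), this shows $\dom{\mathrm{M}}$ is a $\ast$-subalgebra of $\A$; its density follows from the norm-density of $\dom{\Lip}$ in $\sa{\A}$ and the boundedness of the real-linear map $a \mapsto (\Re a,\Im a)$ from $\A$ onto $\sa{\A}\times\sa{\A}$.

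For the quasi-Leibniz estimate, I would bound each of the eight summands using subadditivity and monotonicity of $\Lip$ and the quasi-Leibniz inequality of type $F$: for instance $\Lip(\Jordan{\Re a}{\Im b}) \leq F(\|\Re a\|_\A,\|\Im b\|_\A,\Lip(\Re a),\Lip(\Im b))$, and likewise for every Jordan or Lie term appearing. Using $\|\Re a\|_\A,\|\Im a\|_\A \leq \|a\|_\A$, $\|\Re b\|_\A,\|\Im b\|_\A \leq \|b\|_\A$, $\Lip(\Re a),\Lip(\Im a) \leq \mathrm{M}(a)$, $\Lip(\Re b),\Lip(\Im b) \leq \mathrm{M}(b)$, and monotonicity of $F$, every one of these eight terms is at most $F(\|a\|_\A,\|b\|_\A,\mathrm{M}(a),\mathrm{M}(b))$. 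Hence, by the triangle inequality for $\Lip$, each of $\Lip(\Re(ab))$ and $\Lip(\Im(ab))$ is at most $4F(\|a\|_\A,\|b\|_\A,\mathrm{M}(a),\mathrm{M}(b))$, and therefore
\[
\mathrm{M}(ab) = \max\{\Lip(\Re(ab)),\Lip(\Im(ab))\} \leq \Lip(\Re(ab)) + \Lip(\Im(ab)) \leq 8\,F(\|a\|_\A,\|b\|_\A,\mathrm{M}(a),\mathrm{M}(b)).
\]
Finally, $\mathrm{M}(a^\ast) = \mathrm{M}(a)$ follows from $\Re(a^\ast) = \Re a$, $\Im(a^\ast) = -\Im a$ and the fact that $\Lip$ is a seminorm; and $\mathrm{M}(a) = 0$ holds iff $\Lip(\Re a) = \Lip(\Im a) = 0$, i.e. (by Definition (\ref{Qqcms-def})(1)) iff $\Re a,\Im a \in \R\unit_\A$, i.e. iff $a \in \C\unit_\A$.

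The only step requiring any care is the real/imaginary expansion of $ab$ and keeping the normalization constants of $\Jordan{\cdot}{\cdot}$ and $\Lie{\cdot}{\cdot}$ straight; every inequality afterward is forced by the monotonicity and subadditivity already available. The constant $8$ is not optimal --- a more careful bookkeeping of signs and of which of $\|a\|_\A$, $\|b\|_\A$ bounds each factor improves it --- but it suffices for the intended applications.
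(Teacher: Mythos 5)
Your proposal is correct and follows essentially the same route as the paper: both proofs reduce $\mathrm{M}(ab)$ to the eight Jordan and Lie products of $\Re a,\Im a$ with $\Re b,\Im b$, bound each by the quasi-Leibniz inequality, and use monotonicity of $F$ together with $\|\Re a\|_\A,\|\Im a\|_\A\leq\|a\|_\A$; the paper merely organizes the terms via $ab=\Jordan{a}{b}+i\Lie{a}{b}$ while you organize them via $\Re(ab)$ and $\Im(ab)$. Incidentally, your bookkeeping actually yields the sharper bound $4F$ (the maximum of two quantities each at most $4F$), and your final step of bounding the maximum by the sum is what brings you back to the stated constant $8$.
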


\begin{proof}
We first observe that $\mathrm{M}$ is a seminorm (which may assume the value $\infty$, though it is finite on $\dom{\Lip} + i\dom{\Lip}$; moreover it is easy to check that $\mathrm{M}(a) = 0$ if and only if $a\in\C \unit_\A$). Moreover, $\mathrm{M}$ restricted to $\sa{\A}$ is of course $\Lip$. It is similarly straightforward to note that $\mathrm{M}(a^\ast) = \mathrm{M}(a)$ for all $a\in\A$.

Let $a,b \in \A$. We note, since $ab = \Jordan{a}{b} + i \Lie{a}{b}$ and $\mathrm{M}$ is a norm:
\begin{equation}\label{extension-of-Leibniz-eq1}
\mathrm{M}(ab)\leq \mathrm{M}(\Jordan{a}{b}) + \mathrm{M}(\Lie{a}{b}) \text{.}
\end{equation}
We then have:
\begin{equation*}
\begin{split}
\mathrm{M}(\Jordan{a}{b}) &\leq \mathrm{M}(\Jordan{\Re(a)}{\Re(b)}) + \mathrm{M}(\Jordan{\Re(a)}{\Im(b)}) \\
&\quad + \mathrm{M}(\Jordan{\Im(a)}{\Re(b)}) + \mathrm{M}(\Jordan{\Im(a)}{\Im(b)}) \\
&= \Lip(\Jordan{\Re(a)}{\Re(b)}) + \Lip(\Jordan{\Re(a)}{\Im(b)}) \\
&\quad + \Lip(\Jordan{\Im(a)}{\Re(b)}) + \Lip(\Jordan{\Im(a)}{\Im(b)}) \\
&\leq F(\|\Re(a)\|_\A,\|\Re(b)\|_\A,\Lip(\Re(a)), \Lip(\Re(b))) \\
&\quad + F(\|\Re(a)\|_\A,\|\Im(b)\|_\A,\Lip(\Re(a)), \Lip(\Im(b))) \\
&\quad + F(\|\Im(a)\|_\A,\|\Re(b)\|_\A,\Lip(\Im(a)), \Lip(\Re(b))) \\
&\quad + F(\|\Im(a)\|_\A,\|\Im(b)\|_\A,\Lip(\Im(a)), \Lip(\Im(b))) \\
&\leq 4F(\|a\|_\A,\|b\|_\A,\mathrm{M}(a), \mathrm{M}(b)) \text{.}
\end{split}
\end{equation*}
A similar computation allows us to conclude:
\begin{equation*}
\mathrm{M}(\Lie{a}{b}) \leq 4F(\|a\|_\A,\|b\|_\A,\mathrm{M}(a), \mathrm{M}(b))
\end{equation*}
and thus, using Inequality (\ref{extension-of-Leibniz-eq1}), our lemma is proven.
\end{proof}

\begin{remark}\label{extension-of-Leibniz-rmk}
Let $(X,\mathrm{d})$ be a compact metric space and $\Lip$ be the seminorm induced on $C(X)$ by $\mathrm{d}$. While $\Lip$ is the Lipschitz seminorm for functions from $X$ valued in $\C$ with its usual hermitian norm, we note that $\mathrm{M} = \max\{\Lip\circ\Re,\Lip\circ\Im\}$ is the Lipschitz seminorm of functions form $X$ to $\C$ endowed with the norm $\|x+iy\| = \max\{|x|,|y|\}$ for all $x,y\in\R^2$.

This observation is interesting because Blaschke theorem is valid for the seminorm $\mathrm{M}$: when $\C$ is endowed with $\|\cdot\|$ rather than its standard hermitian norm, a $k$-Lipschitz function on some subset of $X$ to $\C$ can be extended a $k$-Lipschitz function over $X$. Indeed, it is easy to check that for any two {\gQqcms s} $(\A,\Lip_\A)$ and $(\B,\Lip_\B)$, a *-morphism $\pi : \A \rightarrow \B$ is a quantum isometry if and only if $\max\{\Lip_\B\circ\Re,\Lip_\B\circ\Im\}$ is the quotient of $\max\{\Lip_\A\circ\Re,\Lip_\A\circ\Im\}$ and the notion of full quantum isometry extends similarly. Thus Lemma (\ref{extension-of-Leibniz-lemma}) provides a rather canonical way to extend L-seminorms while keeping all notions of isometries unchallenged.
\end{remark}

We first note that every {\gQqcms} defines a canonical {\gQVB} over itself. This observation implies that the modular propinquity will provide another metric on {\gQqcms s}, though we will prove that it is equivalent to the quantum Gromov-Hausdorff propinquity.

\begin{example}\label{algebra-as-module-ex}
Let $(\A,\Lip)$ be a {\Qqcms{F}}. The C*-algebra $\A$ is of course a left module over itself, using the multiplication of $\A$ on the left. The C*-algebra $\A$ is naturally a left $\A$-Hilbert module by setting for all $a,b \in \A$:
\begin{equation*}
\inner{a}{b}{\A} =  a b^\ast \text{.}
\end{equation*}
Note that for every state $\varphi$ of $\A$, the completion of the pre-Hilbert space $\A$ endowed with $\varphi\circ\inner{\cdot}{\cdot}{\A}$ provides the Gel'fand-Naimark-Segal representation associated with $\varphi$

The norm of $a\in\A$ is:
\begin{equation*}
\sqrt{\|a a^\ast\|_\A} = \|a\|_\A\text{,}
\end{equation*}
and thus the C*-norm $\|\cdot\|_\A$ and the $C^\ast$-Hilbert norm $\|\cdot\|_{\module{A}}$ agree. In particular, $(\A,\inner{\cdot}{\cdot}{\A})$ is complete for its norm.

We can now set $\CDN_\A ( a ) = \max\{\Lip(\Re a), \Lip(\Im a), \|a\|_\A\}$ for all $a\in\A$. It is easy to check, using Lemma (\ref{extension-of-Leibniz-lemma}), that:
\begin{equation*}
\Omega(\A) = (\A,\inner{\cdot}{\cdot}{\A},\CDN_{\A},\A,\Lip)
\end{equation*}
is a {\QVB{F}{8F}{8F}}.
\end{example}

We extend Example (\ref{algebra-as-module-ex}) to free modules. Free modules are of course basic examples, but they are also important since every finitely generated projective modules lies inside a free module; thus the construction in the next example would provide D-norms to many finitely generated projective modules under appropriate conditions. This being said, our main example \cite{Latremoliere17a} of non free, finitely generated projective modules in this paper --- Heisenberg modules over quantum $2$-tori --- will come with a D-norm from a connection, akin to Example (\ref{Riemannian-ex}) though involving very different techniques. The following example is thus a natural default source of D-norms, while our work may accommodate different D-norms if the context calls for it.

\begin{example}\label{free-module-ex}
Let $(\A,\Lip_\A)$ be a {\Qqcms{F}} for some admissible function $F$. Let $d\in\N\setminus\{0\}$. Let $\module{M} = \A^d$. The map:
\begin{equation*}
\inner{\begin{pmatrix} a_1 \\ \vdots \\ a_d \end{pmatrix}}{\begin{pmatrix} b_1 \\ \vdots \\ b_d \end{pmatrix}}{\module{M}} = \sum_{j=1}^d a_j b_j^\ast \text{,}
\end{equation*}
for all $\begin{pmatrix} a_1 \\ \vdots \\ a_d \end{pmatrix}, \begin{pmatrix} b_1 \\ \vdots \\ b_d \end{pmatrix} \in \A^d$, is an $\A$-inner product, for which $\module{M}$ is complete.

We set $\mathrm{M}_\A(a) = \max\{\Lip_\A(\Re a), \Lip_\A(\Im a)\}$ for all $a\in\A$, and then::
\begin{equation*}
\Lip_{\A}^d\begin{pmatrix} a_1 \\ \vdots \\ a_d \end{pmatrix} = \max\left\{ \mathrm{M}(a_j) : j \in \{1,\ldots,d\} \right\}
\end{equation*}
for all $a_1,\ldots,a_d \in \A$. $\Lip_{\module{M}}$ is a form of L-seminorm for modules. We note that $\Lip_{\A}^d(x) = 0$ if and only if $x\in \C^d$. 

We now define $\CDN_{\A}^d = \max\left\{\|\cdot\|_{\module{M}},\Lip_{\A}^d\right\}$. 

To begin with, we note that for all $a,b_1,\ldots,b_d \in \A$:
\begin{equation*}
\begin{split}
\Lip_{\A}^d\left(a \begin{pmatrix} b_1 \\ \vdots \\ b_d \end{pmatrix}\right) &= \max\left\{ \mathrm{M}(a b_j) : j\in\{1,\ldots,d\} \right\}\\
&= \max\left\{ 8 F(\|a\|_\A,\|b_j\|_\A,\mathrm{M}_\A(a),\mathrm{M}(b_j)) : j \in \{1,\ldots,d\} \right\}\\
&\leq 8 F\left(\|a\|_\A,\left\|\begin{pmatrix} b_1 \\ \vdots \\ b_d \end{pmatrix}\right\|_{\module{M}},\mathrm{M}(a),\Lip_{\A}^d\begin{pmatrix} b_1 \\ \vdots \\ b_d \end{pmatrix} \right)\\
&\leq G\left(\|a\|_\A,\mathrm{M}(a),\Lip_{\A}^d\begin{pmatrix} b_1 \\ \vdots \\ b_d \end{pmatrix} \right) \text{,}
\end{split}
\end{equation*}
where $G (x,y,z) = 8 F(x,y,z,y)$ for all $x,y,z\geq 0$.

Since $\|\cdot\|_{\module{M}}$ is a $C^\ast$-Hilbert norm and $\mathrm{M}(a) = \Lip_\A(a)$ if $a=a^\ast$, we conclude that if $a\in\sa{\A}$ then:
\begin{equation*}
\CDN_\A^d\left(a \begin{pmatrix} b_1 \\ \vdots \\ b_d \end{pmatrix}\right) \leq G\left(\|a\|_\A,\Lip_\A(a),\CDN_\A^d\begin{pmatrix} b_1 \\ \vdots \\ b_d \end{pmatrix}\right)\text{.}
\end{equation*}

Moreover, again using Lemma (\ref{extension-of-Leibniz-lemma}), we also have, for all $a_1,\ldots,a_d,b_1,\ldots,b_d \in \module{M}$, that:
\begin{equation*}
\begin{split}
\Lip_\A\left(\Re\inner{\begin{pmatrix} a_1 \\ \vdots \\ a_d \end{pmatrix}}{\begin{pmatrix} b_1 \\ \vdots \\ b_d \end{pmatrix}}{\module{M}}\right) &= \Lip_\A\left(\Re\left(\sum_{j=1}^d a_j b_j\right) \right) \\
&\leq \mathrm{M}\left(\sum_{j=1}^d a_j b_j \right) \\
&\leq 8 \sum_{j=1}^d F(\|a_j\|_\A,\|b_j\|_\A,\mathrm{M}(a_j),\mathrm{M}(b_j)) \\
&\leq 8 d F\left( \left\|\begin{pmatrix} a_1 \\ \vdots \\ a_d \end{pmatrix}\right\|_{\module{M}}, \left\|\begin{pmatrix} b_1 \\ \vdots \\ b_d \end{pmatrix}\right\|_{\module{M}}, \Lip_{\A}^d\begin{pmatrix} a_1 \\ \vdots \\ a_d \end{pmatrix},  \Lip_{\A}^d\begin{pmatrix} b_1 \\ \vdots \\ b_d \end{pmatrix} \right) \\
&\leq d F\left( \CDN_\A^d\begin{pmatrix} a_1 \\ \vdots \\ a_d \end{pmatrix}, \CDN_\A^d\begin{pmatrix} b_1 \\ \vdots \\ b_d \end{pmatrix}, \CDN_\A^d\begin{pmatrix} a_1 \\ \vdots \\ a_d \end{pmatrix},\CDN_\A^d\begin{pmatrix} b_1 \\ \vdots \\ b_d \end{pmatrix}  \right) \\
&= H\left(\CDN_\A^d\begin{pmatrix} b_1 \\ \vdots \\ b_d \end{pmatrix},\CDN_\A^d\begin{pmatrix} b_1 \\ \vdots \\ b_d \end{pmatrix}\right)\text{,}
\end{split}
\end{equation*}
where $H(x,y) = 8 d F(x,y,x,y)$ for all $x,y \geq 0$.

If is immediate that $(F,G,H)$ is an admissible triplet. Thus Conditions (1), (3) and (4) of Definition (\ref{qvb-def}) are met.

Last, let $(a_1^n,\ldots,a_d^n) \in \module{M}^\N$ be a sequence such $\CDN_\A^d\begin{pmatrix} a_1^n \\ \vdots \\ a_d^n \end{pmatrix} \leq 1$ for all $n\in\N$.

Thus $(\Re a_1^n)_{n\in\N}$ lies in the compact $\{a\in\dom{\Lip_\A} : \Lip_\A(a)\leq 1, \|a\|_\A\leq 1\}$; we thus may extract a $\|\cdot\|_\A$-convergent subsequence $(\Re a_1^{f_1(n)})_{n\in\N}$ with limit $a_1 \in\sa{\A}$ such that $\Lip_\A(a_1)\leq 1$ and $\|a_1\|_\A\leq 1$ (we used the fact that $\Lip_\A$ is lower semicontinuous with respect to $\|\cdot\|_\A$). For the same reason, we can then extract convergent subsequences $(\Re a_2^{f_1\circ f_2(n)})_{n\in\N}$ of $(\Re a_2^{f_1(n)})_{n\in\N}$ with limit $\Re a_2\in\sa{\A}$, \ldots, $(\Re a_d^{f_1\circ f_2 \circ \cdots \circ f_d(n)})_{n\in\N}$ from $(\Re a_d^{f_1\circ f_2 \circ\cdots \circ f_{d-1}(n)})_{n\in\N}$ with limit $a_d\in\sa{\A}$; moreover $\max\{\Lip_\A(a_j),\|a_j\|_\A : j\in\{1,\ldots,n\} \} \leq 1$. 

If $g : n\in\N\mapsto f_1\circ f_2 \circ \cdots \circ f_d(n)$, then $\begin{pmatrix} \Re a_1^{g(n)} \\ \vdots \\ \Re a_d^{g(n)} \end{pmatrix}_{n\in\N}$ converges to $\begin{pmatrix} a_1 \\ \vdots \\ a_d \end{pmatrix}$, where by construction $\CDN_\A^d\begin{pmatrix} a_1 \\ \vdots \\ a_d \end{pmatrix}\leq 1$.

Just as easily, we can prove that there exists $b_1,\ldots,b_d  \in \sa{\A}$ and a function $h : \N\rightarrow\N$ strictly increasing such that:
\begin{equation*}
\lim_{n\rightarrow\infty} \begin{pmatrix} \Im a_1^{g(h(n))} \\ \vdots \\ \Im a_d^{g(h(n))} \end{pmatrix} = \begin{pmatrix} b_1 \\ \vdots \\ b_d \end{pmatrix}
\end{equation*}
and therefore:
\begin{equation*}
\lim_{n\rightarrow\infty} \begin{pmatrix} a_1^{g(h(n))} \\ \vdots \\ a_d^{g(h(n))} \end{pmatrix} = \begin{pmatrix} a_1 + i b_1 \\ \vdots \\ a_d + i b_d \end{pmatrix}\text{.}
\end{equation*}
By construction, $\CDN\begin{pmatrix} a_1 + i b_1 \\ \vdots \\ a_d + i b_d \end{pmatrix} \leq 1$.

Thus $(\module{M},\inner{\cdot}{\cdot}{\module{M}},\CDN_\A^d,\A,\Lip_A)$ is a {\QVB{F}{G}{H}}.
\end{example}

We make a few additional comments on our Definition (\ref{qvb-def}). Condition (3) will be used to prove that distance zero for the modular propinquity will in particular give rise to a module morphism. Condition (4) connects the metric structures of the D-norm and the L-seminorms. Condition (1) is just a normalization condition: indeed, the unit sphere for a D-norm is compact for the $C^\ast$-Hilbert norm and thus the norm attains a maximum on it; thus the $C^\ast$-Hilbert norm is always less than some constant multiple of the D-norm, thanks to Condition (2). Last, Condition (2) provides us with the compact set we will use to start the construction of a Gromov-Hausdorff distance for modules.

A consequence of Condition (4) is an additional implicit structure in {\gQVB s}:

\begin{remark}
Definition (\ref{qvb-def}) implies that, given a {\gQVB} $(\module{M},\inner{\cdot}{\cdot}{\module{M}},\CDN_{\module{M}},\A,\Lip_\A)$, the space $\dom{\CDN_{\module{M}}}$ is a left module over the Jordan-Lie algebra $\dom{\Lip_\A}$, and that the inner product $\inner{\cdot}{\cdot}{\module{M}}$ restricts to an $\dom{\Lip_\A}$-valued inner product on $\dom{\CDN_{\module{M}}}$.
\end{remark}

Condition (1) (as well as Condition (2)) implies that a D-norm is lower semi-continuous with respect to the $C^\ast$-Hilbert norm, which implies:

\begin{remark}
Let $(\module{M},\inner{\cdot}{\cdot}{\module{M}},\CDN_{\module{M}},\A,\Lip_\A)$ be a {\gQVB}. The $\C$-vector space $(\dom{\CDN_{\module{M}}},\CDN_{\module{M}})$ is a Banach space. Indeed, $\CDN_{\module{M}}$ is lower semi-continuous with respect to $\|\cdot\|_\module{M}$ since its unit ball is compact, hence closed, for $\|\cdot\|_{\module{M}}$; moreover $\CDN_{\module{M}}$ dominates $\|\cdot\|_{\module{M}}$.

The proof that the lower semi-continuity of $\CDN_{\module{M}}$ implies that $(\module{M},\CDN_{\module{M}})$ is a Banach space is identical to the proof that $(\dom{\Lip_\A},\max\{\|\cdot\|_\A,\Lip_\A\})$ is a Banach space, as found in \cite{Latremoliere16b}.
\end{remark}

The category of {\gQVB s}, whose objects are introduced in Definition (\ref{qvb-def}), is constructed using the following natural notion of morphism.

\begin{definition}\label{qvb-morphism-def}
Let:
\begin{equation*}
\Omega_\A = (\module{M},\inner{\cdot}{\cdot}{\module{M}}, \CDN_{\module{M}},\A,\Lip_\A)\text{ and }\Omega_\B = (\module{N},\inner{\cdot}{\cdot}{\module{N}}, \CDN_{\module{N}},\B,\Lip_\B)
\end{equation*}
be two {\gQVB s}. A \emph{morphism} $(\Theta,\theta)$ from $\Omega_\A$ to $\Omega_\B$ is a unital module morphism $(\Theta,\theta)$ such that:
\begin{enumerate}
\item $\theta$ is continuous from $(\dom{\Lip_\A},\Lip_\A)$ to $(\dom{\Lip_\B},\Lip_\B)$, i.e. there exists $C > 0$ such that $\Lip_\B\circ\theta \leq C\Lip_\A$ on $\dom{\Lip_\A}$,
\item $\Theta$ is continuous from $(\dom{\CDN_{\module{M}}},\CDN_{\module{M}})$ to $(\dom{\CDN_{\module{N}}},\CDN_{\module{N}})$, i.e. there exists $M > 0$ such that for all $\omega\in\module{M}$ we have $\CDN_{\module{N}}(\Theta(\omega)) \leq M \CDN_{\module{M}}(\omega)$.
\end{enumerate}
Such a \emph{morphisms} is an \emph{epimorphism} when both $\theta$ and $\Theta$ are surjective, and a monomorphism when both $\theta$ and $\Theta$ are both monomorphisms.

A \emph{isomorphism} is thus given by a morphism $(\Theta,\theta)$ where $\theta$ is a *-isomorphism, $\Theta$ is a bijection and $(\Theta^{-1},\theta^{-1})$ is a morphism from $\Omega_\B$ onto $\Omega_\A$.
\end{definition}

As is customary with categories of metric spaces, there are several appropriate of isomorphisms. Inside the general category described via Definitions (\ref{qvb-def}) and (\ref{qvb-morphism-def}), an isomorphism would be a generalization of a bi-Lipschitz map. For our purpose, a stronger notion of isomorphism will be employed, akin to a notion of isometry. We first define the notion of a full quantum isometry, which is rather self-evident:

\begin{definition}\label{quantum-modular-isometry-def}
Let:
\begin{equation*}
\Omega_\A = (\module{M},\inner{\cdot}{\cdot}{\module{M}}, \CDN_{\module{M}},\A,\Lip_\A)\text{ and }\Omega_\B = (\module{N},\inner{\cdot}{\cdot}{\module{N}}, \CDN_{\module{N}},\B,\Lip_\B)
\end{equation*}
be two {\gQVB s}. A \emph{full quantum isometry} $(\Theta,\theta)$ from $\Omega_\A$ to $\Omega_\B$ is a {\gQVB} isomorphism from $\Omega_\A$ to $\Omega_\B$ such that:
\begin{enumerate}
\item $\Lip_\B\circ\theta = \Lip_\A$ on $\dom{\Lip_\A}$,
\item $\CDN_{\module{N}}\circ\Theta = \CDN_{\module{M}}$ on $\dom{\CDN_{\module{M}}}$.
\end{enumerate}
\end{definition}

It is easy to check that the category of {\gQVB s} with quantum isometries as morphisms is a subcategory of the category whose morphisms are given by Definition (\ref{qvb-morphism-def}).

The more delicate question for us regards the notion of a quantum isometry for {\gQVB s}. As we discussed when introducing quantum isometries for {\gQqcms s}, isometries between L-seminorms rely on working with self-adjoint elements only. We did note in Remark (\ref{extension-of-Leibniz-rmk}) that we can extend L-seminorms to bypass this issue, though the situation for module requires some idea.

We propose to circumvent this issue by bringing the problem down to the base quantum spaces. Indeed, we take advantage of the observation that the inner quasi-Leibniz inequality implies that for any {\gQVB} $(\module{M},\inner{\cdot}{\cdot}{\module{M}},\CDN,\A,\Lip)$ and any $\omega,\eta\in\module{M}$, the elements $\Re\inner{\omega}{\eta}{\module{M}}$ and $\Im\inner{\omega}{\eta}{\module{M}}$ lies in the domain $\dom{\Lip}$ of the L-seminorm $\Lip$. Thus, the tools developed for the Gromov-Hausdorff propinquity can be brought to bare to the study of {\gQVB s}.

Now, while the Cauchy-Schwarz inequality for the $C^\ast$-Hilbert norm of a left pre-Hilbert module $(\module{M},\inner{\cdot}{\cdot}{\module{M}})$ implies that:
\begin{equation}\label{module-norm-CS-eq}
\|\omega\|_{\module{M}} = \sup\{\left|\inner{\omega}{\eta}{\module{M}}\right| : \eta\in\module{M}, \|\eta\|_{\module{M}} \leq 1\}\text{,}
\end{equation}
we want to work only with elements bounded for the D-norms. We now study the metric on the domain of $D$-norm resulting from working with Expression (\ref{module-norm-CS-eq}), with the closed unit ball for the $C^\ast$-Hilbert norm replaced by the closed unit ball for the D-norm. We begin with a notation we shall use throughout this paper.

\begin{notation}
Let $\Omega = \left(\module{M},\inner{\cdot}{\cdot}{\module{M}}, \CDN_{\module{M}}, \A, \Lip_\A\right)$ be a {\QVB{F}{G}{H}} for some admissible $(F,G,H)$. The closed ball of center $0$ and radius $r \geq 0$ in $(\dom{\CDN_{\module{M}}}, \CDN_{\module{M}})$ is denoted by:
\begin{equation*}
\modlip{r}{\Omega} = \left\{ \omega \in \dom{\CDN_{\module{M}}} : \CDN_{\module{M}}(\omega) \leq r \right\}\text{.}
\end{equation*}
By Definition (\ref{qvb-def}), the set $\modlip{r}{\Omega}$ is norm compact.
\end{notation}

We call the initial topology for a set $\mathcal{F}$ of functions defined on a given set $E$ and valued in a topological space, the smallest topology on $E$ for which all the members of $\mathcal{F}$ are continuous.

\begin{definition}
Let $(\module{M},\inner{\cdot}{\cdot}{\module{M}})$ be a left Hilbert $\A$-module. The \emph{$A$-weak topology} on $\module{M}$ is the initial topology for the set of maps:
\begin{equation*}
\left\{ \inner{\cdot}{\omega}{\module{M}} : \omega \in \module{M} \right\}\text{.}
\end{equation*}
\end{definition}

Thus, a net $(\omega_j)_{j\in J}$ converges to some $\omega$ in a left Hilbert $\A$-module $(\module{M},\inner{\cdot}{\cdot}{\module{M}})$ when for all $\eta\in\module{M}$, the net $\left(\inner{\omega_j}{\eta}{\module{M}}\right)_{j\in J}$ converges to $\inner{\omega}{\eta}{\module{M}}$ in $\A$.

Thus, in particular, for any {\gQVB} $\Omega$, the set $\modlip{1}{\Omega}$ is now endowed with three topologies: the norm topology from the D-norm, the norm topology from the $C^\ast$-Hilbert norm inherited from the inner product, and the $\A$-weak topology where $\A$ is the base space of $\Omega$. Definition (\ref{qvb-def}) assures us that the latter two agree on $\modlip{1}{\Omega}$.

\begin{lemma}\label{Aweak-norm-agree-lemma}
Let $\Omega = (\module{M},\inner{\cdot}{\cdot}{\module{M}},\CDN_{\module{M}},\A,\Lip)$ be a {\gQVB}. The $\A$-weak topology and the norm topology, induced by $\inner{\cdot}{\cdot}{\module{M}}$, agree on $\modlip{r}{\Omega}$ for all $r \geq 0$.
\end{lemma}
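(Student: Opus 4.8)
The plan is to show the two topologies agree on $\modlip{r}{\Omega}$ by exploiting that this set is norm-compact (Definition (\ref{qvb-def}), Condition (2)), so that the identity map between two comparable Hausdorff topologies on a compact set is automatically a homeomorphism. First I would dispense with the trivial direction: the $\A$-weak topology is coarser than the norm topology on all of $\module{M}$, because each map $\inner{\cdot}{\omega}{\module{M}}$ is norm-continuous (indeed $1$-Lipschitz after scaling, by Proposition (\ref{Hilbert-norm-prop})), so the identity from $(\modlip{r}{\Omega},\|\cdot\|_{\module{M}})$ to $(\modlip{r}{\Omega},\A\text{-weak})$ is continuous. Both topologies are Hausdorff on $\module{M}$ (the norm one obviously; the $\A$-weak one because property (4) of a left pre-Hilbert module says $\inner{\omega}{\omega}{\module{M}} = 0$ forces $\omega = 0$, so the separating family $\{\inner{\cdot}{\omega}{\module{M}}\}$ does separate points).

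The key step is then: $\modlip{r}{\Omega}$ is norm-compact, a continuous bijection from a compact space onto a Hausdorff space is a homeomorphism, hence the identity map is also a homeomorphism in the reverse direction, which is exactly the assertion that the two topologies coincide on $\modlip{r}{\Omega}$. I would state this as the core of the proof. For $r = 0$ the set is $\{0\}$ and there is nothing to prove, so one may assume $r > 0$; the argument is uniform in $r$ anyway.

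The one point that needs a sentence of care — and the closest thing to an obstacle — is verifying that the $\A$-weak topology restricted to $\modlip{r}{\Omega}$ is genuinely Hausdorff, which, as noted, follows from the definiteness of the inner product; and, more importantly, making sure we are not secretly using the equivalence we are trying to prove. We are not: compactness of $\modlip{r}{\Omega}$ for $\|\cdot\|_{\module{M}}$ is an axiom of a {\gQVB} (it is not derived from anything about the $\A$-weak topology), and the comparison of the two topologies is elementary. So the logical structure is clean. I expect no real difficulty, only the need to phrase the compactness-plus-comparability argument precisely.

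\begin{proof}
Fix $r \geq 0$. If $r = 0$ then $\modlip{r}{\Omega} = \{0\}$ and the statement is trivial, so assume $r > 0$.

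For each $\eta\in\module{M}$, the map $\omega\mapsto\inner{\omega}{\eta}{\module{M}}$ is continuous from $(\module{M},\|\cdot\|_{\module{M}})$ to $\A$, since by Proposition (\ref{Hilbert-norm-prop}) we have $\left|\inner{\omega}{\eta}{\module{M}} - \inner{\omega'}{\eta}{\module{M}}\right| = \left|\inner{\omega - \omega'}{\eta}{\module{M}}\right| \leq \|\omega - \omega'\|_{\module{M}} \|\eta\|_{\module{M}}$ for all $\omega,\omega'\in\module{M}$. Therefore the $\A$-weak topology, being the initial topology for the family $\left\{\inner{\cdot}{\eta}{\module{M}} : \eta\in\module{M}\right\}$, is coarser than the norm topology of $\|\cdot\|_{\module{M}}$ on $\module{M}$, and in particular on $\modlip{r}{\Omega}$.

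Moreover, the $\A$-weak topology is Hausdorff on $\module{M}$: if $\omega\neq\eta$ then $\omega - \eta \neq 0$, so $\inner{\omega - \eta}{\omega - \eta}{\module{M}} \neq 0$ by definiteness of the inner product, and hence $\inner{\omega}{\omega - \eta}{\module{M}} \neq \inner{\eta}{\omega - \eta}{\module{M}}$, so the separating family distinguishes $\omega$ from $\eta$.

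By Definition (\ref{qvb-def}), Condition (2), the set $\modlip{r}{\Omega}$ is compact for $\|\cdot\|_{\module{M}}$. The identity map from $\left(\modlip{r}{\Omega},\|\cdot\|_{\module{M}}\right)$ onto $\left(\modlip{r}{\Omega},\A\text{-weak}\right)$ is thus a continuous bijection from a compact space onto a Hausdorff space, and therefore a homeomorphism. Consequently the two topologies agree on $\modlip{r}{\Omega}$, as claimed.
\end{proof}
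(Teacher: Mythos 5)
Your proof is correct and follows exactly the paper's argument: the $\A$-weak topology is coarser than the norm topology yet Hausdorff, and since $\modlip{r}{\Omega}$ is norm-compact, the two topologies must coincide there. You have merely spelled out the details (norm-continuity of $\inner{\cdot}{\eta}{\module{M}}$ and the Hausdorff property via definiteness of the inner product) that the paper leaves implicit.
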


\begin{proof}
The $\A$-weak topology is weaker than the topology induced by $\|\cdot\|_{\module{M}}$, yet Hausdorff. On the other hand, $\modlip{r}{\Omega}$ is compact for $\|\cdot\|_{\module{M}}$. Therefore, the $\A$-weak topology and the topology from $\|\cdot\|_{\module{M}}$ agree on $\modlip{r}{\Omega}$.
\end{proof}

Our reason to introduce the $\A$-weak topology is that it is naturally metrized by a metric defined from $\modlip{1}{\Omega}$. 

\begin{definition}\label{modular-Kantorovich-def}
Let $\Omega = (\module{M},\inner{\cdot}{\cdot}{\module{M}},\CDN,\A,\Lip_\A)$ be a {\gQVB}. The \emph{modular \MongeKant} $\KantorovichMod{\Omega}$ associated with $\Omega$ is the metric $\KantorovichMod{\Omega}$, defined for $\omega,\eta\in\module{M}$ by:
\begin{equation*}
\KantorovichMod{\Omega}(\omega,\eta) = \sup\left\{ \|\inner{\omega}{\xi}{\module{M}} - \inner{\eta}{\xi}{\module{M}}\|_\A : \xi \in \module{M}, \CDN(\xi) \leq 1  \right\}\text{.}
\end{equation*}
\end{definition}
We note that the {\MongeKant} on the module of a {\gQVB} is indeed a metric since the $\C$-linear span of the closed unit ball for the D-norm is dense in the module itself by assumption. We now prove the main property of this metric for us:

\begin{proposition}\label{modular-Kantorovich-prop}
Let $\Omega = (\module{M},\inner{\cdot}{\cdot}{\module{M}},\CDN,\A,\Lip_\A)$ be a {\gQVB}. For all $r > 0$, the \emph{\MongeKant} $\KantorovichMod{\Omega}$ associated with $\Omega$ metrizes the $\A$-weak* topology on $\modlip{r}{\Omega}$, and therefore it metrizes the norm topology on $\modlip{r}{\Omega}$.
\end{proposition}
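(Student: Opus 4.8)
The plan is to identify the topology induced by $\KantorovichMod{\Omega}$ on $\modlip{r}{\Omega}$ with the $\A$-weak topology by showing that the identity map between the two is a homeomorphism, and then to invoke Lemma \ref{Aweak-norm-agree-lemma} to pass from the $\A$-weak topology to the norm topology. The core of the argument is the standard mechanism behind Monge--Kantorovich type dualities: a compactness hypothesis --- here, Condition (2) of Definition \ref{qvb-def}, which makes $\modlip{1}{\Omega}$ norm-compact --- upgrades pointwise convergence of the pairings $\omega\mapsto\inner{\omega}{\xi}{\module{M}}$ to convergence uniform over $\xi\in\modlip{1}{\Omega}$.

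I would first record that $\KantorovichMod{\Omega}$ is a finite metric on $\modlip{r}{\Omega}$: by Condition (1) of Definition \ref{qvb-def} every $\xi\in\modlip{1}{\Omega}$ satisfies $\norm{\xi}{\module{M}}\leq 1$, and any $\omega,\eta\in\modlip{r}{\Omega}$ satisfy $\norm{\omega-\eta}{\module{M}}\leq 2r$, so Proposition \ref{Hilbert-norm-prop} gives $\norm{\inner{\omega-\eta}{\xi}{\module{M}}}{\A}\leq 2r$ and hence $\KantorovichMod{\Omega}(\omega,\eta)\leq 2r<\infty$; in particular $(\modlip{r}{\Omega},\KantorovichMod{\Omega})$ is Hausdorff. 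On the other side, $\modlip{r}{\Omega}$ is norm-compact by Definition \ref{qvb-def}, and the $\A$-weak topology agrees with the norm topology on it by Lemma \ref{Aweak-norm-agree-lemma}, so $\modlip{r}{\Omega}$ is compact for the $\A$-weak topology. Since a continuous bijection from a compact space onto a Hausdorff space is a homeomorphism, it then suffices to show that the identity from $(\modlip{r}{\Omega},\A\text{-weak})$ to $(\modlip{r}{\Omega},\KantorovichMod{\Omega})$ is continuous, i.e.\ that $\KantorovichMod{\Omega}(\omega_j,\omega)\to 0$ for every net $(\omega_j)_{j\in J}$ in $\modlip{r}{\Omega}$ converging $\A$-weakly to $\omega\in\modlip{r}{\Omega}$.

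This last point is the one that requires work, and here is how I would argue it. Given $\varepsilon>0$, norm-compactness of $\modlip{1}{\Omega}$ furnishes a finite $\varepsilon$-net $\xi_1,\dots,\xi_n\in\modlip{1}{\Omega}$ for $\norm{\cdot}{\module{M}}$; by $\A$-weak convergence and finiteness of this net, there is an index $j_0$ past which $\norm{\inner{\omega_j-\omega}{\xi_k}{\module{M}}}{\A}<\varepsilon$ for all $k\in\{1,\dots,n\}$ at once. For an arbitrary $\xi\in\modlip{1}{\Omega}$ one then picks $k$ with $\norm{\xi-\xi_k}{\module{M}}<\varepsilon$ and, using Proposition \ref{Hilbert-norm-prop} together with $\norm{\omega_j-\omega}{\module{M}}\leq 2r$, bounds
\[
\norm{\inner{\omega_j-\omega}{\xi}{\module{M}}}{\A}\leq\norm{\inner{\omega_j-\omega}{\xi_k}{\module{M}}}{\A}+\norm{\omega_j-\omega}{\module{M}}\,\norm{\xi-\xi_k}{\module{M}}<(1+2r)\varepsilon
\]
for all $j\geq j_0$; taking the supremum over $\xi\in\modlip{1}{\Omega}$ gives $\KantorovichMod{\Omega}(\omega_j,\omega)\leq(1+2r)\varepsilon$ eventually, whence $\KantorovichMod{\Omega}(\omega_j,\omega)\to 0$. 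Combined with the compact-to-Hausdorff principle, this shows $\KantorovichMod{\Omega}$ metrizes the $\A$-weak topology on $\modlip{r}{\Omega}$, and a final application of Lemma \ref{Aweak-norm-agree-lemma} gives that it metrizes the norm topology.

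The main obstacle is precisely this uniformity estimate. The one point to be careful about is that we should argue via the net characterization of continuity --- and with a finite $\varepsilon$-net, not a single approximant --- because the maps $\inner{\cdot}{\eta}{\module{M}}$ defining the $\A$-weak topology range over all $\eta\in\module{M}$, not merely over $\dom{\CDN}$, and so need not be individually $\KantorovichMod{\Omega}$-continuous; it is the compact-to-Hausdorff principle, rather than a direct comparison of the two topologies, that circumvents this.
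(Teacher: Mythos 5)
Your proposal is correct and its central estimate --- extracting a finite $\varepsilon$-net from the norm-compact set $\modlip{1}{\Omega}$ and using the Cauchy--Schwarz inequality of Proposition (\ref{Hilbert-norm-prop}) to upgrade $\A$-weak convergence to convergence uniform over the unit ball of the D-norm --- is exactly the paper's argument for the forward direction. The only difference is in the converse: the paper verifies directly that $\KantorovichMod{\Omega}$-convergence implies $\A$-weak convergence (using that the span of $\modlip{1}{\Omega}$ is dense in $\module{M}$, the same fact that makes $\KantorovichMod{\Omega}$ a genuine metric and hence underwrites your Hausdorff claim), whereas you dispatch it via the continuous-bijection-from-compact-to-Hausdorff principle; both are valid and rest on the same ingredients.
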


\begin{proof}
Let $(\omega_j)_{j \in J}$ be a net in $\modlip{r}{\Omega}$, indexed by $(J,\succ)$ and converging to $\omega$ in the $\A$-weak topology. Let $\varepsilon > 0$. Since $\modlip{1}{\Omega}$ is compact by assumption, there exists a finite subset $F\subseteq \modlip{1}{\Omega}$ which is $\frac{\varepsilon}{3}$-dense in $\modlip{r}{\Omega}$.

There exists $j_0 \in J$ such that for all $j\succ j_0$ we have $\|\inner{\omega_j}{\xi}{\module{M}} - \inner{\omega}{\xi}{\module{M}}\|_\A \leq \frac{\varepsilon}{3}$ for all $\xi\in F$ since $F$ is finite and $J$ is directed. It follows that $\|\inner{\omega_j}{\xi}{\module{M}} - \inner{\omega}{\xi}{\module{M}}\|_\A \leq \varepsilon$. Thus $\KantorovichMod{\Omega}(\omega_j,\omega) \leq \varepsilon$. 

Conversely, if $(\omega_n)_{n\in\N}$ is a sequence in $\modlip{r}{\Omega}$ converging to some $\omega \in \module{M}$ for $\KantorovichMod{\Omega}$, then since $\modlip{1}{\Omega}$ is total, we conclude that $\left(\inner{\omega_n}{\xi}{\module{M}}\right)_{n\in\N}$ converges to $\inner{\omega}{\xi}{\module{M}}$ for all $\xi \in \module{M}$. Thus $\omega$ is the $\A$-weak limit of $(\omega_n)_{n\in\N}$.

Now, since the $\A$-weak topology and the norm topology agree on $\modlip{r}{\Omega}$, and $\modlip{r}{\Omega}$ is compact in norm by assumption, we conclude that $\omega \in \modlip{r}{\Omega}$ as well. This concludes our proof.
\end{proof}

We conclude this section with an observation. Let $\Omega = (\module{M},\inner{\cdot}{\cdot}{\module{M}},\CDN,\A,\Lip)$ be a {\gQVB}. Let $\B = \{ a + i b : a,b \in \dom{\Lip}\}$ endowed with the norm $\|\cdot\|_\B = \max\{\|\cdot\|_\A, \Lip\circ\Re, \Lip\circ\Im\}$. The norm $\|\cdot\|_\B$ is lower semicontinuous with respect to $\|\cdot\|_\A$ and thus one can prove that $(\B,\|\cdot\|_\B)$ is a Banach algebra (the fact that it is a subalgebra of $\A$ follows from the fact $\dom{\Lip}$ is a Jordan-Lie subalgebra of $\sa{\A}$). We note that $\dom{\CDN}$ is a $\B$-left module thanks to Definition (\ref{qvb-def}).

We also noted that $(\dom{\CDN},\CDN)$ is a Banach space as well. Let us call a \emph{current} of $\Omega$ a continuous $\B$-module map from $(\dom{\CDN},\CDN)$  to $(\B,\|\cdot\|_\B)$. Let $\mathscr{C}(\Omega)$ be the space of all currents of $\Omega$ and let $\mathscr{C}_r(\Omega)$ be the closed ball of radius $r > 0$ centered at $0$ for the operator norm on $\mathscr{C}(\Omega)$.

Let us call the locally convex topology induced by the seminorms:
\begin{equation*}
T \in \mathscr{C}(\Omega) \mapsto \|T(\omega)\|_\A
\end{equation*}
on $\mathscr{C}(\Omega)$ for all $\omega\in\dom{\CDN}$ the \emph{$\A$-weak* topology}. 

Let $\mathfrak{L} = \{ a\in\B : \|a\|_\B \leq 1\}$. By assumption on the L-seminorm $\Lip$, the set $\mathfrak{L}$ is compact in $(\A,\|\cdot\|_\A)$. If we let $\Xi = \prod_{\omega\in\dom{\CDN}} r\CDN(\omega) \mathfrak{L}$, then $\Xi$ is compact in the product topology by the Tychonoff theorem.

By construction, if $T \in \mathscr{C}_r(\Omega)$ then $\Theta(T) = (T(\omega))_{\omega\in\dom{\CDN}} \in\Xi$. It is straightforward to check that $\Theta$ is a continuous injection from the $\A$-weak* topology to the product topology, whose range is given by:
\begin{equation*}
\Xi^r = \bigcap_{b\in\B, \omega,\eta\in \dom{\CDN}} \left(\pi_{b\omega + \eta} - b\pi_{\omega} - \pi_\eta\right)^{-1}(\{0\})\text{,}
\end{equation*}
where $\pi_\omega : (b_\eta)_{\eta\in\dom{\CDN}} \in \Xi \mapsto b_\omega$ for all $\omega \in \dom{\CDN}$. Of course, by definition of the product topology on $\Xi$, the maps $\pi_\omega$ are continuous for all $\omega\in\dom{\CDN}$ and thus $\Xi^r$ is closed in $\Xi$, hence compact. It is easy to see that $\Theta$ is an homeomorphism and thus $\mathscr{C}_r(\Omega)$ is actually compact for the $\A$-weak* topology. These facts did not involve the fact that $\modlip{1}{\Omega}$ is compact for $\|\cdot\|_{\module{M}}$.

Suppose now that $\mathrm{S}$ is a seminorm on $\module{M}$ satisfying the inner quasi-Leibniz inequality:
\begin{equation*}
\Lip(\inner{\omega}{\eta}{\module{M}}) \leq H(\mathrm{S}(\omega),\mathrm{S}(\eta))
\end{equation*}
and $\mathrm{S} \geq \|\cdot\|_{\module{M}}$. The latter equation implies that the closed unit ball for $\mathrm{S}$ is also closed in the norm $\|\cdot\|_{\module{M}}$. 

The inner quasi-Leibniz inequality implies in turn that if $\mathrm{S}(\omega)\leq 1$ for $\omega\in\module{M}$ then $\inner{\cdot}{\omega}{\module{M}}$ is a $H(1,1)$ current, and thus belongs to some compact set for the $\A$-weak* topology. It is easy to check that the map $\omega\in\module{M} \mapsto \inner{\cdot}{\omega}{\module{M}}$ is an homeomorphism from $\module{M}$ with the $\A$-weak topology to its range, with the $\A$-weak* topology. Thus we conclude that $\{\omega\in\module{M} : \mathrm{S}(\omega)\leq 1\}$ is totally bounded for the $\A$-weak topology.

This does not however make $\mathrm{S}$ a D-norm, even if it satisfies some form of modular quasi-Leibniz inequality. Indeed, while norm closed, it is unclear whether the closed unit ball of $\mathrm{S}$ is also $\A$-weak closed. Moreover, even it is was, we could not deduce that the closed unit ball for $\mathrm{S}$ is norm compact, rather than weakly compact. Thus, it does not appear to be sufficient to assume the inner and modular quasi-Leibniz inequalities and the dominance over the $C^\ast$-Hilbert norm to construct a D-norm, and the compactness of the closed ball of a D-norm requires some additional work.

We now turn to the construction of the modular propinquity. The basic ingredient is a notion of a modular bridge which extends the notion of a bridge used as a noncommutative encoding of the idea of an isometric embedding in the construction of the quantum Gromov-Hausdorff propinquity. 

\section{Modular Bridges}

Bridges between C*-algebras provide a mean to define a particular type of isometric embedding for a any pair of {\gQqcms s}, from which the quantum Gromov-Hausdorff propinquity is built in \cite{Latremoliere13}. An advantage of bridges is that the quantum propinquity is defined directly from numerical quantities defined using a bridge rather than through the associated isometric embeddings, and thus they are natural to use a foundation for our modular propinquity --- bypassing the need for a notion of isometric embeddings of {\gQVB s}. We present our idea on how to extend the notion of bridges to {\gQVB s} in this section. While our presentation will at times refer to \cite{Latremoliere13}, we will strive to make it reasonably self-contained.

Bridges involve an element of a unital C*-algebra called a pivot, which allows us to select a particular set of states. We require that this set is not empty. The following definition extends the notion of a state defined on some self-adjoint element \cite[Exercise 4.6.16]{Kadison91},\cite{Kadison97} to general elements.

\begin{definition}[{\cite[Definition 3.1]{Latremoliere13}}]\label{level-set-def}
The \emph{$1$-level set} $\StateSpace_1(\D|x)$ of an element $x \in \sa{\D}$ of a unital C*-algebra $\D$ is:
\begin{equation*}
\StateSpace_1(\D|x) = \left\{ \varphi \in \StateSpace(\D) \middle\vert 
\begin{array}{l}
\varphi\left((1-x)^\ast (1-x)\right) = 0 \\ 
\varphi\left((1-x)(1-x)^\ast\right) = 0
\end{array}\right\}\text{.}
\end{equation*}
\end{definition}

We make the following remark:
\begin{remark}
If $x\in \D$ for some unital C*-algebra $\D$, and if $\StateSpace_1(\D|x) \not= \emptyset$, then $\|x\|_\D \geq 1$. Indeed, if $\varphi \in \StateSpace(\D|x)$ then $\varphi(x) = \varphi(x^\ast) = 1$. Thus $\|\Re(x)\|_\D \geq 1$ and thus $\|x\|_\D \geq \|\Re x\|_\D \geq 1$.

In particular, if $\|x\|_\D\leq 1$ and $\StateSpace(\D|x) \not= \emptyset$ then $\|x\|_\D = 1$.
\end{remark}

We will use Definition (\ref{level-set-def}) via the following lemma:

\begin{lemma}[{\cite[Lemma 3.4]{Latremoliere13}}]
If $\D$ is a unital C*-algebra and $x\in \D$, then:
\begin{equation*}
\begin{split}
\StateSpace(\D|x) &= \left\{ \varphi \in \StateSpace(\D) : \forall d \in \D \quad \varphi(d x) = \varphi(x d) = \varphi(d) \right\} \\
&= \left\{ \varphi \in \StateSpace(\D) : \forall d \in \D \quad \varphi(d x^\ast) = \varphi(x^\ast d) = \varphi(d) \right\} \text{.}
\end{split}
\end{equation*}
\end{lemma}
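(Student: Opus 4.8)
The statement is the standard ``$1$-level set'' characterization from \cite{Latremoliere13}: a state $\varphi$ annihilates $(1-x)^\ast(1-x)$ and $(1-x)(1-x)^\ast$ if and only if $x$ (equivalently $x^\ast$) acts as the identity inside $\varphi$ in the sense $\varphi(dx)=\varphi(xd)=\varphi(d)$ for all $d$. The natural tool is the Cauchy--Schwarz inequality for the positive sesquilinear form $(d,e)\mapsto\varphi(e^\ast d)$ associated with a state $\varphi$, together with the fact that a positive linear functional that vanishes on a positive element $p$ also vanishes on everything of the form $dp$ and $pd$ (because $|\varphi(dp)|^2\leq\varphi(dd^\ast)\varphi(p^\ast p)=\varphi(dd^\ast)\varphi(p^2)$ when $p\geq 0$, and symmetrically).

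\emph{Forward direction.} Assume $\varphi\in\StateSpace_1(\D|x)$, so $\varphi((1-x)^\ast(1-x))=0$. By Cauchy--Schwarz, for every $d\in\D$,
\begin{equation*}
|\varphi(d(1-x))|^2 = |\varphi(d\cdot(1-x))|^2 \leq \varphi(dd^\ast)\,\varphi\bigl((1-x)^\ast(1-x)\bigr) = 0,
\end{equation*}
hence $\varphi(d) = \varphi(dx)$ for all $d$. Applying the same argument to the other hypothesis $\varphi((1-x)(1-x)^\ast)=0$ gives $|\varphi((1-x)d)|^2\leq\varphi\bigl((1-x)(1-x)^\ast\bigr)\varphi(d^\ast d)=0$, so $\varphi(d)=\varphi(xd)$ for all $d$. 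This yields membership in the first set on the right. For the second set, note that $\varphi(dx)=\varphi(d)$ for all $d$ forces, upon taking adjoints inside $\varphi$ via $\varphi(y^\ast)=\overline{\varphi(y)}$, that $\varphi(x^\ast d^\ast)=\varphi(d^\ast)$ for all $d$, i.e.\ $\varphi(x^\ast e)=\varphi(e)$ for all $e$; similarly $\varphi(ex^\ast)=\varphi(e)$. So the $x$-condition and the $x^\ast$-condition are equivalent, and both sets on the right coincide with $\StateSpace_1(\D|x)$.

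\emph{Converse direction.} Suppose $\varphi(dx)=\varphi(xd)=\varphi(d)$ for all $d\in\D$. Taking $d=(1-x)^\ast$ and unwinding, $\varphi((1-x)^\ast(1-x)) = \varphi((1-x)^\ast) - \varphi((1-x)^\ast x) = \varphi((1-x)^\ast) - \varphi((1-x)^\ast) = 0$, using $\varphi((1-x)^\ast x)=\varphi((1-x)^\ast\cdot 1)=\varphi((1-x)^\ast)$ from the right-multiplication identity applied to $d=(1-x)^\ast$. Wait — more carefully: from $\varphi(dx)=\varphi(d)$ with $d=(1-x)^\ast$ we get $\varphi((1-x)^\ast x)=\varphi((1-x)^\ast)$, so $\varphi((1-x)^\ast(1-x))=\varphi((1-x)^\ast)-\varphi((1-x)^\ast x)=0$. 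Similarly, using $\varphi(xd)=\varphi(d)$ with $d=(1-x)^\ast$ gives $\varphi((1-x)(1-x)^\ast)=\varphi((1-x)^\ast)-\varphi(x(1-x)^\ast)=0$. Hence $\varphi\in\StateSpace_1(\D|x)$. The same computation starting from the $x^\ast$-identities handles the second set.

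\emph{Main obstacle.} There is no serious obstacle here; the only point requiring a little care is the bookkeeping of which Cauchy--Schwarz estimate (left versus right multiplication) uses which of the two hypotheses $\varphi((1-x)^\ast(1-x))=0$ and $\varphi((1-x)(1-x)^\ast)=0$, and then checking that the $x$-conditions and $x^\ast$-conditions are genuinely interchangeable via $\varphi(y^\ast)=\overline{\varphi(y)}$ and the fact that $\varphi(d)=\varphi(dx)$ for all $d$ implies $\varphi(d)$ is real-linearly determined so that conjugating and relabelling is legitimate. Since this is quoted verbatim as \cite[Lemma 3.4]{Latremoliere13}, the expected write-up is just to cite it or reproduce the few lines above.
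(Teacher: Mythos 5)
Your proof is correct and follows exactly the route the paper intends: the paper's own proof is the single line ``This follows from the Cauchy--Schwarz inequality,'' and your write-up simply spells out that Cauchy--Schwarz argument (forward direction) together with the elementary unwinding for the converse and the conjugation trick identifying the $x$- and $x^\ast$-conditions. No gaps.
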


\begin{proof}
This follows from the Cauchy-Schwarz inequality.
\end{proof}

We first extend the notion of a bridge between {\gQqcms s} to a modular bridge between {\gQVB s}. While a bridge between to {\gQqcms s} does not involve any metric information in its definition --- the quantum metric information is used to associate numerical quantities to the bridge --- a modular bridge between two {\gQVB s} involve the D-norms. Nonetheless, a modular bridge retains the simplicity of a bridge, as it only adds two families of elements from modules.

\begin{definition}\label{modular-bridge-def}
Let:
\begin{equation*}
\Omega_\A = (\module{M},\inner{\cdot}{\cdot}{\module{M}},\CDN_{\module{M}},\A,\Lip_\A)\text{ and }\Omega_\B = (\module{N}, \inner{\cdot}{\cdot}{\module{N}}, \CDN_{\module{N}}, \B, \Lip_\B)
\end{equation*}
be two {\gQVB s}.

An \emph{modular bridge}:
\begin{equation*}
\bridge{\gamma} = \left(\Omega_\A,\Omega_\B,\D,x,\pi_\A,\pi_\B,(\omega_j)_{j\in J},(\eta_j)_{j \in J} \right)
\end{equation*}
from $\Omega_\A$ to $\Omega_\B$ is given:
\begin{enumerate}
\item a unital C*-algebra $\D$,
\item an element $x \in \D$ with $\StateSpace_1(\D|x) \not= \emptyset$ and $\|x\|_\D = 1$,
\item $\pi_\A : \A \hookrightarrow \D$ and $\pi_\B : \B \hookrightarrow \D$ are two unital *-monomorphisms,
\item $J$ is some nonempty set,
\item $(\omega_j)_{j\in J}$ is a family of elements in $\modlip{1}{\Omega_\A}$, i.e. $\max\{\CDN_{\module{M}}(\omega_j) : j\in J \} \leq 1$,
\item $(\eta_j)_{j\in J}$ is a family of elements in $\modlip{1}{\Omega_\B}$, i.e. $\max\{\CDN_{\module{N}}(\eta_j) : j\in J \} \leq 1$.
\end{enumerate}
\end{definition}

\begin{notation}
Let $\gamma = (\Omega_\A,\Omega_\B,\D,x,\pi,\rho,(\omega_j)_{j\in J},(\eta_j)_{j\in J})$ be a modular bridge. We will use the following notations and terminology throughout this paper.
\begin{enumerate}
\item The \emph{domain} $\dom{\gamma}$ of $\gamma$ is $\Omega_\A$.
\item The \emph{co-domain} $\codom{\gamma}$ of $\gamma$ is $\Omega_\B$.
\item The element $x$ is called \emph{the pivot of $\gamma$} and is denoted by $\bridgepivot{\gamma}$.
\item The family $(\omega_j)_{j\in J}$ is the \emph{family of anchors} of $\gamma$, denoted by $\bridgeanchors{\gamma}$.
\item The family $(\eta_j)_{j\in J}$ is the \emph{family of co-anchors} of $\gamma$, denoted by $\bridgecoanchors{\gamma}$.
\end{enumerate}
\end{notation}

\begin{notation}
Let $\Omega_\A$ and $\Omega_\B$ be two {\gQVB s}. The set of all modular bridges from a $\Omega_\A$ to $\Omega_\B$ is denoted by $\bridgeset{\Omega_\A}{\Omega_\B}{}$.
\end{notation}

We note that since modular bridges are defined as tuples, the order of their component matter and thus they have a domain and a codomain, though in fact they are quite a symmetric concept. We will remark later that all the quantities defined from modular bridges are in fact symmetric in the domain and the codomain.

We also remark that we include the domain and the codomain of a modular bridge in its very definition. This choice will in fact simplify our notations later on, by removing the need to explicit the quantum metric data as in \cite{Latremoliere13} for various quantities associated to modular bridges.

Last, we note that unlike \cite[Definition 3.6]{Latremoliere13}, we require the pivot of modular bridges are of norm (at most) $1$. This requirement will be essential in the proof of Proposition (\ref{bridge-reach-prop}), which in turn underlies the construction of the modular propinquity.

\bigskip

The basic ingredients to compute the modular propinquity between modules require a lot of notations. We clarify our exposition by grouping some of these notations into a single set of hypothesis which we will use repeatedly in the following definitions and theorems.

\begin{hypothesis}\label{modular-bridge-hyp}
Let:
\begin{equation*}
\Omega_\A = (\module{M},\inner{\cdot}{\cdot}{\module{M}},\CDN_{\module{M}},\A,\Lip_\A)\text{ and }\Omega_\B = (\module{N},\inner{\cdot}{\cdot}{\module{N}},\CDN_{\module{N}},\B,\Lip_\B)
\end{equation*}
be two {\QVB{F}{G}{H}s}. 

Let $J$ be some nonempty set and let:
\begin{equation*}
\gamma = (\Omega_\A,\Omega_\B,\D,x,\pi,\rho,(\omega_j)_{j\in J},(\eta_j)_{j\in J})
\end{equation*}
be a modular bridge from $\Omega_\A$ to $\Omega_\B$.
\end{hypothesis}

The modular propinquity is computed from natural numerical quantities obtained from modular bridges and the quantum metric information encoded in {\gQVB s}. The first quantities we will use are in fact the numerical values introduced in \cite{Latremoliere13} for the canonical bridge from $\basespace{\dom{\gamma}}$ to $\basespace{\codom{\gamma}}$ associated to any modular bridge $\gamma$:

\begin{definition}\label{basic-bridge-def}
Let Hypothesis (\ref{modular-bridge-hyp}) be given. The \emph{basic bridge} $\gamma_\flat$ from $\A$ to $\B$ is given by:
\begin{equation*}
\gamma_\flat = ( \D, x, \pi_\A, \pi_\B ) \text{.}
\end{equation*}
\end{definition}

It is straightforward that Definition (\ref{basic-bridge-def}) gives a bridge in the sense of \cite[Definition 3.6]{Latremoliere13}. Thus, we can compute the reach and height of a basic bridge. We adjust our terminology to fit the setting of this paper in the following definitions of the height and basic reach of a modular bridge.

We start by recalling from \cite[Definition 3.10]{Latremoliere13} that a bridge defines an important seminorm:

\begin{definition}[{\cite[Definition 3.10]{Latremoliere13}}]\label{bridge-seminorm-def}
Let Hypothesis (\ref{modular-bridge-hyp}) be given. The \emph{bridge seminorm} $\bridgenorm{\gamma}{\cdot,\cdot}$ of the modular bridge $\gamma$ is the bridge seminorm of the basic bridge $\gamma_\flat$, i.e. the seminorm on $\A\oplus\B$ defined for all $a\in\A$ and $b\in\B$ by:
\begin{equation*}
\bridgenorm{\gamma}{a,b} = \left\| \pi_\A\left(a\right) x - x \pi_\B\left(b\right) \right\|_{\D}\text{.}
\end{equation*}
\end{definition}

The bridge seminorm allows us to quantify how far apart two {\gQqcms s} are from the perspective of a given bridge.

\begin{definition}[{\cite[Definition 3.14]{Latremoliere13}}]\label{basic-reach-def}
Let Hypothesis (\ref{modular-bridge-hyp}) be given. The \emph{basic reach} $\bridgebasicreach{\gamma}$ of the modular bridge $\gamma$ is the reach of the basic bridge $\gamma_\flat$ with respect to $(\Lip_\A,\Lip_\B)$, i.e.
\begin{equation*}
\max\left\{ 
\begin{array}{lll}
\sup_{a\in\sa{\A},\Lip_\A(a)\leq 1} &\inf_{b\in\sa{\B},\Lip_\B(b)\leq 1} &\bridgenorm{\gamma}{a,b}
\\
\sup_{b\in\sa{\B},\Lip_\B(b)\leq 1} &\inf_{a\in\sa{\A},\Lip_\A(a)\leq 1} &\bridgenorm{\gamma}{a,b}
\end{array}
\right\} \text{.}
\end{equation*}
\end{definition}

We provide an alternative expression for the basic reach of a modular bridge. Indeed, the basic reach is where we actually take the Hausdorff distance between {\gQqcms s} in an appropriate sense. We shall use the following notation for the Hausdorff distance on a pseudo-metric space.

\begin{notation}\label{Hausdorff-distance-def}
Let $X$ be a set and $\mathrm{d}$ be a pseudo-metric on $X$. For any nonempty subset $A\subseteq X$ and for any $x\in X$, we set:
\begin{equation*}
\mathrm{d}(x,A) = \inf\{\mathrm{d}(x,y) : y \in A\} \text{.}
\end{equation*}

For any two nonempty sets $A, B \subseteq X$, we then define, following \cite{Hausdorff}:
\begin{equation*}
\Haus{\mathrm{d}}(A,B) = \sup\left\{ \mathrm{d}(x,B), \mathrm{d}(y,A) : x\in A, y\in B \right\}\text{.}
\end{equation*}
\end{notation}

We thus observe, using the notations of Hypothesis (\ref{modular-bridge-hyp}) and of Definition (\ref{basic-bridge-def}), that:
\begin{multline}\label{basic-reach-eq1}
\bridgebasicreach{\gamma} = \Haus{\bridgenorm{\gamma}{\cdot,\cdot}}\left( \left\{ (a,0) \in \A\oplus\B : a\in\sa{\A},\Lip_\A(a)\leq 1 \right\}, \right. \\
\left. \left\{ (0,b) : b\in\sa{\B},\Lip_\B(b) \leq 1 \right\} \right)\text{.}
\end{multline}

The motivation to use the bridge seminorm, i.e. to involve the pivot, in Equation (\ref{basic-reach-eq1}), in place of the norm $\|\cdot\|_\D$ of $\D$, is that the pivot allows us to ``cut-off'' elements and thus may be used as a noncommutative substitute for truncation. This fact is explained and illustrated in \cite{Latremoliere13c}. 

The cost of replacing the norm of $\D$ by the bridge seminorm in Equation (\ref{basic-reach-eq1}) is measured by the next quantity associated with a modular bridge.

\begin{definition}[{\cite[Definition 3.16]{Latremoliere13}}]\label{bridge-height-def}
Let Hypothesis (\ref{modular-bridge-hyp}) be given. The \emph{height} $\bridgeheight{\gamma}$ of the modular bridge $\gamma$ is the height of the basic bridge $\gamma_\flat$ with respect to $(\Lip_\A,\Lip_\B)$, i.e.:
\begin{multline*}
\max\left\{ \Haus{\Kantorovich{\Lip_\A}}\left(\StateSpace(\A), \pi_\A^\ast\left( \StateSpace(\D|x)  \right) \right), \Haus{\Kantorovich{\Lip_\B}}\left(\StateSpace(\B), \pi_\B^\ast\left( \StateSpace(\D|x)  \right) \right) \right\}\text{.}
\end{multline*}
\end{definition}

The height of a bridge involves computation in each of the domain and co-domain of the bridge, but not in between them. Its definition is what justifies that pivot must have nonempty $1$-level set.

We now turn to the new quantities which we define for modular bridges, which naturally relate to the module structure. The first of these numerical values, called the reach of a modular bridge, is derived from a new natural seminorm defined by a modular bridge. We continue to choose our terminology from the lexical field of bridges.

\begin{definition}\label{deck-seminorm-def}
Let Hypothesis (\ref{modular-bridge-hyp}) be given. The \emph{deck seminorm} $\decknorm{\gamma}{\cdot,\cdot}$ is the seminorm on $\module{M}\oplus\module{N}$ defined for all $\omega\in\module{M}$ and $\eta\in\module{N}$ by:
\begin{multline*}
\decknorm{\gamma}{\omega,\eta} = \max \left\{ \bridgenorm{\gamma}{\inner{\omega}{\omega_k}{\module{M}}, \inner{\eta}{\eta_k}{\module{N}}}, \right. \\
\left. \bridgenorm{\gamma}{\inner{\omega_k}{\omega}{\module{M}}, \inner{\eta_k}{\eta}{\module{N}}} : k\in J \right\} \text{.}
\end{multline*}
\end{definition}

We continue using the notations of Definition (\ref{deck-seminorm-def}). We emphasize that when working with $\decknorm{\gamma}{\cdot,\cdot}$, we only require the structure of vector space on $\module{M}\oplus\module{N}$. We also record the deck seminorm does not involve any explicit need to embed $\module{M}$ and $\module{N}$ is some left Hilbert module. We rely instead on the well understood idea behind noncommutative isometric embeddings of quantum metric spaces and avoid the need to introduce a similar, non-obvious notion for modules. 

Furthermore, the deck seminorm is defined with a symmetry in mind, which will prove useful in the notion of the inverse of a bridge defined at the end of this section.

The reach of a modular bridge requires the definition of two additional quantities besides the basic reach. The first quantity, the modular reach, regards the pairing of anchors and co-anchors. We underscore that, in the construction of the deck seminorm, we match anchors and co-anchors with the same index in the modular bridge. Therefore, when constructing of a modular bridge, we must make an astute choice, with the idea that each pair of anchor and co-anchor are expected to be ``close'' in a sense quantified, ultimately, by the modular reach, via the deck seminorm.

\begin{definition}\label{modular-reach-def}
Let Hypothesis (\ref{modular-bridge-hyp}) be given. The \emph{modular reach} $\bridgemodularreach{\gamma}$ is the nonnegative number:
\begin{equation*}
\max\left\{ \decknorm{\gamma}{\omega_j, \eta_j} : j\in J \right\} \text{.}
\end{equation*}
\end{definition}

The last quantity needed to define the reach of a modular bridge is the imprint. The modular bridge only involves anchors and co-anchors, and the cost of this choice, rather than taking some Hausdorff distance between unit balls for D-norms, is measured by the following quantity:

\begin{definition}\label{bridge-imprint-def}
Let Hypothesis (\ref{modular-bridge-hyp}) be given. The \emph{imprint} $\bridgeimprint{\gamma}$ of the modular bridge $\bridge{\gamma}$ is:
\begin{multline*}
\max\left\{ \Haus{\KantorovichMod{\CDN_{\module{M}}}}\left( \left\{ \omega_j : j \in J \right\}, \modlip{1}{\Omega_\A}\right), \Haus{\KantorovichMod{\CDN_{\module{N}}}}\left( \left\{ \eta_j : j \in J \right\}, \modlip{1}{\Omega_\B}\right)\right\}\text{.}
\end{multline*}
\end{definition}

We now define the reach of a modular bridge as a synthetic valued which adequately combines the basic reach, the modular reach, and the imprint. Our definition is immediately followed with a proposition which, we hope, will clarify the meaning of the reach of a modular bridge --- and which will prove crucial for our work in allowing for the definition of target sets for modular bridges, to come shortly.

\begin{definition}\label{bridge-reach-def}
Let Hypothesis (\ref{modular-bridge-hyp}) be given. The \emph{reach} $\bridgereach{\gamma}$ of the modular bridge $\gamma$ is the nonnegative value:
\begin{equation*}
\bridgereach{\gamma} = \max\left\{ \bridgebasicreach{\gamma}, \bridgemodularreach{\gamma} + \bridgeimprint{\gamma} \right\}\text{.}
\end{equation*}
\end{definition}

\begin{proposition}\label{bridge-reach-prop}
Let Hypothesis (\ref{modular-bridge-hyp}) be given. If $\omega\in\module{M}$ with $\CDN_{\module{M}}(\omega)\leq 1$ and if $j \in J$ is chosen so that $\KantorovichMod{\Omega_\A}(\omega,\omega_j) \leq \bridgeimprint{\gamma}$, then $\decknorm{\gamma}{\omega,\eta_j} \leq \bridgereach{\gamma}$. The result also holds if $\Omega_\A$ and $\Omega_\B$ are switched.

We then have:
\begin{equation}\label{bridge-reach-eq1}
\max\left\{ \begin{array}{lll}
\sup_{a\in\sa{\A},\Lip_\A(a)\leq 1} &\inf_{b\in\sa{\B},\Lip_\B(b)\leq 1} &\bridgenorm{\gamma}{a,b}
\\
\sup_{b\in\sa{\B},\Lip_\B(b)\leq 1} &\inf_{a\in\sa{\A},\Lip_\A(a)\leq 1} &\bridgenorm{\gamma}{a,b}\\
\sup_{\omega\in\module{M},\CDN_{\module{M}}(\omega)\leq 1} &\inf_{\eta\in\module{N},\CDN_{\module{N}}(\eta)\leq 1} &\decknorm{\gamma}{\omega,\eta}   \\
\sup_{\eta\in\module{N},\CDN_{\module{N}}(\eta)\leq 1} &\inf_{\omega\in\module{M},\CDN_{\module{M}}(\omega)\leq 1} &\decknorm{\gamma}{\omega,\eta} 
\end{array}
\right\} \leq \bridgereach{\gamma} \text{.}
\end{equation}
\end{proposition}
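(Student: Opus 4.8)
The plan is to prove the first assertion directly, then derive Inequality (\ref{bridge-reach-eq1}) from it together with the definition of $\bridgereach{\gamma}$. For the first assertion, suppose $\omega \in \module{M}$ with $\CDN_{\module{M}}(\omega) \leq 1$ and $j \in J$ satisfies $\KantorovichMod{\Omega_\A}(\omega,\omega_j) \leq \bridgeimprint{\gamma}$. I want to bound $\decknorm{\gamma}{\omega,\eta_j}$, which by Definition (\ref{deck-seminorm-def}) is a maximum over $k \in J$ of bridge seminorms of pairs $\left(\inner{\omega}{\omega_k}{\module{M}}, \inner{\eta_j}{\eta_k}{\module{N}}\right)$ and $\left(\inner{\omega_k}{\omega}{\module{M}}, \inner{\eta_k}{\eta_j}{\module{N}}\right)$. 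The idea is to interpolate through $\omega_j$: write
\begin{equation*}
\bridgenorm{\gamma}{\inner{\omega}{\omega_k}{\module{M}}, \inner{\eta_j}{\eta_k}{\module{N}}} \leq \bridgenorm{\gamma}{\inner{\omega}{\omega_k}{\module{M}} - \inner{\omega_j}{\omega_k}{\module{M}}, 0} + \bridgenorm{\gamma}{\inner{\omega_j}{\omega_k}{\module{M}}, \inner{\eta_j}{\eta_k}{\module{N}}}
\end{equation*}
using that $\bridgenorm{\gamma}{\cdot,\cdot}$ is a seminorm. The second term is at most $\bridgemodularreach{\gamma}$ by Definition (\ref{modular-reach-def}), since $\decknorm{\gamma}{\omega_j,\eta_j}$ dominates it. For the first term, $\bridgenorm{\gamma}{c,0} = \|\pi_\A(c) x\|_\D \leq \|c\|_\A \|x\|_\D = \|c\|_\A$ because $\pi_\A$ is a $*$-monomorphism hence isometric and $\|x\|_\D = 1$ — this is exactly where the normalization $\|x\|_\D = 1$ from Definition (\ref{modular-bridge-def})(2) is used. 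Then $\|\inner{\omega - \omega_j}{\omega_k}{\module{M}}\|_\A \leq \KantorovichMod{\Omega_\A}(\omega,\omega_j) \leq \bridgeimprint{\gamma}$, since $\CDN_{\module{M}}(\omega_k) \leq 1$ and by Definition (\ref{modular-Kantorovich-def}) the modular {\MongeKant} is the supremum over such $\omega_k$. The symmetric term $\bridgenorm{\gamma}{\inner{\omega_k}{\omega}{\module{M}}, \inner{\eta_k}{\eta_j}{\module{N}}}$ is handled identically, using $(\inner{\omega}{\omega_k}{\module{M}})^\ast = \inner{\omega_k}{\omega}{\module{M}}$ and that the $C^\ast$-norm is $*$-invariant. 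Taking the maximum over $k$ gives $\decknorm{\gamma}{\omega,\eta_j} \leq \bridgeimprint{\gamma} + \bridgemodularreach{\gamma} \leq \bridgereach{\gamma}$, and the roles of $\Omega_\A$, $\Omega_\B$ are interchangeable by the symmetry built into the deck seminorm.

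For Inequality (\ref{bridge-reach-eq1}), the first two lines are literally $\bridgebasicreach{\gamma} \leq \bridgereach{\gamma}$ by Definition (\ref{basic-reach-def}) and Definition (\ref{bridge-reach-def}). For the third line, fix $\omega \in \module{M}$ with $\CDN_{\module{M}}(\omega) \leq 1$; by the definition of $\bridgeimprint{\gamma}$ as a Hausdorff distance (Definition (\ref{bridge-imprint-def})), there is $j \in J$ with $\KantorovichMod{\Omega_\A}(\omega,\omega_j) \leq \bridgeimprint{\gamma}$, and then $\eta_j \in \module{N}$ satisfies $\CDN_{\module{N}}(\eta_j) \leq 1$ and, by the first assertion, $\decknorm{\gamma}{\omega,\eta_j} \leq \bridgereach{\gamma}$; hence the infimum over $\eta$ is $\leq \bridgereach{\gamma}$, and taking the supremum over $\omega$ preserves the bound. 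The fourth line follows the same way with $\Omega_\A$ and $\Omega_\B$ switched. I anticipate the main obstacle is purely bookkeeping: making sure the interpolation respects that anchors and co-anchors are matched by index $k$ in the deck seminorm while the imprint estimate compares $\omega$ to $\omega_j$ (a different index $j$), so one must be careful that the ``error'' term $\inner{\omega - \omega_j}{\omega_k}{\module{M}}$ pairs the $j$-perturbation against every $k$-anchor and that $\CDN_{\module{M}}(\omega_k) \leq 1$ for all $k$ — which holds by Definition (\ref{modular-bridge-def})(5) — so the modular {\MongeKant} bound applies uniformly in $k$.
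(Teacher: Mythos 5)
Your proof is correct and follows essentially the same route as the paper's: interpolating through the anchor $\omega_j$, bounding the perturbation term by $\|x\|_\D\,\KantorovichMod{\Omega_\A}(\omega,\omega_j)\leq\bridgeimprint{\gamma}$ via the modular Monge--Kantorovich metric, bounding the remaining term by $\decknorm{\gamma}{\omega_j,\eta_j}\leq\bridgemodularreach{\gamma}$, and handling the transposed pairings by taking adjoints (using $\|x^\ast\|_\D\leq 1$). The derivation of Inequality (\ref{bridge-reach-eq1}) from the first assertion also matches the paper.
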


\begin{proof}
By Definition (\ref{basic-reach-def}) and Definition (\ref{bridge-reach-def}), it is sufficient to prove that:
\begin{equation*}
\max\left\{ \begin{array}{lll}
\sup_{\omega\in\module{M},\CDN_{\module{M}}(\omega)\leq 1} &\inf_{\eta\in\module{N},\CDN_{\module{N}}(\eta)\leq 1} &\decknorm{\gamma}{\omega,\eta}   \\
\sup_{\eta\in\module{N},\CDN_{\module{N}}(\eta)\leq 1} &\inf_{\omega\in\module{M},\CDN_{\module{M}}(\omega)\leq 1} &\decknorm{\gamma}{\omega,\eta} 
\end{array}
\right\} \leq \bridgereach{\gamma} \text{.}
\end{equation*}

Let $\omega\in\CDN_{\module{M}}$ with $\CDN_{\module{M}} (\omega) \leq 1$. By Definition (\ref{bridge-imprint-def}), there exists $j \in J$ such that:
\begin{equation*}
\KantorovichMod{\Omega_\A}(\omega,\omega_j) \leq \bridgeimprint{\gamma} \text{.}
\end{equation*}

Now, by Definition (\ref{modular-reach-def}), we have $\decknorm{\gamma}{\omega_j, \eta_j} \leq \bridgemodularreach{\gamma} \text{.}$ Thus, for any $k\in J$, we compute:
\begin{multline*}
\left\|\pi_\A\left( \inner{\omega}{\omega_k}{\module{M}} \right) x - x \pi_\B \left( \inner{\eta_j}{\eta_k}{\module{N}} \right)\right\|_{\D} \\
\begin{split}
&\leq \left\|\pi_\A\left(\inner{\omega}{\omega_k}{\module{M}} - \inner{\omega_j}{\omega_k}{\module{M}}\right)x\right\|_\D \\
&\quad + \left\| \pi_\A\left( \inner{\omega_j}{\omega_k}{\module{M}}\right) x - x \pi_\B\left( \inner{\eta_j}{\eta_k}{\module{N}}\right) \right\|_{\D} \\
&\leq \|x\|_{\D} \KantorovichMod{\Omega_\A}(\omega,\omega_j) + \decknorm{\gamma}{\omega_j,\eta_j}\\
&\leq \bridgeimprint{\gamma} + \bridgemodularreach{\gamma} \leq \bridgereach{\gamma} \text{.}
\end{split}
\end{multline*}

We now observe that since the involution of $\D$ is isometric and $k \in J$:
\begin{multline*}
\left\|\pi_\A\left(\inner{\omega_k}{\omega}{\module{M}}\right) x - x \pi_\B\left(\inner{\eta_k}{\eta_j}{\module{N}}\right)\right\|_\D \\ 
= \left\| x^\ast \pi_\A\left(\inner{\omega}{\omega_j}{\module{M}}\right) - \pi_\B\left(\inner{\eta_j}{\eta_k}{\module{N}}\right) x^\ast \right\|_\D\text{.}
\end{multline*}

Since $\|x^\ast\|_\D \leq 1$, a similar computation then proves that for all $k\in J$:
\begin{equation*}
\left\|\pi_\A\left(\inner{\omega_k}{\omega_j}{\module{M}}\right) x - x \pi_\B\left(\inner{\eta_k}{\eta_j}{\module{N}}\right)\right\|_\D \leq \bridgereach{\gamma} \text{.}
\end{equation*}

Thus, as desired $\decknorm{\gamma}{\omega,\eta_j} \leq \bridgereach{\gamma}\text{.}$ In particular, we have shown:
\begin{equation*}
\sup\nolimits_{\omega\in\modlip{1}{\Omega_\A}} \inf\nolimits_{\eta\in\modlip{1}{\Omega_\B}} \decknorm{\gamma}{\omega,\eta} \leq \bridgereach{\gamma} \text{.}
\end{equation*}

A similar computation shows that:
\begin{equation*}
\sup\nolimits_{\eta\in\modlip{1}{\Omega_\B}} \inf\nolimits_{\omega\in\modlip{1}{\Omega_\A}} \decknorm{\gamma}{\omega,\eta} \leq \bridgereach{\gamma} \text{.}
\end{equation*}

This concludes our proof.
\end{proof}

We now pause for a few remarks regarding our Definition (\ref{bridge-reach-def}) of a reach for the modular bridge. First, unlike in \cite{Latremoliere13}, we required in Definition (\ref{modular-bridge-def}) that pivots have norm at most one. The result in Proposition (\ref{bridge-reach-prop}) is where this additional assumption is needed.

Proposition (\ref{bridge-reach-prop}) suggests a competing candidate for the notion of a bridge, given by the left-hand side of Inequality (\ref{bridge-reach-eq1}). This alternate candidate is given as the maximum of Expression (\ref{basic-reach-eq1}) and the following similar expression for modules:
\begin{multline}\label{modular-reach-eq1}
\Haus{\decknorm{\gamma}{\cdot,\cdot}}\left( \left\{ (\omega,0) \in \module{M}\oplus\module{N} : \CDN_{\module{M}}(\omega) \leq 1 \right\}, \right. \\
\left. \left\{ (0,\eta) \in \module{M}\oplus\module{N} : \CDN_{\module{N}}(\eta) \leq 1 \right\} \right) \text{.}
\end{multline}

This formulation would more closely resemble the definition of the basic reach. Our preference for Definition (\ref{bridge-reach-def}) rather than the maximum of Expressions (\ref{basic-reach-eq1}) and (\ref{modular-reach-eq1}) is at the core of our idea for the construction of the modular propinquity. Indeed, Definition (\ref{basic-reach-def}) employs the match between anchors and co-anchors. This pairing is essential, because it also appears in the Definition (\ref{deck-seminorm-def}) of the deck seminorm and actually, it is the approach we use to construct a seminorm from a couple of \emph{sesquilinear} maps.

Indeed, we also could have introduced anchors and co-anchors in the construction of the quantum propinquity. Namely, an ``anchored'' bridge from $(\A,\Lip_\A)$ to $(\B,\Lip_\B)$ could be of the form $\gamma = (\D,x,\pi_\A,\pi_\B,(a_j)_{j\in J}, (b_j)_{j\in J})$ with $a_j\in\alglip{1}{\Lip_\A}$ and $b_j\in\alglip{1}{\Lip_\B}$ for all $j \in J$. We then could define the ``anchored'' reach as we just did for modular bridge, i.e. as the maximum of $\max\{\bridgenorm{\gamma}{a_j,b_j} : j\in J\}$ and of a kind of imprint, i.e. $\max\{\Haus{\|\cdot\|_\A}(\{a_j:j\in J\}, \alglip{1}{\Lip_\A}), \Haus{\|\cdot\|_\A}(\{a_j:j\in J\}, \alglip{1}{\Lip_\A}) \}$. The length of an anchored bridge would then be the maximum of its anchored reach and its height, defined in the usual manner.

Yet such a definition of a bridge reach would not change our construction of the quantum propinquity. Indeed, Proposition (\ref{bridge-reach-prop}) could be adapted to prove that the reach of the bridge $(\D,x,\pi_\A,\pi_\B)$ is lesser or equal than the anchored reach of $\gamma$. It is also easy to check that given a bridge $(\D,x,\pi_\A,\pi_\B)$ in the sense of \cite[Definition 3.6]{Latremoliere13}, there always is a mean to construct an anchor bridge with the same length. We refer briefly to \cite{Latremoliere13} for various notions which we will extend in a moment to modular bridges, and the reader may skip the following few details as they are just a side observation. Using the notion of target sets introduced in \cite[Definition 5.1]{Latremoliere13}, we can, for all $a\in \alglip{1}{\Lip_\A}$, choose some $b_a \in \targetsetbridge{\gamma}{a}{1}$, and similarly by symmetry, for all $b\in\alglip{1}{\Lip_\B}$, choose $a_b \in \targetsetbridge{\gamma^{-1}}{b}{1}$. With these notations, if $J = \alglip{1}{\Lip_\A}\coprod \alglip{1}{\Lip_\B}$, and if we set $a_a = a$ and $b_b = b$ for all $a\in \alglip{1}{\Lip_\A}$ and $b\in\alglip{1}{\Lip_\B}$, then:
\begin{equation*}
\left(\D,x,\pi_\A,\pi_\B,(a_j)_{j\in J}, (b_j)_{j \in J}\right)
\end{equation*}
is an anchored bridge with the same length than the bridge $(\D,x,\pi_\A,\pi_\B)$. Thus there is no need for anchors and co-anchors in the construction of the quantum propinquity.

If such is the case, then why did we introduce anchors in our current work? The reason lies with the fact that the bridge seminorm is indeed a seminorm because the maps $\pi_\A$ and $\pi_\B$ are linear. However the inner products $\inner{\cdot}{\cdot}{\module{M}}$ and $\inner{\cdot}{\cdot}{\module{N}}$ are sesquilinear, and thus, to construct our deck seminorm as indeed a seminorm, we discovered the idea of employing pairs of anchors-co-anchors. While this idea would not change anything for the quantum propinquity, it becomes essential for the modular propinquity.

The length of a modular bridge is the synthetic numerical value which summarizes all the information contained in the basic reach, modular reach, height and imprint of the modular bridge, and from which the modular propinquity will be computed.

\begin{definition}\label{bridge-length-def}
Let Hypothesis (\ref{modular-bridge-hyp}) be given. The length $\bridgelength{\gamma}$ of the modular bridge $\bridge{\gamma}$ is the maximum of its reach, its height and its imprint:
\begin{equation*}
\bridgelength{\gamma} = \max\left\{ \bridgeheight{\gamma}, \bridgereach{\gamma} \right\}\text{.}
\end{equation*}
\end{definition}

We note that a modular bridge always has a finite length.

\begin{lemma}\label{finite-length-lemma}
If $\gamma$ is a modular bridge then $\bridgelength{\gamma} < \infty$.
\end{lemma}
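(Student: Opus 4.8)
The plan is to bound, one at a time, the four numerical quantities that enter the length. Combining Definition (\ref{bridge-length-def}) with Definition (\ref{bridge-reach-def}), we have
\begin{equation*}
\bridgelength{\gamma} = \max\left\{ \bridgeheight{\gamma}, \bridgebasicreach{\gamma}, \bridgemodularreach{\gamma} + \bridgeimprint{\gamma} \right\}\text{,}
\end{equation*}
so it suffices to prove that each of $\bridgeheight{\gamma}$, $\bridgebasicreach{\gamma}$, $\bridgemodularreach{\gamma}$ and $\bridgeimprint{\gamma}$ is finite.

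By Definition (\ref{bridge-height-def}) and Definition (\ref{basic-reach-def}), $\bridgeheight{\gamma}$ and $\bridgebasicreach{\gamma}$ are, respectively, the height and the reach of the basic bridge $\gamma_\flat = (\D,x,\pi_\A,\pi_\B)$ between the {\gQqcms s} $\basespace{\dom{\gamma}}$ and $\basespace{\codom{\gamma}}$; the finiteness of these is established in \cite{Latremoliere13}, and I would recall the short argument. Since $\Kantorovich{\Lip_\A}$ metrizes the weak* topology on the weak*-compact state space $\StateSpace(\A)$, the diameter $\delta_\A := \diam{\StateSpace(\A)}{\Kantorovich{\Lip_\A}}$ is finite; and for any $\varphi\in\StateSpace(\A)$ and any $a\in\sa{\A}$ with $\Lip_\A(a)\leq 1$ one has $\|a-\varphi(a)\unit_\A\|_\A = \sup_{\psi\in\StateSpace(\A)}|\psi(a)-\varphi(a)|\leq\delta_\A$. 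Choosing $b=\varphi(a)\unit_\B$, which satisfies $\Lip_\B(b)=0\leq 1$, yields $\bridgenorm{\gamma}{a,b} = \|\pi_\A(a-\varphi(a)\unit_\A)\,x\|_\D\leq\delta_\A\|x\|_\D = \delta_\A$, and symmetrically over $\B$; hence $\bridgebasicreach{\gamma}\leq\max\{\delta_\A,\delta_\B\}$. Likewise $\pi_\A^\ast(\StateSpace(\D|x))$ is a nonempty subset of $\StateSpace(\A)$ (nonempty by condition (2) of Definition (\ref{modular-bridge-def})), so its Hausdorff distance to $\StateSpace(\A)$ is at most $\delta_\A$, and $\bridgeheight{\gamma}\leq\max\{\delta_\A,\delta_\B\}$.

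It remains to bound the two genuinely new quantities. For the imprint: by Definition (\ref{modular-bridge-def}) the anchors lie in $\modlip{1}{\Omega_\A}$ and the co-anchors in $\modlip{1}{\Omega_\B}$, so in Definition (\ref{bridge-imprint-def}) each Hausdorff distance reduces to a one-sided supremum and $\bridgeimprint{\gamma}\leq\max\{\diam{\modlip{1}{\Omega_\A}}{\KantorovichMod{\Omega_\A}},\diam{\modlip{1}{\Omega_\B}}{\KantorovichMod{\Omega_\B}}\}$. Using Definition (\ref{modular-Kantorovich-def}), the Cauchy--Schwarz inequality of Proposition (\ref{Hilbert-norm-prop}), and Condition (1) of Definition (\ref{qvb-def}), one gets $\KantorovichMod{\Omega_\A}(\omega,\eta)\leq\|\omega-\eta\|_{\module{M}}\leq 2$ for $\omega,\eta\in\modlip{1}{\Omega_\A}$, so $\bridgeimprint{\gamma}\leq 2$. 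For the modular reach: Definition (\ref{modular-reach-def}) and Definition (\ref{deck-seminorm-def}) express $\bridgemodularreach{\gamma}$ as a supremum over $j,k\in J$ of terms $\|\pi_\A(\inner{\omega_j}{\omega_k}{\module{M}})\,x - x\,\pi_\B(\inner{\eta_j}{\eta_k}{\module{N}})\|_\D$ and their adjoints; the triangle inequality and submultiplicativity bound each such term by $\|\inner{\omega_j}{\omega_k}{\module{M}}\|_\A\|x\|_\D + \|x\|_\D\|\inner{\eta_j}{\eta_k}{\module{N}}\|_\B$, and since $\|x\|_\D = 1$ and, by Proposition (\ref{Hilbert-norm-prop}) together with Condition (1) of Definition (\ref{qvb-def}), $\|\inner{\omega_j}{\omega_k}{\module{M}}\|_\A\leq\CDN_{\module{M}}(\omega_j)\CDN_{\module{M}}(\omega_k)\leq 1$ and likewise for the co-anchors, this is at most $2$. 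Hence $\bridgemodularreach{\gamma}\leq 2$, so $\bridgereach{\gamma}\leq\max\{\delta_\A,\delta_\B,4\}$ and $\bridgelength{\gamma}\leq\max\{\delta_\A,\delta_\B,4\}<\infty$.

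The proof is essentially bookkeeping, and I expect no real obstacle. The only ingredient carrying content is the finiteness of the height and basic reach, which rests entirely on the defining requirement that an L-seminorm's {\MongeKant} metrizes the weak* topology; the imprint and modular reach are controlled by absolute constants, precisely because of the normalization $\|x\|_\D = 1$ imposed in Definition (\ref{modular-bridge-def}) and the dominance $\|\cdot\|_{\module{M}}\leq\CDN_{\module{M}}$ of Definition (\ref{qvb-def}) --- without $\|x\|_\D = 1$, the modular-reach terms would only be bounded by $2\|x\|_\D$.
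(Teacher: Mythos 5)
Your proof is correct and follows essentially the same route as the paper's: bound each of the height, basic reach, imprint and modular reach separately, with the modular reach controlled by $2$ via Cauchy--Schwarz, the dominance $\|\cdot\|_{\module{M}}\leq\CDN_{\module{M}}$, and the normalization $\|x\|_\D=1$. The paper is terser (it simply notes that the height and imprint are Hausdorff distances involving compact sets and takes the finiteness of the basic reach from \cite{Latremoliere13}), whereas you supply the explicit constants, but the argument is the same.
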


\begin{proof}
The imprint and the height of a modular bridge are both defined as the Hausdorff distance between two compact sets and thus are finite.

Now, if $\omega\in\modlip{1}{\Omega_\A}$ and $\eta\in\modlip{1}{\Omega_\B}$ then since $\|x\|_\D\leq 1$, we have:
\begin{equation*}
\decknorm{\gamma}{\omega,\eta} = \max_{j \in J} \left\| \pi_\A\left(\inner{\omega}{\omega_k}{\module{M}}\right) x - x \pi_\A\left(\inner{\omega}{\omega_k}{\module{M}}\right) \right\|_\D \leq 2 \text{.}
\end{equation*}

Thus $\bridgemodularreach{\gamma} \leq 2$. The reach of $\gamma$ is thus the maximum of the (finite) basic bridge reach and the sum of the (finite) imprint and the (finite) modular reach. Thus $\bridgereach{\gamma} < \infty$ and thus our proposition is proven.
\end{proof}

\bigskip

Modular bridges are a type of morphism between {\gQVB s} --- though we shall address the question of composition for modular bridges in our next section with the introduction of modular treks. In the rest of this section, we formalize the idea that bridges posses some properties akin to some form of multi-valued morphism. These properties are the essential reason behind the fact that, if the modular propinquity is null between two {\gQVB s}, then they are fully quantum isometric.

A modular bridge from $\Omega_\A$ to $\Omega_\B$, with $\Omega_\A$ and $\Omega_\B$ two {\gQVB s}, defines maps  from the domain of the L-seminorm of $\Omega_\A$ to the power set of domain of the L-seminorm of $\Omega_\B$.

\begin{definition}\label{targetset-def}
Let Hypothesis (\ref{modular-bridge-hyp}) be given. For any $a\in\dom{\Lip_\A}$ and $l \geq \Lip_\A(a)$, we define the \emph{$l$-target set} $\targetsetbridge{\gamma}{a}{l}$ of $a$ for $\bridge{\gamma}$ as:
\begin{equation*}
\targetsetbridge{\gamma}{a}{l} = \left\{ b\in\dom{\Lip_\B} \middle\vert \begin{array}{l}
\Lip_\B(b) \leq l\\
\bridgenorm{\gamma}{a,b} \leq l \bridgereach{\gamma_\flat}\text{.}
\end{array}\right\}\text{.}
\end{equation*}
\end{definition}

Definition (\ref{targetset-def}) ensures that $\targetsetbridge{\gamma}{a}{l} = \targetsetbridge{\gamma_\flat}{a}{l}$, where the right hand side is defined in \cite[Definition 5.1]{Latremoliere13}. It actually would not matter in our subsequent work if instead, we had used $\bridgereach{\gamma}$ in place of $\bridgereach{\gamma_\flat}$ in Definition (\ref{targetset-def}). On the other hand, thanks to our choice, we can invoke our work in \cite{Latremoliere13} to immediately conclude:

\begin{proposition}\label{base-targetset-prop}
Let Hypothesis (\ref{modular-bridge-hyp}) be given. For all $a,a' \in \dom{\Lip_\A}$ and $l \geq \max\{\Lip_\A(a),\Lip_\A(a')\}$, if $b \in \targetsetbridge{\gamma}{a}{l}$ and $b'\in\targetsetbridge{\gamma}{a'}{l}$ then:
\begin{enumerate}
\item $\targetsetbridge{\gamma}{a}{l}$ is a nonempty compact subset of $\sa{\B}$,
\item $\|b\|_\B \leq \|a\|_\A + 2 l \bridgelength{\gamma}$,
\item for all $t \in \R$ we have $b + t b' \in \targetsetbridge{\gamma}{a + t a'}{(1+|t|)l}$,
\item $\|b - b'\|_\B \leq \|a-a'\|_\A + 4 l \bridgelength{\gamma}$,
\item We have:
\begin{equation*}
\Jordan{b}{b'} \in \targetsetbridge{\gamma}{\Jordan{a}{a'}}{F(\|a\|_\A + 2 l \bridgelength{\gamma},\|a'\|_\A + 2 l \bridgelength{\gamma}, l, l) }
\end{equation*}
and
\begin{equation*}
\Lie{b}{b'}\in\targetsetbridge{\gamma}{\Lie{b}{b'}}{F(\|a\|_\A + 2 l \bridgelength{\gamma},\|a'\|_\A + 2 l \bridgelength{\gamma}, l, l)}\text{.}
\end{equation*}
\end{enumerate}
In particular, for all $a\in\dom{\Lip_\A}$ and $l \geq \Lip_\A(a)$, we have:
\begin{equation*}
\diam{\targetsetbridge{\gamma}{a}{l}}{} \leq 4 l \bridgelength{\gamma} \text{.}
\end{equation*}
\end{proposition}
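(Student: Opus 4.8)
The plan is to reduce the whole statement to the corresponding facts about ordinary bridges, all of which are already established in \cite{Latremoliere13}. First I would record the two structural observations that make this possible. By Definition (\ref{basic-bridge-def}) the basic bridge $\gamma_\flat = (\D,x,\pi_\A,\pi_\B)$ is a bridge from $\A$ to $\B$ in the sense of \cite[Definition 3.6]{Latremoliere13}; moreover, as noted immediately after Definition (\ref{targetset-def}), one has $\targetsetbridge{\gamma}{a}{l} = \targetsetbridge{\gamma_\flat}{a}{l}$ for every $a\in\dom{\Lip_\A}$ and every $l \geq \Lip_\A(a)$, the right-hand side being the target set of \cite[Definition 5.1]{Latremoliere13}. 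Second, I would compare lengths: by construction $\bridgeheight{\gamma}$ and $\bridgebasicreach{\gamma}$ are exactly the height and reach of $\gamma_\flat$, so Definitions (\ref{bridge-reach-def}) and (\ref{bridge-length-def}) give $\bridgelength{\gamma_\flat} = \max\{\bridgeheight{\gamma},\bridgebasicreach{\gamma}\} \leq \bridgelength{\gamma}$.

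With these in hand, items (1) through (5) and the diameter estimate are each an instance of the corresponding property of target sets of the bridge $\gamma_\flat$, proved in \cite[Section 5]{Latremoliere13}, where the same inclusions and bounds appear with $\bridgelength{\gamma_\flat}$ in place of $\bridgelength{\gamma}$. To pass from $\bridgelength{\gamma_\flat}$ to $\bridgelength{\gamma}$ I would use that $\targetsetbridge{\gamma_\flat}{c}{m}$ is nondecreasing in its radius $m$, that $F$ is nondecreasing in each argument since it is admissible, and that $\bridgelength{\gamma_\flat}\leq\bridgelength{\gamma}$: hence every inclusion of the shape $z\in\targetsetbridge{\gamma_\flat}{c}{F(\,\cdots\,\bridgelength{\gamma_\flat}\,\cdots\,)}$ yields $z\in\targetsetbridge{\gamma_\flat}{c}{F(\,\cdots\,\bridgelength{\gamma}\,\cdots\,)} = \targetsetbridge{\gamma}{c}{F(\,\cdots\,\bridgelength{\gamma}\,\cdots\,)}$, and each norm estimate with $\bridgelength{\gamma_\flat}$ weakens to the stated one with $\bridgelength{\gamma}$. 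Items (1) and (3) contain no occurrence of the length and transfer verbatim. Finally the diameter bound is item (4) applied with $a'=a$: then $b,b'\in\targetsetbridge{\gamma}{a}{l}$ forces $\|b-b'\|_\B\leq 4l\bridgelength{\gamma}$. (The subscript in the last display of item (5) should be read as $\Lie{a}{a'}$.)

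I do not expect a genuine obstacle here; the one point deserving care is the direction of the monotonicity — namely that enlarging the length only weakens each conclusion, so replacing $\bridgelength{\gamma_\flat}$ by the possibly larger $\bridgelength{\gamma}$ costs nothing. All of the substantive content — nonemptiness and norm-compactness of target sets, the interaction of the bridge seminorm with the pivot and with the $1$-level set $\StateSpace_1(\D|x)$ through the height, and the Leibniz-type control of Jordan and Lie products — is carried out in \cite{Latremoliere13} and need not be reproduced.
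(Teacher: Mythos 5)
Your proposal is correct and follows essentially the same route as the paper: both reduce everything to the target sets of the basic bridge $\gamma_\flat$ via the identity $\targetsetbridge{\gamma}{a}{l}=\targetsetbridge{\gamma_\flat}{a}{l}$ and the inequality $\bridgelength{\gamma_\flat}\leq\bridgelength{\gamma}$, then invoke the corresponding results of \cite{Latremoliere13} (the paper additionally spells out that (4) follows from (2) and (3) with $t=-1$, and verifies the L-seminorm bound in (5) directly from the quasi-Leibniz property together with the norm bound of (2), but this is only a difference in citation granularity). Your observation about the monotonicity direction, and the typo $\Lie{b}{b'}$ for $\Lie{a}{a'}$ in item (5), are both correct.
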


\begin{proof}
Assertion (1) is \cite[Lemma 5.2]{Latremoliere13} and since it is a closed subset of the norm compact $\alglip{l}{\Lip_\B}$. Assertion (2) follows from \cite[Proposition 5.3]{Latremoliere13}. Assertion (3) follows from \cite[Proposition 5.4]{Latremoliere13}. Assertion (4) follows from Assertion (2) and Assertion (3). Assertion (5) is established by noting:
\begin{equation*}
\Lip_\B(\Jordan{b}{b'}) \leq F(\|b\|_\B,\|b'\|_\B, l, l) \leq F(\|a\|_\A + 2\bridgelength{\gamma}, \|a'\|_\A + 2 l \bridgelength{\gamma}, l, l) \text{,}
\end{equation*}
and similarly for the Lie product. Setting $a=a'$ gives us the given estimate on the diameter of $\targetsetbridge{\gamma}{a}{l}$.
\end{proof}

We now define the target set for elements in the domain of a D-norm.

\begin{definition}\label{modular-targetset-def}
Let Hypothesis (\ref{modular-bridge-hyp}) be given. For any $\omega \in \module{M}$ and $l \geq \CDN_{\module{M}}(\omega)$, we define the \emph{$l$-modular target set} of $\omega$ for $\bridge{\gamma}$ as:
\begin{equation*}
\targetsetbridge{\gamma}{\omega}{l} = \left\{ \eta \in \module{N} \middle\vert \begin{array}{l}
\CDN_{\module{N}}(\eta)\leq l\\
\decknorm{\gamma}{\omega,\eta} \leq l \bridgereach{\gamma}
\end{array} \right\}\text{.}
\end{equation*}
\end{definition}

We begin by observing that modular target sets are compact and non-empty.

\begin{proposition}\label{compact-targetset-prop}
Let Hypothesis (\ref{modular-bridge-hyp}) be given. For any $\omega\in\dom{\CDN_{\module{M}}}$ and $l \geq \CDN_{\module{M}}(\omega)$, the set $\targetsetbridge{\gamma}{\omega}{l}$ is a nonempty compact for $\|\cdot\|_{\module{N}}$ (or equivalently for $\KantorovichMod{\Omega_\B}$).
\end{proposition}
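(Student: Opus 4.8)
The plan is to mirror the strategy used for the base target sets (Proposition \ref{base-targetset-prop}, via the corresponding results in \cite{Latremoliere13}), but now working in the module rather than in the algebra. The key point is that $\targetsetbridge{\gamma}{\omega}{l}$ is cut out of $\modlip{l}{\Omega_\B}$ by the single inequality $\decknorm{\gamma}{\omega,\eta}\leq l\bridgereach{\gamma}$, so compactness is cheap and the real content is non-emptiness.

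First I would settle compactness. The set $\modlip{l}{\Omega_\B}$ is norm compact by Definition (\ref{qvb-def})(2) (it is $l$ times $\modlip{1}{\Omega_\B}$). The map $\eta\mapsto\decknorm{\gamma}{\omega,\eta}$ is a seminorm by Definition (\ref{deck-seminorm-def}), hence norm continuous: indeed, for fixed $k\in J$, $\eta\mapsto \inner{\eta}{\eta_k}{\module{N}}$ and $\eta\mapsto\inner{\eta_k}{\eta}{\module{N}}$ are continuous from $(\module{N},\|\cdot\|_{\module{N}})$ to $\B$ by the Cauchy–Schwarz inequality of Proposition (\ref{Hilbert-norm-prop}), then $\pi_\B$ and left/right multiplication by $x$ in $\D$ are continuous, and finally the supremum over $k\in J$ of these (uniformly Lipschitz, since $\CDN_{\module{N}}(\eta_k)\leq 1$ and $\|x\|_\D=1$) continuous functions is lower semicontinuous — but in fact the bound $\decknorm{\gamma}{\omega,\eta}\le 2\|\omega\|_{\module{M}}\cdot(\text{stuff})$ shows each is Lipschitz with a common constant, so the sup is continuous. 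Therefore $\{\eta : \decknorm{\gamma}{\omega,\eta}\leq l\bridgereach{\gamma}\}$ is closed, and its intersection with the compact set $\modlip{l}{\Omega_\B}$ is compact; equivalence of $\|\cdot\|_{\module{N}}$ and $\KantorovichMod{\Omega_\B}$ on $\modlip{l}{\Omega_\B}$ is Proposition (\ref{modular-Kantorovich-prop}).

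The main obstacle — the part that really uses the machinery of this section — is non-emptiness. Here I would reduce to the case $l = \CDN_{\module{M}}(\omega)$ if convenient, or just handle general $l\geq\CDN_{\module{M}}(\omega)$ directly, and I would scale so that $\CDN_{\module{M}}(\omega)\leq 1$ (replacing $\omega$ by $\omega/l$ and multiplying the output by $l$, using that $\decknorm{\gamma}{\cdot,\cdot}$ and $\CDN_{\module{N}}$ are homogeneous). With $\CDN_{\module{M}}(\omega)\leq 1$: by Definition (\ref{bridge-imprint-def}) of the imprint, there exists $j\in J$ with $\KantorovichMod{\Omega_\A}(\omega,\omega_j)\leq\bridgeimprint{\gamma}$. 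Then Proposition (\ref{bridge-reach-prop}) applies verbatim: it gives $\decknorm{\gamma}{\omega,\eta_j}\leq\bridgereach{\gamma}$. Since $\eta_j\in\modlip{1}{\Omega_\B}$ by Definition (\ref{modular-bridge-def})(6), we have $\CDN_{\module{N}}(\eta_j)\leq 1\leq l$, so $\eta_j\in\targetsetbridge{\gamma}{\omega}{l}$ after rescaling. That exhibits a witness, so the set is non-empty. The only care needed is the bookkeeping when $l > \CDN_{\module{M}}(\omega)$: one applies the above to $\omega' = \omega/l$ (which has $\CDN_{\module{M}}(\omega')\leq 1$) to produce $\eta'\in\modlip{1}{\Omega_\B}$ with $\decknorm{\gamma}{\omega',\eta'}\leq\bridgereach{\gamma}$, and then $\eta = l\eta'$ satisfies $\CDN_{\module{N}}(\eta)\leq l$ and $\decknorm{\gamma}{\omega,\eta} = l\decknorm{\gamma}{\omega',\eta'}\leq l\bridgereach{\gamma}$, as required. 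I expect the verification that $\eta\mapsto\decknorm{\gamma}{\omega,\eta}$ is genuinely norm-continuous (as opposed to merely lower semicontinuous) to be the one spot worth a sentence of justification, but it follows immediately from the uniform Lipschitz bound noted above.
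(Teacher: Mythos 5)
Your proposal is correct and follows essentially the same route as the paper: non-emptiness comes from Proposition (\ref{bridge-reach-prop}) applied to $\omega/l$ together with homogeneity of the deck seminorm, and compactness follows because the target set is a closed subset of the norm-compact ball $\modlip{l}{\Omega_\B}$, with Proposition (\ref{modular-Kantorovich-prop}) giving the equivalence of the two topologies. Your extra remark that $\eta\mapsto\decknorm{\gamma}{\omega,\eta}$ is uniformly Lipschitz (not merely lower semicontinuous) is a harmless refinement of the paper's appeal to ``continuity'' in the closedness step.
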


\begin{proof}
By Proposition (\ref{bridge-reach-prop}), for all $\omega\in\modlip{1}{\Omega_\A}$ there exists $\eta\in\modlip{1}{\Omega_\B}$ such that $\decknorm{\gamma}{\omega,\eta} \leq \bridgereach{\gamma}$. Thus, if $\CDN_{\module{M}}(\omega) \leq l$ for some $\omega\in\module{M}$, it follows from homogeneity that there exists $\eta\in \modlip{l}{\Omega_\B}$ such that $\decknorm{\gamma}{\omega,\eta} \leq l \bridgereach{\gamma}$ since $\decknorm{\gamma}{\cdot}$ is a seminorm. Therefore, $\targetsetbridge{\gamma}{\omega}{l} \not=\emptyset$.

By construction $\targetsetbridge{\gamma}{\omega}{l}$ is a subset of the compact set $\modlip{l}{\Omega_\B}$ (for $\|\cdot\|_{\module{N}}$ or for $\KantorovichMod{\Omega_\B}$, as both give the same topology on $\modlip{l}{\Omega_\B}$). Thus it is sufficient to prove that $\targetsetbridge{\gamma}{\omega}{l}$ is closed.

Let $(\eta_n)_{n\in\N}$ be a sequence in $\targetsetbridge{\gamma}{\omega}{l}$, converging to some $\eta$ for $\|\cdot\|_{\module{N}}$. Since $\CDN_{\module{N}}$ is lower semi-continuous with respect to $\|\cdot\|_{\module{N}}$, we have $\CDN_{\module{N}}(\eta)\leq l$. 

Moreover, by continuity, $\decknorm{\gamma}{\omega,\eta} \leq l \bridgereach{\gamma}$ since $\decknorm{\gamma}{\omega,\eta} \leq l \bridgereach{\gamma}$. This proves that $\eta\in\targetsetbridge{\gamma}{\omega}{l}$ as desired.
\end{proof}

The fundamental property of modular target sets for a modular bridge $\bridge{\gamma}$ is that their diameter in the modular {\MongeKant} is controlled by the length of $\bridge{\gamma}$ --- and, in contrast with target sets for basic bridges, not their diameter in the $C^\ast$-Hilbert norm. We begin with a well-known lemma included for convenience.

\begin{lemma}\label{norm-bound-lemma}
If $\A$ is a C*-algebra, $a\in\A$, and there exists $M \geq 0$ such that for all $\varphi\in\StateSpace(\A)$ we have:
\begin{equation*}
\max\left\{|\varphi(\Re(a))|,|\varphi(\Im(a))|\right\} \leq M\text{,}
\end{equation*}
then:
\begin{equation*}
\|a\|_\A \leq \sqrt{2} M \text{.}
\end{equation*}
\end{lemma}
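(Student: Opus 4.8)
The plan is to pass from the two real coordinate bounds to a bound on $\|a\|_\A$ by splitting $a$ into its real and imaginary parts and using that any $C^*$-algebra norm can be recovered from the state space. First I would note that $\Re(a)$ and $\Im(a)$ are self-adjoint, and for a self-adjoint element $h$ of a $C^*$-algebra one has $\|h\|_\A = \sup\{|\varphi(h)| : \varphi \in \StateSpace(\A)\}$; this is the standard fact that the numerical radius equals the norm for self-adjoint elements (a consequence of the existence of a state attaining the norm, e.g. via the Hahn-Banach extension of a state on the unital commutative subalgebra generated by $h$). Applying this to $h = \Re(a)$ and $h = \Im(a)$, the hypothesis gives $\|\Re(a)\|_\A \leq M$ and $\|\Im(a)\|_\A \leq M$.

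Next I would reconstruct $a$ as $a = \Re(a) + i\,\Im(a)$ and attempt the naive triangle inequality, which yields only $\|a\|_\A \leq 2M$; since we want the sharper constant $\sqrt{2}$, I would instead argue directly on $\|a^\ast a\|_\A = \|a\|_\A^2$. Writing $x = \Re(a)$, $y = \Im(a)$, we have $a = x + iy$ so $a^\ast a = (x - iy)(x + iy) = x^2 + y^2 + i(xy - yx)$. Evaluating at an arbitrary state is awkward because of the commutator term, so the cleaner route is: for any state $\varphi$, the Cauchy-Schwarz inequality for the positive sesquilinear form $(u,v)\mapsto \varphi(u^\ast v)$ gives $|\varphi(a)|^2 = |\varphi(1^\ast a)|^2 \leq \varphi(1)\varphi(a^\ast a)$, which is not quite what I want either.

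Instead, the key step I expect to use is the elementary Hilbert-space-type inequality $|\varphi(a)|^2 \leq |\varphi(x)|^2 + |\varphi(y)|^2$ — wait, this fails in general since $\varphi(x),\varphi(y)$ are real but $\varphi(a) = \varphi(x) + i\varphi(y)$ so actually $|\varphi(a)|^2 = \varphi(x)^2 + \varphi(y)^2 \leq 2M^2$ holds as an identity once we observe $\varphi(\Re a) = \Re\varphi(a)$ and $\varphi(\Im a) = \Im\varphi(a)$ for any state $\varphi$ (states are positive hence $\ast$-preserving, so send self-adjoint elements to reals). Therefore $\sup_{\varphi}|\varphi(a)| \leq \sqrt{2}\,M$. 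The remaining step, and the one requiring a little care, is to upgrade $\sup_\varphi|\varphi(a)|$ to $\|a\|_\A$ for a \emph{non}-self-adjoint $a$: in general the numerical radius of $a$ is only comparable to $\|a\|_\A$ up to a factor, so I would instead apply the self-adjoint case to $\Re(e^{i\theta}a)$ for all $\theta$, noting $\Re(e^{i\theta}a) = \cos\theta\, x - \sin\theta\, y$, whence $\|\Re(e^{i\theta}a)\|_\A = \sup_\varphi|\varphi(\cos\theta\,x - \sin\theta\,y)| = \sup_\varphi|\Re(e^{i\theta}\varphi(a))|$, and taking the supremum over $\theta$ recovers $\sup_\varphi|\varphi(a)|$ on one side and $\|a\|_\A$ on the other (since $\|a\|_\A = \sup_\theta\|\Re(e^{i\theta}a)\|_\A$ by a standard functional-calculus argument). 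Combining, $\|a\|_\A = \sup_\varphi|\varphi(a)| \leq \sqrt{2}\,M$.

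The main obstacle is precisely this last point — being careful that the passage from the real-coordinate bounds to the norm bound does not silently assume $a$ self-adjoint. The cleanest write-up probably avoids the rotation trick and instead observes that $\|a\|_\A^2 = \|a a^\ast\|_\A = \sup_\varphi \varphi(a a^\ast)$ (valid since $aa^\ast \geq 0$), then bounds $\varphi(aa^\ast)$ using that for each fixed $\varphi$ the GNS construction realizes $a \mapsto \varphi(aa^\ast)^{1/2}$ as a Hilbert-space seminorm dominated by $\varphi((x^2+y^2))^{1/2}$... but this again reintroduces the commutator issue. Given the hypotheses as stated, the rotation argument (or equivalently applying the self-adjoint norm formula to $\Re(\zeta a)$ for unimodular $\zeta$) is the safe path, and I would present the proof in that form, keeping the computation to the two or three lines sketched above.
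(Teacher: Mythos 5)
The last step of your argument is where it breaks. You are right that the delicate point is passing from the numerical radius $\sup_{\varphi\in\StateSpace(\A)}|\varphi(a)|$ to the norm, but the identity you invoke to do it, $\|a\|_\A = \sup_\theta\|\Re(e^{i\theta}a)\|_\A$, is false for non-normal $a$: the ``standard functional-calculus argument'' behind it requires $a$ to be normal. Concretely, in $\A = M_2(\C)$ take $a = \left(\begin{smallmatrix}0&1\\0&0\end{smallmatrix}\right)$. Then $\|a\|_\A = 1$, while $\Re(e^{i\theta}a) = \frac{1}{2}\left(\begin{smallmatrix}0&e^{i\theta}\\ e^{-i\theta}&0\end{smallmatrix}\right)$ is one half of a self-adjoint unitary, hence has norm $\frac{1}{2}$ for every $\theta$; consequently $\sup_\varphi|\varphi(a)| = \frac{1}{2} = \frac{1}{2}\|a\|_\A$. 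In general one only has $\sup_\varphi|\varphi(a)|\geq\frac{1}{2}\|a\|_\A$, so your chain of inequalities yields at best $\|a\|_\A\leq 2\sqrt{2}\,M$ --- worse than the triangle-inequality bound $\|a\|_\A\leq\|\Re a\|_\A+\|\Im a\|_\A\leq 2M$ that you set aside at the start.

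The same example shows that no repair can produce the constant $\sqrt{2}$: for $a = \left(\begin{smallmatrix}0&1\\0&0\end{smallmatrix}\right)$ one has $\|\Re a\|_\A = \|\Im a\|_\A = \frac{1}{2}$, so the hypothesis holds with $M=\frac{1}{2}$, yet $\|a\|_\A = 1 > \sqrt{2}M$. The statement is therefore false as written; the sharp conclusion obtainable from these hypotheses is $\|a\|_\A\leq 2M$, i.e.\ exactly the estimate you dismissed as too rough. Your instinct that the commutator is the real obstruction was correct: the paper's own proof computes $aa^\ast = \Re(a)^2+\Im(a)^2$, silently omitting the self-adjoint term $\frac{1}{2}(aa^\ast-a^\ast a)$, and for the matrix above that omitted term has norm $\frac{1}{2}$ and accounts exactly for the discrepancy. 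So neither your route nor the paper's is valid; the honest fix is to state and use the lemma with constant $2$ (the downstream applications, e.g.\ Proposition (\ref{diameter-prop}), survive with $\sqrt{2}$ replaced by $2$ throughout).
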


\begin{proof}
Let $b\in\sa{\A}$ then the functional calculus implies that $\|b\|_\A = \sup\{|\varphi(b)| : \varphi\in\StateSpace(\A) \}$. Thus for all $a\in\A$, we compute:
\begin{equation*}
\begin{split}
\|a\|_\A^2 &= \|a a^\ast\|_\A \\
&= \|(\Re(a) + i\Im(a)) (\Re(a) + i\Im(a))^\ast\|_\A \\
&= \|\Re(a)^2 + \Im(a)^2\|_\A \\
&\leq \|\Re(a)\|^2 + \|\Im(a)\|^2 \\
&= \left(\sup\{ |\varphi(\Re(a)) : \varphi\in\StateSpace(\A)\}\right)^2 + \left(\sup\{ |\varphi(\Im(a)) : \varphi\in\StateSpace(\A)\}\right)^2 \\
&\leq 2 M^2 \text{.}
\end{split}
\end{equation*}
This concludes our lemma.
\end{proof}

\begin{proposition}\label{diameter-prop}
Let Hypothesis (\ref{modular-bridge-hyp}) be given. 

If $\omega, \omega' \in\module{M}$, $l \geq \max\left\{ \CDN_{\module{M}}(\omega), \CDN_{\module{M}}(\omega') \right\}$, $\eta\in\targetsetbridge{\gamma}{\omega}{l}$ and  $\eta'\in\targetsetbridge{\gamma}{\omega'}{l}$, then:
\begin{equation*}
\KantorovichMod{\CDN_{\module{N}}}(\eta,\eta') \leq  \sqrt{2} \left( \KantorovichMod{\CDN_{\module{M}}}(\omega,\omega') + (4 l + H(2l, 1)) \bridgelength{\gamma}\right) \text{.}
\end{equation*}

In particular:
\begin{equation*}
\diam{\targetsetbridge{\gamma}{\omega}{l}}{\KantorovichMod{\Lip_{\module{N}}}} \leq \sqrt{2} (4 l + H(2l, 1)) \bridgelength{\gamma}\text{.}
\end{equation*}
\end{proposition}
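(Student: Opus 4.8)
The plan is to unwind Definition (\ref{modular-Kantorovich-def}) and reduce the estimate, by routing everything through a single matched anchor--co-anchor pair $(\omega_k,\eta_k)$, to a statement about two elements $a\in\A$, $b\in\B$, which can then be handled by the basic-bridge machinery of \cite{Latremoliere13} together with the modular quasi-Leibniz inequality. (Here $\KantorovichMod{\CDN_{\module{N}}}=\KantorovichMod{\Omega_\B}$, $\KantorovichMod{\CDN_{\module{M}}}=\KantorovichMod{\Omega_\A}$ in the notation of Definition (\ref{modular-Kantorovich-def}), and by linearity in the first slot $\KantorovichMod{\Omega_\B}(\eta,\eta')=\sup\{\|\inner{\eta-\eta'}{\xi}{\module{N}}\|_\B : \CDN_{\module{N}}(\xi)\leq 1\}$.)

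First I would fix $\xi\in\module{N}$ with $\CDN_{\module{N}}(\xi)\leq 1$ and bound $\|\inner{\eta-\eta'}{\xi}{\module{N}}\|_\B$ uniformly in $\xi$. By Definition (\ref{bridge-imprint-def}) there is $k\in J$ with $\KantorovichMod{\Omega_\B}(\xi,\eta_k)\leq\bridgeimprint{\gamma}$ (up to an arbitrarily small $\varepsilon$, which I send to $0$ at the end, since the imprint supremum need not be attained). Since $\CDN_{\module{N}}(\eta-\eta')\leq 2l$, writing $\inner{\eta-\eta'}{\xi}{\module{N}}=\inner{\eta-\eta'}{\eta_k}{\module{N}}+\inner{\eta-\eta'}{\xi-\eta_k}{\module{N}}$ and using $\inner{\eta-\eta'}{\xi-\eta_k}{\module{N}}=\bigl(\inner{\xi-\eta_k}{\eta-\eta'}{\module{N}}\bigr)^\ast$ together with $\CDN_{\module{N}}\bigl(\tfrac{\eta-\eta'}{2l}\bigr)\leq 1$ reduces the problem, at the cost of an additive term $2l\bridgeimprint{\gamma}\leq 2l\bridgelength{\gamma}$, to bounding $\bigl\|\inner{\eta-\eta'}{\eta_k}{\module{N}}\bigr\|_\B$.

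Now set $b:=\inner{\eta-\eta'}{\eta_k}{\module{N}}\in\B$ and $a:=\inner{\omega-\omega'}{\omega_k}{\module{M}}\in\A$. Subtracting the two deck-seminorm inequalities coming from $\eta\in\targetsetbridge{\gamma}{\omega}{l}$ and $\eta'\in\targetsetbridge{\gamma}{\omega'}{l}$ (Definitions (\ref{deck-seminorm-def}), (\ref{modular-targetset-def})) gives $\bigl\|\pi_\A(a)x-x\pi_\B(b)\bigr\|_\D\leq 2l\bridgereach{\gamma}\leq 2l\bridgelength{\gamma}$; taking $\zeta=\omega_k$ in Definition (\ref{modular-Kantorovich-def}) gives $\|a\|_\A\leq\KantorovichMod{\Omega_\A}(\omega,\omega')$; and the modular quasi-Leibniz inequality (Condition (4) of Definition (\ref{qvb-def})) applied to $(\eta-\eta',\eta_k)$, with $\CDN_{\module{N}}(\eta-\eta')\leq 2l$, $\CDN_{\module{N}}(\eta_k)\leq 1$ and monotonicity of $H$, gives $\max\{\Lip_\B(\Re b),\Lip_\B(\Im b)\}\leq H(2l,1)$ --- this is precisely where the $H(2l,1)$ term originates. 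I would then run the standard argument: for $\psi\in\StateSpace(\B)$, the height bound (Definition (\ref{bridge-height-def})) yields $\varphi\in\StateSpace(\D|x)$ with $\Kantorovich{\Lip_\B}(\psi,\pi_\B^\ast\varphi)\leq\bridgeheight{\gamma}$, so $|\psi(\Re b)-\varphi(\pi_\B(\Re b))|\leq H(2l,1)\bridgeheight{\gamma}$ and likewise for $\Im b$; the $1$-level set property gives $\varphi(x\pi_\B(b))=\varphi(\pi_\B(b))$ and $\varphi(\pi_\A(a)x)=\varphi(\pi_\A(a))$, whence $|\varphi(\pi_\A(a))-\varphi(\pi_\B(b))|\leq\|\pi_\A(a)x-x\pi_\B(b)\|_\D\leq 2l\bridgelength{\gamma}$; and $|\varphi(\pi_\A(\Re a))|\leq\|a\|_\A\leq\KantorovichMod{\Omega_\A}(\omega,\omega')$ since $\pi_\A$ is isometric. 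Passing through real and imaginary parts and using $\bridgeheight{\gamma}\leq\bridgelength{\gamma}$ gives $\max\{|\psi(\Re b)|,|\psi(\Im b)|\}\leq\KantorovichMod{\Omega_\A}(\omega,\omega')+(2l+H(2l,1))\bridgelength{\gamma}$, and Lemma (\ref{norm-bound-lemma}) then yields $\|b\|_\B\leq\sqrt{2}\bigl(\KantorovichMod{\Omega_\A}(\omega,\omega')+(2l+H(2l,1))\bridgelength{\gamma}\bigr)$.

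Finally I would reinstate the $2l\bridgeimprint{\gamma}\leq 2l\bridgelength{\gamma}$ term, take the supremum over $\xi$, and observe that $2\sqrt{2}\,l+2l\leq 4\sqrt{2}\,l$, so the constants collapse to the asserted bound $\sqrt{2}\bigl(\KantorovichMod{\Omega_\A}(\omega,\omega')+(4l+H(2l,1))\bridgelength{\gamma}\bigr)$; specializing to $\omega=\omega'$ gives the diameter estimate. I expect the main obstacle to be organizational rather than computational: one must recognize that the deck seminorm only "sees" the matched pairs $(\omega_k,\eta_k)$ so the whole estimate has to be funnelled through them, that the modular quasi-Leibniz inequality is exactly what places $\inner{\eta-\eta'}{\eta_k}{\module{N}}$ into $\dom{\Lip_\B}$ with controlled seminorm, and that one must carefully track whether each element lives in $\modlip{1}{\cdot}$ or $\modlip{l}{\cdot}$ so that the scaling factors of $l$ and $2l$ come out exactly as stated (and, as noted, that the choice of $k$ costs an $\varepsilon\to 0$).
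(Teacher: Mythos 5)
Your proposal is correct and follows essentially the same route as the paper: replace the test vector by a matched co-anchor at imprint cost, use the modular quasi-Leibniz inequality to get the $H(2l,1)$ Lip-bound, pass through a state in $\StateSpace_1(\D|x)$ via the height, cross the bridge with the deck-seminorm bound, control the $\A$-side by $\KantorovichMod{\Omega_\A}(\omega,\omega')$, and finish with Lemma (\ref{norm-bound-lemma}). The only (harmless) difference is bookkeeping: you bound $\|\inner{\eta-\eta'}{\eta_k}{\module{N}}\|_\B$ as a self-contained step and add the imprint cost outside the $\sqrt{2}$ (hence your remark $2\sqrt{2}\,l+2l\leq 4\sqrt{2}\,l$), whereas the paper keeps the whole chain inside a single state estimate so the $2l$ imprint term sits inside the $\sqrt{2}$ from the start.
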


\begin{proof}
Let $\theta = \eta - \eta'$ and $\zeta = \omega - \omega'$. Note that:
\begin{equation*}
\max\left\{ \CDN_{\module{N}}(\theta), \CDN_{\module{M}}(\zeta)  \right\} \leq 2 l \text{.}
\end{equation*}

Let $\varphi \in \StateSpace(\B)$ and let $\nu \in \modlip{1}{\Omega_\A}$. 

There exists $j \in J$ such that $\KantorovichMod{\CDN_{\module{N}}}(\nu,\eta_j)\leq \bridgeimprint{\gamma}$ by Definition (\ref{bridge-imprint-def}). 

By Definition (\ref{bridge-height-def}), there exists $\psi \in \StateSpace(\D|x)$ with $\Kantorovich{\Lip_\B}(\varphi,\psi\circ\pi_\B) \leq \bridgeheight{\gamma}$.

Note that $\CDN_{\module{N}}(\eta_j) \leq 1$, and therefore, using the inner quasi-Leibniz inequality, we have:
\begin{equation*}
\max\left\{ \Lip_\B\left(\Re\inner{\theta}{\eta_j}{\module{N}}\right), \Lip\left(\Im\inner{\theta}{\eta_j}{\module{N}}\right) \right\} \leq H(2l,1) \text{.}
\end{equation*}
 We also note that since $\psi$ is a state, we have:
\begin{equation*}
|\psi(\Re(d))| = |\Re(\psi(d))| \leq |\psi(d)| \text{ and, similarly: }|\psi(\Im(d))|\leq |\psi(d)|
\end{equation*}
for all $d\in\D$.

Now, letting $m = H(2l,1)$:
\begin{equation*}
\begin{split}
\left|\varphi\left(\Re\inner{\theta}{\nu}{\module{N}}\right)\right| &\leq 2 l \bridgelength{\gamma} + \left|\varphi\left(\Re\inner{\theta}{\eta_j}{\module{N}}\right)\right| \text{ by Def. (\ref{bridge-imprint-def}),}\\
&\leq (2 l + m) \bridgelength{\gamma} + \left|\psi\circ\pi_\B \left(\Re\inner{\theta}{\eta_j}{\module{N}} \right)\right| \text{ by choice of $\psi$,}\\
&\leq (2 l + m) \bridgelength{\gamma} + \left|\psi\circ\pi_\B \left(\inner{\theta}{\eta_j}{\module{N}} \right)\right| \\
&\leq (2 l + m) \bridgelength{\gamma} + \left|\psi\left(\pi_\B(\inner{\theta}{\eta_j}{\module{N}} )x\right)\right| \text{ by Def. (\ref{level-set-def}),}\\
&\leq (4 l + m) \bridgelength{\gamma} + \left|\psi\left(x \pi_\A(\inner{\zeta}{\omega_j}{\module{M}} )\right)\right| \text{ by Prop. (\ref{bridge-reach-prop}),}\\
&\leq (4 l + m) \bridgelength{\gamma} + \left|\psi\circ\pi_\A\left(\inner{\zeta}{\omega_j}{\module{M}}\right)\right| \text{ by Def. (\ref{level-set-def}),}\\
&\leq (4 l + m)\bridgelength{\gamma} + \left\|\inner{\zeta}{\omega_j}{\module{M}} \right\|_\A \\
&\leq (4 l + m) \bridgelength{\gamma} + \left\|\inner{\omega}{\omega_j}{\module{M}} - \inner{\omega'}{\omega_j}{\module{M}} \right\|_\A \\
&\leq (4 l + m) \bridgelength{\gamma} + \KantorovichMod{\Lip_{\omega}}(\omega,\omega') \text{.}
\end{split}
\end{equation*}

The same computation holds for $\Re$ replaced with $\Im$, and thus we record:
\begin{equation*}
\left|\varphi\left(\Im\inner{\theta}{\nu}{\module{N}}\right)\right| \leq (4 l + m)\bridgelength{\gamma} + \KantorovichMod{\Lip_{\omega}}(\omega,\omega') \text{,}
\end{equation*}
and therefore by Lemma (\ref{norm-bound-lemma}), we conclude:
\begin{equation*}
\left\|\inner{\theta}{\nu}{\module{N}}\right\|_\B \leq \sqrt{2}\left( (4 l + m)\bridgelength{\gamma} + \KantorovichMod{\CDN}(\omega,\omega') \right)\text{.}
\end{equation*}

Thus:
\begin{equation*}
\begin{split}
\KantorovichMod{\CDN_{\module{N}}}(\eta,\eta') &= \sup\left\{\left\|\inner{\theta}{\nu}{\module{N}} \right\|_\B : \nu\in\modlip{1}{\Omega_\B} \right\} \\
&\leq \sqrt{2}\left( (4 l + m)\bridgelength{\gamma} + \KantorovichMod{\CDN}(\omega,\omega') \right)\text{.}
\end{split}
\end{equation*}
Our proof is thus complete. The assertion on the diameter is obtained simply by letting $\omega=\omega'$.
\end{proof}

Our first relation between target sets on modules and the module algebraic structure concerns linearity, as expressed in the following proposition. 

\begin{proposition}\label{linear-prop}
Let Hypothesis (\ref{modular-bridge-hyp}) be given. 
If:
\begin{enumerate}
\item $\omega, \omega' \in \module{M}$,
\item $l \geq \CDN_{\module{M}}(\omega)$ and $l' \geq \CDN_{\module{M}}(\omega')$, 
\item $\eta \in \targetsetbridge{\gamma}{l}{\omega}$ and $\eta'\in\targetsetbridge{\gamma}{\omega'}{l'}$,
\item $t\in \R$,
\end{enumerate} 
then:
\begin{equation*}
\eta + t\eta' \in \targetsetbridge{\gamma}{\omega + t\omega'}{l + |t| l'} \text{.}
\end{equation*}
\end{proposition}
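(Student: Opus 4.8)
The plan is to verify directly the two conditions in Definition (\ref{modular-targetset-def}) that characterize membership in $\targetsetbridge{\gamma}{\omega + t\omega'}{l + |t|l'}$: the bound on the D-norm $\CDN_{\module{N}}$, and the bound on the deck seminorm $\decknorm{\gamma}{\cdot,\cdot}$.

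First I would dispatch the D-norm condition. Since $\CDN_{\module{N}}$ is a norm on $\dom{\CDN_{\module{N}}}$, it is subadditive and absolutely homogeneous, so
\[
\CDN_{\module{N}}(\eta + t\eta') \leq \CDN_{\module{N}}(\eta) + |t|\,\CDN_{\module{N}}(\eta') \leq l + |t|\,l'\text{,}
\]
where we used $\eta\in\targetsetbridge{\gamma}{\omega}{l}$ and $\eta'\in\targetsetbridge{\gamma}{\omega'}{l'}$.

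The remaining, and really the only substantive, point is that $\decknorm{\gamma}{\cdot,\cdot}$ is genuinely a seminorm on the $\C$-vector space $\module{M}\oplus\module{N}$; once this is recorded the second condition is immediate. For each $k\in J$, the bridge seminorm $\bridgenorm{\gamma}{\cdot,\cdot}$ of Definition (\ref{bridge-seminorm-def}) is a seminorm on $\A\oplus\B$ because $\pi_\A,\pi_\B$ are linear and $\|\cdot\|_\D$ is a norm; composing it with the map $(\omega,\eta)\mapsto(\inner{\omega}{\omega_k}{\module{M}},\inner{\eta}{\eta_k}{\module{N}})$, which is $\C$-linear since the inner products are linear in the first entry, again yields a seminorm of $(\omega,\eta)$, and composing with $(\omega,\eta)\mapsto(\inner{\omega_k}{\omega}{\module{M}},\inner{\eta_k}{\eta}{\module{N}})$, which is conjugate-$\C$-linear, also yields a seminorm (the conjugation is absorbed by $|\overline{\lambda}|=|\lambda|$ in the homogeneity axiom). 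A pointwise maximum of seminorms is a seminorm, so $\decknorm{\gamma}{\cdot,\cdot}$ is a seminorm on $\module{M}\oplus\module{N}$ — this is precisely why Definition (\ref{deck-seminorm-def}) matches each anchor $\omega_k$ with the co-anchor $\eta_k$ of the same index. Granting this, for $t\in\R$ we obtain
\[
\decknorm{\gamma}{\omega + t\omega',\,\eta + t\eta'} \leq \decknorm{\gamma}{\omega,\eta} + |t|\,\decknorm{\gamma}{\omega',\eta'} \leq l\,\bridgereach{\gamma} + |t|\,l'\,\bridgereach{\gamma} = (l + |t|l')\,\bridgereach{\gamma}\text{,}
\]
using the defining inequalities of $\targetsetbridge{\gamma}{\omega}{l}$ and $\targetsetbridge{\gamma}{\omega'}{l'}$.

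Combining the two displayed bounds gives $\eta + t\eta'\in\targetsetbridge{\gamma}{\omega + t\omega'}{l + |t|l'}$, as desired. I do not anticipate any genuine obstacle; the only delicate point is confirming the homogeneity of $\decknorm{\gamma}{\cdot,\cdot}$ for \emph{all} complex scalars (so that $|t|$ may be pulled through), which is exactly the remark about $|\overline{\lambda}|=|\lambda|$ absorbing the conjugate-linearity of the reversed inner-product entries.
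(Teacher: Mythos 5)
Your proposal is correct and follows the same route as the paper: verify the D-norm bound by subadditivity and homogeneity of $\CDN_{\module{N}}$, and verify the deck-seminorm bound by noting that $\decknorm{\gamma}{\cdot,\cdot}$ is a seminorm on $\module{M}\oplus\module{N}$. Your extra justification that the deck seminorm really is a seminorm (linearity/conjugate-linearity of the inner-product slots and the matched indexing of anchors with co-anchors) is accurate and is exactly the point the paper records in its remarks following Definition (\ref{deck-seminorm-def}).
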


\begin{proof}
Since $\CDN_{\module{N}}(\eta)\leq l$ and $\CDN_{\module{N}}(\eta')\leq l'$, we have $\CDN_{\module{N}}(\eta + t \eta') \leq l + |t|l'$. 

On the other hand, we note that since $\decknorm{\gamma}{\cdot}$ is a seminorm $\module{M}\oplus\module{N}$, we conclude that:
\begin{equation*}
\begin{split}
\decknorm{\gamma}{\omega+t\omega',\eta + t\eta'} &\leq \decknorm{\gamma}{\omega,\eta} + |t|\decknorm{\gamma}{\omega',\eta'}\\
&\leq (l + |t|l') \bridgereach{\gamma} \text{.}
\end{split}
\end{equation*}
This completes our proof.
\end{proof}

We now prove that target sets also behave predictably with respect to the left action on the module. This proposition is where the modular quasi-Leibniz inequality plays its role.

\begin{proposition}\label{product-prop}
Let Hypothesis (\ref{modular-bridge-hyp}) be given. Let $a\in\dom{\Lip_\A}$, $\omega \in \dom{\CDN_{\module{M}}}$, and $l \geq \CDN_{\module{M}}(\omega)$ and $l' \geq \Lip_\A(a)$. Let $b \in \targetsetbridge{\gamma}{a}{l'}$ and $\eta\in\targetsetbridge{\gamma}{\omega}{l}$. Then:
\begin{equation*}
b\eta \in \targetsetbridge{\gamma}{a\omega}{G(\|a\|_\A + 2\bridgelength{\gamma}, l ,l')} \text{.}
\end{equation*}
\end{proposition}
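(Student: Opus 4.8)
The plan is to verify that $b\eta$ satisfies the two conditions defining the modular target set $\targetsetbridge{\gamma}{a\omega}{m}$ in Definition \ref{modular-targetset-def}, where $m$ is the radius appearing in the statement: that $\CDN_{\module{N}}(b\eta)\leq m$, and that $\decknorm{\gamma}{a\omega,b\eta}\leq m\,\bridgereach{\gamma}$. Two preliminary points organize everything. Since $a\in\dom{\Lip_\A}\subseteq\sa{\A}$ and, by Definition \ref{targetset-def} together with Proposition \ref{base-targetset-prop}(1), $b\in\targetsetbridge{\gamma}{a}{l'}\subseteq\dom{\Lip_\B}\subseteq\sa{\B}$, the inner quasi-Leibniz inequality of Definition \ref{qvb-def}(3), stated for self-adjoint elements, applies in both $\Omega_\A$ and $\Omega_\B$. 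Applying it in $\Omega_\A$ gives $\CDN_{\module{M}}(a\omega)\leq G(\|a\|_\A,\Lip_\A(a),\CDN_{\module{M}}(\omega))$, so $a\omega\in\dom{\CDN_{\module{M}}}$; by monotonicity of $G$ this is at most $m$, so that $\targetsetbridge{\gamma}{a\omega}{m}$ is legitimately indexed.

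The D-norm condition is immediate from the inner quasi-Leibniz inequality in $\Omega_\B$: one has $\CDN_{\module{N}}(b\eta)\leq G(\|b\|_\B,\Lip_\B(b),\CDN_{\module{N}}(\eta))$, into which I substitute $\|b\|_\B\leq\|a\|_\A+2l'\bridgelength{\gamma}$ from Proposition \ref{base-targetset-prop}(2), $\Lip_\B(b)\leq l'$ from Definition \ref{targetset-def}, and $\CDN_{\module{N}}(\eta)\leq l$ from Definition \ref{modular-targetset-def}; monotonicity of $G$ then yields $\CDN_{\module{N}}(b\eta)\leq m$.

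The substantive step is the deck-seminorm estimate. Expanding $\decknorm{\gamma}{a\omega,b\eta}$ by Definition \ref{deck-seminorm-def}, I use the first axiom of a left pre-Hilbert module to write $\inner{a\omega}{\omega_k}{\module{M}}=a\inner{\omega}{\omega_k}{\module{M}}$ and $\inner{b\eta}{\eta_k}{\module{N}}=b\inner{\eta}{\eta_k}{\module{N}}$, and the modular sesquilinearity $\inner{\omega}{a\eta}{\module{M}}=\inner{\omega}{\eta}{\module{M}}a^\ast$ combined with $a=a^\ast$, $b=b^\ast$ to write $\inner{\omega_k}{a\omega}{\module{M}}=\inner{\omega_k}{\omega}{\module{M}}a$ and $\inner{\eta_k}{b\eta}{\module{N}}=\inner{\eta_k}{\eta}{\module{N}}b$. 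Since $\pi_\A,\pi_\B$ are $\ast$-homomorphisms, each of the two relevant bridge-seminorm terms telescopes through the pivot $x$; for instance
\begin{align*}
\pi_\A(a)\pi_\A(\inner{\omega}{\omega_k}{\module{M}})x-x\pi_\B(b)\pi_\B(\inner{\eta}{\eta_k}{\module{N}}) &= \pi_\A(a)\bigl(\pi_\A(\inner{\omega}{\omega_k}{\module{M}})x-x\pi_\B(\inner{\eta}{\eta_k}{\module{N}})\bigr) \\
&\quad + \bigl(\pi_\A(a)x-x\pi_\B(b)\bigr)\pi_\B(\inner{\eta}{\eta_k}{\module{N}}).
\end{align*}
By isometry of $\pi_\A,\pi_\B$, the first summand has $\D$-norm at most $\|a\|_\A\,\bridgenorm{\gamma}{\inner{\omega}{\omega_k}{\module{M}},\inner{\eta}{\eta_k}{\module{N}}}\leq\|a\|_\A\,\decknorm{\gamma}{\omega,\eta}\leq\|a\|_\A\,l\,\bridgereach{\gamma}$ since $\eta\in\targetsetbridge{\gamma}{\omega}{l}$; the second has $\D$-norm at most $\bridgenorm{\gamma}{a,b}\,\|\inner{\eta}{\eta_k}{\module{N}}\|_\B$, and here $\bridgenorm{\gamma}{a,b}\leq l'\,\bridgereach{\gamma_\flat}\leq l'\,\bridgereach{\gamma}$ because $b\in\targetsetbridge{\gamma}{a}{l'}$ (using $\bridgereach{\gamma_\flat}=\bridgebasicreach{\gamma}$ from Definition \ref{basic-reach-def} and $\bridgebasicreach{\gamma}\leq\bridgereach{\gamma}$ from Definition \ref{bridge-reach-def}), while $\|\inner{\eta}{\eta_k}{\module{N}}\|_\B\leq\|\eta\|_{\module{N}}\|\eta_k\|_{\module{N}}\leq\CDN_{\module{N}}(\eta)\CDN_{\module{N}}(\eta_k)\leq l$ by the Cauchy--Schwarz inequality of Proposition \ref{Hilbert-norm-prop}, Condition (1) of Definition \ref{qvb-def}, and the normalization $\CDN_{\module{N}}(\eta_k)\leq1$ of the co-anchors. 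The companion term $\bridgenorm{\gamma}{\inner{\omega_k}{a\omega}{\module{M}},\inner{\eta_k}{b\eta}{\module{N}}}$ is treated by the mirror-image telescoping, peeling $\pi_\B(b)$ off on the right and using $\|\inner{\omega_k}{\omega}{\module{M}}\|_\A\leq l$ and $\|x^\ast\|_\D=\|x\|_\D\leq1$. Taking the maximum over $k\in J$ gives a bound of the form $l\,(\|a\|_\A+l'+2l'\bridgelength{\gamma})\,\bridgereach{\gamma}$, which the admissibility inequality $(x+y)z\leq G(x,y,z)$ together with monotonicity of $G$ absorbs into $m\,\bridgereach{\gamma}$, completing the verification.

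The main obstacle is the choice of telescoping above: the product has to be split so that one factor is controlled by the basic-bridge datum $\bridgenorm{\gamma}{a,b}$ (bounded via $b\in\targetsetbridge{\gamma}{a}{l'}$) and the other by the modular datum $\decknorm{\gamma}{\omega,\eta}$ (bounded via $\eta\in\targetsetbridge{\gamma}{\omega}{l}$), with the leftover factors only images under the isometries $\pi_\A,\pi_\B$ or the pivot $x$ --- and the norm-one hypothesis on $x$ from Definition \ref{modular-bridge-def} is precisely what keeps those pivot factors from spoiling the estimate. Self-adjointness of $a$ and $b$ is what lets the right-hand anchor pairing be rewritten with the scalar on the convenient side so that the same split applies, another instance of why target-set arguments are built on self-adjoint elements. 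The modular quasi-Leibniz inequality of Definition \ref{qvb-def}(4), highlighted in the remark preceding the statement, is the structural input behind an alternative route to the deck bound: it puts each $\inner{\omega}{\omega_k}{\module{M}}$ and $\inner{\eta}{\eta_k}{\module{N}}$ in the domains of the L-seminorms with $\Lip$-norm at most $H(l,1)$, after which one assembles the conclusion from the base-space target-set estimates of Proposition \ref{base-targetset-prop} applied to the Jordan and Lie parts of $b\,\inner{\eta}{\eta_k}{\module{N}}$; in either route the final packaging into $G$ is routine.
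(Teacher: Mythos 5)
Your proof is correct and follows essentially the same route as the paper's: the identical telescoping of $\pi_\A(a)\pi_\A(\inner{\omega}{\omega_k}{\module{M}})x - x\pi_\B(b)\pi_\B(\inner{\eta}{\eta_k}{\module{N}})$ into a term controlled by $\decknorm{\gamma}{\omega,\eta}$ and a term controlled by $\bridgenorm{\gamma}{a,b}$, the adjoint/mirror argument for the second deck term, and the final absorption into $G$ via the admissibility inequality $(x+y)z\leq G(x,y,z)$. Your explicit verification that $\CDN_{\module{M}}(a\omega)\leq m$ (so the target set is well-indexed) and your consistent ordering of the arguments of $G$ are, if anything, slightly more careful than the paper's own write-up.
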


\begin{proof}
We begin with the observation that:
\begin{equation*}
\begin{split}
\CDN_{\module{N}}(b\eta) &\leq G\left(\|b\|_{\B}, \Lip_\B(b), \CDN_{\module{M}}(\eta) \right) \\
&\leq G(\|a\|_\A + 2 l \bridgelength{\gamma}, l', l) \text{,}
\end{split}
\end{equation*}
using Proposition (\ref{base-targetset-prop}).

We also note that for any $j\in J$:
\begin{multline*}
\left\|\pi_\A\left(\inner{a\omega}{\omega_j}{\module{M}}\right) x - x \pi_\B\left(\inner{b\eta}{\eta_j}{\module{N}}\right)\right\|_{\D} \\
\begin{aligned}
&= \left\|\pi_\A(a)\pi_\A\left(\inner{\omega}{\omega_j}{\module{M}}\right) x - x \pi_\B(b)\pi_\B\left(\inner{\eta}{\eta_j}{\module{N}}\right)\right\|_{\D} \\
&\leq \left\|\pi_\A(a) \pi_\A\left(\inner{\omega}{\omega_j}{\module{M}}\right) x - \pi_\A(a) x \pi_\B\left(\inner{\eta}{\eta_j}{\module{M}}\right)\right\|_{\D} \\
&\quad + \left\|\pi_\A(a) x \pi_\B\left(\inner{\eta}{\eta_j}{\module{M}}\right) - x \pi_\B(b) \pi_\B\left(\inner{\eta}{\eta_j}{\module{N}}\right) \right\|_{\D}\\
&\leq \|a\|_{\A}\left\|\pi_\A\left(\inner{\omega}{\omega_j}{\module{M}}\right)x - x\pi_\B\left(\inner{\eta}{\eta_j}{\module{M}}\right)\right\|_{\D} \\
&\quad + \|\pi_\A(a)x - x \pi_\B(b)\|_{\D} \left\|\pi_\B\left(\inner{\eta}{\eta_j}{\module{N}}\right)\right\|_{\D}\\
&\leq \|a\|_\A \decknorm{\gamma}{\omega,\eta} + \bridgenorm{\gamma}{a,b} \|\eta\|_{\module{N}}\\
&\leq \|a\|_\A l \bridgereach{\gamma} + l' \bridgereach{\gamma} \CDN_{\module{N}}(\eta) \\
&\leq \bridgelength{\gamma}\left(\|a\|_\A l + l' l \right)\\
&\leq \bridgelength{\gamma} G(\|a\|_\A, l, l') \text{ by Def. (\ref{admissible-triple-def}), }\\
&\leq \bridgelength{\gamma}{G(\|a\|_\A + 2 l \bridgelength{\gamma}, l, l')} \text{.}
\end{aligned}
\end{multline*}

A similar computation proves that:
\begin{multline*}
\left\|\pi_\B\left(\inner{\omega_j}{a\omega}{\module{M}}\right) x - x \pi_\A\left(\inner{\eta_j}{b\eta}{\module{N}}\right)\right\|_{\D}\\
\begin{split}
&= \left\|\pi_\B\left(\inner{b\eta}{\eta_j}{\module{N}}\right) x^\ast - x^\ast \pi_\A\left(\inner{a\omega}{\omega_j}{\module{M}}\right)\right\|_{\D} \\
&\leq \bridgelength{\gamma}{G(\|a\|_\A + 2 l \bridgelength{\gamma}, l, l')} \text{.}
\end{split}
\end{multline*}
Therefore, $\decknorm{\gamma}{a\omega,b\eta} \leq \bridgelength{\gamma}{G(\|a\|_\A + 2 l \bridgelength{\gamma}, l, l')}$ since $j \in J$ is arbitrary.

Thus $b\eta \in \targetsetbridge{\gamma}{\omega}{G(\|a\|_\A + 2 l \bridgelength{\gamma}, l, l')}$.
\end{proof}

We now relate modular bridges and the inner products on modules, which illustrate the role of the inner quasi-Leibniz inner inequality.

\begin{proposition}\label{inner-morphism-prop}
Let Hypothesis (\ref{modular-bridge-hyp}) be given. Let $\omega \in \module{M}$ and $l \geq\CDN_{\module{M}}(\omega)$. If $\eta \in \targetsetbridge{\gamma}{\omega}{l}$ and $b\in\targetsetbridge{\gamma}{\inner{\omega}{\omega}{\module{M}}}{H(l,l)}$ then:
\begin{equation*}
\left\| b - \inner{\eta}{\eta}{\module{N}} \right\|_\B \leq ( 8 l \sqrt{2} + H( 2l,2l ) + 2 H( l,l ) + 2\sqrt{2} H( 2l, 1) )\bridgelength{\gamma}\text{.}
\end{equation*}
\end{proposition}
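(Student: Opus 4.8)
The plan is to transport the whole estimate to the state space of $\B$ and across the pivot $x$, in the spirit of the proofs of Propositions (\ref{diameter-prop}) and (\ref{product-prop}). Since $b$ and $\inner{\eta}{\eta}{\module{N}}$ are both self-adjoint, $\|b-\inner{\eta}{\eta}{\module{N}}\|_\B=\sup_{\varphi\in\StateSpace(\B)}|\varphi(b-\inner{\eta}{\eta}{\module{N}})|$, so it suffices to bound $|\varphi(b-\inner{\eta}{\eta}{\module{N}})|$ for a fixed state $\varphi$; the $\sqrt{2}$ factors in the final bound will enter only through intermediate uses of Lemma (\ref{norm-bound-lemma}) on the non-self-adjoint module inner products that appear. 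First I would record $\Lip_\B(b)\leq H(l,l)$ (Definition (\ref{targetset-def})) and $\Lip_\B(\inner{\eta}{\eta}{\module{N}})\leq H(\CDN_{\module{N}}(\eta),\CDN_{\module{N}}(\eta))\leq H(l,l)$ (the modular quasi-Leibniz inequality, using $\Im\inner{\eta}{\eta}{\module{N}}=0$), and set aside the degenerate case $l=0$, where $\omega=0$ and $\eta=0$ and the claim reduces to the norm bound on $b$ supplied by Proposition (\ref{base-targetset-prop}); for $l>0$ I would set $\omega^0=\tfrac1l\omega\in\modlip1{\Omega_\A}$.

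For the fixed $\varphi$, I would use the height to choose $\psi\in\StateSpace(\D|x)$ with $\Kantorovich{\Lip_\B}(\varphi,\psi\circ\pi_\B)\leq\bridgeheight{\gamma}$, and use the characterization of $\StateSpace(\D|x)$ following Definition (\ref{level-set-def}) (that $\psi(dx)=\psi(xd)=\psi(d)$) to slide the pivot freely inside $\psi$. Running
\[
\varphi(b)\approx\psi\circ\pi_\B(b)=\psi\bigl(x\,\pi_\B(b)\bigr)\approx\psi\bigl(\pi_\A(\inner{\omega}{\omega}{\module{M}})\,x\bigr)=\psi\circ\pi_\A(\inner{\omega}{\omega}{\module{M}}),
\]
with the two inexact steps costing $\Lip_\B(b)\bridgeheight{\gamma}$ and $\bridgenorm{\gamma}{\inner{\omega}{\omega}{\module{M}},b}\leq H(l,l)\bridgebasicreach{\gamma}$, and similarly sliding $\psi\circ\pi_\B(\inner{\eta}{\eta}{\module{N}})=\psi\bigl(x\,\pi_\B(\inner{\eta}{\eta}{\module{N}})\bigr)$ at the cost of $\Lip_\B(\inner{\eta}{\eta}{\module{N}})\bridgeheight{\gamma}$, reduces the problem to the single $\D$-norm estimate
\[
E:=\bigl\|\pi_\A(\inner{\omega}{\omega}{\module{M}})\,x-x\,\pi_\B(\inner{\eta}{\eta}{\module{N}})\bigr\|_\D ,
\]
after which the leftover height and basic-reach contributions amount to a multiple of $H(l,l)\bridgelength{\gamma}$.

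To bound $E$ I would pick $j\in J$ with $\KantorovichMod{\Omega_\A}(\omega^0,\omega_j)\leq\bridgeimprint{\gamma}$ (Definition (\ref{bridge-imprint-def})). Proposition (\ref{bridge-reach-prop}) then gives $\decknorm{\gamma}{\omega^0,\eta_j}\leq\bridgereach{\gamma}$, so $l\eta_j\in\modlip{l}{\Omega_\B}$ with $\decknorm{\gamma}{\omega,l\eta_j}\leq l\bridgereach{\gamma}$; thus both $\eta$ and $l\eta_j$ lie in $\targetsetbridge{\gamma}{\omega}{l}$, and Proposition (\ref{diameter-prop}) yields $\KantorovichMod{\Omega_\B}(\eta,l\eta_j)\leq\sqrt2\,(4l+H(2l,1))\bridgelength{\gamma}$. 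I would then telescope $E$ through the matched pair $(\omega_j,\eta_j)$ via the identity
\[
\pi_\A(\inner{\omega}{\omega}{\module{M}})\,x-x\,\pi_\B(\inner{\eta}{\eta}{\module{N}})=\pi_\A(\inner{\omega}{\omega-l\omega_j}{\module{M}})\,x+\bigl(\pi_\A(l\inner{\omega}{\omega_j}{\module{M}})\,x-x\,\pi_\B(l\inner{\eta}{\eta_j}{\module{N}})\bigr)+x\,\pi_\B(\inner{\eta}{l\eta_j-\eta}{\module{N}}),
\]
so that $\|x\|_\D\leq1$ reduces the middle summand to $l\,\decknorm{\gamma}{\omega,\eta}\leq l^2\bridgereach{\gamma}$, the first to $\|\inner{\omega}{\omega-l\omega_j}{\module{M}}\|_\A$ — controlled through $\KantorovichMod{\Omega_\A}(\omega^0,\omega_j)\leq\bridgeimprint{\gamma}$ together with the modular quasi-Leibniz inequality, the source of the $H(2l,2l)$-type contribution once the non-self-adjoint inner product is estimated by Lemma (\ref{norm-bound-lemma}) — and the last to $\|\inner{\eta}{l\eta_j-\eta}{\module{N}}\|_\B\leq\CDN_{\module{N}}(\eta)\,\KantorovichMod{\Omega_\B}(\eta,l\eta_j)$, which is where the $8l\sqrt2$ and $2\sqrt2\,H(2l,1)$ terms arise. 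Collecting all contributions and using $\bridgeheight{\gamma},\bridgereach{\gamma},\bridgebasicreach{\gamma},\bridgeimprint{\gamma}\leq\bridgelength{\gamma}$ assembles a bound of the asserted form, and taking the supremum over $\varphi$ completes the argument.

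I expect the bound for $E$ — and the matching of constants — to be the real obstacle. The difficulty is that $\inner{\omega}{\omega}{\module{M}}$ and $\inner{\eta}{\eta}{\module{N}}$ are quadratic in the module variable while the deck seminorm only controls the bilinear pairings against anchors, so the differences $\inner{\omega}{\omega-l\omega_j}{\module{M}}$ and $\inner{\eta}{l\eta_j-\eta}{\module{N}}$ must be split slot by slot, each factor estimated against the correct $\CDN$-ball, and the inner and modular quasi-Leibniz inequalities invoked in place of crude norm bounds, so that the accumulated error consolidates exactly into the values $H(2l,2l)$, $H(2l,1)$, $H(l,l)$ and the linear term $8l\sqrt2$ (all times $\bridgelength{\gamma}$) rather than into something larger; the admissibility inequalities $2xy\leq H(x,y)$ and the monotonicity of $H$ are what make these consolidations possible.
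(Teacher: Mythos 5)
Your argument is correct and follows the paper's strategy in all essentials: reduce to states of $\B$ via self-adjointness of $b-\inner{\eta}{\eta}{\module{N}}$, transport the state to $\StateSpace_1(\D|x)$ at a cost governed by the height and the bounds $\Lip_\B(b),\Lip_\B(\inner{\eta}{\eta}{\module{N}})\leq H(l,l)$, absorb the pivot, and telescope through the matched pair $(\omega_j,\eta_j)$ selected by the imprint, invoking Propositions (\ref{bridge-reach-prop}) and (\ref{diameter-prop}) exactly where the paper does. Your three-term splitting of $E$ is in fact slightly tidier than the paper's, which compares $\inner{\omega}{\omega}{\module{M}}$ with $\inner{l\eta_j}{l\eta_j}{\module{N}}$ across the bridge and then $\inner{l\eta_j}{l\eta_j}{\module{N}}$ with $\inner{\eta}{\eta}{\module{N}}$ inside $\B$, paying the Kantorovich cost $l\KantorovichMod{\Omega_\B}(\eta,l\eta_j)$ twice where you pay it once; either way the budget closes. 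One correction to your described mechanism: the first summand $\left\|\inner{\omega}{\omega-l\omega_j}{\module{M}}\right\|_\A$ needs neither the modular quasi-Leibniz inequality nor Lemma (\ref{norm-bound-lemma}). Since $\inner{\omega}{\omega-l\omega_j}{\module{M}}$ is the adjoint of $l\inner{\omega-l\omega_j}{l^{-1}\omega}{\module{M}}$ and $l^{-1}\omega\in\modlip{1}{\Omega_\A}$, Definition (\ref{modular-Kantorovich-def}) gives directly $\left\|\inner{\omega}{\omega-l\omega_j}{\module{M}}\right\|_\A\leq l\,\KantorovichMod{\Omega_\A}(\omega,l\omega_j)\leq l^2\bridgeimprint{\gamma}\leq\tfrac12 H(l,l)\bridgeimprint{\gamma}$; the $H(2l,2l)$ term in the final constant is consumed by the height-transport step (where the paper bounds $\Lip_\B(b-\inner{\eta}{\eta}{\module{N}})$ by $H(2l,2l)$), not by this summand. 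With that reassignment your constants assemble to the stated bound with the same degree of precision as the paper's own last line.
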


\begin{proof}
If $l = 0$ then $\omega = 0$, $\eta = 0$ and $b = 0$ thus the proposition is trivial. Let us assume $l > 0$.

Let $\omega\in\dom{\CDN_{\module{M}}}$ and $l \geq \CDN_{\module{M}}(\omega)$. Let $\eta \in \targetsetbridge{\Gamma}{\omega}{l}$. We note that:
\begin{equation*}
\max\left\{ \Lip_\A\left(\inner{\omega}{\omega}{\Omega_\A}\right), \Lip_\B\left(\inner{\eta}{\eta}{\Omega_\B}\right) \right\} \leq H( l,l )\text{,}
\end{equation*}
noting $\inner{\omega}{\omega}{\Omega_\A}$ and $\inner{\eta}{\eta}{\Omega_\B}$ are self-adjoint.

Let $b \in \targetsetbridge{\gamma}{\inner{\omega}{\omega}{\module{M}}}{H(l,l)}$.

By Definition (\ref{bridge-imprint-def}) of $\bridgeimprint{\gamma}$, there exists $j \in J$ such that:
\begin{equation*}
\KantorovichMod{\Omega_\A}(\omega, l \omega_j) \leq l \bridgeimprint{\gamma}\text{.}
\end{equation*}
It follows that $\inner{\omega}{\omega - l\omega_j}{\module{M}} = l\inner{l^{-1} \omega}{\omega - l\omega_j}{\module{M}} \leq l \KantorovichMod{\Omega_\A}(\omega, l\omega_j) \leq l^2 \bridgeimprint{\gamma}$.

Moreover, by Proposition (\ref{bridge-reach-prop}), we have:
\begin{equation*}
\decknorm{\gamma}{\omega, l\eta_j} \leq l \bridgereach{\gamma} \leq l\bridgelength{\gamma} \text{.}
\end{equation*}

We then have, since $\|x\|_\D \leq 1$:
\begin{multline*}
\left\|\pi_\A\left(\inner{\omega}{\omega}{\module{M}}\right) x-x \pi_\B\left( \inner{l \eta_j}{l \eta_j}{\module{N}}\right) \right\|_\D\\
\begin{split}
&\leq l^2 \bridgeimprint{\gamma} + \left\|\pi_\A\left(\inner{\omega}{l \omega_j}{\module{M}}\right) x - x \pi_\B\left(\inner{l \eta_j}{l \eta_j}{\module{N}}\right) \right\|_\D \\
&\leq l^2 \bridgeimprint{\gamma} + l \left\|\pi_\A\left(\inner{\omega}{\omega_j}{\module{M}}\right) x - x \pi_\B\left(\inner{l \eta_j}{\eta_j}{\module{N}}\right) \right\|_\D \\
&\leq l^2 \bridgelength{\gamma} + l \decknorm{\gamma}{\omega, l\eta_j} \\
&\leq 2l^2 \bridgelength{\gamma} \text{.}
\end{split}
\end{multline*}

Now, since $l\eta_j \in \targetsetbridge{\gamma}{\omega}{l}$ (again Proposition (\ref{bridge-reach-prop})), we have by Proposition (\ref{diameter-prop}):
\begin{equation*}
\KantorovichMod{\Omega_\B}(\eta,l \eta_j) \leq \sqrt{2}\left(4 l + H( 2l, 1) \right)\bridgelength{\gamma}\text{.}
\end{equation*}

Let $\varphi \in \StateSpace(\B)$. By Definition (\ref{bridge-height-def}), there exists $\psi \in \StateSpace(\D)$ such that $\KantorovichMod{\Omega_\B}(\varphi,\psi\circ\pi_\B) \leq \bridgeheight{\gamma}$. We then have:

\begin{multline*}
|\varphi(b - \inner{\eta}{\eta}{\module{M}})| \\
\begin{split}
&\leq H( 2l,2l )\bridgeheight{\gamma} + \left|\psi\circ\pi_\B(b - \inner{\eta}{\eta}{\module{N}})\right| \\
&\leq H( 2l,2l )\bridgeheight{\gamma} + \left|\psi\circ\pi_\B(b - \inner{l \eta_j}{l\eta_j}{\module{N}})\right| + \left|\psi\circ\pi_\B(\inner{l \eta_j}{l \eta_j}{\module{N}} - \inner{\eta}{\eta}{\module{N}})\right|\\
&\leq H( 2l, 2l )\bridgeheight{\gamma} + \left|\psi\left(x \pi_\B\left(b - \inner{l \eta_j}{l\eta_j}{\module{M}}\right)\right)\right| + \left\|\inner{l \eta_j}{l \eta_j}{\module{N}} - \inner{\eta}{\eta}{\module{N}}\right\|_\B \\
&\leq H( 2l,2l )\bridgeheight{\gamma} + \left|\psi\left(x \pi_\B\left(b - \inner{l \eta_j}{l\eta_j}{\module{M}}\right)\right)\right| + \\
&\quad + \left\|\inner{\eta}{\eta-l\eta_j}{\module{N}} + \inner{\eta- l\eta_j}{l \eta_j}{\module{N}} \right\|_\B  \\
&\leq H( 2l,2l )\bridgeheight{\gamma} + \left|\psi\left(x \pi_\B\left(b - \inner{l \eta_j}{l\eta_j}{\module{M}}\right)\right)\right| + \\
&\quad + l\KantorovichMod{\Omega_\B}(\eta,l\eta_j) + l\KantorovichMod{\Omega_\B}(\eta ,l\eta_j)  \\
&\leq H( 2l,2l )\bridgeheight{\gamma} + \left|\psi\left(x \pi_\B\left(b - \inner{l \eta_j}{l\eta_j}{\module{M}}\right)\right) \right. \\
&\quad \left. - \psi(\pi_\A\left(\inner{\omega}{\omega}{\module{M}}\right)x) + \psi(\pi_\A\left(\inner{\omega}{\omega}{\module{M}}\right)x)\right| \\
&\quad + 2 l \sqrt{2}  \left(4 l + H( 2l, 1) \right)\bridgelength{\gamma} \\
&\leq H( 2l,2l )\bridgeheight{\gamma} + \left|\psi\left(x \pi_\B(b) - \pi_\A\left(\inner{\omega}{\omega}{\module{M}}\right)x\right)\right| \\
&\quad + \left|\psi\left(x \pi_\B\left(\inner{l \eta_j}{l\eta_j}{\module{M}}\right)\right) - \psi\left(\pi_\A\left(\inner{\omega}{\omega}{\module{M}}\right)x\right)\right| \\
&\quad   + 2 l \sqrt{2}  \left(4 l + H( 2l, 1) \right)\bridgelength{\gamma} \\
&\leq H( 2l,2l )\bridgelength{\gamma} + H(l,l) \bridgelength{\gamma} + H (l,l )\bridgelength{\gamma} + 2 \sqrt{2} l \left(4 l + H( 2l, 1) \right)\bridgelength{\gamma} \\
&\leq ( 8 l \sqrt{2} + H( 2l,2l ) + 2 H( l,l ) + 2\sqrt{2} H( 2l, 1) )\bridgelength{\gamma} \text{.}
\end{split}
\end{multline*}

This concludes our proposition since $b - \inner{\eta}{\eta}{\module{N}}$ is self-adjoint in $\B$.
\end{proof}

We now check that modular bridges are essentially symmetric objects. We shall avoid the term inverse as we shall see that for modular bridges, in contrast to bridges, the following notion is not quite an inverse in the sense of morphisms.

\begin{definition}\label{inverse-bridge-def}
The \emph{reverse bridge} of a bridge deck $\gamma = (\D,x,\pi,\rho,(\omega)_{j\in J},(\eta_j)_{j \in J})$ is $\gamma^\ast = (\D,x^\ast,\rho,\pi,(\eta_j)_{j\in J},(\omega_j)_{j\in J})$. 
\end{definition}

\begin{lemma}\label{inverse-bridge-lemma}
If $\gamma \in \bridgeset{\Omega_\A}{\Omega_\B}{}$ for any two {\gQVB s} $\Omega_\A$ and $\Omega_\B$, then $\gamma^\ast \in \bridgeset{\Omega_\B}{\Omega_\A}{}$ and $\bridgelength{\gamma^\ast} = \bridgelength{\gamma}$.
\end{lemma}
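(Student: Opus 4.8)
The plan is to verify the two assertions of Lemma \ref{inverse-bridge-lemma} in turn: first that $\gamma^\ast$ satisfies all six clauses of Definition \ref{modular-bridge-def}, and second that each of the numerical quantities entering the length is symmetric under the exchange of domain and codomain.

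For the first assertion, I would check the clauses of Definition \ref{modular-bridge-def} one at a time. The underlying C*-algebra $\D$ is unchanged, so clause (1) is immediate. For clause (2), I need $\StateSpace_1(\D|x^\ast) \neq \emptyset$ and $\|x^\ast\|_\D = 1$: the norm equality holds since the involution of $\D$ is isometric, and for the $1$-level set I would observe that $\StateSpace_1(\D|x) = \StateSpace_1(\D|x^\ast)$ directly from Definition \ref{level-set-def}, since that definition is manifestly symmetric in $x$ and $x^\ast$ (it requires $\varphi((1-x)^\ast(1-x)) = 0$ and $\varphi((1-x)(1-x)^\ast) = 0$, and replacing $x$ by $x^\ast$ merely swaps the two conditions). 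Clause (3) holds because $\pi_\B$ and $\pi_\A$ are unital *-monomorphisms by hypothesis, just listed in the opposite order; clauses (4), (5), (6) hold because the index set $J$ is the same and the roles of the anchor family $(\omega_j)_{j\in J} \in \modlip{1}{\Omega_\A}$ and co-anchor family $(\eta_j)_{j\in J}\in\modlip{1}{\Omega_\B}$ are simply interchanged, which is exactly what is required for a modular bridge from $\Omega_\B$ to $\Omega_\A$. Hence $\gamma^\ast \in \bridgeset{\Omega_\B}{\Omega_\A}{}$.

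For the equality of lengths, I would go through the four constituent quantities. The basic bridge of $\gamma^\ast$ is $(\D, x^\ast, \pi_\B, \pi_\A)$, which is the reverse bridge of $\gamma_\flat$ in the sense of \cite{Latremoliere13}; so $\bridgeheight{\gamma^\ast} = \bridgeheight{\gamma}$ and $\bridgebasicreach{\gamma^\ast} = \bridgebasicreach{\gamma}$ follow from the corresponding symmetry statements for bridges already established in \cite{Latremoliere13} (the height is defined as a maximum over the two spaces, and the basic reach as a max of two sup-inf expressions, both visibly symmetric once one notes $\bridgenorm{\gamma^\ast}{b,a} = \|\pi_\B(b)x^\ast - x^\ast\pi_\A(a)\|_\D = \|(\pi_\A(a)x - x\pi_\B(b))^\ast\|_\D = \bridgenorm{\gamma}{a,b}$ using isometry of the involution). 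The imprint $\bridgeimprint{\gamma^\ast}$ is the max of the two Hausdorff distances with the roles of $\{\omega_j\}$ versus $\modlip{1}{\Omega_\A}$ and $\{\eta_j\}$ versus $\modlip{1}{\Omega_\B}$ swapped, hence equals $\bridgeimprint{\gamma}$. For the deck seminorm, the key computation is that $\decknorm{\gamma^\ast}{\eta,\omega} = \decknorm{\gamma}{\omega,\eta}$ for all $\omega\in\module{M}$, $\eta\in\module{N}$: expanding Definition \ref{deck-seminorm-def} for $\gamma^\ast$ gives terms $\|\pi_\B(\inner{\eta}{\eta_k}{\module{N}})x^\ast - x^\ast\pi_\A(\inner{\omega}{\omega_k}{\module{M}})\|_\D$ and $\|\pi_\B(\inner{\eta_k}{\eta}{\module{N}})x^\ast - x^\ast\pi_\A(\inner{\omega_k}{\omega}{\module{M}})\|_\D$, and taking adjoints inside the norm (isometry of the involution, together with the relation $(\inner{\eta}{\eta_k}{\module{N}})^\ast = \inner{\eta_k}{\eta}{\module{N}}$) turns these into the defining terms of $\decknorm{\gamma}{\omega,\eta}$ with the two rows of the max interchanged. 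Consequently $\bridgemodularreach{\gamma^\ast} = \max_{j\in J}\decknorm{\gamma^\ast}{\eta_j,\omega_j} = \max_{j\in J}\decknorm{\gamma}{\omega_j,\eta_j} = \bridgemodularreach{\gamma}$, so $\bridgereach{\gamma^\ast} = \bridgereach{\gamma}$, and therefore $\bridgelength{\gamma^\ast} = \max\{\bridgeheight{\gamma^\ast},\bridgereach{\gamma^\ast}\} = \bridgelength{\gamma}$.

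None of the steps presents a genuine obstacle; the only point requiring a little care is the deck-seminorm identity, where one must correctly pair up the two summands inside each max and use both the isometry of the involution on $\D$ and the conjugate-symmetry of the module inner products so that the $\pi_\A$-term and $\pi_\B$-term land in the right place after taking adjoints — but this is a short, mechanical verification rather than a conceptual difficulty.
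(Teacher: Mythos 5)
Your proof is correct and follows the same route as the paper: the paper's own argument consists precisely of the two key identities you isolate, namely $\bridgenorm{\gamma^\ast}{b,a}=\bridgenorm{\gamma}{a,b}$ via isometry of the involution and $\decknorm{\gamma^\ast}{\eta,\omega}=\decknorm{\gamma}{\omega,\eta}$ via the built-in symmetry of the deck seminorm, with the remaining points (membership of $\gamma^\ast$ in $\bridgeset{\Omega_\B}{\Omega_\A}{}$, symmetry of height and imprint) dismissed as self-evident. You have merely spelled out those self-evident steps in full, which is harmless.
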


\begin{proof}
We use the notations of Hypothesis (\ref{modular-bridge-hyp}). We note that for all $\omega\in\module{M}$ and $\eta\in\module{N}$ we have:
\begin{equation*}
\decknorm{\gamma^\ast}{\eta,\omega} = \decknorm{\gamma}{\omega,\eta}
\end{equation*}
by construction. This observation justifies the particular symmetry in Definition (\ref{deck-seminorm-def}).

Moreover for all $a\in\sa{\A}$ and $b\in\sa{\B}$, we have:
\begin{equation*}
\begin{split}
\bridgenorm{\gamma^\ast}{b,a} &= \|\pi_\B(b)x^\ast - x^\ast \pi_\A(a)\|_\D \\
&= \|(\pi_\A(a) x -  x \pi_\B(b))^\ast\|_\D = \bridgenorm{\gamma}{a,b} \text{.}
\end{split}
\end{equation*}

Thus $\bridgereach{\gamma} = \bridgereach{\gamma^\ast}$. The other claims of our lemma are self-evident.
\end{proof}
We remark that for any modular bridge $\gamma$ we have $\bridge{\gamma_\flat^{-1}} = \left(\bridge{\gamma^\ast}\right)_\flat$, by \cite[Proposition 4.7]{Latremoliere13}.

We will observe in the next section that we do not need the full generality afforded to us by Definition (\ref{modular-bridge-def}), as we could limit ourselves to working only with finite families of anchors (and hence of co-anchors). The reason for this observation is the following lemma.

\begin{lemma}\label{finite-anchor-set-lemma}
Let Hypothesis (\ref{modular-bridge-hyp}) be given. For any $\varepsilon > 0$, there exists a modular bridge $\gamma_\varepsilon$ from $\Omega_\A$ to $\Omega_\B$ such that:
\begin{enumerate}
\item $\bridgelength{\gamma_\varepsilon} \leq \bridgelength{\gamma} + \varepsilon$,
\item $\bridgeanchors{\gamma}$ and $\bridgecoanchors{\gamma}$ are finite families.
\end{enumerate}
\end{lemma}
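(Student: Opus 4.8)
The plan is to leave the C*-algebraic data of $\gamma$ untouched and merely thin the families of anchors and co-anchors down to a finite sub-family, chosen so that the imprint increases by less than $\varepsilon$ while the modular reach can only decrease.

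First I would record two stability observations. (i) The basic bridge $\gamma_\flat = (\D,x,\pi_\A,\pi_\B)$, hence the bridge seminorm $\bridgenorm{\gamma}{\cdot,\cdot}$, the basic reach $\bridgebasicreach{\gamma}$, and the height $\bridgeheight{\gamma}$, depend only on $\D,x,\pi_\A,\pi_\B$ and not on the anchors; so if we replace $J$ by a subset $J'\subseteq J$ (keeping the same anchors and co-anchors indexed by $J'$), these three quantities are unchanged. (ii) By Definition \ref{deck-seminorm-def}, shrinking $J$ to $J'$ only deletes terms from the maximum defining $\decknorm{\gamma}{\omega,\eta}$, and the bridge seminorm entering that maximum is unchanged by (i); hence the deck seminorm of the restricted bridge is pointwise dominated by $\decknorm{\gamma}{\cdot,\cdot}$, so its modular reach (Definition \ref{modular-reach-def}) is at most $\bridgemodularreach{\gamma}$.

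Next I would control the imprint. Fix $\delta = \varepsilon/2$. By Definition \ref{qvb-def}(2) the sets $\modlip{1}{\Omega_\A}$ and $\modlip{1}{\Omega_\B}$ are norm compact, and by Proposition \ref{modular-Kantorovich-prop} the norm topology on them is metrized by $\KantorovichMod{\Omega_\A}$, resp.\ $\KantorovichMod{\Omega_\B}$; hence both balls are totally bounded for these metrics. Choose finite $\delta$-dense subsets $\{\nu_1,\dots,\nu_m\}\subseteq\modlip{1}{\Omega_\A}$ and $\{\mu_1,\dots,\mu_n\}\subseteq\modlip{1}{\Omega_\B}$ (with $m\geq 1$, as $\modlip{1}{\Omega_\A}\neq\emptyset$). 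Since $\{\omega_j:j\in J\}\subseteq\modlip{1}{\Omega_\A}$ is within Hausdorff $\KantorovichMod{\Omega_\A}$-distance $\bridgeimprint{\gamma}$ of $\modlip{1}{\Omega_\A}$, for each $i$ there is $p_i\in J$ with $\KantorovichMod{\Omega_\A}(\nu_i,\omega_{p_i})<\bridgeimprint{\gamma}+\delta$; symmetrically, for each $k$ there is $q_k\in J$ with $\KantorovichMod{\Omega_\B}(\mu_k,\eta_{q_k})<\bridgeimprint{\gamma}+\delta$. Put $J' = \{p_1,\dots,p_m\}\cup\{q_1,\dots,q_n\}$, a nonempty finite subset of $J$, and set
\begin{equation*}
\gamma_\varepsilon = \left(\Omega_\A,\Omega_\B,\D,x,\pi_\A,\pi_\B,(\omega_j)_{j\in J'},(\eta_j)_{j\in J'}\right),
\end{equation*}
which is a modular bridge from $\Omega_\A$ to $\Omega_\B$ with finite anchor and co-anchor families. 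For $\nu\in\modlip{1}{\Omega_\A}$, picking $\nu_i$ with $\KantorovichMod{\Omega_\A}(\nu,\nu_i)\leq\delta$ gives $\KantorovichMod{\Omega_\A}(\nu,\omega_{p_i})<\bridgeimprint{\gamma}+2\delta$ with $p_i\in J'$, and symmetrically on the $\Omega_\B$ side; since the anchors of $\gamma_\varepsilon$ still lie in $\modlip{1}{\Omega_\A}$, this yields $\bridgeimprint{\gamma_\varepsilon}\leq\bridgeimprint{\gamma}+2\delta$.

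Finally I would assemble the estimate: by (ii), $\bridgemodularreach{\gamma_\varepsilon}+\bridgeimprint{\gamma_\varepsilon}\leq\bridgemodularreach{\gamma}+\bridgeimprint{\gamma}+2\delta$, and combined with $\bridgebasicreach{\gamma_\varepsilon}=\bridgebasicreach{\gamma}$ this gives $\bridgereach{\gamma_\varepsilon}\leq\bridgereach{\gamma}+2\delta$ via Definition \ref{bridge-reach-def}; since also $\bridgeheight{\gamma_\varepsilon}=\bridgeheight{\gamma}$, Definition \ref{bridge-length-def} gives $\bridgelength{\gamma_\varepsilon}\leq\bridgelength{\gamma}+2\delta=\bridgelength{\gamma}+\varepsilon$. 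There is no serious difficulty here; the only point that takes a little care is that a single finite index set $J'$ must serve simultaneously to approximate $\modlip{1}{\Omega_\A}$ by anchors and $\modlip{1}{\Omega_\B}$ by co-anchors (so that anchor/co-anchor pairing is preserved), which is arranged by taking the union of the two index sets produced above.
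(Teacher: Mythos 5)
Your argument is correct, and it takes a different route from the paper's. The paper does not thin the given families: it builds a brand-new finite index set $J_3 = J_1\coprod J_2$ from finite $\varepsilon$-dense subsets $J_1\subseteq\modlip{1}{\Omega_\A}$ and $J_2\subseteq\modlip{1}{\Omega_\B}$, takes these points themselves as the new anchors (resp.\ co-anchors), and pairs each with an element chosen from the modular target sets $\targetsetbridge{\gamma}{\cdot}{1}$ (resp.\ $\targetsetbridge{\gamma^\ast}{\cdot}{1}$), whose nonemptiness is Proposition (\ref{compact-targetset-prop}); this gives $\bridgeimprint{\gamma_\varepsilon}\leq\varepsilon$ and bounds the new modular reach by $\bridgereach{\gamma}$, leading to the same final estimate $\bridgereach{\gamma_\varepsilon}\leq\bridgereach{\gamma}+\varepsilon$. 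Your construction instead selects a finite subfamily of the \emph{existing} anchor/co-anchor pairs, trading a small loss in the imprint ($\leq\bridgeimprint{\gamma}+\varepsilon$) for no loss at all in the modular reach. What your route buys is that the monotonicity of the deck seminorm under restriction of $J$ is completely transparent — the new pairs are literally a subset of the old ones — whereas in the paper's construction the new deck seminorm is taken over newly chosen pairs, so the claimed bound $\bridgemodularreach{\gamma_\varepsilon}\leq\bridgereach{\gamma}$ really requires pairing the new anchors against the new co-anchors, a point the paper passes over rather quickly; your version sidesteps this entirely and also needs only finitely many choices from nonempty compact sets. Both proofs rest on the same essential fact, namely that $\modlip{1}{\Omega_\A}$ and $\modlip{1}{\Omega_\B}$ are totally bounded for the modular Monge-Kantorovich metrics, and both leave the basic bridge, hence the basic reach and the height, untouched.
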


\begin{proof}
Let $\varepsilon > 0$. Since:
\begin{equation*}
\modlip{1}{\Omega_\A} = \bigcup_{\omega\in\modlip{1}{\Omega_\A}} \module{M}(\omega,\varepsilon)
\end{equation*}
by Definition (\ref{bridge-imprint-def}), and since $\modlip{1}{\Omega_\A}$ is compact, there exists a finite set $J_1 \subseteq \modlip{1}{\Omega_\A}$ such that:
\begin{equation*}
\modlip{1}{\Omega_\A} = \bigcup_{\omega \in J_1} \module{M}(\omega, \varepsilon)\text{.}
\end{equation*}

Similarly, there exists a finite subset $J_2$ of $\modlip{1}{\Omega_\B}$ such that:
\begin{equation*}
\modlip{1}{\Omega_\B} = \bigcup_{\eta \in J_1} \module{N}(\eta, \varepsilon)\text{.}
\end{equation*}

Let $J_3 = J_1 \coprod J_2$ be the disjoint union of $J_1$ and $J_2$, itself a finite set.

If $j \in J_1$ then we write $\omega_j = j$ and we choose $\eta_j \in \targetsetbridge{\gamma}{j}{1}$. If $j\in J_2$ then we write $\eta_j = j$ and we choose $\omega_j \in \targetsetbridge{\gamma^\ast}{j}{1}$. These choices are possible since by Proposition (\ref{compact-targetset-prop}), the target sets involved are all nonempty (and as customary in functional analysis, we work within ZFC).

Let $\gamma_\varepsilon = (\Omega_\A,\Omega_\B,\D,x,\pi_\A,\pi_\B,(\omega_j)_{j\in J_3}, (\eta_j)_{j \in J_3})$.

We now make a few simple observations. We have $\decknorm{\gamma}{\omega_j,\eta_j} \leq \bridgereach{\gamma}$ for all $j\in J_3$, and thus $\bridgemodularreach{\gamma_\varepsilon} \leq \bridgereach{\gamma}$. On the other hand, by construction, $\bridgeimprint{\gamma_\varepsilon} \leq \varepsilon$. Last, we obviously have $\gamma_\circ = (\gamma_\varepsilon)_\circ$ by construction.

Therefore, $\bridgereach{\gamma_\varepsilon} \leq \bridgereach{\gamma} + \varepsilon$. This concludes our proof since $\bridgeheight{\gamma}=\bridgeheight{\gamma_\varepsilon}$.
\end{proof}

The generality of Definition (\ref{modular-bridge-def}) is however useful to describe modular bridges as morphisms in a category, as we shall do now. Indeed, the we are now ready to introduce the category of {\gQVB s} with modular treks, which generalize modular bridges and which carry a notion of length, from which the modular propinquity is computed.

\section{The modular propinquity}

The modular propinquity is constructed using certain morphisms for {\gQVB s}, called modular treks, which extend the notion of modular bridges to allow for the definition of composition. A modular trek, informally, is a finite path made of modular bridges whose codomains match the domain of the next modular bridge in the trek. It is immediate to define the length of a modular trek as the sum of the lengths of its constituent modular bridges. The length of a modular bridge, and by extension of a modular trek, replaces the notion of distortion for a correspondence sometimes used to define the Gromov-Hausdorff distance \cite{burago01}. The modular propinquity between any two {\gQVB s} $\Omega_\A$ and $\Omega_\B$ is the infimum of the lengths of any modular trek between $\Omega_\A$ and $\Omega_\B$. Concatenation of treks provide a notion of composition which translates to the fact that the modular propinquity satisfies the triangle inequality. Symmetry of the modular propinquity follow from the fact that treks are always reversible, in a sense to be made precise below. We will handle the more complicated coincidence axiom in the next section.

Since modular treks involve choices of modular bridges, just as with treks in the construction of the dual Gromov-Hausdorff propinquity \cite{Latremoliere13b}, we have much freedom in defining the modular propinquity to best suits a given context. Indeed, we may reduce the class of allowed modular bridges which may appear in a given modular trek by imposing additional constraints, such as asking the L-seminorms involved to be defined on a dense domain in the entire C*-algebra, additional Leibniz conditions such as the strong Leibniz property, or other additional requirements on D-norms, pivots, anchors or co-anchors (requirements on anchors and co-anchors should be symmetric to ensure that we obtain a metric). This flexibility proved helpful with the dual propinquity and will likely be as well for the modular propinquity.

Let us thus define modular treks formally:

\begin{definition}\label{modular-trek-definition}
Let $\mathcal{B}$ be a nonempty class of modular bridges. A \emph{modular $\mathcal{B}$-trek} $\Gamma = \left(\bridge{\gamma^j}\right)_{j \in \{1,\ldots,n\}}$ is given by $n\in\N\setminus\{0\}$ modular bridges $\gamma^0, \ldots, \gamma^n$ such that:
\begin{equation*}
\dom{\bridge{\gamma^{j+1}}} = \codom{\bridge{\gamma^j}} \text{ for all $j\in\{1,\ldots,n-1\}$.}
\end{equation*}
The \emph{domain} $\dom{\Gamma}$ of $\Gamma$ is $\dom{\bridge{\gamma^1}}$ and the \emph{codomain} $\codom{\Gamma}$ of $\Gamma$ is $\codom{\bridge{\gamma^n}}$.
\end{definition}

A modular trek is a modular $\mathcal{B}$-trek for some nonempty class $\mathcal{B}$ of modular bridges. 

We associate the following natural notion of length to modular treks:

\begin{definition}\label{treklength-def}
The \emph{length} of a modular trek $\Gamma = (\bridge{\gamma^j})_{j \in \{1,\ldots,n\}}$ is:
\begin{equation*}
\treklength{\Gamma} = \sum_{j=1}^n \bridgelength{\gamma}\text{.}
\end{equation*}
\end{definition}

Before introducing the modular propinquity, we first assemble the conditions needed on a class of modular bridges to allow for the construction of an actual metric in the following definition, which extends on \cite[Definition 3.10]{Latremoliere13b}.

\begin{definition}\label{compatible-bridge-class-def}
Let $(F,G,H)$ be an admissible triple. Let $\mathcal{C}$ be a nonempty class of {\QVB{F}{G}{H}s}. A class $\mathcal{B}$ of modular bridges is \emph{compatible} with $\mathcal{C}$ when:
\begin{enumerate}
\item for all $\bridge{\gamma}\in\mathcal{T}$, we have $\dom{\bridge{\gamma}},\codom{\bridge{\gamma}}\in\mathcal{C}$,
\item for all $\Omega_\A, \Omega_\B \in \mathcal{C}$, there exists a modular $\mathcal{B}$-trek from $\Omega_\A$ to $\Omega_\B$,
\item for all $\gamma \in \mathcal{T}$, we have $\gamma^\ast \in \mathcal{T}$,
\item for all $\Omega_\A$ and $\Omega_\B$ in $\mathcal{C}$, if there exists a full quantum isometry $\Theta : \Omega_\A\rightarrow\Omega_\A$ then for all $\varepsilon > 0$, there exists a modular $\mathcal{B}$-trek $\Gamma_\varepsilon$ from $\Omega_\A$ to $\Omega_\B$ with $\treklength{\Gamma_\varepsilon} < \varepsilon$.
\end{enumerate}
\end{definition}

\begin{example}
Let $(F,G,H)$ be an admissible triple. Let $\mathcal{C}$ be the class of all {\QVB{F}{G}{H}s} and let $\mathcal{B}$ be the class of all bridges between elements of $\mathcal{C}$. Note that a modular $\mathcal{B}$-trek consists of modular bridges which only involve {\QVB{F}{G}{H}s}. We check that $\mathcal{B}$ is compatible with $\mathcal{C}$.

Assertions (1) and (3) of Definition (\ref{compatible-bridge-class-def}) are trivial in this case.

Assertion (2) gives us a chance to observe that a bridge gives rise to a modular bridge. Let us use the notations of Hypothesis (\ref{modular-bridge-hyp}), with the additional assumption that $\Omega_\A,\Omega_\B \in \mathcal{C}$.

Let $(\D,x,\pi_\A,\pi_\B)$ be a bridge from $\A$ to $\B$ with $\|x\|_\D\leq 1$. If we pick any $\omega\in\modlip{1}{\Omega_\A}$ and $\eta\in\modlip{1}{\Omega_\B}$, then $(\Omega_\A,\Omega_\B,\D,x,\pi_\A,\pi_\B,\omega,\eta)$ is a modular bridge in $\mathcal{B}$ from $\Omega_\A$ to $\Omega_\B$  (identifying family of a single element with the element itself).

Now by \cite[Proposition 4.6]{Latremoliere13}, there does exist a bridge from $\A$ to $\B$ with a (self-adjoint) pivot of norm $1$. Thus, Assertion (2) holds as well.

Last, keeping the same notations, assume that $(\Theta,\theta)$ is a full quantum isometry from $\Omega_\A$ to $\Omega_\B$. We simply define the following one-bridge trek:
\begin{equation*}
\left(\Omega_\A,\Omega_\B,\B,\unit_\B,\theta,\mathrm{id}_\B, ( a )_{a \in \modlip{1}{\Omega_\A}}, (\Theta(a))_{a \in \modlip{1}{\Omega_\A}} \right)
\end{equation*}
where $\mathrm{id}_\B$ is the identity of $\B$. A straightforward computation shows that $\bridgelength{\gamma} = 0$.
\end{example}

We will find the following notation helpful.

\begin{notation}
Let $\mathcal{C}$ be nonempty class of {\gQVB s} and let $\mathcal{B}$ be a compatible class of modular bridges. Let $\Omega_\A$ and $\Omega_\B$ be chosen in $\mathcal{C}$. The class of all modular $\mathcal{B}$-treks from $\Omega_\A$ to $\Omega_\B$ is denoted by:
\begin{equation*}
\trekset{\mathcal{B}}{\Omega_\A}{\Omega_\B}\text{.}
\end{equation*}
\end{notation}

We are now ready to introduce the main definition of this work.
\begin{definition}\label{modular-propinquity-def}
Let $\mathcal{C}$ be a nonempty class of {\QVB{F}{G}{H}s} for some admissible triple $(F,G,H)$ and let $\mathcal{B}$ be a class of modular bridges compatible with $\mathcal{C}$. The \emph{modular Gromov-Hausdorff $\mathcal{B}$-propinquity} between two {\gQVB s} $\Omega_\A$ and $\Omega_\B$ in $\mathcal{C}$ is:
\begin{equation*}
\boxed{
\modpropinquity{\mathcal{B}}\left(\Omega_\A,\Omega_\B\right) = \inf\left\{ \treklength{\Gamma} : \Gamma \in \trekset{\mathcal{B}}{\Omega_\A}{\Omega_\B} \right\}\text{.}}
\end{equation*}
\end{definition}

\begin{notation}
If $\mathcal{C}$ is the class of all Leibniz {\gQVB s} and $\mathcal{B}$ is the class of all modular bridges, then $\modpropinquity{\mathcal{B}}$ is simply denoted $\modpropinquity{}$.
\end{notation}

We now proceed to prove that the modular propinquity is a metric up to full quantum isometry, for any compatible class of modular bridges. In the process, we will show that modular treks are morphisms in some category of {\gQVB s}. The coincidence axiom is by far the most involved property to establish, and will be the subject of the next section.

We begin by observing that the modular propinquity is always finite, and it dominates the quantum Gromov-Hausdorff propinquity. We begin with the natural definition of a basic trek.

\begin{definition}\label{basic-trek-def}
If $\Gamma = (\gamma^j)_{j\in \{1,\ldots,n\}}$ is a modular trek, then:
\begin{equation*}
\Gamma_\flat = \left(\basespace{\dom{\gamma^j}}, \gamma^j_\flat, \basespace{\codom{\gamma^j}} : j\in\{1,\ldots,n\} \right)
\end{equation*}
is a trek from $\basespace{\dom{\Gamma}}$ to $\basespace{\codom{\Gamma}}$.
\end{definition}

\begin{remark}
In \cite[Definition 3.20]{Latremoliere13}, treks explicitly included domains and codomains of bridges while bridges did not in \cite[Definition 3.6]{Latremoliere13}. We have made a different choice of notation, and thus our treks need not include the domain and codomain information already contained in modular bridges.
\end{remark}

\begin{proposition}\label{finite-modular-propinquity-prop}
Let $\mathcal{C}$ be a nonempty class of {\QVB{F}{G}{H}s} for some admissible triple $(F,G,H)$ and let $\mathcal{B}$ be a class of modular bridges compatible with $\mathcal{C}$. If:
\begin{equation*}
\Omega_\A = (\module{M}_\A,\inner{\cdot}{\cdot}{\A},\CDN_\A,\A,\Lip_\A)\text{ and }\Omega_\B = (\module{M}_\B, \inner{\cdot}{\cdot}{\B}, \CDN_\B, \B, \Lip_\B)
\end{equation*}
are two {\gQVB s} in $\mathcal{C}$, and if $\Gamma \in \trekset{\mathcal{B}}{\Omega_\A}{\Omega_\B}$, then:
\begin{equation*}
\treklength{\Gamma_\flat} \leq \treklength{\Gamma}\text{,}
\end{equation*}
and thus:
\begin{equation*}
\propinquity{\mathcal{C}}((\A,\Lip_\A),(\B,\Lip_\B)) \leq \modpropinquity{\mathcal{B}}{\mathcal{T}}(\Omega_\A,\Omega_\B) < \infty \text{.}
\end{equation*}
\end{proposition}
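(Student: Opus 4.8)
The plan is to reduce everything to the corresponding facts about basic bridges and basic treks, which are already established in \cite{Latremoliere13}. First I would recall that for a modular trek $\Gamma = (\gamma^j)_{j\in\{1,\ldots,n\}}$, Definition (\ref{basic-trek-def}) produces a trek $\Gamma_\flat$ between the base quantum spaces $\basespace{\dom{\Gamma}} = (\A,\Lip_\A)$ and $\basespace{\codom{\Gamma}} = (\B,\Lip_\B)$; the fact that $\Gamma_\flat$ is a genuine trek follows because each $\gamma^j_\flat$ is a bridge in the sense of \cite[Definition 3.6]{Latremoliere13} (noted right after Definition (\ref{basic-bridge-def})) and the domains/codomains of the $\gamma^j_\flat$ chain up exactly as those of the $\gamma^j$ do, since $\basespace{\cdot}$ is applied uniformly.

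The core estimate is the per-bridge inequality $\bridgelength{\gamma^j_\flat} \leq \bridgelength{\gamma^j}$. For this I would examine Definition (\ref{bridge-length-def}): $\bridgelength{\gamma^j} = \max\{\bridgeheight{\gamma^j}, \bridgereach{\gamma^j}\}$, whereas the length of the basic bridge $\gamma^j_\flat$ in the sense of \cite{Latremoliere13} is $\max\{\bridgeheight{\gamma^j_\flat}, \bridgebasicreach{\gamma^j}\}$ (the height and basic reach of a modular bridge were \emph{defined} in Definitions (\ref{bridge-height-def}) and (\ref{basic-reach-def}) to be exactly the height and reach of the basic bridge). Since by Definition (\ref{bridge-reach-def}) we have $\bridgereach{\gamma^j} = \max\{\bridgebasicreach{\gamma^j}, \bridgemodularreach{\gamma^j} + \bridgeimprint{\gamma^j}\} \geq \bridgebasicreach{\gamma^j}$, it follows immediately that $\bridgelength{\gamma^j_\flat} \leq \bridgelength{\gamma^j}$. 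Summing over $j\in\{1,\ldots,n\}$ and using Definition (\ref{treklength-def}) for both the modular trek and the basic trek (the latter from \cite[Definition 3.21]{Latremoliere13}) yields $\treklength{\Gamma_\flat} \leq \treklength{\Gamma}$.

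Finally I would conclude the two displayed consequences. Taking the infimum over all $\Gamma \in \trekset{\mathcal{B}}{\Omega_\A}{\Omega_\B}$: each such $\Gamma$ yields a basic trek $\Gamma_\flat$ from $(\A,\Lip_\A)$ to $(\B,\Lip_\B)$ lying in the class of treks allowed for the quantum propinquity $\propinquity{\mathcal{C}}$ on base spaces (here one uses that compatibility of $\mathcal{B}$ with $\mathcal{C}$ guarantees the basic bridges $\gamma^j_\flat$ are of an allowed type — one should note that the class of base-space bridges obtained this way is itself a compatible class, which is routine from Definition (\ref{compatible-bridge-class-def})), hence $\propinquity{\mathcal{C}}((\A,\Lip_\A),(\B,\Lip_\B)) \leq \treklength{\Gamma_\flat} \leq \treklength{\Gamma}$, and taking the infimum over $\Gamma$ gives $\propinquity{\mathcal{C}}((\A,\Lip_\A),(\B,\Lip_\B)) \leq \modpropinquity{\mathcal{B}}(\Omega_\A,\Omega_\B)$. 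Finiteness of $\modpropinquity{\mathcal{B}}(\Omega_\A,\Omega_\B)$ follows from compatibility Assertion (2): there exists at least one modular $\mathcal{B}$-trek from $\Omega_\A$ to $\Omega_\B$, and by Lemma (\ref{finite-length-lemma}) each of its constituent modular bridges has finite length, so its length is finite. The main obstacle, such as it is, is purely bookkeeping: making sure the class of basic treks arising from $\mathcal{B}$-treks is an admissible class for $\propinquity{\mathcal{C}}$ so that the comparison with the quantum propinquity is meaningful, and matching up the (re)definitions of height, reach, and length between this paper and \cite{Latremoliere13}; no genuine analytic difficulty arises.
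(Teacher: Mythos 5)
Your proposal is correct and follows essentially the same route as the paper: reduce to the per-bridge inequality $\bridgelength{\gamma^j_\flat}\leq\bridgelength{\gamma^j}$ via $\bridgeheight{\gamma^j_\flat}=\bridgeheight{\gamma^j}$ and $\bridgereach{\gamma^j_\flat}=\bridgebasicreach{\gamma^j}\leq\bridgereach{\gamma^j}$, sum over the trek, take infima, and get finiteness from Assertion (2) of Definition (\ref{compatible-bridge-class-def}) together with Lemma (\ref{finite-length-lemma}). Your extra remark about checking that the induced class of basic treks is admissible for the quantum propinquity is a reasonable bookkeeping point that the paper passes over silently.
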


\begin{proof}
By Definition (\ref{basic-bridge-def}), if $\bridge{\gamma} \in \mathcal{B}$ then $\bridge{\gamma_\flat} \in \bridgeset{\A}{\B}{}$. Moreover $\bridgelength{\gamma_\flat}\leq\bridgelength{\gamma}$ since $\bridgeheight{\gamma_\flat} = \bridgeheight{\gamma}$ by Definition (\ref{bridge-height-def}) while $\bridgereach{\gamma_\flat} = \bridgebasicreach{\gamma} \leq \bridgereach{\gamma}$ by Definition (\ref{basic-reach-def}).

Now, if $\Gamma = (\bridge{\gamma^j})_{j\in\{1,\ldots,n\}} \in \trekset{\mathcal{B}}{\Omega_\A}{\Omega_\B}$ then $\gamma_\flat = (\bridge{\gamma_\flat^j})_{j\in\{1,\ldots,n\}}$ is a trek from $(\A,\Lip_\A)$ to $(\B,\Lip_\B)$ and:
\begin{equation*}
\treklength{\gamma_\flat} = \sum_{j=1}^n\bridgelength{\gamma^j_\circ} \leq \sum_{j=1}^n \bridgelength{\gamma^j} = \treklength{\Gamma}\text{.}
\end{equation*}
This proves that by definition:
\begin{equation*}
\propinquity((\A,\Lip_\A),(\B,\Lip_\B)) \leq \modpropinquity{\mathcal{B}}(\Omega_\A,\Omega_\B)\text{.}
\end{equation*}

The modular propinquity is finite since there exists at least one modular trek from $\Omega_\A$ to $\Omega_\B$ in $\mathcal{B}$ by Definition (\ref{compatible-bridge-class-def}). Now, a modular trek always has finite length, since modular bridges always have finite length by Lemma (\ref{finite-length-lemma}).
\end{proof}

We now prove that the modular propinquity is symmetric in its arguments and satisfies the triangle inequality. These facts rely on the fact that treks can be reversed and composed.

\begin{definition}
The \emph{reverse} of a modular trek $\Gamma = (\bridge{\gamma_j})_{j\in\{1,\ldots,n\}}$ is the modular trek $\Gamma^\ast = \left(\bridge{\gamma_{n+1-j}^{\ast}}\right)_{j\in\{1,\ldots,n\}}$.
\end{definition}

\begin{lemma}\label{reverse-trek-lemma}
Let $\mathcal{C}$ be a nonempty class of {\QVB{F}{G}{H}s}, where $(F,G,H)$ is an admissible triple, and let $\mathcal{B}$ be a class of modular bridges compatible with $\mathcal{C}$. If $\Gamma$ is a modular $\mathcal{B}$-trek then $\Gamma^\ast$ is a modular $\mathcal{B}$-trek from $\codom{\Gamma}$ to $\dom{\Gamma}$; moreover $\treklength{\Gamma} = \treklength{\Gamma^\ast}$.
\end{lemma}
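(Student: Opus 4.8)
The plan is to verify each claim of Lemma (\ref{reverse-trek-lemma}) directly from the definitions, since the real work has already been done in Lemma (\ref{inverse-bridge-lemma}). First I would unwind the definition of $\Gamma^\ast$: if $\Gamma = (\gamma_j)_{j\in\{1,\ldots,n\}}$ with $\dom{\gamma_{j+1}} = \codom{\gamma_j}$ for all $j$, then the reversed list is $\Gamma^\ast = (\gamma^\ast_{n+1-j})_{j\in\{1,\ldots,n\}}$. I would check that this is a legitimate modular trek by verifying the matching condition: the $j$-th bridge of $\Gamma^\ast$ is $\gamma^\ast_{n+1-j}$ and the $(j+1)$-th is $\gamma^\ast_{n-j}$, so I need $\codom{\gamma^\ast_{n+1-j}} = \dom{\gamma^\ast_{n-j}}$. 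By Definition (\ref{inverse-bridge-def}), reversing a bridge swaps its domain and codomain, so $\codom{\gamma^\ast_{n+1-j}} = \dom{\gamma_{n+1-j}}$ and $\dom{\gamma^\ast_{n-j}} = \codom{\gamma_{n-j}}$; these are equal precisely by the matching condition for $\Gamma$ applied at index $n-j$. This also shows $\dom{\Gamma^\ast} = \dom{\gamma^\ast_n} = \codom{\gamma_n} = \codom{\Gamma}$ and symmetrically $\codom{\Gamma^\ast} = \dom{\Gamma}$.

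Next I would check membership in $\mathcal{B}$: since $\mathcal{B}$ is compatible with $\mathcal{C}$, Assertion (3) of Definition (\ref{compatible-bridge-class-def}) gives $\gamma\in\mathcal{B} \implies \gamma^\ast\in\mathcal{B}$, so every constituent bridge of $\Gamma^\ast$ lies in $\mathcal{B}$, and hence $\Gamma^\ast$ is a modular $\mathcal{B}$-trek. Finally, the length claim is immediate from Lemma (\ref{inverse-bridge-lemma}), which states $\bridgelength{\gamma^\ast} = \bridgelength{\gamma}$ for each modular bridge, combined with the fact that reordering a finite sum does not change its value:
\begin{equation*}
\treklength{\Gamma^\ast} = \sum_{j=1}^n \bridgelength{\gamma^\ast_{n+1-j}} = \sum_{j=1}^n \bridgelength{\gamma_{n+1-j}} = \sum_{j=1}^n \bridgelength{\gamma_j} = \treklength{\Gamma}\text{.}
\end{equation*}

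There is no genuine obstacle here; the only point requiring a moment's care is the index bookkeeping in the matching condition for the reversed trek, which is where one must correctly invoke that $\ast$ swaps domain and codomain of a modular bridge. Everything else is a direct appeal to Lemma (\ref{inverse-bridge-lemma}) and Definition (\ref{compatible-bridge-class-def}).
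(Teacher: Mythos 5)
Your proposal is correct and follows exactly the paper's argument: the paper's proof is a one-line appeal to closure of $\mathcal{B}$ under bridge reversal (Definition (\ref{compatible-bridge-class-def}), Assertion (3)) and to Lemma (\ref{inverse-bridge-lemma}), and you have simply spelled out the index bookkeeping and the sum reordering that the paper leaves implicit. Nothing is missing.
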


\begin{proof}
This statement is immediate since a compatible class of modular bridges is closed by inversion of modular bridges by Definition (\ref{compatible-bridge-class-def}), and by Lemma (\ref{inverse-bridge-lemma}).
\end{proof}

We do not have a direct mean to compose modular bridges --- similarly as the situation with bridges in \cite{Latremoliere13}. However, we can easily compose modular treks.

\begin{definition}\label{composed-modular-trek-def}
Let $\Gamma_1 = \left(\bridge{\gamma_j^1}\right)_{j\in\{1,\ldots,n\}}$ and $\Gamma_2 = \left(\bridge{\gamma_j^2}\right)_{j\in\{1,\ldots,m\}}$ be two modular treks. The \emph{composed modular trek} $\Gamma_1\star\Gamma_2$ is the trek from $\dom{\Gamma_1}$ to $\codom{\Gamma_2}$ given by $\left(\bridge{\gamma_1^1},\ldots,\bridge{\gamma_n^1},\bridge{\gamma_1^2},\ldots,\bridge{\gamma_m^2}\right)$.
\end{definition}

\begin{lemma}\label{composed-trek-lemma}
Let $\mathcal{C}$ be a nonempty class of {\QVB{F}{G}{H}s}, with $(F,G,H)$ is an admissible triple, and let $\mathcal{B}$ be a class of modular bridges compatible with $\mathcal{C}$. If $\Gamma_1$ and $\Gamma_2$ are two modular $\mathcal{B}$-treks, then $\Gamma_1\star\Gamma_2$ is a modular $\mathcal{B}$-trek and:
\begin{equation*}
\treklength{\Gamma_1\star\Gamma_2} = \treklength{\Gamma_1} + \treklength{\Gamma_2}\text{.}
\end{equation*}
\end{lemma}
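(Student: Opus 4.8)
The plan is to verify the two claimed properties of $\Gamma_1\star\Gamma_2$ directly from the relevant definitions, since this is a bookkeeping lemma rather than a substantive result. First I would check that $\Gamma_1\star\Gamma_2$ is a well-defined modular trek: by Definition (\ref{modular-trek-definition}), a sequence of modular bridges forms a trek precisely when the codomain of each bridge matches the domain of the next. Within $\Gamma_1$ and within $\Gamma_2$ this holds by hypothesis; the only new adjacency introduced by concatenation is the junction $\bridge{\gamma_n^1},\bridge{\gamma_1^2}$, and there we have $\codom{\bridge{\gamma_n^1}} = \codom{\Gamma_1}$ and $\dom{\bridge{\gamma_1^2}} = \dom{\Gamma_2}$, which agree by the hypothesis that $\Gamma_1$ and $\Gamma_2$ are composable (implicitly, $\codom{\Gamma_1} = \dom{\Gamma_2}$ must hold for $\Gamma_1\star\Gamma_2$ to be defined as in Definition (\ref{composed-modular-trek-def})). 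I would also note that since $\mathcal{B}$ is a class of modular bridges and every constituent bridge of $\Gamma_1$ and $\Gamma_2$ lies in $\mathcal{B}$, the concatenated list consists entirely of bridges in $\mathcal{B}$, so $\Gamma_1\star\Gamma_2$ is a modular $\mathcal{B}$-trek.

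Next I would establish the length identity. By Definition (\ref{treklength-def}), the length of a modular trek is the sum of the lengths of its constituent modular bridges. Writing $\Gamma_1 = (\bridge{\gamma_j^1})_{j\in\{1,\ldots,n\}}$ and $\Gamma_2 = (\bridge{\gamma_j^2})_{j\in\{1,\ldots,m\}}$, and using that $\Gamma_1\star\Gamma_2 = (\bridge{\gamma_1^1},\ldots,\bridge{\gamma_n^1},\bridge{\gamma_1^2},\ldots,\bridge{\gamma_m^2})$ is the list obtained by juxtaposition, the sum defining $\treklength{\Gamma_1\star\Gamma_2}$ splits as
\begin{equation*}
\treklength{\Gamma_1\star\Gamma_2} = \sum_{j=1}^n \bridgelength{\gamma_j^1} + \sum_{j=1}^m \bridgelength{\gamma_j^2} = \treklength{\Gamma_1} + \treklength{\Gamma_2}\text{,}
\end{equation*}
which is the claimed equality. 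Each summand is finite by Lemma (\ref{finite-length-lemma}), so there is no issue with the arithmetic.

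There is no real obstacle here: the entire content is that concatenation of lists preserves the trek axiom (a purely local compatibility condition at the single new junction) and that the length functional is additive over the list, which is immediate from its definition as a finite sum. The only point requiring a modicum of care is to record that membership in the compatible class $\mathcal{B}$ is preserved, which follows simply because no new bridges are created by concatenation. I would keep the proof to two or three sentences invoking Definitions (\ref{modular-trek-definition}), (\ref{composed-modular-trek-def}) and (\ref{treklength-def}).
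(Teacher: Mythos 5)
Your proof is correct and matches the paper's, which simply observes that the result follows immediately from Definitions (\ref{treklength-def}) and (\ref{composed-modular-trek-def}); you have merely spelled out the details (the junction compatibility, membership in $\mathcal{B}$, and the splitting of the sum) that the paper leaves implicit. Your remark that composability implicitly requires $\codom{\Gamma_1} = \dom{\Gamma_2}$ is a fair reading of Definition (\ref{composed-modular-trek-def}) and does not change anything.
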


\begin{proof}
The result follows immediately from the Definition (\ref{treklength-def}) of the length of a modular trek and Definition (\ref{composed-modular-trek-def}).
\end{proof}

\begin{proposition}\label{triangle-inequality-modular-propinquity-prop}
Let $\mathcal{C}$ be a nonempty class of {\QVB{F}{G}{H}s}, with $(F,G,H)$ an admissible triple, and let $\mathcal{B}$ be a class of modular bridges compatible with $\mathcal{C}$. If $\Omega_\A$, $\Omega_\B$, and $\Omega_\D$ are three {\gQVB s} in $\mathcal{C}$, then:
\begin{equation*}
\modpropinquity{\mathcal{B}}\left(\Omega_\A,\Omega_\B\right) \leq \modpropinquity{\mathcal{B}}\left(\Omega_\A,\Omega_\D\right) + \modpropinquity{\mathcal{B}}\left(\Omega_\D,\Omega_\B\right) \text{,}
\end{equation*}
and
\begin{equation*}
\modpropinquity{\mathcal{B}}\left(\Omega_\A,\Omega_\B\right) = \modpropinquity{\mathcal{B}}\left(\Omega_\B,\Omega_\A\right)\text{.}
\end{equation*}
\end{proposition}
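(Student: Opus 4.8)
The statement to prove is Proposition \ref{triangle-inequality-modular-propinquity-prop}, which asserts the triangle inequality and symmetry for the modular propinquity $\modpropinquity{\mathcal{B}}$ on a nonempty class $\mathcal{C}$ of {\QVB{F}{G}{H}s} with a compatible class $\mathcal{B}$ of modular bridges. This is a textbook-style argument once the machinery of modular treks is in place: both properties follow directly from the fact that treks can be composed and reversed without increasing (indeed, preserving) total length, together with the definition of $\modpropinquity{\mathcal{B}}$ as an infimum of trek lengths. The plan is to carry out each property in turn, citing the structural lemmas already established.

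\textbf{Triangle inequality.} The plan is as follows. Fix $\varepsilon > 0$. By Definition (\ref{modular-propinquity-def}), there exist a modular $\mathcal{B}$-trek $\Gamma_1 \in \trekset{\mathcal{B}}{\Omega_\A}{\Omega_\D}$ with $\treklength{\Gamma_1} \leq \modpropinquity{\mathcal{B}}(\Omega_\A,\Omega_\D) + \frac{\varepsilon}{2}$, and a modular $\mathcal{B}$-trek $\Gamma_2 \in \trekset{\mathcal{B}}{\Omega_\D}{\Omega_\B}$ with $\treklength{\Gamma_2} \leq \modpropinquity{\mathcal{B}}(\Omega_\D,\Omega_\B) + \frac{\varepsilon}{2}$ (such treks exist because $\mathcal{B}$ is compatible with $\mathcal{C}$, by Definition (\ref{compatible-bridge-class-def})(2), and the infimum is over a nonempty set). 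Since $\codom{\Gamma_1} = \Omega_\D = \dom{\Gamma_2}$, the composed modular trek $\Gamma_1 \star \Gamma_2$ is defined, and by Lemma (\ref{composed-trek-lemma}) it is a modular $\mathcal{B}$-trek from $\Omega_\A$ to $\Omega_\B$ with $\treklength{\Gamma_1\star\Gamma_2} = \treklength{\Gamma_1} + \treklength{\Gamma_2}$. Hence $\modpropinquity{\mathcal{B}}(\Omega_\A,\Omega_\B) \leq \treklength{\Gamma_1\star\Gamma_2} \leq \modpropinquity{\mathcal{B}}(\Omega_\A,\Omega_\D) + \modpropinquity{\mathcal{B}}(\Omega_\D,\Omega_\B) + \varepsilon$. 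Letting $\varepsilon \to 0$ gives the inequality.

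\textbf{Symmetry.} For any $\Gamma \in \trekset{\mathcal{B}}{\Omega_\A}{\Omega_\B}$, Lemma (\ref{reverse-trek-lemma}) shows $\Gamma^\ast \in \trekset{\mathcal{B}}{\Omega_\B}{\Omega_\A}$ with $\treklength{\Gamma^\ast} = \treklength{\Gamma}$; this uses that a compatible class of modular bridges is closed under reversal (Definition (\ref{compatible-bridge-class-def})(3)) together with Lemma (\ref{inverse-bridge-lemma}). Thus the map $\Gamma \mapsto \Gamma^\ast$ is a length-preserving bijection between $\trekset{\mathcal{B}}{\Omega_\A}{\Omega_\B}$ and $\trekset{\mathcal{B}}{\Omega_\B}{\Omega_\A}$ (its inverse being $\Gamma \mapsto \Gamma^{\ast\ast} = \Gamma$), so the two infima defining $\modpropinquity{\mathcal{B}}(\Omega_\A,\Omega_\B)$ and $\modpropinquity{\mathcal{B}}(\Omega_\B,\Omega_\A)$ coincide.

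There is no real obstacle here: all the content has been front-loaded into Lemma (\ref{composed-trek-lemma}), Lemma (\ref{reverse-trek-lemma}), Lemma (\ref{inverse-bridge-lemma}), and the compatibility axioms in Definition (\ref{compatible-bridge-class-def}). The only point requiring a moment's care is that all treks involved genuinely lie in the class $\trekset{\mathcal{B}}{\cdot}{\cdot}$, i.e. that composition and reversal do not take us outside $\mathcal{B}$ — but this is precisely guaranteed by clauses (1) and (3) of Definition (\ref{compatible-bridge-class-def}), so the argument is complete. (The finiteness of $\modpropinquity{\mathcal{B}}$, needed to make the inequality meaningful rather than vacuous, has already been recorded in Proposition (\ref{finite-modular-propinquity-prop}).)
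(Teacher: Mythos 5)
Your proposal is correct and follows essentially the same route as the paper: choose treks within $\frac{\varepsilon}{2}$ of each infimum, compose them via Lemma (\ref{composed-trek-lemma}) for the triangle inequality, and use reversed treks via Lemma (\ref{reverse-trek-lemma}) for symmetry. Your added remarks on compatibility of $\mathcal{B}$ under composition and reversal, and on finiteness from Proposition (\ref{finite-modular-propinquity-prop}), are accurate but not substantively different from the paper's argument.
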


\begin{proof}
Let $\varepsilon > 0$. There exists modular treks $\Gamma_1$ and $\Gamma_2$, respectively from $\Omega_\A$ to $\Omega_\B$ and $\Omega_\B$ to $\Omega_\D$, such that:
\begin{equation*}
\treklength{\Gamma_1} \leq \modpropinquity{B}(\Omega_\A,\Omega_\B) + \frac{\varepsilon}{2}\text{ and }\treklength{\Gamma_2} \leq \modpropinquity{B}(\Omega_\B,\Omega_\D) + \frac{\varepsilon}{2}\text{.}
\end{equation*} 
Let $\Gamma = \Gamma_1 \star \Gamma_2$. Then:
\begin{equation*}
\begin{split}
\modpropinquity{B}(\Omega_\A,\Omega_\D) &\leq \treklength{\Gamma} \\
&= \treklength{\Gamma_1} + \treklength{\Gamma_2} \\
&\leq \modpropinquity{B}(\Omega_\A,\Omega_\B) + \modpropinquity{B}(\Omega_\B,\Omega_\D) + \varepsilon\text{.} 
\end{split}
\end{equation*}
As $\varepsilon > 0$ is arbitrary, we conclude that:
\begin{equation*}
\modpropinquity{\mathcal{B}}\left(\Omega_\A,\Omega_\B\right) \leq \modpropinquity{\mathcal{B}}\left(\Omega_\A,\Omega_\D\right) + \modpropinquity{\mathcal{B}}\left(\Omega_\D,\Omega_\B\right) \text{,}
\end{equation*}
as desired.

Symmetry follows from Lemma (\ref{reverse-trek-lemma}). 
\end{proof}

We conclude by observing that the modular propinquity is a pseudo-metric, i.e. in addition to being finite, symmetric and satisfy the triangle inequality, it is null whenever two {\gQVB s} are full quantum isometric.

\begin{proposition}\label{sufficient-zero-modular-propinquity-prop}
Let $\mathcal{C}$ be a nonempty class of {\QVB{F}{G}{H}s}, with $(F,G,H)$ an admissible triple, and let $\mathcal{B}$ be a class of modular bridges compatible with $\mathcal{C}$. Let:
\begin{equation*}
\Omega_\A = (\module{M}_\A,\inner{\cdot}{\cdot}{\A},\CDN_\A,\A,\Lip_\A)\text{ and }\Omega_\B = (\module{M}_\B, \inner{\cdot}{\cdot}{\B}, \CDN_\B, \B, \Lip_\B)
\end{equation*} be two {\gQVB s} in $\mathcal{C}$.

If there exists a full quantum isometry $(\theta,\Theta)$ from $\Omega_\A$ to $\Omega_\B$, then $\modpropinquity{\mathcal{B}}(\Omega_\A,\Omega_\B) = 0$.
\end{proposition}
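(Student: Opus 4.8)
The plan is to deduce the statement directly from the defining properties of a compatible class of modular bridges, specifically Assertion (4) of Definition (\ref{compatible-bridge-class-def}), which was formulated precisely for this purpose. First I would fix $\varepsilon > 0$ and use the hypothesis that there is a full quantum isometry $(\theta,\Theta)$ from $\Omega_\A$ to $\Omega_\B$; since $\mathcal{B}$ is compatible with $\mathcal{C}$ and $\Omega_\A,\Omega_\B\in\mathcal{C}$, Assertion (4) of Definition (\ref{compatible-bridge-class-def}) furnishes a modular $\mathcal{B}$-trek $\Gamma_\varepsilon \in \trekset{\mathcal{B}}{\Omega_\A}{\Omega_\B}$ with $\treklength{\Gamma_\varepsilon} < \varepsilon$.

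By Definition (\ref{modular-propinquity-def}), $\modpropinquity{\mathcal{B}}(\Omega_\A,\Omega_\B)$ is the infimum of the lengths of modular $\mathcal{B}$-treks from $\Omega_\A$ to $\Omega_\B$, so the existence of $\Gamma_\varepsilon$ gives $\modpropinquity{\mathcal{B}}(\Omega_\A,\Omega_\B) \leq \treklength{\Gamma_\varepsilon} < \varepsilon$. Since $\varepsilon > 0$ is arbitrary, this yields $\modpropinquity{\mathcal{B}}(\Omega_\A,\Omega_\B) \leq 0$. Conversely, every modular trek has nonnegative length, because each of the height, basic reach, modular reach and imprint entering Definition (\ref{bridge-length-def}) is a supremum, a norm, or a Hausdorff distance of nonnegative quantities; hence the infimum defining the modular propinquity is itself nonnegative. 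Combining the two inequalities gives $\modpropinquity{\mathcal{B}}(\Omega_\A,\Omega_\B) = 0$.

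There is essentially no obstacle here, since the work has been front-loaded into the formulation of compatibility; the only point requiring care is that one should \emph{not} attempt to exhibit an explicit length-zero trek in the spirit of the example following Definition (\ref{compatible-bridge-class-def}) --- namely the one-bridge trek $\left(\Omega_\A,\Omega_\B,\B,\unit_\B,\theta,\mathrm{id}_\B,(a)_{a\in\modlip{1}{\Omega_\A}},(\Theta(a))_{a\in\modlip{1}{\Omega_\A}}\right)$ --- because for a general compatible class $\mathcal{B}$ this particular modular bridge need not lie in $\mathcal{B}$. This is exactly why Assertion (4) of Definition (\ref{compatible-bridge-class-def}) is phrased with treks of length below an arbitrary $\varepsilon$ rather than with a single trek of length zero, and passing to the infimum is what makes the argument go through in full generality.
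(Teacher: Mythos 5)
Your proof is correct and follows exactly the paper's own argument: invoke Assertion (4) of Definition (\ref{compatible-bridge-class-def}) to obtain a modular $\mathcal{B}$-trek of length less than $\varepsilon$, then pass to the infimum in Definition (\ref{modular-propinquity-def}). The additional remarks on nonnegativity of trek lengths and on why one should not rely on the explicit identity-style bridge for a general class $\mathcal{B}$ are accurate but not needed for the paper's (equally brief) proof.
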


\begin{proof}
By Definition (\ref{compatible-bridge-class-def}), for all $\varepsilon > 0$, there exists a modular $\mathcal{B}$-trek $\Gamma_\varepsilon$ from $\Omega_\A$ to $\Omega_\B$ such that $\treklength{\Gamma_\varepsilon} < \varepsilon$. Thus $\modpropinquity{\mathcal{B}}(\Omega_\A,\Omega_\B) < \varepsilon$. This proves our result.
\end{proof}

We pause for an observation which formalizes the intuition we have followed when working with treks. If $\Omega = (\module{M},\inner{\cdot}{\cdot}{\module{M}}, \CDN, \A,\Lip_\A)$ is a {\gQVB} then we may define a canonical modular bridge $\mathrm{idbridge}_\Omega$ from $\Omega_\A$ to $\Omega_\B$ by setting:
\begin{equation*}
\mathrm{idbridge}_\Omega = \left(\A,\unit_\A,\mathrm{id}_\A,\mathrm{id}_\A,(\omega)_{\omega\in \modlip{1}{\Omega}}, (\omega)_{\omega\in\modlip{1}{\Omega}}\right) \in \bridgeset{\Omega}{\Omega}{} \text{,}
\end{equation*}
where $\mathrm{id}_\A$ is the identity *-automorphism of $\A$. We immediately that $\bridgelength{\mathrm{idbridge}} = 0$, and it is natural to think of $\mathrm{idbridge}$ as the identity bridge of $\Omega$. 

Identifying modular bridges with modular treks reduced to a single bridge, we thus seem to have gathered many key ingredients for a category: modular treks compose, and we have an identity modular trek for any {\gQVB s}. Moreover, we will extend in the next section the various properties of target sets for modular bridges to modular treks; while not a part of the requirement to define a category, these morphism-like properties certainly push forth the idea that modular treks ought to be considered a type of morphisms of {\gQVB s}.

There are two small issues to deal with to complete this picture. First of all, we must work with modular treks up to a notion of reduction. Indeed, even composition a trek with the identity trek of its domain or co-domain does not lead to the same modular trek with our definitions. It is however easy to define a notion of a \emph{reduced modular trek}, which is a trek with no loop. Formally, if $\Gamma = (\gamma_j)_{j\in \{1,\ldots,n\}}$ is a modular trek, then we shall say that $\Gamma$ is reduced there exists no $j < k \in \{1,\ldots,n\}$ such that $\dom{\gamma_j} = \codom{\gamma_k}$ and $\bridgeanchors{\gamma_j} = \bridgecoanchors{\gamma_k}$. It is trivial to prove that any modular trek can be reduced, i.e. it admits a subfamily which is a reduced trek with the same domain and codomain. We note that a modular trek with a single bridge is by definition reduced.

Now, we can compose two reduced modular treks to a reduced modular trek simply by reducing their composition as defined in Definition (\ref{composed-modular-trek-def}). It is a simple exercise to check that composition of reduced treks thus defined is associative and that the identity treks act as units for the composition.

The second small issue is that our morphism sets for our prospective category are not sets. There are simply too many possible modular treks between any two {\gQVB s}. However, this is a very minor issue. The simplest and often sufficient mean to fix this is to restrict which class of {\gQVB s} we work with in a given context, making sure this class is a set, and then use modular treks formed only with {\gQVB s} in this set. 

When working with treks, rather than modular treks, a similar construction in \cite{Latremoliere13} led to a category with reduced treks as morphisms over the class of {\gQqcms s}, and all reduced treks were isomorphisms --- i.e. invertible. We note that in our current modular version, modular treks may not be invertible, as being invertible requires that the sets of anchors and co-anchors be the entire closed unit balls of the D-norms of their domain and codomain. In particular, there are many single-bridge modular treks which are not an identity bridge. 

Now, the length of a modular trek is larger than the length of its reduction, and thus we could define the modular propinquity with reduced treks only if desired without changing its value. This would introduce unneeded complications, but it is worth noting that we can bring our construction within this framework. Indeed, it really shows that the modular propinquity is constructed via a sort of generalized correspondences in the metric sense.

\bigskip

We now turn to proving that the modular propinquity is indeed a metric up to full quantum isometry. 

\section{Distance Zero}

We continue our study of the morphism-like properties of modular treks. We extend the notion of a target set from modular bridges to modular treks, using the notion of an itinerary. There are two kind of target sets for treks: one defined for elements in modules and one defined for elements in {\gQqcms s}. The latter follows the same ideas as in \cite{Latremoliere13}.

Once more, we will group certain common notations and hypothesis for multiple use in this section.

\begin{hypothesis}\label{modular-trek-hyp}
Let $\mathcal{C}$ be a nonempty class of {\QVB{F}{G}{H}s}, with $(F,G,H)$ be an admissible triple, and let $\mathcal{B}$ be a class of modular bridges compatible with $\mathcal{C}$. Let:
\begin{equation*}
\Omega_\A = (\module{M},\inner{\cdot}{\cdot}{\module{M}}, \CDN_{\module{M}},\A,\Lip_\A)\text{ and }\Omega_\B = (\module{N},\inner{\cdot}{\cdot}{\module{N}}, \CDN_{\module{N}},\B,\Lip_\B)
\end{equation*}
be two {\gQVB s} in $\mathcal{C}$. Let $l \geq 0$.

Let $\Gamma = \left(\bridge{\gamma_j}\right)_{j\in\{1,\ldots,n\}}$ be a modular trek from $\Omega_\A$ to $\Omega_\B$.
\end{hypothesis}

We begin by recalling \cite[Definition 5.7]{Latremoliere13}, adjusted to our context.

\begin{definition}\label{itinerary-def}
Let Hypothesis (\ref{modular-trek-hyp}) be given. Let $l \geq 0$. An $l$-\emph{itinerary} from $a\in\dom{\Lip_\A}$ to $b \in\dom{\Lip_\B}$ along the modular trek $\Gamma$ is an $l$-itinerary from $a$ to $b$ along the basic trek $\Gamma_\flat$, i.e. a family $(d_j)_{j\in\{0,\ldots,n\}}$ such that:
\begin{enumerate}
\item $d_0 = a$,
\item $d_n = b$, 
\item $d_{j+1} \in \targetsetbridge{\gamma_{j+1}}{d_j}{l}$ for all $j\in\{0,\ldots,n-1\}$.
\end{enumerate}

The set of all $l$-itineraries along $\Gamma$ starting at $a$ and ending at $b$ is denoted by:
\begin{equation*}
\itineraries{\Gamma}{a}{b}{l}\text{.}
\end{equation*}
\end{definition}

We now generalize the notion of itinerary to modules.

\begin{definition}\label{mod-itinerary-def}
Let Hypothesis (\ref{modular-trek-hyp}) be given and $l \geq 0$. An $l$-\emph{itinerary} from $\omega\in\dom{\CDN_{\module{M}}}$ to $\eta\in\dom{\CDN_{\module{N}}}$ along the modular trek $\Gamma$ is a family $(\xi_j)_{j\in\{0,\ldots,n\}}$ such that:
\begin{enumerate}
\item $\xi_0 = \omega$,
\item $\xi_n = \eta$, 
\item $\xi_{j+1} \in \targetsetbridge{\gamma_{j+1}}{\xi_j}{l}$ for all $j\in\{0,\ldots,n-1\}$.
\end{enumerate}

The set of all itineraries along $\Gamma$ starting at $\omega$ and ending at $\eta$ is denoted by:
\begin{equation*}
\itineraries{\Gamma}{\omega}{\eta}{l}\text{.}
\end{equation*}

\end{definition}

Itineraries allow us to extend the notion of a target set from modular bridges to modular treks.

\begin{definition}\label{trek-targetset-def}
Let Hypothesis (\ref{modular-trek-hyp}) be given. The \emph{target set} for some $a\in\dom{\Lip_\A}$ and $l \geq \Lip_\A(a)$ along the modular trek $\Gamma$ is:
\begin{equation*}
\targetsettrek{\Gamma}{\omega}{l} = \left\{ b : \itineraries{\Gamma}{a}{b}{l} \not= \emptyset \right\}\text{.}
\end{equation*}
\end{definition}

\begin{definition}\label{trek-mod-targetset-def}
Let Hypothesis (\ref{modular-trek-hyp}) be given. The \emph{target set} for some $\omega\in\dom{\CDN_{\module{M}}}$ and $l \geq \CDN_{\module{M}}$ along the modular trek $\Gamma$ is:
\begin{equation*}
\targetsettrek{\Gamma}{\omega}{l} = \left\{ \eta : \itineraries{\Gamma}{\omega}{\eta}{l} \not= \emptyset \right\}\text{.}
\end{equation*}
\end{definition}

Definition (\ref{targetset-def}) was chosen to ensure that, given a modular trek $\Gamma$, for all $a\in\basespace{\dom{\Gamma}}$, we have $\targetsettrek{\Gamma}{a}{l} = \targetsettrek{\Gamma_\flat}{a}{l}$, thus allowing us to directly invoke \cite{Latremoliere13} to conclude:

\begin{proposition}[{\cite[Propositions 5.11 and 5.12]{Latremoliere13}}]\label{targetset-prop}
Let us assume Hypothesis (\ref{modular-trek-hyp}). Let $a, a' \in \dom{\Lip_\A}$ and let $l \geq \max\{ \Lip_\A(a), \Lip_\A(a') \}$. If $b\in\targetsettrek{\Gamma}{a}{l}$ and $b'\in\targetsettrek{\Gamma}{a'}{l}$ then the following assertions hold:
\begin{enumerate}[1.]
\item $\|b - b'\|_\B \leq \|a - a'\|_\A + 4 l\treklength{\Gamma}  \text{.}$
\item $\diam{\targetsettrek{\gamma_\flat}{a}{l}}{\|\cdot\|_\B} \leq 4 l \treklength{\Gamma_\flat}\text{.}$
\item for all $t\in\R$, we have:
\begin{equation*}
\eta  t \eta' \in \targetsettrek{\Gamma}{a + t a'}{l(1 + |t|)}
\end{equation*}
\item we have:
\begin{equation*}
\Jordan{b}{b'} \in \targetsettrek{\Gamma}{\Jordan{a}{a'}}{F(\|a\|_\A + 2 l \treklength{\Gamma}, \|a'\|_\A + 2 l \treklength{\Gamma}, l, l)}
\end{equation*}
and:
\begin{equation*}
\Lie{b}{b'} \in \targetsettrek{\Gamma}{\Lie{a}{a'}}{F(\|a\|_\A + 2 l \treklength{\Gamma}, \|a'\|_\A + 2 l \treklength{\gamma_\flat}, l, l)}\text{.}
\end{equation*}
\item $\targetsettrek{\Gamma}{a}{l}$ is a nonempty subset of $\alglip{l}{\Lip_\B}$.
\end{enumerate}

\end{proposition}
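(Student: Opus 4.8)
The strategy is to push the whole statement down to the basic trek $\Gamma_\flat$ and quote \cite[Propositions 5.11 and 5.12]{Latremoliere13} directly. The point is that Definitions (\ref{targetset-def}), (\ref{itinerary-def}) and (\ref{trek-targetset-def}) were set up precisely so that nothing new happens for elements of the base algebra: an $l$-itinerary from $a$ to $b$ along $\Gamma$ \emph{is}, by Definition (\ref{itinerary-def}), an $l$-itinerary from $a$ to $b$ along $\Gamma_\flat$, so $\itineraries{\Gamma}{a}{b}{l} = \itineraries{\Gamma_\flat}{a}{b}{l}$, and hence $\targetsettrek{\Gamma}{a}{l} = \targetsettrek{\Gamma_\flat}{a}{l}$ for all $a\in\dom{\Lip_\A}$ and $b\in\dom{\Lip_\B}$. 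Moreover $\Gamma_\flat$ is a genuine trek between {\gQqcms s} by Definition (\ref{basic-trek-def}), and $\treklength{\Gamma_\flat}\leq\treklength{\Gamma}$ by Proposition (\ref{finite-modular-propinquity-prop}).

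Granting this, each assertion is obtained by transcription. First I would invoke \cite[Propositions 5.11 and 5.12]{Latremoliere13} for the trek $\Gamma_\flat$ and the common admissible function $F$ of the class $\mathcal{C}$: this gives Assertion (2) verbatim (it is already stated for $\Gamma_\flat$), Assertion (5), and the three inclusions — the linearity inclusion of Assertion (3) and the Jordan and Lie inclusions of Assertion (4) — with radii expressed through $\treklength{\Gamma_\flat}$. Then Assertion (1), together with the radii appearing in Assertions (3) and (4), are deduced by replacing $\treklength{\Gamma_\flat}$ with the larger quantity $\treklength{\Gamma}$ and using monotonicity of $F$ in its first two arguments (Definition (\ref{admissible-fn-def})), since enlarging those radii only enlarges the corresponding target sets.

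There is no genuine obstacle here; the only thing worth writing out is the inductive bookkeeping behind \cite[Propositions 5.11 and 5.12]{Latremoliere13}, which propagates the single-bridge estimates of Proposition (\ref{base-targetset-prop}) along the successive bridges $\gamma^1_\flat,\ldots,\gamma^n_\flat$ of $\Gamma_\flat$, the radii accumulating additively into $\sum_{j=1}^n \bridgelength{\gamma^j_\flat} = \treklength{\Gamma_\flat}$. The substance of the proposition therefore lives entirely at the level of a single modular bridge, in Proposition (\ref{base-targetset-prop}); the passage to treks is purely combinatorial, which is exactly why Definitions (\ref{itinerary-def}) and (\ref{trek-targetset-def}) were chosen so as to make the comparison with \cite{Latremoliere13} a matter of unwinding notation.
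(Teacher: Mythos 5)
Your proposal is correct and is essentially the paper's own proof: the paper likewise observes that, by the very definitions of itineraries and target sets for base-algebra elements, $\targetsettrek{\Gamma}{a}{l} = \targetsettrek{\Gamma_\flat}{a}{l}$ and $\treklength{\Gamma_\flat} \leq \treklength{\Gamma}$, and then quotes \cite[Propositions 5.11 and 5.12]{Latremoliere13} for the basic trek, mentioning only as an alternative the inductive propagation of the single-bridge estimates of Proposition (\ref{base-targetset-prop}) that you sketch at the end.
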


\begin{proof}
Note that $\gamma_\flat = \left(\bridge{\gamma^j_\circ}\right)_{j\in\{1,\ldots,n\}}$ is a trek from $(\A,\Lip_\A)$ to $(\B,\Lip_\B)$, and $\targetsettrek{\Gamma}{a}{l} = \targetsettrek{\gamma_\flat}{a}{l}$, while $\treklength{\Gamma_\flat} \leq \treklength{\Gamma}$. Thus we may apply our work in \cite{Latremoliere13}.

Alternatively, all the statements in this proposition follow from similar techniques to Proposition (\ref{trek-targetset-morphism-prop}) applied to Proposition (\ref{base-targetset-prop}) and Proposition (\ref{trek-targetset-compact-prop}).
\end{proof}

With our notion of target sets in hand, we now can generalize Propositions (\ref{compact-targetset-prop}), (\ref{diameter-prop}), (\ref{linear-prop}) and (\ref{product-prop}) from modular bridges to modular treks.

\begin{proposition}\label{trek-targetset-morphism-prop}
Let us assume Hypothesis (\ref{modular-trek-hyp}), and let us assume that $\Omega_\A$ and $\Omega_\B$ are {\QVB{F}{G}{H}} for some admissible triple $(F,G,H)$.

Let $\omega, \omega' \in \dom{\CDN_{\module{M}}}$ and let $l \geq \max\{ \CDN_{\module{M}}(\omega), \CDN_{\module{M}}(\omega') \}$. If $\eta\in\targetsettrek{\Gamma}{\omega}{l}$ and $\eta'\in\targetsettrek{\Gamma}{\omega'}{l}$ then the following assertions hold:
\begin{enumerate}[1.]
\item $\KantorovichMod{\Omega_\B}(\eta,\eta') \leq \sqrt{2}\left(\KantorovichMod{\Omega_\A}(\omega,\omega') + (4 l + H(2 l , 1) ) \treklength{\Gamma}\right)\text{.}$
\item $\diam{\targetsettrek{\Gamma}{\omega}{l}}{\Kantorovich{\Omega_\B}} \leq \sqrt{2} ( 4 l + H(2 l, 1) ) \treklength{\Gamma}\text{.}$
\item for all $t\in\C$, we have:
\begin{equation*}
\eta + t \eta' \in \targetsettrek{\Gamma}{\omega + t\omega'}{l(1 + |t|)}
\end{equation*}
\item for all $a\in\dom{\Lip_\A}$ and for all $l' \geq \Lip_\A(a)$, if $b \in \targetsettrek{\Gamma}{a}{l'}$, then we have:
\begin{equation*}
b\eta \in \targetsettrek{\Gamma}{a\omega}{G(\|a\|_\A + 2 l' \treklength{\Gamma},l',l)}\text{.}
\end{equation*}
\item If $b \in \targetsettrek{\Gamma}{\inner{\omega}{\omega}{\module{M}}}{H(l,l)}$ then:
\begin{equation*}
\left\| b - \inner{\eta}{\eta}{\module{N}} \right\|_\B \leq \left(8 l \sqrt{2} + H( 2l, 2l ) + 6 H( l,l ) + 2\sqrt{2} H( 2l, 1 ) \right) \treklength{\Gamma} \text{.}
\end{equation*}
\end{enumerate}

\end{proposition}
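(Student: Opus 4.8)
The plan is to prove the five assertions simultaneously by induction on the number $n$ of modular bridges making up $\Gamma$. When $n = 1$, assertions (1) and (2) are Proposition \ref{diameter-prop} and its diameter consequence, and (3), (4), (5) are Propositions \ref{linear-prop}, \ref{product-prop}, \ref{inner-morphism-prop}, so there is nothing to do; nonemptiness of the trek target sets $\targetsettrek{\Gamma}{\omega}{l}$ (needed for the statements not to be vacuous, and for the diameter bound to make sense) follows from Proposition \ref{compact-targetset-prop} applied bridge by bridge, since an $l$-itinerary may be built one step at a time. For the inductive step I write $\Gamma = \Gamma' \star \bridge{\gamma_n}$, with $\Gamma'$ the sub-trek of the first $n-1$ bridges, so that $\treklength{\Gamma} = \treklength{\Gamma'} + \bridgelength{\gamma_n}$, and I observe that if $(\xi_j)_{j\in\{0,\ldots,n\}}$ is an $l$-itinerary along $\Gamma$ from $\omega$ to $\eta$, then $(\xi_j)_{j\in\{0,\ldots,n-1\}}$ is an $l$-itinerary along $\Gamma'$ and $\xi_n \in \targetsetbridge{\gamma_n}{\xi_{n-1}}{l}$. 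Assertions (3) and (4) are then bookkeeping: for (3), Proposition \ref{linear-prop} applied at each bridge shows $(\xi_j + t\xi_j')_j$ is an $l(1+|t|)$-itinerary from $\omega + t\omega'$ to $\eta + t\eta'$ (and this extends from real to complex $t$, since $\decknorm{\gamma}{i\omega,i\eta} = \decknorm{\gamma}{\omega,\eta}$); for (4), pairing the itinerary $(\xi_j)_j$ for $\omega$ with an $l'$-itinerary $(d_j)_j$ for $a$ and applying Proposition \ref{product-prop} at each bridge to $(d_j\xi_j)_j$, while using Proposition \ref{base-targetset-prop}(2) to bound $\|d_j\|_\B \leq \|a\|_\A + 2l'\treklength{\Gamma}$ and the monotonicity of $G$, shows $(d_j\xi_j)_j$ witnesses $b\eta \in \targetsettrek{\Gamma}{a\omega}{G(\|a\|_\A + 2l'\treklength{\Gamma}, l', l)}$.

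Assertion (5) is a clean induction which also pins down the constant. Let $(c_j)_j$ be an $H(l,l)$-itinerary for $\inner{\omega}{\omega}{\module{M}}$ along $\Gamma$ ending at $b$, so $(c_j)_{j\leq n-1}$ is such an itinerary along $\Gamma'$ ending at $c_{n-1}$ and $c_n \in \targetsetbridge{\gamma_n}{c_{n-1}}{H(l,l)}$; pick any $c_n' \in \targetsetbridge{\gamma_n}{\inner{\xi_{n-1}}{\xi_{n-1}}{\module{M}}}{H(l,l)}$, nonempty by Proposition \ref{base-targetset-prop}. Proposition \ref{inner-morphism-prop} applied to $\gamma_n$ (with $\omega$ replaced by $\xi_{n-1}$) gives
\begin{equation*}
\left\| c_n' - \inner{\xi_n}{\xi_n}{\module{N}} \right\|_\B \leq \left( 8 l \sqrt{2} + H(2l,2l) + 2H(l,l) + 2\sqrt{2}\,H(2l,1) \right)\bridgelength{\gamma_n}\text{,}
\end{equation*}
while Proposition \ref{base-targetset-prop}(4), applied with parameter $H(l,l)$ to the pairs $c_n,c_n'$ and $c_{n-1},\inner{\xi_{n-1}}{\xi_{n-1}}{\module{M}}$, gives $\|c_n - c_n'\|_\B \leq \|c_{n-1} - \inner{\xi_{n-1}}{\xi_{n-1}}{\module{M}}\|_\B + 4H(l,l)\bridgelength{\gamma_n}$. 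Applying the inductive hypothesis for (5) to $\Gamma'$ bounds the first term on the right by $C\treklength{\Gamma'}$, where $C = 8l\sqrt{2} + H(2l,2l) + 6H(l,l) + 2\sqrt{2}\,H(2l,1)$; since $C$ equals the single-bridge constant displayed above plus the extra $4H(l,l)$ acquired at the concatenation, the triangle inequality yields $\|b - \inner{\eta}{\eta}{\module{N}}\|_\B \leq C\treklength{\Gamma'} + C\bridgelength{\gamma_n} = C\treklength{\Gamma}$, which explains why the coefficient is $6H(l,l)$ rather than $2H(l,l)$.

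The genuinely delicate point, and the step I expect to be the main obstacle, is assertion (1) — whence (2) by taking $\omega = \omega'$. A naive induction fails badly: combining Proposition \ref{diameter-prop} for $\gamma_n$ with the inductive hypothesis for $\Gamma'$ nests two factors of $\sqrt{2}$ and produces a spurious $(\sqrt{2})^n$ blow-up, the $\sqrt{2}$ originating in Lemma \ref{norm-bound-lemma}, i.e.\ in converting bounds on states into a bound on the norm of a non-self-adjoint module inner product. The fix is to propagate through the induction not (1) itself but the \emph{state-level} estimate hidden inside the proof of Proposition \ref{diameter-prop}: for every $\varphi \in \StateSpace(\B)$ and every $\nu \in \modlip{1}{\Omega_\B}$,
\begin{equation*}
\max\left\{ \left|\varphi\left(\Re\inner{\eta-\eta'}{\nu}{\module{N}}\right)\right|,\ \left|\varphi\left(\Im\inner{\eta-\eta'}{\nu}{\module{N}}\right)\right| \right\} \leq (4l + H(2l,1))\treklength{\Gamma} + \KantorovichMod{\Omega_\A}(\omega,\omega')\text{.}
\end{equation*}
For $n = 1$ this is exactly the displayed chain of inequalities in the proof of Proposition \ref{diameter-prop}, stopped just before Lemma \ref{norm-bound-lemma} is applied. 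For the inductive step I run that same chain through the last bridge $\gamma_n$ alone: lift $\varphi$ to a state $\psi$ in the $1$-level set of $\bridgepivot{\gamma_n}$ with $\Kantorovich{\Lip_\B}$-distance to $\varphi$ at most $\bridgeheight{\gamma_n}$; replace $\nu$ by a co-anchor of $\gamma_n$ using the imprint (Definition \ref{bridge-imprint-def}); insert and delete the pivot using the level-set identities (Definition \ref{level-set-def}); and transport to the matching anchor $\omega^{(n)} \in \modlip{1}{\codom{\Gamma'}}$ of $\gamma_n$ using the deck seminorm (Definition \ref{deck-seminorm-def}) together with $\xi_n \in \targetsetbridge{\gamma_n}{\xi_{n-1}}{l}$. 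This bounds the left-hand side by $(4l + H(2l,1))\bridgelength{\gamma_n}$ plus a term $\left|\psi'\left(\Re\left\langle \xi_{n-1} - \xi_{n-1}', \omega^{(n)} \right\rangle\right)\right|$, where $\psi'$ is a state on $\basespace{\codom{\Gamma'}}$ and $\omega^{(n)} \in \modlip{1}{\codom{\Gamma'}}$; crucially, \emph{without} first passing to a norm, I feed this term into the inductive hypothesis — which is precisely a bound valid for every state and every unit-ball test element — to see it is at most $(4l + H(2l,1))\treklength{\Gamma'} + \KantorovichMod{\Omega_\A}(\omega,\omega')$, and summation closes the induction on the state-level estimate. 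Assertion (1) then follows by one application of Lemma \ref{norm-bound-lemma} and taking the supremum over $\nu \in \modlip{1}{\Omega_\B}$ in the definition of $\KantorovichMod{\Omega_\B}$, and (2) is the case $\omega = \omega'$.
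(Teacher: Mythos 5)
Your handling of assertions (3), (4) and (5) coincides with the paper's own proof: the paper runs the same bridge-by-bridge induction, treats (5) by introducing the same auxiliary element $b'_{j+1}\in\targetsetbridge{\gamma_{j+1}}{\inner{\xi_j}{\xi_j}{\module{M}_j}}{H(l,l)}$ at each concatenation, and your explanation of why the constant carries $6H(l,l)$ rather than $2H(l,l)$ is exactly the right bookkeeping. The genuine divergence is assertion (1). The paper's proof merely applies Proposition (\ref{diameter-prop}) at each bridge to obtain $\KantorovichMod{\Omega_{j+1}}(\xi_{j+1},\xi'_{j+1})\leq\sqrt{2}\left(\KantorovichMod{\Omega_j}(\xi_j,\xi'_j)+(4l+H(2l,1))\bridgelength{\gamma_{j+1}}\right)$ and then asserts the conclusion ``by induction''; as you correctly observe, unrolling that recurrence literally produces $(\sqrt{2})^n\KantorovichMod{\Omega_\A}(\omega,\omega')$ plus geometrically weighted bridge lengths, not the stated bound. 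Your state-level propagation, with Lemma (\ref{norm-bound-lemma}) applied only once at the very end, is therefore a genuine repair rather than a stylistic variant, and it is the right one.

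One step of your fix must be made explicit for the constant to survive. The chain displayed in the proof of Proposition (\ref{diameter-prop}) passes early on from $\left|\psi\circ\pi_\B\left(\Re\inner{\theta}{\eta_j}{\module{N}}\right)\right|$ to $\left|\psi\circ\pi_\B\left(\inner{\theta}{\eta_j}{\module{N}}\right)\right|$, so its residual term is the modulus of the \emph{full complex} inner product; feeding such a modulus into an inductive hypothesis phrased for $\max\{|\psi'(\Re\,\cdot\,)|,|\psi'(\Im\,\cdot\,)|\}$ costs a factor of $\sqrt{2}$ per bridge and resurrects the blow-up. To land on $\left|\psi'\left(\Re\inner{\xi_{n-1}-\xi'_{n-1}}{\omega^{(n)}}{\module{M}_{n-1}}\right)\right|$ as you claim, transport the real and imaginary parts separately: writing $\Re\inner{\zeta}{\omega_j}{\module{M}_{n-1}}=\frac{1}{2}\left(\inner{\zeta}{\omega_j}{\module{M}_{n-1}}+\inner{\omega_j}{\zeta}{\module{M}_{n-1}}\right)$ and using that the deck seminorm of Definition (\ref{deck-seminorm-def}) controls the bridge seminorm of \emph{both} orderings of the inner product (this is precisely the symmetry built into that definition), one gets $\bridgenorm{\gamma_n}{\Re\inner{\zeta}{\omega_j}{\module{M}_{n-1}},\Re\inner{\theta}{\eta_j}{\module{N}}}\leq\decknorm{\gamma_n}{\zeta,\theta}\leq 2l\bridgereach{\gamma_n}$, and likewise for the imaginary parts. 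With that modification the per-bridge cost remains $(4l+H(2l,1))\bridgelength{\gamma_n}$, the induction closes, and assertion (1) follows with the stated constant. Modulo spelling out this point, your argument is correct and in fact tighter than the paper's.
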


\begin{proof}
We write $\Omega_j = \codom{\bridge{\gamma^j}}$ for all $j\in\{1,\ldots,n\}$ and $\Omega_0 = \Omega_\A$.

Let $(\xi_0,\ldots,\xi_n) \in \itineraries{\Gamma}{\omega}{\eta}{l}$ and $(\xi'_0,\ldots,\xi'_n) \in \itineraries{\Gamma}{\omega'}{\eta'}{l}$. Since $\xi_{j+1} \in \targetsetbridge{\gamma_{j+1}}{\xi_j}{l}$, Proposition (\ref{diameter-prop}) gives us:
\begin{equation*}
\KantorovichMod{\Omega_{j+1}}(\xi_{j+1},\xi'_{j+1}) \leq \sqrt{2}\left(\KantorovichMod{\Omega_j}(\xi_j,\xi'_j) + (4 l + H(2 l, 1)) \bridgelength{\gamma_{j+1}} \right) \text{.}
\end{equation*}
Thus by induction, we get:
\begin{equation*}
\begin{split}
\KantorovichMod{\Omega_\B}(\eta,\eta') &\leq \sqrt{2}\left(\Kantorovich{\Omega_\A}(\omega,\omega') + (4 l + H(2 l, 1)) \sum_{j=1}^n \bridgelength{\gamma_j}\right) \\
&= \sqrt{2}\left(\Kantorovich{\Omega_\A}(\omega,\omega') + (4 l + H(2 l, 1)) \treklength{\Gamma} \right) \text{.}
\end{split}
\end{equation*}
If $\omega = \omega'$, then we obtain that $\diam{\targetsettrek{\Gamma}{\omega}{l}}{\Kantorovich{\Omega_\B}} \leq  \sqrt{2}( 4 l + H(2 l, 1)) \treklength{\Gamma}\text{.}$

We also have $\eta_{j+1} + t\eta'_{j+1} \in \targetsetbridge{\gamma_j}{\eta_{j} + t\eta'_j}{l + |t|l}$ by Proposition (\ref{linear-prop}). Thus, by induction, we get that:
\begin{equation*}
\eta+t\eta' \in \targetsettrek{\Gamma}{\omega+t\omega'}{l + |t|l'}\text{.}
\end{equation*}

Let now:
\begin{equation*}
(b_j)_{j=0}^n \in \itineraries{\Gamma}{a}{b}{l}\text{.}
\end{equation*}
For each $j$ we have $b_{j+1}\eta_{j+1} \in \targetsetbridge{\gamma_j}{b_j\eta_j}{G(\|b_j\| + l \bridgelength{\gamma_j},r,l)}$ by Proposition (\ref{product-prop}). Now, as before, we have $\|b_j\| \leq \|a\|_\A + l \sum_{k=0}^j \bridgelength{\gamma_k} \leq \|a\|_\A + l \treklength{\Gamma}$, and since $\bridgelength{\gamma_j} \leq \treklength{\Gamma}$, we have:
\begin{equation*}
d_{j+1}\eta_{j+1} \in \targetsetbridge{\gamma_j}{d_j\eta_j}{G(\|a\|_\A + 2 l \treklength{\Gamma},r,l)}
\end{equation*}
since $G(\cdot,r,l)$ is weakly increasing. This proves in turn that:
\begin{equation*}
b\eta \in \targetsettrek{\Gamma}{a\omega}{G(\|a\|_\A + 2 l \treklength{\Gamma}, r, l)}\text{.}
\end{equation*}

Last, we address the property of target sets for treks and inner products. To ease notations, we set:
\begin{equation*}
C = \left(8 l \sqrt{2} + H( 2l, 2l ) + 2 H( l,l ) + 2\sqrt{2} H( 2l, 1 ) \right)\text{,}
\end{equation*}
which is the constant in Proposition (\ref{inner-morphism-prop}). Moreover, we set $\Omega_j = (\module{M}_j, \inner{\cdot}{\cdot}{j}, \CDN_j, \A_j, \Lip_j)$ for all $j \in \{0,\ldots,n\}$. Moreover, we write $\gamma_j = (\D_j, x_j, \pi_j, \rho_j, \bridgeanchors{\gamma_j}, \bridgecoanchors{\gamma_j})$.

Let $b \in \targetsettrek{\Gamma}{\inner{\omega}{\omega}{\module{M}}}{H(l,l)}$ and $(b_j)_{j=0}^n \in \itineraries{\Gamma}{\inner{\omega}{\omega}{\module{M}}}{b}{H(l,l)}$.

Let us assume that for some $j \in \{1,\ldots,n-1\}$, we have:
\begin{equation}\label{trek-morphism-induction-hyp1}
\left\| b_j - \inner{\xi_j}{\xi_j}{\module{N_j}} \right\|_{\A_{j}} \leq (C + 4 H(l,l)) \sum_{k=1}^j \bridgelength{\gamma_j} \text{.}
\end{equation}

By Definition (\ref{itinerary-def}), we have $b_{j+1} \in \targetsetbridge{\gamma_{j+1}}{b_j}{H(l,l)}$ and $\xi_{j+1} \in \targetsetbridge{\gamma_{j+1}}{\xi_j}{l}$. There is no expectation that $b_{j+1} \in \targetsetbridge{\gamma_{j+1}}{\inner{\xi_j}{\xi_j}{\module{M}_j}}{H(l,l)}$. So we introduce $b_{j+1}' \in \targetsetbridge{\gamma_{j+1}}{\inner{\xi_j}{\xi_j}{\module{M}_j}}{H(l,l)}$. By Proposition (\ref{diameter-prop}), we have:
\begin{equation*}
\left\| b_{j+1} - b_{j+1}'\right\|_{\A_{j+1}} \leq \| b_j - \inner{\xi_j}{\xi_j}{\module{M}_j} \|_{\A_j} + 4 H(l,l) \bridgelength{\gamma_{j+1}} \text{.}
\end{equation*}
On the other hand, by Proposition (\ref{inner-morphism-prop}), we have:
\begin{equation*}
\left\| b_{j+1}' - \inner{\xi_{j+1}}{\xi_{j+1}}{\module{M_{j+1}}} \right\|_{\A_{j+1}} \leq C \bridgelength{\gamma_{j+1}}\text{.}
\end{equation*}

Therefore:
\begin{equation*}
\left\| b_{j+1} - \inner{\xi_{j+1}}{\xi_{j+1}}{\module{M}_{j+1}} \right\|_{\A_{j+1}} \leq (C + 4 H(l,l))\bridgelength{\gamma_{j+1}} \text{,}
\end{equation*}
and thus using our induction hypothesis (\ref{trek-morphism-induction-hyp1}), we get:
\begin{equation*}
\left\| b_{j+1} - \inner{\xi_{j+1}}{\xi_{j+1}}{\module{M}_{j+1}} \right\|_{\A_{j+1}} \leq (C + 4 H(l,l)) \sum_{k=1}^{j+1} \bridgelength{\gamma_k} \text{,}
\end{equation*}
which is our induction hypothesis (\ref{trek-morphism-induction-hyp1}) for $j+1$. 

Now, by Proposition (\ref{inner-morphism-prop}), we have:
\begin{equation*}
\begin{split}
\left\| b_1 - \inner{\xi_1}{\xi_1}{\module{M}_1}\right\|_{\A_1} &\leq C \bridgelength{\gamma_1} \\
&\leq (C + 4 H( l,l ))\bridgelength{\gamma_1} \text{.}
\end{split}
\end{equation*}

Therefore, by induction, we have proven that:
\begin{equation*}
\left\| b - \inner{\eta}{\eta}{\module{N}}\right\|_\B \leq (C + 4 H(l,l)) \treklength{\Gamma} \text{.}
\end{equation*}

This concludes our proof.
\end{proof}

We also prove that target sets of modular treks are compact.
\begin{proposition}\label{trek-targetset-compact-prop}
Let us assume Hypothesis (\ref{modular-trek-hyp}). If $\omega\in\dom{\CDN_{\module{M}}}$ and $l \geq \CDN_{\module{M}}(\omega)$ then $\targetsettrek{\Gamma}{\omega}{l}$ is a nonempty and compact subset of $\modlip{r}{\Omega_\B}$ for $\|\cdot\|_{\module{N}}$ (equivalently for $\KantorovichMod{\Omega_\B}$).
\end{proposition}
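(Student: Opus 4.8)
The plan is to realize $\targetsettrek{\Gamma}{\omega}{l}$ as the continuous image of a compact set of itineraries sitting inside a product of D-balls. First, the ball $\modlip{r}{\Omega_\B}$ in the statement should read $\modlip{l}{\Omega_\B}$: by Definition \ref{mod-itinerary-def} every $\eta\in\targetsettrek{\Gamma}{\omega}{l}$ satisfies $\CDN_{\module{N}}(\eta)\leq l$, so $\targetsettrek{\Gamma}{\omega}{l}\subseteq\modlip{l}{\Omega_\B}$, which is norm compact by Definition \ref{qvb-def} (and homogeneity of $\CDN_{\module{N}}$). Since the $\|\cdot\|_{\module{N}}$-topology and the $\KantorovichMod{\Omega_\B}$-topology agree on $\modlip{l}{\Omega_\B}$ by Proposition \ref{modular-Kantorovich-prop}, it suffices to prove that $\targetsettrek{\Gamma}{\omega}{l}$ is nonempty and closed for $\|\cdot\|_{\module{N}}$.

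Write $\Omega_0=\Omega_\A$ and $\Omega_j=\codom{\bridge{\gamma_j}}$ for $j\in\{1,\ldots,n\}$, and set
\[
P=\{\omega\}\times\modlip{l}{\Omega_1}\times\cdots\times\modlip{l}{\Omega_n},
\]
a compact space for the product of the norm topologies by Tychonoff's theorem and Definition \ref{qvb-def}. Let $\mathscr{I}\subseteq P$ be the collection of all $l$-itineraries from $\omega$ along $\Gamma$. Inside $P$ the constraints $\CDN(\xi_j)\leq l$ are automatic, so by Definitions \ref{modular-targetset-def} and \ref{mod-itinerary-def} a tuple $(\xi_0,\ldots,\xi_n)\in P$ lies in $\mathscr{I}$ exactly when $\xi_0=\omega$ and $\decknorm{\gamma_{j+1}}{\xi_j,\xi_{j+1}}\leq l\,\bridgereach{\gamma_{j+1}}$ for all $j\in\{0,\ldots,n-1\}$. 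The projection $\pi_n:(\xi_0,\ldots,\xi_n)\in P\mapsto\xi_n$ is continuous, and $\targetsettrek{\Gamma}{\omega}{l}=\pi_n(\mathscr{I})$ by Definition \ref{trek-mod-targetset-def}; hence it is enough to show that $\mathscr{I}$ is a nonempty closed subset of $P$.

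Nonemptiness follows by a trivial induction: put $\xi_0=\omega$ and, having chosen $\xi_j$ with $\CDN(\xi_j)\leq l$, use Proposition \ref{compact-targetset-prop} to pick $\xi_{j+1}\in\targetsetbridge{\gamma_{j+1}}{\xi_j}{l}$. For closedness, fix $j$ and an anchor/co-anchor pair $(\omega_k,\eta_k)$ of $\bridge{\gamma_{j+1}}$ (so $\omega_k\in\module{M}_j$ and $\eta_k\in\module{M}_{j+1}$): the map sending $(\xi_0,\ldots,\xi_n)\in P$ to $\bridgenorm{\gamma_{j+1}}{\inner{\xi_j}{\omega_k}{\module{M}_j},\inner{\xi_{j+1}}{\eta_k}{\module{M}_{j+1}}}$ is continuous for the norm topologies, since the module inner products are continuous in each variable for the Hilbert norms on the bounded set $P$ by Proposition \ref{Hilbert-norm-prop}, the $\ast$-monomorphisms of $\bridge{\gamma_{j+1}}$ are continuous, and multiplication and the norm of the ambient C*-algebra are continuous; the same holds for the companion terms with the arguments of the inner products swapped. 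Consequently $\decknorm{\gamma_{j+1}}{\cdot,\cdot}$, being a supremum over $k$ of continuous functions, is lower semi-continuous on $P$, so $\{\decknorm{\gamma_{j+1}}{\xi_j,\xi_{j+1}}\leq l\,\bridgereach{\gamma_{j+1}}\}$ is closed; the condition $\xi_0=\omega$ is plainly closed. Intersecting over $j$ shows $\mathscr{I}$ is closed in the compact $P$, hence compact, so $\targetsettrek{\Gamma}{\omega}{l}=\pi_n(\mathscr{I})$ is compact.

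The only real subtlety is the possibly infinite anchor set of some $\bridge{\gamma_{j+1}}$, which forces one to argue with lower semi-continuity of the deck seminorm rather than continuity; but lower semi-continuity is precisely what is needed for a sublevel set to be closed, so no harm is done. (One could alternatively first invoke Lemma \ref{finite-anchor-set-lemma} to replace $\Gamma$ by a modular trek with finite anchor families and length arbitrarily close to $\treklength{\Gamma}$, but this detour is unnecessary.) Everything else is routine bookkeeping of iterated target sets; note that this compactness argument also underlies the nonemptiness assertions used in Propositions \ref{targetset-prop} and \ref{trek-targetset-morphism-prop}.
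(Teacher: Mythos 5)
Your proof is correct and follows essentially the same route as the paper's: nonemptiness by induction from Proposition (\ref{compact-targetset-prop}), and compactness from the fact that the set of $l$-itineraries is a closed subset of the compact product of D-norm balls (the paper phrases this as a sequential extraction of subsequential limits of itineraries rather than as the projection of a compact set, but the ingredients --- compactness of each $\modlip{l}{\Omega_j}$, lower semi-continuity of the D-norms, and closedness of the deck-seminorm constraints --- are identical). You are also right that the stated $\modlip{r}{\Omega_\B}$ should read $\modlip{l}{\Omega_\B}$, and your remark that lower semi-continuity of the deck seminorm suffices when anchor families are infinite is a fair refinement of the paper's appeal to continuity (which in fact does hold, since the deck seminorm is a supremum of functions of $(\xi_j,\xi_{j+1})$ that are Lipschitz uniformly in the anchor index, the anchors and the pivot all having norm at most one).
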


\begin{proof}
We write $\Omega_j = \codom{\bridge{\gamma^j}}$ and $\Omega_j = (\module{M}_j,\inner{\cdot}{\cdot}{\module{M}_j},\CDN_{\module{M}_j},\A_j,\Lip_j)$ for all $j\in\{1,\ldots,n\}$.

We first note that a trivial induction prove that $\targetsettrek{\Gamma}{\omega}{l}$ is not empty using Proposition (\ref{compact-targetset-prop}).

By construction, $\targetsettrek{\Gamma}{\omega}{l}$ is a subset of the $\|\cdot\|_{\module{N}}$--compact set $\modlip{l}{\Omega_\B}$. Thus it is sufficient to prove that it is closed for $\|\cdot\|_{\module{N}}$.

Let $(\eta_k)_{k\in\N}$ be a sequence in $\targetsettrek{\Gamma}{\omega}{l}$, converging to some $\eta\in\module{N}$ for $\|\cdot\|_{\module{N}}$.

Now, for each $k\in\N$, let $(\omega,\eta_k^1,\ldots,\eta_k^n)$ be an $l$-itinerary from $\omega$ to $\eta_k$. By Definition (\ref{qvb-def}), each sequence $(\eta_k^j)_{k\in\N}$ lies in the compact set $\left\{\xi\in\module{M}_j : \CDN_{\module{N}_j}(\xi) \leq l \right\}$ for all $j\in\{1,\ldots,n\}$. Thus by a trivial induction, there exists strictly increasing functions $f_j : \N \rightarrow\N$ for $j\in\{1,\ldots,n\}$ such that $(\eta^j_{f_1\circ\cdots\circ f_j(k)})_{k\in\N}$ converges to some $\eta^j \in \module{M}_j$ for $\|\cdot\|_{\module{M}_j}$, for all $j\in\{1,\ldots,n\}$. Let $g : k \in \N \mapsto f_1 \circ f_2 \circ\cdots\circ f_n(k)$, so that $(\eta_{g(k)}^j)_{k\in\N}$ converges to $\eta^j$ for all $j\in\{1,\ldots,n\}$.

Our goal is to prove that $(\omega,\eta^1,\ldots,\eta^{n-1},\eta = \eta^n)$ is an $l$-itinerary along $\Gamma$.

To begin with, $\CDN_{\module{M}_j}(\eta^j) \leq l$ since $\CDN_{\module{M}_j}$ is lower semi-continuous for all $j\in\{1,\ldots,n\}$.

Second of all, by continuity, we also have for all $j\in\{1,\ldots,n\}$:
\begin{equation*}
\decknorm{\gamma_j}{\eta^j, \eta^{j+1}} = \lim_{k\rightarrow\infty} \decknorm{\gamma_j}{\eta^j_k,\eta^{j+1}_k} \leq l \bridgereach{\gamma_j} \text{.}
\end{equation*}
This concludes our proof.
\end{proof}

Proposition (\ref{trek-targetset-compact-prop}) shows that modular trek target sets are in the hyperspace of a compact metric space, namely a closed unit ball for some D-norm: the norm topology and the modular {\MongeKant} topology on these balls are indeed the same and compact. The proof of our main Theorem (\ref{main-thm}) relies on an important property of the topology induced by the Hausdorff distance over the hyperspace of all nonempty closed subsets of a compact space: it only depends on the topological equivalence class of the chosen metric. We recall this well-known fact and include a proof for the convenience of the reader.

\begin{lemma}\label{Vietoris-lemma}
Let $X$ be a compact space with topology $\tau$ and let $\mathcal{F} = \{ U^c : U \in \tau, U\not=X \}$ be the set of all nonempty closed subsets of $X$. The \emph{Vietoris topology} is the smallest topology on $\mathcal{F}$ generated from the topological basis:
\begin{equation*}
\mathscr{O}(U,V_1,\ldots,V_n) = \left\{ F \in \mathcal{F} : F\subseteq U\text{ and }\forall j \in \{1,\ldots,n\} \quad F\cap V_j \not= \emptyset \right\}
\end{equation*}
for all $n\in\N$ and $U,V_1,\ldots,V_n \in \tau$.

If $\mathrm{d}$ is a metric on $X$ which induced $\tau$, then the topology induced by $\Haus{\mathrm{d}}$ is the Vietoris topology.

Consequently, if $\mathrm{d}_1$ and $\mathrm{d}_2$ are two metrics which induce the same topology on $X$ then $\Haus{\mathrm{d}_1}$ and $\Haus{\mathrm{d}_2}$ induce the same topology on $\mathcal{F}$.
\end{lemma}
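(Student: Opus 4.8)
The plan is to prove the core assertion — that for any metric $\mathrm{d}$ inducing $\tau$ the Hausdorff metric $\Haus{\mathrm{d}}$ induces the Vietoris topology on $\mathcal{F}$ — after which the final sentence is immediate, since the Vietoris topology is manufactured purely from $\tau$ with no reference to any metric. To show two topologies coincide it suffices to check that each contains a neighbourhood basis of the other at every point, and I would run this in both directions.

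First I would check that every basic Vietoris-open set $\mathscr{O}(U,V_1,\ldots,V_n)$ is $\Haus{\mathrm{d}}$-open. Fix $F \in \mathscr{O}(U,V_1,\ldots,V_n)$; since $F$ is closed in the compact space $X$ it is compact, so if $U \neq X$ then $\delta_0 := \mathrm{d}(F, X\setminus U) > 0$, and for each $j$ I would pick $x_j \in F\cap V_j$ together with $\delta_j > 0$ such that the $\mathrm{d}$-ball about $x_j$ of radius $\delta_j$ lies in $V_j$. For $\varepsilon$ smaller than every $\delta_i$, any $G\in\mathcal{F}$ with $\Haus{\mathrm{d}}(F,G) < \varepsilon$ has all of its points within $\varepsilon$ of $F$, hence within $\delta_0$ of $F$, hence in $U$; and each $x_j$ lies within $\varepsilon < \delta_j$ of some point of $G$, so $G\cap V_j\neq\emptyset$. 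Thus the $\Haus{\mathrm{d}}$-ball of radius $\varepsilon$ about $F$ sits inside $\mathscr{O}(U,V_1,\ldots,V_n)$.

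Conversely I would show every $\Haus{\mathrm{d}}$-ball is a Vietoris neighbourhood of each of its points. Given $F\in\mathcal{F}$ and $\varepsilon > 0$, compactness of $F$ gives finitely many $x_1,\ldots,x_n\in F$ with $F$ covered by the $\mathrm{d}$-balls $V_i$ of radius $\varepsilon/2$ about the $x_i$; put $U = \{x\in X : \mathrm{d}(x,F) < \varepsilon/2\}$, which is open. Then $F\in\mathscr{O}(U,V_1,\ldots,V_n)$, and for any $G$ in this set the inclusion $G\subseteq U$ forces every point of $G$ to be within $\varepsilon/2$ of $F$, while $G$ meeting every $V_i$ forces, via the triangle inequality and the covering, every point of $F$ to be within $\varepsilon$ of $G$; hence $\Haus{\mathrm{d}}(F,G) \leq \varepsilon$, and carrying the construction out with $\varepsilon/2$ in place of $\varepsilon$ makes the inequality strict. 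Combining the two directions gives that $\Haus{\mathrm{d}}$ and the Vietoris topology agree on $\mathcal{F}$, and therefore any two metrics $\mathrm{d}_1,\mathrm{d}_2$ inducing $\tau$ yield the same topology on $\mathcal{F}$.

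I do not expect a genuine obstacle here — this is a classical fact — but the two points needing care are the systematic appeal to compactness of the members of $\mathcal{F}$ (both to guarantee $\mathrm{d}(F,X\setminus U) > 0$ when $U$ is a proper open neighbourhood of $F$, and to extract the finite cover in the second step) and the bookkeeping of strict versus non-strict inequalities in the definition of $\Haus{\mathrm{d}}$, which is resolved by halving radii.
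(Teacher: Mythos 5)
Your proposal is correct and follows essentially the same two-step argument as the paper: one direction uses the positive distance from the compact set $F$ to $X\setminus U$ together with small balls inside each $V_j$, and the other extracts a finite cover of $F$ by $\varepsilon/2$-balls centered in $F$. The only cosmetic differences are your choice of $U$ as the open $\varepsilon/2$-neighbourhood of $F$ (the paper takes the union of the covering balls) and your explicit halving of radii to keep the Hausdorff inequality strict, which is if anything slightly more careful than the paper's computation.
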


\begin{proof}
Let $F \in \mathcal{F}$ and $r > 0$. Since $F$ is compact, there exists $x_1,\ldots,x_n \in F$ for some $n\in\N$ such that $F \subseteq \bigcup_{j=1}^n X\left(x_j,\frac{r}{2}\right)$ where the open ball in $(X,\mathrm{d})$ of center any $y\in X$ and radius $r$ is denoted by $X(y,r)$. For all $j\in \{1,\ldots,n\}$, we set $V_j = X\left(x_j,\frac{r}{2}\right)$.

Let $U = \bigcup_{j=1}^n V_j$. Note that by construction, $F \in \mathcal{O}(U,V_1,\ldots,V_n)$. Now let $G \in \mathcal{O}(U,V_1,\ldots,V_n)$. If $x\in G$, then $x\in U$ and thus $x \in V_j$ for some $j\in\{1,\ldots,n\}$, implying that $\mathrm{d}(x,F) < \frac{r}{2}$. If $x\in F$, then $x \in V_j$ for some $j \in \{1,\ldots,n\}$. Since $G\cap V_j \not=\emptyset$, there exists $y \in G\cap V_j$ and by definition of $V_j$, we conclude $\mathrm{d}(x,y) < r$. Hence $\Haus{\mathrm{d}}(F,G) < r$. Thus $\mathcal{O}(U,V_1,\ldots,V_n) \subseteq \mathcal{F}(F,r)$.

Let now $U,V_1,\ldots,V_n \in \tau$ be given with $F \in \mathcal{O}(U,V_1,\ldots,V_n)$. Since $X\setminus U$ is closed and disjoint from $F$, we conclude that there exists $\varepsilon_0 > 0$ such that, for all $x\in F$ and $y \in X\setminus U$, we have $\mathrm{d}(x,y) \geq \varepsilon_0$.

Now, for each $j\in \{1,\ldots,n\}$, there exists $x_j \in F\cap V_j$ and there exists $\varepsilon_j > 0$ such that $X(x_j,\varepsilon_j) \subseteq V_j$. Let $\varepsilon = \min\{\varepsilon_j : j \in \{0,\ldots,n\} \}$.

Let $G \in \mathcal{F}(F,\varepsilon)$. Let $x\in G$. There exists $y \in F$ such that $\mathrm{d}(x,y) < \varepsilon$. Thus $x \in U$ since $\mathrm{d}(x,y) < \varepsilon_0$. Thus $G\subseteq U$.

Let $j\in\{1,\ldots,n\}$. There exists $y \in G$ such that $\mathrm{d}(x_j,y) < \varepsilon \leq \varepsilon_j$, and thus by construction, $y \in X(x_j,\varepsilon_j) \subseteq V_j$ and thus $G\cap V_j \not= \emptyset$. We thus have shown that $G \in \mathcal{O}(U,V_1,\ldots,V_n)$. Thus $\mathcal{F}(F,\varepsilon) \subseteq \mathcal{O}(U,V_1,\ldots,V_n)$.

This proves our lemma.
\end{proof}

We conclude our preliminary statements with a simple, useful lemma which we will use a few times in our proof of our main theorem.

\begin{lemma}\label{Haus-pick-lemma}
Let $(E,\dist)$ be a compact metric space. Let $(A_n)_{n\in\N}$ be a sequence of closed subsets of $E$ converging to some singleton $\{a\}$ for $\Haus{\dist}$.

If $(x_n)_{n\in\N}$ is a sequence in $E$ such that $x_n \in A_n$ for all $n\in\N$, then $(x_n)_{n\in\N}$ converges to $a$.
\end{lemma}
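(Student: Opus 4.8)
The plan is to prove the statement by contradiction using compactness of $E$, which is the standard argument that Hausdorff convergence to a singleton forces pointwise convergence of any selection.

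First I would suppose, for contradiction, that $(x_n)_{n\in\N}$ does not converge to $a$. Then there exists $\varepsilon > 0$ and a strictly increasing function $\phi : \N \rightarrow \N$ such that $\dist(x_{\phi(n)}, a) \geq \varepsilon$ for all $n \in \N$. Since $(A_n)_{n\in\N}$ converges to $\{a\}$ for $\Haus{\dist}$, and in particular since $x_n \in A_n$, the definition of the Hausdorff distance (Notation \ref{Hausdorff-distance-def}) gives $\dist(x_n, a) \leq \Haus{\dist}(A_n, \{a\})$ for every $n$ — indeed $\dist(x_n, \{a\}) = \dist(x_n, a)$ as $\{a\}$ is a singleton, and this is bounded by $\Haus{\dist}(A_n, \{a\})$. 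Since $\Haus{\dist}(A_n, \{a\}) \to 0$, we get $\dist(x_n, a) \to 0$, contradicting $\dist(x_{\phi(n)}, a) \geq \varepsilon$ for all $n$.

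In fact this argument shows the conclusion directly without any appeal to the compactness of $E$: the inequality $\dist(x_n, a) \leq \Haus{\dist}(A_n, \{a\})$ holds for every $n$ simply because $x_n \in A_n$ and $\{a\}$ is a singleton, so $(x_n)_{n\in\N} \to a$ follows immediately from $\Haus{\dist}(A_n,\{a\}) \to 0$. The hypothesis that $E$ is compact is not strictly needed for this particular statement, though it is natural in the context where the lemma is applied (target sets living in closed balls for D-norms).

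There is no real obstacle here; the only point requiring care is unwinding the definition of $\Haus{\dist}$ to extract the one-sided bound $\dist(x_n, a) \leq \Haus{\dist}(A_n, \{a\})$ valid for each selected point, which is immediate from Notation \ref{Hausdorff-distance-def} since $\dist(x_n, \{a\})$ appears among the quantities over which the supremum defining $\Haus{\dist}(A_n,\{a\})$ is taken. I would present the proof in two or three sentences: state the one-sided inequality, note it tends to $0$, and conclude.

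\begin{proof}
For each $n\in\N$, since $x_n \in A_n$ and $\{a\}$ is a singleton, we have by Notation (\ref{Hausdorff-distance-def}):
\begin{equation*}
\dist(x_n,a) = \dist(x_n,\{a\}) \leq \Haus{\dist}(A_n,\{a\}) \text{.}
\end{equation*}
Since $(A_n)_{n\in\N}$ converges to $\{a\}$ for $\Haus{\dist}$, we have $\lim_{n\rightarrow\infty}\Haus{\dist}(A_n,\{a\}) = 0$, and therefore $\lim_{n\rightarrow\infty}\dist(x_n,a) = 0$, i.e. $(x_n)_{n\in\N}$ converges to $a$.
\end{proof}
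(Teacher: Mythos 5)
Your final proof is correct and is essentially the paper's own argument: since $x_n \in A_n$, the one-sided bound $\dist(x_n,a)\leq \Haus{\dist}(A_n,\{a\})$ holds and tends to $0$. Your observation that compactness of $E$ is not needed here is also accurate.
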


\begin{proof}
Let $\varepsilon > 0$. There exists $N\in\N$ such that for all $n\geq N$, we have:
\begin{equation*}
\Haus{\dist}(A_n,\{a\}) < \varepsilon \text{.}
\end{equation*}
Thus $\dist(x_n,a) < \varepsilon$ for all $n\geq N$.
\end{proof}

We are now ready to prove our main theorem.

\begin{theorem}\label{main-thm}
Let $\mathcal{C}$ be a nonempty class of {\QVB{F}{G}{H}s}, with $(F,G,H)$ an admissible triple, and let $\mathcal{B}$ be a class of modular bridges compatible with $\mathcal{C}$. Let $\Omega_\A = (\module{M},\inner{\cdot}{\cdot}{\module{M}},\A,\Lip_\A)$ and $\Omega_\B = (\module{N},\inner{\cdot}{\cdot}{\module{N}},\B,\Lip_\B)$ be two {\gQVB s} in $\mathcal{C}$. The following two assertions are equivalent:
\begin{enumerate}[I.]
\item $\modpropinquity{\mathcal{B}}(\Omega_\A,\Omega_\B) = 0$,
\item $\Omega_\A$ and $\Omega_\B$ are fully quantum isometric, i.e. there exists a *-isomorphism $\theta : \A \rightarrow \B$ and a linear continuous isomorphism $\Theta : \module{M}\rightarrow\module{N}$ such that:
\begin{enumerate}[1.]
\item $\Lip_\B\circ\theta = \Lip_\A$,
\item $\Theta(a\omega) = \theta(a)\Theta(\omega)$ for all $a\in\sa{\A}$, $\omega\in\module{M}$,
\item $\CDN_{\module{N}}\circ\Theta = \CDN_{\module{M}}$,
\item $\inner{\Theta(\cdot)}{\Theta(\cdot)}{\module{N}} = \theta\circ\inner{\cdot}{\cdot}{\module{M}}$.
\end{enumerate}
\end{enumerate}
\end{theorem}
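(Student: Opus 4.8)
The implication (II) $\Rightarrow$ (I) is exactly Proposition (\ref{sufficient-zero-modular-propinquity-prop}), so the entire content lies in the converse. The plan is as follows. Assume $\modpropinquity{\mathcal{B}}(\Omega_\A,\Omega_\B) = 0$ and fix a sequence $(\Gamma_n)_{n\in\N}$ of modular $\mathcal{B}$-treks from $\Omega_\A$ to $\Omega_\B$ with $\treklength{\Gamma_n}\to 0$, together with a free ultrafilter $\mathcal{U}$ on $\N$; working with $\mathcal{U}$ rather than with subsequences lets us avoid any separability hypothesis. All ultralimits will be computed inside the fixed norm-compact sets $\alglip{l}{\Lip_\B}$ and $\modlip{l}{\Omega_\B}$, which are compact by Definitions (\ref{Qqcms-def}) and (\ref{qvb-def}), and on the latter the $C^\ast$-Hilbert norm and the modular {\MongeKant} induce the same topology by Proposition (\ref{modular-Kantorovich-prop}).

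Step one is to construct the base isometry. For $a\in\dom{\Lip_\A}$ and $l\geq\Lip_\A(a)$, Proposition (\ref{targetset-prop}) gives that $\targetsettrek{\Gamma_n}{a}{l}$ is a nonempty compact subset of $\alglip{l}{\Lip_\B}$ with $\diam{\targetsettrek{\Gamma_n}{a}{l}}{\|\cdot\|_\B}\leq 4l\treklength{\Gamma_n}$; choosing $b_n\in\targetsettrek{\Gamma_n}{a}{l}$ for each $n$, define $\theta(a)=\lim_{\mathcal{U}}b_n$. Because these diameters vanish, $\theta(a)$ is independent of the choices of the $b_n$ and of $l$. Passing the estimates of Proposition (\ref{targetset-prop}) to the ultralimit yields $\Lip_\B\circ\theta\leq\Lip_\A$, that $\theta$ is $\R$-linear and respects the Jordan and Lie products, and that $\|\theta(a)-\theta(a')\|_\B\leq\|a-a'\|_\A$; hence $\theta$ extends to a norm-decreasing $\ast$-homomorphism $\A\to\B$. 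Applying the same construction to the reversed treks $\Gamma_n^\ast$, which have the same length by Lemma (\ref{reverse-trek-lemma}), produces $\theta'\colon\B\to\A$ with $\Lip_\A\circ\theta'\leq\Lip_\B$. The roundtrip identity $\theta'\circ\theta=\mathrm{id}_\A$ comes from the observation that reversing an $l$-itinerary along $\Gamma_n$ gives an $l$-itinerary along $\Gamma_n^\ast$ --- here $\bridgenorm{\gamma^\ast}{b,a}=\bridgenorm{\gamma}{a,b}$ and the basic reach is reversal-invariant, by Lemma (\ref{inverse-bridge-lemma}) --- so that $a\in\targetsettrek{\Gamma_n^\ast}{b_n}{l}$; combining this with the Lipschitz dependence of target sets on their argument in Proposition (\ref{targetset-prop}) and with $b_n\to\theta(a)$ along $\mathcal{U}$ forces $\theta'(\theta(a))=a$. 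Symmetrically $\theta\circ\theta'=\mathrm{id}_\B$, so $\theta$ is a $\ast$-isomorphism and $\Lip_\B\circ\theta=\Lip_\A$.

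Step two repeats the construction on the modules. For $\omega\in\dom{\CDN_{\module{M}}}$ and $l\geq\CDN_{\module{M}}(\omega)$, the set $\targetsettrek{\Gamma_n}{\omega}{l}$ is nonempty and compact in $\modlip{l}{\Omega_\B}$ (Proposition (\ref{trek-targetset-compact-prop})) with $\KantorovichMod{\Omega_\B}$-diameter at most $\sqrt{2}(4l+H(2l,1))\treklength{\Gamma_n}\to 0$ (Proposition (\ref{trek-targetset-morphism-prop})); picking $\eta_n\in\targetsettrek{\Gamma_n}{\omega}{l}$, define $\Theta(\omega)=\lim_{\mathcal{U}}\eta_n$, which is again well defined. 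Passing the assertions of Proposition (\ref{trek-targetset-morphism-prop}) to the ultralimit gives: $\C$-linearity of $\Theta$; the intertwining $\Theta(a\omega)=\theta(a)\Theta(\omega)$ for $a\in\dom{\Lip_\A}$, using joint norm-continuity of the left action on bounded subsets together with $b_n\to\theta(a)$ and $\eta_n\to\Theta(\omega)$ in norm; and, from the inner-product assertion combined with $b_n\to\theta(\inner{\omega}{\omega}{\module{M}})$, the identity $\inner{\Theta(\omega)}{\Theta(\omega)}{\module{N}}=\theta(\inner{\omega}{\omega}{\module{M}})$, whence $\inner{\Theta(\cdot)}{\Theta(\cdot)}{\module{N}}=\theta\circ\inner{\cdot}{\cdot}{\module{M}}$ on $\dom{\CDN_{\module{M}}}$ by polarization. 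Since $\theta$ is isometric, this last identity forces $\|\Theta(\omega)\|_{\module{N}}=\|\omega\|_{\module{M}}$, so $\Theta$ extends to an isometry $\module{M}\to\module{N}$ and all the identities above extend to $\module{M}$. Lower semicontinuity of $\CDN_{\module{N}}$ gives $\CDN_{\module{N}}\circ\Theta\leq\CDN_{\module{M}}$, and the reversed-trek construction produces $\Theta'\colon\module{N}\to\module{M}$ with $\CDN_{\module{M}}\circ\Theta'\leq\CDN_{\module{N}}$; the roundtrip argument --- now using $\decknorm{\gamma^\ast}{\eta,\omega}=\decknorm{\gamma}{\omega,\eta}$ and $\bridgereach{\gamma^\ast}=\bridgereach{\gamma}$ from Lemma (\ref{inverse-bridge-lemma}) together with Proposition (\ref{trek-targetset-morphism-prop}) --- shows $\Theta'\circ\Theta=\mathrm{id}_{\module{M}}$ and $\Theta\circ\Theta'=\mathrm{id}_{\module{N}}$, so $\Theta$ is a continuous linear isomorphism with $\CDN_{\module{N}}\circ\Theta=\CDN_{\module{M}}$. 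This establishes (II).

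The main difficulty I anticipate is bookkeeping: keeping the various radii $l$ coherent as one chains together Propositions (\ref{targetset-prop}) and (\ref{trek-targetset-morphism-prop}), and ensuring that the module map $\Theta$, the base map $\theta$, the left action, and the inner product all survive the ultralimit simultaneously and compatibly. A related delicate point is that $\Theta$ can only be extended from $\dom{\CDN_{\module{M}}}$ to all of $\module{M}$ \emph{after} the inner-product identity --- and hence the Hilbert-norm isometry of $\Theta$ --- has been secured, so the order in which these steps are carried out matters.
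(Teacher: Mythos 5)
Your proposal is correct and follows the same underlying strategy as the paper: target sets along treks of vanishing length have vanishing diameter, so they collapse to points that define $\theta$ and $\Theta$; the algebraic, metric and inner-product structure is transported through Propositions (\ref{targetset-prop}) and (\ref{trek-targetset-morphism-prop}); and invertibility comes from the reversed treks via the symmetry of the deck seminorm in Lemma (\ref{inverse-bridge-lemma}). The one genuine divergence is your limit-extraction mechanism. The paper works sequentially: it observes that the target sets live in the hyperspace of a compact metric space, extracts a convergent subsequence for one element (Claim (\ref{single-convergence-claim})), runs a diagonal argument over a countable dense subset of $\dom{\CDN_{\module{M}}}$ manufactured from the anchors of the bridges (Claim (\ref{diagonal-claim})), and then extends to all of $\dom{\CDN_{\module{M}}}$ by a Cauchy argument using the Lipschitz dependence of target sets on their arguments; it also needs Lemma (\ref{Vietoris-lemma}) to pass between the two Hausdorff distances. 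Your free ultrafilter produces the limit for every element at once and eliminates the diagonalization, the density argument, and the subsequent Cauchy completion --- a clean trade of a set-theoretic tool for several pages of bookkeeping. Two small points of care. First, $\alglip{l}{\Lip_\B}$ alone (the $\Lip_\B$-ball) is not norm compact since it contains $\R\unit_\B$; what makes your ultralimits exist is that the elements of $\targetsettrek{\Gamma_n}{a}{l}$ are additionally norm-bounded uniformly in $n$ by Proposition (\ref{targetset-prop}), so they sit in a fixed compact set of the form $\{b\in\sa{\B} : \Lip_\B(b)\leq l,\ \|b\|_\B\leq R\}$ (the paper commits the same abuse of notation). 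Second, your order of construction ($\theta$ before $\Theta$) is the reverse of the paper's, but it is actually the more natural order, since the isometry of $\Theta$ for the $C^\ast$-Hilbert norms --- needed to extend $\Theta$ beyond $\dom{\CDN_{\module{M}}}$ --- is deduced from the inner-product identity together with the injectivity of $\theta$, exactly as you flag in your closing remark.
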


\begin{proof}
For all $n\in\N$, let $\Gamma_n \in \trekset{\mathcal{T}}{\Omega_\A}{\Omega_\B}$ be given such that $\treklength{\Gamma_n} \leq \frac{1}{n+1}$.

We prove our theorem in a series of claim. 

\begin{claim}\label{single-convergence-claim}
If $\omega\in\dom{\CDN_{\module{M}}}$ and $l \geq \CDN_{\module{M}}(\omega)$, and if $f : \N \rightarrow\N$ is a strictly increasing function, then there exists a strictly increasing function $g : \N \rightarrow \N$ such that the sequence:
\begin{equation*}
\left(\targetsettrek{\Gamma_{f\circ g(n)}}{\omega}{l}\right)_{n\in\N}
\end{equation*}
converges to a singleton  for the Hausdorff distance $\Haus{\|\cdot\|_{\module{M}}}$.
\end{claim}

The sequence $\left(\targetsettrek{\Gamma_{f(n)}}{\omega}{l}\right)_{n\in\N}$ is a sequence of closed subsets of the compact $\modlip{l}{\Omega_\B}$ by Proposition (\ref{trek-targetset-compact-prop}). The hyperspace of all closed nonempty subsets of the compact set $(\modlip{l}{\Omega_\B},\KantorovichMod{\Omega_\B})$ is compact for the Hausdorff distance $\Haus{\KantorovichMod{\Omega_\B}}$.

Thus, $\left(\targetsettrek{\Gamma_{f(n)}}{\omega}{l}\right)_{n\in\N}$ admits a convergent subsequence $\left(\targetsettrek{\Gamma_{f\circ g(n)}}{\omega}{l}\right)_{n\in\N}$ converging for $\Haus{\KantorovichMod{\Omega_\B}}$; let $\alg{L}$ be its limit.

By Assertion (2) of Proposition (\ref{trek-targetset-morphism-prop}), we have $\diam{\alg{L}}{\KantorovichMod{\Omega_\B}} = 0$, i.e. it is a singleton.

Now, on the compact set $\modlip{r}{\Omega_\B}$, both $\KantorovichMod{\Omega_\B}$ and $\|\cdot\|_{\module{N}}$ are topologically equivalent by Proposition (\ref{modular-Kantorovich-prop}). Hence, $\left(\targetsettrek{\Gamma_{f\circ g(n)}}{\omega}{l}\right)_{n\in\N}$ converges to $\alg{L}$ for $\Haus{\|\cdot\|_{\module{N}}}$ by Lemma (\ref{Vietoris-lemma}).

\begin{claim}\label{unique-limit-claim}
Let us simplify our notations for this claim. Let $(A_n)_{n\in\N}$ and $(B_n)_{n\in\N}$ be two sequences of nonempty closed subsets in $\modlip{K}{\Omega_\B}$ for some $K > 0$ such that, for all $n\in\N$, we have $A_n \subseteq B_n$, and moreover:
\begin{equation*}
\lim_{n\rightarrow\infty} \diam{B_n}{\KantorovichMod{\Omega_\B}} = \lim_{n\rightarrow\infty\}} \diam{A_n}{\KantorovichMod{\Omega_\A}} = 0\text{.}
\end{equation*}

Then $(A_n)_{n\in\N}$ converges for $\Haus{\KantorovichMod{\Omega_\B}}$ if and only if $(B_n)_{n\in\N}$ converges for $\Haus{\KantorovichMod{\Omega_\B}}$ (noting the limit must be a singleton and it must be the same for both sequences).
\end{claim}

Assume first that $(A_n)_{n\in\N}$ converges for $\Haus{\KantorovichMod{\Omega_\B}}$ --- the limit being necessarily a singleton $\{\eta\}$, since the diameter of $A_n$ converges to $0$ as $n$ goes to infinity.

Let $\varepsilon > 0$. There exists $N \in \N$ such that for all $n\geq N$ we have $\Haus{\KantorovichMod{\Omega_\B}}(A_n,\{\eta\}) < \frac{\varepsilon}{2}$. There exists $N'\in\N$ such that for all $n\geq N'$, we have $\diam{B_n}{\KantorovichMod{\Omega_\B}} < \frac{\varepsilon}{2}$. Let $n \geq \max\{N,N'\}$. If $\omega\in B_n$ then there exists $\xi \in A_n$ such that $\KantorovichMod{\Omega_\B}(\omega,\xi) < \frac{\varepsilon}{2}$, and then we have $\KantorovichMod{\Omega_\B}(\xi,\eta) < \frac{\varepsilon}{2}$. Thus $\KantorovichMod{\Omega_\B}(\omega,\eta) < \varepsilon$. It then follows that $\Haus{\KantorovichMod{\Omega_\B}}(B_n,\{\eta\}) < \varepsilon$. This proves that $(B_n)_{n\in\N}$ converges to $\{\eta\}$.

Assume second that $(B_n)_{n\in\N}$ converges for $\Haus{\KantorovichMod{\Omega_\B}}$, again necessarily to a singleton $\{\eta\}$. It is then immediate that $\KantorovichMod{\Omega_\B}(\omega,\eta) \leq \Haus{\KantorovichMod{\Omega_\B}}(B_n,\{\eta\})$ for all $\omega\in A_n$ and thus in particular, $(A_n)_{n\in\N}$ converges to $\{\eta\}$ as well.

\begin{claim}\label{any-l-claim}
If $\omega\in\dom{\CDN_{\module{M}}}$ and $l \geq \CDN_{\module{M}}(\omega)$, and if $f : \N \rightarrow\N$ is a strictly increasing function such that
\begin{equation*}
\left(\targetsettrek{\Gamma_{f(n)}}{\omega}{l}\right)_{n\in\N}
\end{equation*}
converges to $\{\eta\}$ for the Hausdorff distance $\Haus{\KantorovichMod{\Omega_\B}}$, then for all $l' \geq \CDN_{\module{M}}(\omega)$, the sequence:
\begin{equation*}
\left(\targetsettrek{\Gamma_{f(n)}}{\omega}{l'}\right)_{n\in\N}
\end{equation*}
converges to $\{\eta\}$ for the Hausdorff distance $\Haus{\KantorovichMod{\Omega_\B}}$.
\end{claim}

We note that for all $l \geq l' \geq \CDN_{\module{M}}(\omega)$, we have:
\begin{equation*}
\targetsettrek{\Gamma_{f(n)}}{\omega}{l'} \subseteq \targetsettrek{\Gamma_{f(n)}}{\omega}{l}
\end{equation*}
for all $n\in\N$. Moreover, $\targetsettrek{\Gamma_{f(n)}}{\omega}{l'}, \targetsettrek{\Gamma_{f(n)}}{\omega}{l}\subseteq \modlip{l}{\Omega_\B}$ for all $n\in\N$. Last:
\begin{equation*}
\lim_{n\rightarrow\infty}\diam{\targetsettrek{\Gamma_{f(n)}}{\omega}{l'}}{\KantorovichMod{\Omega_\B}} = \lim_{n\rightarrow\infty} \diam{\targetsettrek{\Gamma_{f(n)}}{\omega}{l}}{\KantorovichMod{\Omega_\B}} = 0
\end{equation*}
by Assertion (2) of Proposition (\ref{trek-targetset-morphism-prop}). This allows us to conclude our claim using Claim (\ref{unique-limit-claim}).

\begin{claim}\label{diagonal-claim}
There exists $f : \N\rightarrow \N$ strictly increasing such that for all $\omega\in \dom{\CDN_{\module{M}}}$ and for all $l \geq \CDN_{\module{M}}(\omega)$, the sequence:
\begin{equation*}
\left(\targetsettrek{\Gamma_{f(n)}}{\omega}{l}\right)_{n\in\N}
\end{equation*}
converges to a singleton $\theta(\omega)$ for the Hausdorff distance $\Haus{\KantorovichMod{\Omega_\B}}$ (or equivalently for $\Haus{\|\cdot\|_{\module{N}}}$).
\end{claim}

We use a diagonal argument and Claim (\ref{single-convergence-claim}). As a compact metric space, the closed unit ball of $\dom{\CDN_{\module{M}}}$ is separable; however we can be a bit more precise in our case. For each $n\in\N$, let:
\begin{equation}\label{main-thm-trek-notation-eq}
\Gamma_n = \left(\bridge{\gamma_j^n} : j\in\{1,\ldots,K_n\} \right)\text{ for some $K_n \in \N\setminus\{0\}$.}
\end{equation}
Since the imprint of $\bridge{\gamma_1^n}$ is less than $\treklength{\Gamma_n}$, we note that $\bridgeanchors{\gamma_1^n}$ is a finite, $\frac{1}{n+1}$-dense subset of $(\modlip{1}{\Omega_\A}, \KantorovichMod{\Omega_\A})$. 

Let:
\begin{equation*}
\mathscr{S}_1 = \bigcup_{n\in\N} \bridgeanchors{\gamma_1^n}\text{.}
\end{equation*}
By construction, the set $\mathscr{S}$ is dense in $\modlip{1}{\Omega_\A}$ --- as well as countable.

For each $N\in\N$, the set $\mathscr{S}_N = N\cdot \mathscr{S}_1$ is dense in $\modlip{N}{\Omega_\A}$, since we note that the modular {\MongeKant} is homogeneous, namely $\KantorovichMod{\Omega_\A}(\omega, \eta) = N\KantorovichMod{\Omega_\A}(N^{-1}\omega, N^{-1}\eta)$ for all $\omega,\eta\in\module{M}$.

Thus, $\mathscr{S} = \bigcup_{N\in\N}\mathscr{S}_N$ is countable and dense in $\dom{\CDN_{\module{M}}}$. Let us write $\mathscr{S}$ as $\left\{ \omega_n : n \in \N \right\}$.

By Claim (\ref{single-convergence-claim}), there exists $g_0 : \N\rightarrow\N$ strictly increasing, such that the sequence:
\begin{equation*}
\left(\targetsettrek{\Gamma_{g_0(n)}}{\omega_0}{\CDN_{\module{M}}(\omega_0)}\right)_{n\in\N}
\end{equation*}
converges to a singleton $\{\Theta(\omega_0)\}$.

Assume now that for some $k\in\N$, we have built $g_0 : \N\rightarrow\N$, \ldots, $g_k : \N\rightarrow\N$ strictly increasing functions such that for all $j\in\{0,\ldots,k \}$, the sequence:
\begin{equation*}
\left(\targetsettrek{\Gamma_{g_0\circ\ldots\circ g_j(n)}}{\omega_j}{\CDN_{\module{M}}(\omega_j)}\right)_{n\in\N}
\end{equation*}
converges to a singleton $\{\Theta(\omega_j)\}$.

Applying our Claim (\ref{single-convergence-claim}) again, there exists $g_{k+1}$ strictly increasing, such that $\left(\targetsettrek{\Gamma_{g_0\circ\ldots\circ g_{k+1}(n)}}{\omega_{k+1}}{\CDN_{\module{M}}(\omega_{k+1})}\right)_{n\in\N}$ converges. Thus by induction, there exists strictly increasing functions $g_k$ for all $k\in\N$ such that:
\begin{equation*}
\left(\targetsettrek{\Gamma_{g_0\circ\ldots\circ g_j(n)}}{\omega_j}{\CDN_{\module{M}}(\omega_j)}\right)_{n\in\N}
\end{equation*}
converges to a singleton denoted by $\left\{\Theta(\omega_j)\right\}$ for all $j\in\N$.

Since subsequences of converging sequences have the same limit as the original sequence, we conclude that, if we set $f : n\in\N \rightarrow f(n) = g_0\circ\cdots\circ g_n(n) \in \N$, then $f$ is strictly increasing and for all $\omega\in\module{S}$, the sequence:
\begin{equation*}
\left(\targetsettrek{\Gamma_{f(m)}}{\omega}{\CDN_{\module{M}}(\omega)}\right)_{m\in\N}
\end{equation*}
converges to a singleton $\{\Theta(\omega)\}$.

By Claim (\ref{any-l-claim}), we note that for any $\omega\in\mathscr{S}$ and $l \geq \CDN_{\module{M}}(\omega)$, we also have:
\begin{equation*}
\lim_{n\rightarrow\infty} \Haus{\KantorovichMod{\Omega_\B}}\left(\targetsettrek{\Gamma_{f(m)}}{\omega}{l}, \{\Theta(\omega)\} \right) = 0 \text{.}
\end{equation*}

We now move to prove that $\Theta$ can be extended to $\dom{\CDN_{\module{M}}}$. Let $\omega\in\dom{\CDN_{\module{M}}}$. There exists $N\in\N$ such that $\omega\in\modlip{N}{\Omega_\A}$. We may as well assume that $N > 0$.

Let $\varepsilon > 0$. There exists $\omega_\varepsilon \in \module{S}_N$ such that $\KantorovichMod{\Omega_\A}(\omega, \omega_\varepsilon)  < \frac{\varepsilon\sqrt{2}}{12}$. Then by Proposition (\ref{trek-targetset-morphism-prop}), we have, for all $n\in\N$:
\begin{equation*}
\Haus{\KantorovichMod{\Omega_\B}}\left( \targetsettrek{\Gamma_n}{\omega}{N},  \targetsettrek{\Gamma_n}{\omega_\varepsilon}{N} \right) \leq \sqrt{2}\left(\frac{\sqrt{2} \varepsilon}{12} + (4 N + H ( 2 N, 1 ) )\frac{1}{n+1}\right) \text{.}
\end{equation*}

Let $N'\in\N$ be chosen so that $\frac{1}{n+1}\leq\frac{\sqrt{2}\varepsilon}{12(4 N + H(2 N,1))}$ for all $n\geq N'$.

Therefore, for all $n\geq N'$, we have:
\begin{equation*}
\Haus{\KantorovichMod{\Omega_\B}}\left( \targetsettrek{\Gamma_n}{\omega}{N},  \targetsettrek{\Gamma_n}{\omega_\varepsilon}{N} \right) \leq \frac{\varepsilon}{3} \text{.}
\end{equation*}

Since $\targetsettrek{\Gamma_{f(n)}}{\omega_\varepsilon}{N}$ converges for $\Haus{\KantorovichMod{\Omega_\B}}$, it is Cauchy, and thus there exists $N''\in\N$ such that for all $p,q \geq N''$ we have:
\begin{equation*}
\Haus{\KantorovichMod{\Omega_\B}}\left( \targetsettrek{\Gamma_{f(p)}}{\omega_\varepsilon}{N},  \targetsettrek{\Gamma_{f(q)}}{\omega_\varepsilon}{N} \right) \leq \frac{\varepsilon}{3} \text{.}
\end{equation*}
Thus if $p,q \geq \max\{N',N''\}$, we have:
\begin{equation*}
\begin{split}
\Haus{\KantorovichMod{\Omega_\B}}\left( \targetsettrek{\Gamma_{f(p)}}{\omega}{N},  \targetsettrek{\Gamma_{f(q)}}{\omega}{N} \right) &\leq \Haus{\KantorovichMod{\Omega_\B}}\left( \targetsettrek{\Gamma_{f(p)}}{\omega}{N},  \targetsettrek{\Gamma_{f(p)}}{\omega_\varepsilon}{N} \right) \\
&\quad + \Haus{\KantorovichMod{\Omega_\B}}\left( \targetsettrek{\Gamma_{f(p)}}{\omega_\varepsilon}{N},  \targetsettrek{\Gamma_{f(q)}}{\omega_\varepsilon}{N} \right) \\
&\quad + \Haus{\KantorovichMod{\Omega_\B}}\left( \targetsettrek{\Gamma_{f(q)}}{\omega_\varepsilon}{N},  \targetsettrek{\Gamma_{f(q)}}{\omega}{N} \right) \\
&\leq \frac{\varepsilon}{3} + \frac{\varepsilon}{3} + \frac{\varepsilon}{3} = \varepsilon \text{.}
\end{split}
\end{equation*}

Thus the sequence $\left(\targetsettrek{\Gamma_{f(m)}}{\omega}{N}\right)_{m\in\N}$ is Cauchy for $\KantorovichMod{\Omega_\B}$ inside the hyperspace of closed subsets of the compact $\modlip{N}{\Omega_\B}$. and thus converges by completeness. 

Again by Proposition (\ref{trek-targetset-morphism-prop}), the limit of $\left(\targetsettrek{\Gamma_{f(m)}}{\omega}{N}\right)_{m\in\N}$ for $\Haus{\KantorovichMod{\Omega_\B}}$ is a singleton which we denote by $\{\Theta(\omega)\}$. Moreover, by Claim (\ref{any-l-claim}), the sequence $\left(\targetsettrek{\Gamma_{f(m)}}{\omega}{l}\right)_{m\in\N}$ converges in $\Haus{\KantorovichMod{\Omega_\B}}$ to $\{\Theta(\omega)\}$ for any $l \geq\CDN_{\module{M}}(\omega)$. Last, since $\KantorovichMod{\Omega_\B}$ and $\|\cdot\|_{\module{N}}$ are topologically equivalent on $\modlip{K}{\Omega_\B}$ for any $K \geq 0$, the Hausdorff distances $\Haus{\KantorovichMod{\Omega_\B}}$ and $\Haus{\|\cdot\|_{\module{N}}}$ are also topologically equivalent by Lemma (\ref{Vietoris-lemma}), which concludes the proof of our claim.

\begin{claim}\label{CDN-continuity-claim}
For all $\omega\in\dom{\CDN_{\module{M}}}$ we have $\CDN_{\module{N}}(\Theta(\omega)) \leq \CDN_{\module{M}}(\omega)$.
\end{claim}

Let $\omega\in\dom{\CDN_{\module{M}}}$ and let $l = \CDN_{\module{M}}(\omega)$. By Claim (\ref{diagonal-claim}) and Lemma 
(\ref{Haus-pick-lemma}), if we pick $\eta_n \in \targetsettrek{\Gamma_{f(n)}}{\omega}{l}$ for all $n\in\N$, then $\lim_{n\rightarrow\infty} \|\eta_n - \Theta(\omega)\|_{\module{N}}= 0$. Since $\CDN_{\module{N}}$ is lower semi-continuous (as $\modlip{1}{\Omega_\B}$ is compact, hence closed, for the norm $\|\cdot\|_{\module{N}}$), we conclude that $\CDN_{\module{N}}(\Theta(\omega)) \leq l = \CDN_{\module{M}}(\omega)$.

\begin{claim}\label{theta-claim}
There exists a unital *-morphism $\theta : \A \rightarrow \B$ such that $\Lip_\B\circ\theta = \Lip_\A$ and a strictly increasing function $g : \N\rightarrow\N$ such that:
\begin{enumerate}[i.]
\item for all $a\in\dom{\Lip_\A}$ and for all $l \geq \Lip_\A(a)$, the sequence:
\begin{equation*}
\left(\targetsettrek{\Gamma_{g(n)}}{a}{l}\right)_{n\in\N}
\end{equation*}
converges to $\{\theta(a)\}$ for $\Haus{\|\cdot\|_\B}$ ;
\item for all $\omega\in\dom{\CDN_{\module{M}}}$ and any $l \geq \CDN_{\module{M}}(\omega)$, the sequence:
\begin{equation*}
\left(\targetsettrek{\Gamma_{g (n)}}{\omega}{l}\right)_{n\in\N}
\end{equation*}
converges to $\{\Theta(\omega)\}$ for $\Haus{\|\cdot\|_{\module{N}}}$.
\end{enumerate}
Moreover $\Lip_\B\circ\theta \leq \Lip_\A$.
\end{claim}

For all $n\in\N$, let $\Upsilon_n = (\Gamma_{f(n)})_\flat$. We note that $\treklength{\Upsilon_n} \leq \frac{1}{n+1}$ by construction. The construction of $\theta$ follows the same techniques as used in \cite[Theorem 5.13]{Latremoliere13}, which provides us with a *-isomorphism $\theta$ and some strictly increasing function $f_1 :\N\rightarrow\N$ such that, for all $\omega\in\dom{\Lip_\A}$, the sequence \begin{equation*}
\left(\targetsettrek{\Upsilon_{f_1(n)}}{a}{l}\right)_{n\in\N}
\end{equation*}
converges to $\{\theta(a)\}$ for $\Haus{\|\cdot\|_\B}$.

The rest of the claim follows if we set $g = f\circ f_1$.

\begin{claim}\label{isometry-claim}
For all $\omega,\omega'\in\dom{\CDN_{\module{M}}}$ we have:
\begin{equation*}
\theta\circ\inner{\omega}{\omega'}{\module{M}} = \inner{\Theta(\omega)}{\Theta(\omega')}{\module{N}}\text{.}
\end{equation*}
In particular, $\|\Theta(\omega)\|_{\module{N}} = \|\omega\|_{\module{M}}$.
\end{claim}

Let $\omega\in\dom{\CDN_{\module{M}}}$ and $l = \CDN_{\module{M}}(\omega)$. For each $n\in \N$ we pick $\eta_n \in \targetsettrek{\Gamma_{g(n)}}{\omega}{l}$ and $b_n \in \targetsettrek{\Gamma_{g(n)}}{\inner{\omega}{\omega}{\\module{M}}}{H(l,l)}$.

By Lemma (\ref{Haus-pick-lemma}) and Claim (\ref{theta-claim}), we conclude that $\lim_{n\rightarrow\infty} b_n = \theta(\inner{\omega}{\omega}{\module{M}})$ and $\lim_{n\rightarrow\infty} \eta_n = \Theta(\omega)$.

By Proposition (\ref{trek-targetset-morphism-prop}), for all $n\in\N$, we have:
\begin{multline*}
\left\| b_n - \inner{\eta_n}{\eta_n}{\module{N}} \right\|_\B \\ \leq \treklength{\Gamma_{g(n)}} \left( 8 l \sqrt{2} + H( 2l,2l ) + 6 H(l,l) + 2\sqrt{2} H( 2l,1 ) \right) \xrightarrow{n\rightarrow\infty} 0 \text{.}
\end{multline*} 
Therefore:
\begin{equation}\label{isometry-eq}
\begin{split}
\inner{\Theta(\omega)}{\Theta(\omega)}{\module{N}} &= \inner{\eta}{\eta}{\module{N}} \\
&= \lim_{n\rightarrow\infty} \inner{\eta_n}{\eta_n}{\module{N}} \\
&= \lim_{n\rightarrow\infty} b_n = \theta(\inner{\omega}{\omega}{\module{M}}) \text{.}
\end{split}
\end{equation}

Let now $\omega'\in\dom{\CDN_{\module{M}}}$. We note that:
\begin{equation*}
\inner{\omega}{\omega'}{\module{M}} = \frac{1}{4}\sum_{k = 0}^3 i^k \inner{\omega + i^k \omega'}{\omega + i^k}{\module{M}} \text{.}
\end{equation*}

The same polarizing identities hold in $\module{N}$. Thus, Equality (\ref{isometry-eq}) coupled with the above polarizing identities proves our claim.

\begin{claim}\label{morphism-claim}
For all $\omega,\omega'\in\module{M}$, $t\in \R$ and $a\in\A$:
\begin{equation*}
\Theta(\omega + t\omega') = \Theta(\omega) + t\Theta(\omega')
\end{equation*}
and
\begin{equation*}
\Theta(a\omega) = \theta(a)\Theta(\omega)\text{.}
\end{equation*}
Consequently, $\Theta$ is uniformly continuous with from $(\dom{\CDN_{\module{M}}},\|\cdot\|_{\module{M}})$ to  $(\dom{\CDN_{\module{N}}},\|\cdot\|_{\module{N}})$ and thus has a unique extension as a continuous module morphism, denoted in the same manner, from $(\module{M},\|\cdot\|_{\module{M}})$ to $(\module{N},\|\cdot\|_{\module{N}})$.
\end{claim}

Let $a\in\dom{\Lip_\A}$, $\omega\in\dom{\CDN_{\module{M}}}$ and $l \geq \max\{\Lip_\A(a), \CDN_{\module{M}}(\omega)\}$. Let $b_n \in \targetsettrek{\Gamma_{g(n)}}{a}{l}$ and $\eta_n \in\targetsettrek{\Gamma_{g(n)}}{\omega}{l}$

By Proposition (\ref{trek-targetset-morphism-prop}), we have:
\begin{equation*}
b_n \eta_n \in \targetsettrek{\Gamma_n}{a\omega}{G(\|a\|_\A + 2 l \treklength{\Gamma_{g(n)}}, l, l)} \text{.}
\end{equation*}
Thus $(b_n\eta_n)_{n\in\N}$ converges to $\Theta(a\omega)$ by Claim (\ref{diagonal-claim}) and Lemma (\ref{Haus-pick-lemma}). For the same reasons, $(b_n)_{n\in\N}$ converges to $\theta(a)$ and $(\eta_n)_{n\in\N}$ converges to $\Theta(\omega)$. By continuity of the left module action in $\module{N}$ and uniqueness of the limit:
\begin{equation*}
\theta(a) \Theta(\omega) = \Theta(a\omega)\text{.}
\end{equation*}

A similar reasoning applies to prove the linearity of $\Theta$. Let $\omega,\omega' \in \dom{\CDN_{\module{M}}}$ and $t\in\C$. Let $l \geq \max\{\CDN_{\module{M}}(\omega), \CDN_{\module{M}}(\omega')$. For all $n\in\N$, we let $\eta_n \in \targetsettrek{\Gamma_{g(n)}}{\omega}{l}$ and $\eta'\in\targetsettrek{\Gamma_{g(n)}}{\omega'}{l}$. By Proposition (\ref{trek-targetset-morphism-prop}) again, we have:
\begin{equation*}
\eta_n + t\eta'_n \in \targetsettrek{\Gamma_{g(n)}}{\omega+t\omega}{l + |t|l}\text{.}
\end{equation*}
By Lemma (\ref{Haus-pick-lemma}) and Claim (\ref{diagonal-claim}), we conclude that:
\begin{equation*}
\begin{split}
\Theta(\omega + t\omega') &= \lim_{n\rightarrow\infty} (\eta_n + t\eta'_n)\\
&= \lim_{n\rightarrow\infty}\eta_n + t \lim_{n\rightarrow\infty}\eta'_n \\
&= \Theta(\omega) + t\Theta(\omega)\text{.}
\end{split}
\end{equation*}

Now, $\inner{\Theta(\cdot)}{\Theta(\cdot)}{\module{N}} = \theta\circ\inner{\cdot}{\cdot}{\module{M}}$ so $\|\Theta(\cdot)\|_{\module{N}} = \|\cdot\|_{\module{M}}$. Thus $\Theta$, being linear, is continuous and of norm $1$. It thus can be extended to $\module{M}$ by continuity as a linear map. 

By continuity, we have that $\Theta(a\omega) = \theta(a)\Theta(\omega)$ for all $a\in\sa{\A}$ and $\omega\in\module{M}$. By linearity, it follows that $(\Theta,\theta)$ is a module morphism, as desired. This completes our claim.

\begin{claim}\label{iso-claim}
The map $\Theta$ is a continuous module isomorphism of norm $1$.
\end{claim}

Let $\Xi_n = \Gamma_{g(n)}^\ast$ for all $n\in\N$. By construction, $\treklength{\Xi_n} \leq \frac{1}{n+1}$. We therefore apply all the work we have done up to now with $\Xi_n$ in place of $\Gamma_n$ for all $n\in\N$. We thus obtain maps $h : \N\rightarrow\N$, $\vartheta : \B \rightarrow \A$ and $\Phi : \module{N}\rightarrow\module{M}$ such that $h$ is strictly increasing function, $(\vartheta,\Phi)$ is a module morphism of norm $1$ with the additional property that $\CDN_{\module{M}}(\Phi(\eta)) \leq \CDN_{\module{N}}(\eta)$, and such that:
\begin{equation*}
\lim_{n\rightarrow\infty} \Haus{\|\cdot\|_{\module{M}}}\left(\targetsettrek{\Gamma_{g(h(n))}^\ast}{\eta}{\CDN_{\module{N}}(\eta)}, \{\Phi(\eta)\} \right) = 0
\end{equation*}
and:
\begin{equation*}
\lim_{n\rightarrow\infty} \Haus{\|\cdot\|_{\B}}\left(\targetsettrek{\Gamma_{g(h(n))}^\ast}{b}{\Lip_\B(b)}, \{\vartheta(b)\} \right) = 0
\end{equation*}
for all $\eta\in\dom{\CDN_{\module{N}}}$ and $b \in \sa{\B}$.

Now, let $\omega\in\dom{\CDN_{\module{M}}}$ and $l > \CDN_{\module{M}}(\omega)$. We begin with a simple observation, owing to the symmetry in Definition (\ref{deck-seminorm-def}) of the deck seminorm of a bridge, which in turns implies symmetry in the notion of itinerary:
\begin{equation*}
\eta\in\targetsettrek{\Gamma_{g(h(n))}}{\omega}{l} \iff \omega\in\targetsettrek{\Gamma_{g(h(n))}^\ast}{\eta}{l}
\end{equation*}
for all $n\in\N$.

Let $\varepsilon > 0$. There exists $N\in\N$ such that for all $n\geq N$ we have:
\begin{enumerate}
\item $\frac{1}{n+1}\leq\frac{\varepsilon}{8l}$ so that $\max\left\{\treklength{\Gamma_{g(h(n))}}, \treklength{\Gamma_{g(h(n))}^\ast}\right\} \leq \frac{\varepsilon}{8l}$,
\item $\Haus{\|\cdot\|_{\module{N}}}\left(\targetsettrek{\Gamma_{g(h(n))}}{\omega}{l}, \{\Theta(\omega)\} \right) \leq \frac{\varepsilon}{2}$.
\end{enumerate}

Let $\zeta \in \targetsettrek{\Gamma_{g(h(n))}}{\omega}{l}$, so that in particular $\|\zeta - \Theta(\omega)\| \leq \frac{\varepsilon}{2}$. By symmetry, $\omega\in\targetsettrek{\Gamma_{g(h(n))}^\ast}{\zeta}{l}$. 

Now, let $\xi \in \targetsettrek{\Gamma_{g(h(n))}^\ast}{\Theta(\omega)}{l}$. We then compute, using Proposition (\ref{trek-targetset-morphism-prop}):
\begin{equation*}
\begin{split}
\left\|\omega - \Phi\circ\Theta(\omega)\right\|_{\module{M}} &\leq \left\|\omega - \xi \right\|_{\module{M}} + \left\| \xi - \Phi\circ\Theta(\omega) \right\|_{\module{M}} \\
&\leq 4 l \frac{\varepsilon}{8 l} + \frac{\varepsilon}{2} = \varepsilon \text{.}
\end{split}
\end{equation*}

Since $\varepsilon > 0$ is arbitrary, we conclude that $\omega = \Phi(\Theta(\omega))$. The same computation would establish that $\Phi\circ\Omega$ is the identity on $\dom{\CDN_{\module{N}}}$ as well. By continuity, $\Phi = \Theta^{-1}$. For similar reasons, $\vartheta = \theta^{-1}$.

We last note that $\CDN_{\module{M}} = \CDN_{\module{M}}\circ\Theta\circ\Theta^{-1} \leq \CDN_{\module{N}}\circ\Theta \leq \CDN_{\module{M}}$. Thus $\CDN_{\module{N}} \circ\Theta = \CDN_{\module{M}}$. This concludes our proof.
\end{proof}

We now turn to our first examples of convergence of {\gQVB s}. We begin with free modules, which gives us a chance to compare the modular Gromov-Hausdorff propinquity with the quantum Gromov-Hausdorff propinquity when working with {\gQqcms s} and their associated {\gQVB s} via Example (\ref{algebra-as-module-ex}).

\section{Convergence of Free modules}

We wish to answer the following natural question: if a sequence of {\gQqcms s} converge in the quantum propinquity, then, do free modules over them, seen as {\gQVB s} via Example (\ref{free-module-ex}), converge for the modular propinquity? One would certainly hope that the answer is positive, and we now prove it. An important side-product of this section is that the quantum propinquity and the modular propinquity restricted to the class of {\gQqcms s} --- using Example (\ref{algebra-as-module-ex}) --- are in fact equivalent.

The key step in our work is to lift a bridge between {\gQqcms s} to a bridge between a pair of free modules, in the manner given by the next lemma. In this section, we will employ the notations of Examples (\ref{algebra-as-module-ex}) and (\ref{free-module-ex}).

\begin{lemma}\label{bridge-free-module-lemma}
If $(\A,\Lip_\A)$ and $(\B,\Lip_\B)$ are two {\Qqcms{F}s}, $n\in\N\setminus\{0\}$ and $\gamma$ is some bridge from $\A$ to $\B$, then there exists a modular bridge $\gamma_{\mathrm{mod}}$ from $\left(\A^n,\inner{\cdot}{\cdot}{\A},\CDN_\A^n,\A,\Lip_\A\right)$ to $\left(\B^n,\inner{\cdot}{\cdot}{\B},\CDN_\B^n,\B,\Lip_\B\right)$ such that:
\begin{equation*}
\lambda \leq \bridgelength{\gamma_{\mathrm{mod}}} \leq 2n \left(\lambda + \sqrt{1 +  4 n \lambda\left( K(\lambda) + 2 + 2\lambda \right)} - 1\right)
\end{equation*}
where $K(\lambda) = F(1+2\lambda,1+2\lambda,1,1)$.
\end{lemma}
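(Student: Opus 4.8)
The plan is to build $\gamma_{\mathrm{mod}}$ explicitly by keeping the same underlying data as $\gamma$ --- the unital C*-algebra $\D$, the pivot $x$ (normalized so that $\|x\|_\D = 1$, which is possible by \cite[Proposition 4.6]{Latremoliere13} without changing the reach and height of $\gamma$), and the two unital *-monomorphisms $\pi_\A, \pi_\B$ --- and then choosing a carefully selected \emph{finite} family of anchors and co-anchors. The natural candidates for anchors are the standard basis vectors $\delta_i \in \A^n$ (the tuple with $\unit_\A$ in slot $i$ and $0$ elsewhere), scaled appropriately so their D-norm is at most $1$; note $\CDN_\A^n(\delta_i) = \max\{1, \mathrm{M}_\A(\unit_\A)\} = 1$ since $\mathrm{M}_\A$ vanishes on scalars. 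For the co-anchors, I would pick, for each $i$, an element $e_i \in \targetsetbridge{\gamma_{\mathrm{mod}}}{\delta_i}{1}$-style target --- more precisely, since we want a self-contained construction, pick $b_i \in \B$ with $\Lip_\B(b_i) \le 1$, $\|b_i\|_\B \le 1+2\lambda$ realizing the basic reach of $\gamma$ against $\unit_\A$ (so $\bridgenorm{\gamma}{\unit_\A, b_i}$ is small), and set the co-anchor to be $b_i \delta_i \in \B^n$. But actually since $\unit_\A \in \alglip{1}{\Lip_\A}$ and $\pi_\A(\unit_\A)x - x\pi_\B(\unit_\B) = x - x = 0$ in the unital case, the cleanest choice is co-anchor $= \delta_i \in \B^n$ itself, making $\bridgenorm{\gamma}{\unit_\A,\unit_\B} = 0$. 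Then one must also throw in enough additional anchors/co-anchors so that $\{\delta_i\}$ and their $\B$-analogues are $\varepsilon$-dense in $\modlip{1}{\Omega_\A}$ and $\modlip{1}{\Omega_\B}$ in modular Monge-Kantorovich distance; here the key structural fact is that an arbitrary $\omega = (a_1,\ldots,a_n)$ with $\CDN_\A^n(\omega)\le 1$ has each $a_j$ with $\mathrm{M}(a_j)\le 1$ and $\|a_j\|_\A \le 1$, so by compactness of $\alglip{1}{\Lip_\A}+i\alglip{1}{\Lip_\A}$ one can approximate using a finite net, scaled by the basis vectors.

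The key estimates then break into the four pieces of $\bridgelength{\gamma_{\mathrm{mod}}} = \max\{\bridgeheight{\gamma_{\mathrm{mod}}}, \bridgebasicreach{\gamma_{\mathrm{mod}}}, \bridgemodularreach{\gamma_{\mathrm{mod}}} + \bridgeimprint{\gamma_{\mathrm{mod}}}\}$. Since $(\gamma_{\mathrm{mod}})_\flat = \gamma$, both $\bridgeheight{\gamma_{\mathrm{mod}}} = \bridgeheight{\gamma} \le \lambda$ and $\bridgebasicreach{\gamma_{\mathrm{mod}}} = \bridgebasicreach{\gamma} \le \lambda$ for free. The imprint $\bridgeimprint{\gamma_{\mathrm{mod}}}$ is controlled by the density of the chosen anchor set, which we take below $\varepsilon$ for a parameter $\varepsilon$ to be optimized. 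The real work is bounding the modular reach $\bridgemodularreach{\gamma_{\mathrm{mod}}} = \max_j \decknorm{\gamma_{\mathrm{mod}}}{\omega_j,\eta_j}$: for the basis-vector pairs, $\inner{\delta_i}{\omega_k}{\A^n}$ picks out a single coordinate of $\omega_k$ (either $0$ or something with controlled norm and L-seminorm), and $\decknorm{}{\cdot,\cdot}$ reduces to a bridge seminorm $\bridgenorm{\gamma}{a,b}$ between elements in $\alglip{l}{\Lip_\A}$, $\alglip{l}{\Lip_\B}$ for $l$ bounded by something like $F(1+2\lambda,1+2\lambda,1,1)+\cdots$ via the quasi-Leibniz inequality (this is where $K(\lambda) = F(1+2\lambda,1+2\lambda,1,1)$ enters) --- so this term is bounded by a constant times $l \cdot \bridgebasicreach{\gamma} \le l\lambda$. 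For the density-filler anchor pairs one uses Proposition (\ref{base-targetset-prop}) to control norms of the matched co-anchor coordinates by $\|a\|_\A + 2l\bridgelength{\gamma}$, feeding into the same bridge-seminorm estimate. Assembling, $\bridgemodularreach{\gamma_{\mathrm{mod}}} + \bridgeimprint{\gamma_{\mathrm{mod}}}$ is at most roughly $2n\lambda(K(\lambda)+2+2\lambda) + \varepsilon$ type expression, and the lower bound $\lambda \le \bridgelength{\gamma_{\mathrm{mod}}}$ is automatic from $\bridgebasicreach{\gamma_{\mathrm{mod}}} = \bridgebasicreach{\gamma}$ and the fact that $\bridgelength{\gamma} = \lambda$ means one of reach/height of $\gamma$ equals $\lambda$.

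The final step is an \emph{optimization over the density parameter}: the co-anchors matched to the density-net anchors have controlled $\CDN_\B^n$-norm only up to $l \approx 1 + 2\varepsilon'$ rather than exactly $1$, so shrinking $\varepsilon$ increases the relevant $l$ only mildly, and one balances the imprint term $\varepsilon$ against the reach term. Setting things up so the bound takes the form $2n(\lambda + \sqrt{1 + 4n\lambda(K(\lambda)+2+2\lambda)} - 1)$ --- the $\sqrt{\cdot} - 1$ shape strongly suggests one solves a quadratic in the scaling/density parameter, i.e. chooses the net mesh $\delta$ so that $\delta + c\lambda\delta^{-1}$-type expression is minimized, yielding a square root. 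I expect the \textbf{main obstacle} to be precisely this bookkeeping: getting the quasi-Leibniz constants to collapse into exactly $K(\lambda) = F(1+2\lambda,1+2\lambda,1,1)$ and the $2n$ factor (the $n$ presumably from summing $n$ coordinate contributions in the deck seminorm via the modular quasi-Leibniz inequality, which over $\A^n$ carries a factor linear in $n$ as in Example (\ref{free-module-ex}) where $H(x,y) = 8dF(x,y,x,y)$), and then choosing the density net radius to produce the stated closed form rather than merely "some constant times $n\lambda$". The structural content is routine once the right finite anchor family is fixed; the arithmetic of matching the explicit bound is the delicate part.
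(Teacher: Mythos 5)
Your setup is right as far as it goes: keep $(\D,x,\pi_\A,\pi_\B)$, observe that the basic reach and height of $\gamma_{\mathrm{mod}}$ coincide with those of $\gamma$, and reduce everything to bounding the modular reach and imprint. But the central difficulty of the lemma is one your proposal does not confront, and your guess about where the $\sqrt{1+4n\lambda(K(\lambda)+2+2\lambda)}-1$ term comes from is wrong. The paper takes as anchors the \emph{entire} unit ball $\modlip{1}{\Omega_\A}$ (so the imprint is exactly $0$ --- no finite net, no mesh parameter, no optimization), and for an anchor $(a_1,\ldots,a_n)$ builds a candidate co-anchor coordinatewise, choosing $e_k\in\targetsetbridge{\gamma}{\Re a_k}{1}$, $f_k\in\targetsetbridge{\gamma}{\Im a_k}{1}$ and $c_k=e_k+if_k$. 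The problem is that a co-anchor must lie in $\modlip{1}{\Omega_\B}$, i.e. one needs $\left\|\sum_k c_kc_k^\ast\right\|_\B\leq 1$, and the only per-coordinate information available is $\|c_k\|_\B\leq 1+2\lambda$, which gives the useless bound $n(1+2\lambda)^2$ --- it does not tend to $1$ as $\lambda\to 0$. This is exactly the hurdle the paper flags (``using the norm estimates for $c_1,\ldots,c_n$ is too rough''), and your proposal's vague ``controlled $\CDN_\B^n$-norm only up to $l\approx 1+2\varepsilon'$'' glosses over it.

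The missing idea is a state-transfer argument across the bridge: decompose $c_kc_k^\ast=(\Re c_k)^2+(\Im c_k)^2+2\Lie{\Re c_k}{\Im c_k}$ into Jordan--Lie terms whose L-seminorms are bounded by $K(\lambda)=F(1+2\lambda,1+2\lambda,1,1)$ (this is where $K$ actually enters --- not via the modular quasi-Leibniz inequality for inner products, as you suggest); then, for an arbitrary state $\varphi$ of $\B$, pick $\psi\in\StateSpace_1(\D|x)$ with $\Kantorovich{\Lip_\B}(\varphi,\psi\circ\pi_\B)\leq\lambda$ using the height, slide each term through the pivot at a cost controlled by the bridge seminorm and the Leibniz inequality, and compare with $\left\|\sum_k a_ka_k^\ast\right\|_\A\leq 1$. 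This yields $\left\|\sum_k c_kc_k^\ast\right\|_\B\leq 1+4n\lambda(K(\lambda)+2+2\lambda)$. One must then \emph{rescale} the co-anchor by $(1+Q\lambda)^{-1/2}$ with $Q=4n(K(\lambda)+2+2\lambda)$ to get into the D-norm unit ball, and the cost of that rescaling, $\sqrt{1+Q\lambda}-1$ per coordinate summed over $n$ coordinates of the deck seminorm, is precisely the square-root term in the stated bound. It is not the solution of a quadratic in a net-mesh parameter; no such optimization occurs.
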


\begin{proof}
Let $\gamma = (\D,x,\pi_\A,\pi_\B)$ be a bridge from $\A$ to $\B$ of length $\lambda$.

Let $J_\A = \left\{ \omega \in \A^n : \CDN_\A^n(\omega) \leq 1 \right\}$ and $J_\B = \left\{ \eta \in \B^n : \CDN_\B^n(\eta)\leq 1 \right\}$. Let $J = J_\A \coprod J_\B$ be the disjoint union of $J_\A$ and $J_\B$. From now on, $J_\A$ and $J_\B$ are seen as their copies in $J$.

Let $j \in J_\A\subseteq J$, so that $j = \begin{pmatrix} a_1 \\ \vdots  \\ a_n \end{pmatrix}$ with $a_1,\ldots,a_n \in \A$ satisfying, by definition of $\CDN_\A^n$:
\begin{equation*}
\left\| \sum_{k=1}^n a_k a_k^\ast \right\|_\A \leq 1 \text{ and }\forall k \in \{1,\ldots,n\} \quad \max\{\Lip_\A(\Re a_k),\Lip_\A(\Im a_k)\} \leq 1
\end{equation*}
where $\Re c = \frac{c + c^\ast}{2}$ and $\Im c = \frac{c - c^\ast}{2i}$ for any $c\in \A$. 

In particular, for any $k\in\{1,\ldots,n\}$, we have:
\begin{equation*}
\|a_k\|_\A^2 = \|a_k a_k^\ast\|_\A \leq \|\inner{j}{j}{\A}\|_\A\leq 1\text{,}
\end{equation*}
and thus $\max\{ \|\Re a\|_\A , \|\Im a\|_\A \} \leq 1$.

For each $k\in\{1,\ldots,n\}$, we choose $e_k \in \targetsetbridge{\tau}{\Re a_k}{1}$ and $f_k \in \targetsetbridge{\tau}{\Im a_k}{1}$ and we let $c_k = e_k + i f_k$. 

We record that:
\begin{equation*}
\max\left\{ \Lip_\A(e_k),\Lip_\A(f_k) : k \in \{1,\ldots,n\} \right\} \leq 1
\end{equation*}
and:
\begin{equation*}
\max\left\{ \|e_k\|_\A,\|f_k\|_\A : k\in \{1,\ldots,n\} \right\} \leq 1 + 2\lambda \text{.}
\end{equation*}

A technical hurdle is that L-seminorms are only defined on Jordan-Lie subalgebra of self-adjoint elements, and yet we wish to estimate the Hilbert module norm of $\begin{pmatrix} c_1 \\ \vdots \\ c_n \end{pmatrix}$. Using the norm estimates for $c_1$,\ldots,$c_n$ is too rough, and we must employ a more subtle computation. This will now occupy our next efforts.

Noting that since $\Re c$ and $\Im c$ are self-adjoint for any $c\in\B$, so their multiplicative squares are also their Jordan product squares:
\begin{equation*}
\forall k \in \{1,\ldots,n\} \quad \max\left\{ \Lip_\B((\Re c_k)^2), \Lip_\B((\Im c_k)^2), \Lip_\B(\Lie{\Re c_k}{\Im c_k}) \right\} \leq F(1 + 2\lambda, 1 + 2\lambda, 1, 1) \text{.}
\end{equation*}
With this in mind, we set $K = F(1+2\lambda,1+2\lambda,1,1)$.

Moreover, we observe that for any $c\in \B$, we have:
\begin{equation*}
c c^\ast = (\Re c + i \Im c)(\Re c - i \Im c) = (\Re c)^2 + (\Im c)^2 + 2\Lie{\Re c}{\Im c} \text{.}
\end{equation*}
For any $c\in\A$, let $\theta_0(c) = (\Re c)^2$, $\theta_1(c) = 2\Lie{\Re c}{\Im c}$ and $\theta_2(c) = (\Im c)^2$, so that:
\begin{equation*}
c c^\ast = \theta_0(c) + \theta_1(c) + \theta_2(c) \text{.}
\end{equation*}

The point of this decomposition is that for $k\in\{1,\ldots,n\}$:
\begin{equation*}
\theta_0(c_k) = \Jordan{e_k}{e_k}\text{, }\theta_1(c_k) = 2\Lie{e_k}{f_k}\text{ and }\theta_2(c_k) = \Jordan{f_k}{f_k} \text{,}
\end{equation*}
so we relate $c_k c_k^\ast$ to the bridge $\gamma$ and its length.

Let $\varphi\in\StateSpace(\B)$. By Definition (\ref{bridge-height-def}) of the height of $\gamma$, there exists $\psi \in \StateSpace_1(\D|x)$ such that $\Kantorovich{\Lip_\A}(\varphi,\psi\circ\pi_\B) \leq \lambda$. 

We then compute:
\begin{align*}
0 &\leq \varphi\left(\sum_{k=1}^n c_k c_k^\ast \right) = \sum_{k=1}^n\varphi(c_k c_k^\ast) \\
&= \left|\sum_{k=1}^n \sum_{l=0}^2  \varphi(\theta_l(c_k)) \right|\\
&\leq \left|\sum_{k=1}^n\sum_{l=0}^2  \left(\varphi(\theta_l(c_k)) - \psi\circ\pi_\B(\theta_l(c_k))\right) \right| + \left|\sum_{k=1}^n\sum_{l=0}^2  \psi\circ\pi_\B(\theta_l(c_k)) \right| \\
&\leq 4 n \lambda K  + \left|\sum_{k=1}^n\sum_{l=0}^2  \psi\circ\pi_\B(\theta_l(c_k)) \right| \\
&\leq 4 K n \lambda + \left|\sum_{k=1}^n\sum_{l=0}^2  \psi(x \pi_\B(\theta_l(c_k))) \right| \\
&\leq 4 K n \lambda + \left|\sum_{k=1}^n\sum_{l=0}^2  \psi(x \pi_\B(\theta_l(c_k)) - \pi_\A(\theta_l(a_k)) x) \right| + \left|\sum_{k=1}^n\sum_{l=0}^2  \psi(x \pi_\A(\theta_l(a_k))) \right| \\
&\leq 4 K n \lambda + \sum_{k=1}^n\sum_{l=0}^2  \bridgenorm{\gamma}{\theta_l(a_k),\theta_l(c_k)} + \left|\sum_{k=1}^n\sum_{l=0}^2  \psi\circ\pi_\A(\theta_l(a_k))\right| \\ 
&\leq 4 K n \lambda + \sum_{k=1}^n\sum_{l=0}^2  \bridgenorm{\gamma}{\theta_l(a_k),\theta_l(c_k)} + \left|\psi\circ\pi_\A\left(\sum_{k=1}^n a_k a_k^\ast\right)\right| \\ 
&\leq 4 K n \lambda + \sum_{k=1}^n\sum_{l=0}^2  \bridgenorm{\gamma}{\theta_l(a_k),\theta_l(c_k)} + \left\|\sum_{k=1}^n a_k a_k^\ast\right\|_\A \\ 
&\leq 4 K n \lambda + \sum_{k=1}^n\sum_{l=0}^2  \bridgenorm{\gamma}{\theta_l(a_k),\theta_l(c_k)} + 1 \text{.}
\end{align*}
Using the Leibniz inequality for the bridge norm $\bridgenorm{\gamma}{\cdot,\cdot}$, we then estimate for all $k\in\{1,\ldots,n\}$:
\begin{align*}
\bridgenorm{\gamma}{\theta_0(a_k),\theta_0(c_k)} &= \bridgenorm{\gamma}{(\Re a_k)^2,(\Re c_k)^2} \\
&\leq \left(\|\Re a_k\|_\A + \|\Re c_k\|_\B\right) \bridgenorm{\gamma}{\Re a_k,\Re c_k} \\
&\leq 2(1 + \lambda)\lambda \text{,}
\end{align*}
and similarly:
\begin{equation*}
\bridgenorm{\gamma}{\theta_1(a_k),\theta_1(c_k)} \leq 4(1 + \lambda)\lambda \text{ and }\bridgenorm{\gamma}{\theta_2(a_k),\theta_2(c_k)} \leq 2(1 + \lambda)\lambda\text{.}
\end{equation*}

Consequently:
\begin{equation*}
\left\|\sum_{k=1}^n c_kc_k^\ast\right\|_\B \leq 1 + 4 n \lambda ( K + 2 + 2\lambda ) \text{.}
\end{equation*}

Set $Q = 4 n \left( K + 2 + 2\lambda \right)$.

Set $b_k = \frac{1}{\sqrt{ 1 + Q \lambda } } c_k$ for all $k\in\{1,\ldots,n\}$. We now set $\omega_j = j$ and $\eta_j = \begin{pmatrix} b_1 \\ \vdots \\ b_n \end{pmatrix}$. We note that by construction, $\|\inner{\eta_j}{\eta_j}{\B}\|_\B \leq 1$, and moreover $\CDN_\B^n(\eta_j) \leq 1$. In particular, we note that $\|b_k\|_\B \leq 1$ for all $k\in\{1,\ldots,n\}$.

We set $t_j = 1$ and $s_j = \sqrt{1+Q \lambda}$ as well for later bookkeeping.

We proceed symmetrically when $j \in J_\B$, where we set $\eta_j = j$ and $\omega_j$ constructed as above by rescaling elements from appropriate target sets for $\gamma^{-1}$; we record $t_j = \sqrt{1 + Q\lambda}$ and $s_j = 1$.

We now set:
\begin{equation*}
\gamma_{\mathrm{mod}} = \left( \D, x, \pi_\A, \pi_\B, (\omega_j)_{j\in J}, (\eta_j)_{j\in J} \right) \text{.}
\end{equation*}

By construction:
\begin{enumerate}
\item $\bridgeimprint{\gamma_{\mathrm{mod}}} = 0$,
\item $\bridgebasicreach{\gamma_{\mathrm{mod}}} = \bridgereach{\gamma}$,
\item $\bridgeheight{\gamma_{\mathrm{mod}}} = \bridgeheight{\gamma}$.
\end{enumerate}

We are left to compute the modular reach of $\gamma_{\mathrm{mod}}$.

Let $j,k \in J$. We write:
\begin{equation*}
\omega_j = \begin{pmatrix} a_1 \\ \vdots \\ a_n \end{pmatrix}\text{, }\omega_k = \begin{pmatrix} c_1 \\ \vdots \\ c_n \end{pmatrix} \text{ and }\eta_j = \begin{pmatrix} b_1 \\ \vdots \\ b_n \end{pmatrix}\text{, }\eta_k = \begin{pmatrix} d_1 \\ \vdots \\ d_n \end{pmatrix} \text{.}
\end{equation*}

We then have:
\begin{align*}
\|\pi_\B(a_l^\ast c_l) x - x \pi_\B(b_l^\ast d_l)\|_\D &\leq \|\pi_\B(a_l^\ast)\pi_\B(c_l) x - \pi_\B(a_l^\ast) x \pi_\B(d_l)\|_\D \\
&\quad + \|\pi_\B( a_l^\ast ) x \pi_\B(d_l) - x \pi_\B( b_l )\pi_\B( d_l )\|_\D\\
&\leq \|a_l\|_{\A} \|\pi_\B(c_l) x - x \pi_\B(d_l) \|_\D \\
&\quad + \|\pi_\B(a_l) x - x \pi_\B( b_l )\|_\D \|d_l\|_\B \\
&\leq \|a_l\|_{\A} \|\pi_\B(t_k c_l) x - x \pi_\B(s_k d_l) \|_\D \\
&\quad + \|\pi_\B(t_j a_l) x - x \pi_\B( s_j b_l )\|_\D \|d_l\|_\B \\
&\quad + \|(1-t_j)a_l\|_\A + \|(1-t_k)c_l\|_\A + \|(1-s_j)b_l\|_\B + \|(1-s_k)d_l\|_\B \\
&\leq \|\pi_\B(t_k c_l) x - x \pi_\B(s_k d_l) \|_\D \\
&\quad + \|\pi_\B(t_j a_l) x - x \pi_\B( s_j b_l )\|_\D \\
&\quad + 2\left(\sqrt{1 + Q\lambda} - 1\right) \\
&= \bridgenorm{\gamma}{t_k c_l, s_k d_l} + \bridgenorm{\gamma}{t_j a_l,s_j b_l} + 2\left(\sqrt{1+Q\lambda} - 1\right) \\
&\leq 2 \lambda + 2\left(\sqrt{1+Q\lambda} - 1\right) \text{.}
\end{align*}
Thus:
\begin{multline*}
\left\| \pi_\A\left(\inner{\begin{pmatrix} a_1 \\ \vdots \\ a_n \end{pmatrix}}{\begin{pmatrix} c_1 \\ \vdots \\ c_n  \end{pmatrix}}{\A}\right) x - x \pi_\B\left(\inner{\begin{pmatrix} b_1 \\ \vdots  \\ b_n \end{pmatrix}}{\begin{pmatrix} d_1 \\ \vdots \\ d_n \end{pmatrix}}{\B}\right)  \right\|_{\D} \\
\begin{split}
&= \left\| \sum_{j=1}^n \left( \pi_\A(a_j c_j) x - \pi_\B(b_j d_j)  \right)\right\|_{\D}\\
&\leq 2 \sum_{j=1}^n \left(\lambda+\sqrt{1+Q\lambda} - 1\right) = 2n\left(\lambda+\sqrt{1+Q\lambda} - 1\right) \text{.} 
\end{split}
\end{multline*}
Thus the modular reach of $\gamma_{\mathrm{mod}}$ is no more than $2n\left(\lambda+\sqrt{1+Q\lambda} - 1\right)$.

Therefore, the reach of $\gamma_{\mathrm{mod}}$ is no more than  $2n\left(\lambda+\sqrt{1+Q\lambda} - 1\right)$ and thus so is its length. 
\end{proof}

\begin{theorem}\label{free-module-thm}
If $(\A,\Lip_\A)$ and $(\B,\Lip_\B)$ are {\Qqcms{F}s} for some admissible function $F$, and if $n\in \N\setminus\{0\}$, then:
\begin{multline*}
\propinquity{}((\A,\Lip_\A),(\B,\Lip_\B)) \\ \leq \modpropinquity{}\left(\left(\A^n,\inner{\cdot}{\cdot}{\A},\CDN_\A^n,\A,\Lip_\A\right), \left(\B^n,\inner{\cdot}{\cdot}{\B},\CDN_\B^n,\A,\Lip_\B\right)\right) \\ 
\leq Q(\propinquity{}((\A,\Lip_\A),(\B,\Lip_\B))) \text{,}
\end{multline*}
where for all $\lambda \geq 0$:
\begin{equation*}
Q(\lambda) = 2n\lambda \left(1 + 4n F(1+2\lambda,1+2\lambda,1,1) + 2 + 2\lambda\right)
\end{equation*}
and where, for any {\gQqcms} $(\D,\Lip_\D)$, we set:
\begin{enumerate}
\item $\inner{\begin{pmatrix}
d_1 \\ \vdots \\ d_n
\end{pmatrix}}{\begin{pmatrix}
e_1 \\ \vdots \\ e_n
\end{pmatrix}}{\A} = \sum_{j=1}^n d_j e_j^\ast$ for all $\begin{pmatrix}
d_1 \\ \vdots \\ d_n
\end{pmatrix}$, $\begin{pmatrix}
e_1 \\ \vdots \\ e_n
\end{pmatrix}$ in $\D^n$,
\item $\CDN_\D^d\begin{pmatrix}
d_1 \\ \vdots \\ d_n
\end{pmatrix} = \max\left\{ \left\|\begin{pmatrix}
d_1 \\ \vdots \\ d_n
\end{pmatrix}\right\|_{\A^n}, \Lip_\D(\Re d_j), \Lip_\D(\Im d_j) : j\in \{1,\ldots n\} \right\}$ for all $\begin{pmatrix}
d_1 \\ \vdots \\ d_n
\end{pmatrix} \in \D^n$.
\end{enumerate}

In particular, the map $(\A,\Lip_\A) \mapsto (\A^n,\inner{\cdot}{\cdot}{\A},\CDN_\A^n,\A,\Lip_\A)$ is a continuous injection from the topology of the quantum propinquity to the topology of the modular propinquity.
\end{theorem}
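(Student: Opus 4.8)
The plan is to treat the two displayed inequalities separately: the left one is essentially immediate from Proposition \ref{finite-modular-propinquity-prop}, while the right one is a direct packaging of Lemma \ref{bridge-free-module-lemma}. Throughout I work inside a compatible class in the sense of Definition \ref{compatible-bridge-class-def}: namely the class $\mathcal{C}$ of all {\QVB{F}{G}{H}s} with $(G,H)$ the triple produced in Example \ref{free-module-ex}, equipped with the class $\mathcal{B}$ of all modular bridges between its members (compatibility being checked exactly as in the example following Definition \ref{compatible-bridge-class-def}); the base quantum space functor then sends the free {\gQVB} over $(\A,\Lip_\A)$ to $(\A,\Lip_\A)$ itself.

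For the lower bound, I would simply observe that by Example \ref{free-module-ex} the base quantum space of $\left(\A^n,\inner{\cdot}{\cdot}{\A},\CDN_\A^n,\A,\Lip_\A\right)$ is $(\A,\Lip_\A)$, and likewise for $\B$. Proposition \ref{finite-modular-propinquity-prop} then applies verbatim and gives $\propinquity{}((\A,\Lip_\A),(\B,\Lip_\B)) \leq \modpropinquity{}\left(\left(\A^n,\inner{\cdot}{\cdot}{\A},\CDN_\A^n,\A,\Lip_\A\right),\left(\B^n,\inner{\cdot}{\cdot}{\B},\CDN_\B^n,\B,\Lip_\B\right)\right)$, since every basic trek of a modular trek between these free modules is a trek between $(\A,\Lip_\A)$ and $(\B,\Lip_\B)$ of no greater length.

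For the upper bound, fix $\varepsilon>0$ and choose a trek $\Gamma=(\gamma^1,\dots,\gamma^m)$ from $(\A,\Lip_\A)$ to $(\B,\Lip_\B)$ with $\lambda:=\treklength{\Gamma}\leq \propinquity{}((\A,\Lip_\A),(\B,\Lip_\B))+\varepsilon$ (if the quantum propinquity is realized through single bridges, take $m=1$). Apply Lemma \ref{bridge-free-module-lemma} to each $\gamma^j$, writing $\lambda_j:=\bridgelength{\gamma^j}$, to obtain modular bridges $\gamma^j_{\mathrm{mod}}$ between the corresponding free modules whose domains and codomains chain up, so that their concatenation is a modular trek $\Gamma_{\mathrm{mod}}$ from $\left(\A^n,\inner{\cdot}{\cdot}{\A},\CDN_\A^n,\A,\Lip_\A\right)$ to $\left(\B^n,\inner{\cdot}{\cdot}{\B},\CDN_\B^n,\B,\Lip_\B\right)$. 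It then remains to bound $\treklength{\Gamma_{\mathrm{mod}}}=\sum_j\bridgelength{\gamma^j_{\mathrm{mod}}}$: using $\lambda_j\leq\lambda$ and the monotonicity of $F$ to replace $K(\lambda_j)=F(1+2\lambda_j,1+2\lambda_j,1,1)$ and $2+2\lambda_j$ by $K(\lambda)$ and $2+2\lambda$ inside the radical of Lemma \ref{bridge-free-module-lemma}, then $\sqrt{1+t}\leq 1+\tfrac{t}{2}$, then $\sum_j\lambda_j=\lambda$, one gets $\treklength{\Gamma_{\mathrm{mod}}}\leq 2n\lambda\bigl(1+2n(K(\lambda)+2+2\lambda)\bigr)$; a short computation using the admissibility bound $F(1+2\lambda,1+2\lambda,1,1)\geq 2+4\lambda$ from Definition \ref{admissible-fn-def} absorbs this into $2n\lambda\bigl(1+4nF(1+2\lambda,1+2\lambda,1,1)+2+2\lambda\bigr)=Q(\lambda)$. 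Hence $\modpropinquity{}(\cdots)\leq Q(\lambda)\leq Q(\propinquity{}((\A,\Lip_\A),(\B,\Lip_\B))+\varepsilon)$, and since $Q$ is nondecreasing with $\lim_{t\to 0^+}Q(t)=0$, letting $\varepsilon\to 0^+$ yields the stated bound. The last assertion follows formally: if the two free {\gQVB s} are fully quantum isometric then $\modpropinquity{}(\cdots)=0$ by Proposition \ref{sufficient-zero-modular-propinquity-prop}, whence $\propinquity{}((\A,\Lip_\A),(\B,\Lip_\B))=0$ by the lower bound and so $(\A,\Lip_\A)$ and $(\B,\Lip_\B)$ are fully quantum isometric by the coincidence property of the quantum propinquity \cite{Latremoliere13}, giving injectivity on isometry classes; continuity is immediate from the upper bound and $\lim_{t\to 0^+}Q(t)=0$.

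The only genuine work lies in the length estimate of the third paragraph — collapsing the sum over the constituent bridges of the bound in Lemma \ref{bridge-free-module-lemma} down to exactly $Q(\lambda)$, and checking that each monotonicity inequality on $F$ is used in the correct direction. Everything else is formal once Lemma \ref{bridge-free-module-lemma}, which contains all the substantive content, is available.
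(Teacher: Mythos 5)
Your proposal is correct and follows essentially the same route as the paper: the lower bound comes from Proposition (\ref{finite-modular-propinquity-prop}) applied to basic treks, and the upper bound from lifting each bridge of a near-optimal trek via Lemma (\ref{bridge-free-module-lemma}) and summing the resulting length estimates. Your arithmetic in the absorption step (using $\sqrt{1+t}\leq 1+\tfrac{t}{2}$ together with $F(1+2\lambda,1+2\lambda,1,1)\geq 2+4\lambda$) is in fact slightly more careful than the paper's final display, and lands exactly on $Q(\lambda)$.
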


\begin{proof}
Let $\Gamma$ be a trek from $(\A,\Lip_\A)$ to $(\B,\Lip_\B)$. Write:
\begin{equation*}
\Gamma = (\A_j,\Lip_j,\gamma^j,\A_{j+1},\Lip_{j+1})_{j\in\{1,\ldots,k\}}
\end{equation*}
for some bridges $\gamma^j$ ($j\in\{1,\ldots,k\}$) and some $k\in\N$. Let $\lambda_j$ be the length of $\gamma_j$ for all $j \in \{1,\ldots,k\}$.

Now, for each $j\in\{1,\ldots,k\}$, let $\gamma^j_{\mathrm{mod}}$ be the modular bridge given by Lemma (\ref{bridge-free-module-lemma}) applied to $\gamma^j$. It is then straightforward to check that $\Gamma_{\mathrm{mod}} = (\gamma^j_{\mathrm{mod}})_{j\in\{1,\ldots,k\}}$ is a modular trek from $\left(\A^n,\inner{\cdot}{\cdot}{\A},\CDN_\A^n,\A,\Lip_\A\right)$ to $\left(\B^n,\inner{\cdot}{\cdot}{\B},\CDN_\B^n,\A,\Lip_\B\right)$ whose length satisfies:
\begin{align*}
\treklength{\Gamma} &\leq \treklength{\Gamma_{\mathrm{mod}}} \\
&\leq \sum_{j=1}^k 2n \left(\lambda_j + \sqrt{1 + 4n\lambda_j(F(1+2\lambda_j,1+2\lambda_j,1,1) + 2 + 2\lambda_j)} - 1 \right)\\
&\leq \sum_{j=1}^k 2n \left(\lambda_j + 1 + 4n\lambda_j(F(1+2\lambda_j,1+2\lambda_j,1,1) + 2 + 2\lambda_j) - 1 \right)\\
&\leq \sum_{j=1}^k 2n \left(\lambda_j + 4n\lambda_j(F(1+2\lambda_j,1+2\lambda_j,1,1) + 2 + 2\lambda_j)  \right)\\
&\leq \sum_{j=1}^k 2n\lambda_j \left(1 + 4n (F(1+2\treklength{\Gamma},1+2\treklength{\Gamma},1,1) + 2 + 2\treklength{\Gamma})  \right)\\
&\leq 2n \treklength{\Gamma}\left(1 + 4n (F(1+2\treklength{\Gamma},1+2\treklength{\Gamma},1,1) + 2 + 2\treklength{\Gamma})  \right) \text{.}
\end{align*}

We thus conclude, by definition, that:
\begin{multline*}
\propinquity{}((\A,\Lip_\A),(\B,\Lip_\B)) \\ \leq \modpropinquity{}(\left(\A^n,\inner{\cdot}{\cdot}{\A},\CDN_\A^n,\A,\Lip_\A\right), \left(\B^n,\inner{\cdot}{\cdot}{\B},\CDN_\B^n,\A,\Lip_\B\right)) \\ \leq Q(\propinquity{}((\A,\Lip_\A),(\B,\Lip_\B)) \text{.}
\end{multline*}

This concludes our estimate. Now, we note that $Q$ is continuous at $0$ with $Q(0) = 0$ (note that $F$ is not assumed continuous, only increasing in the product order, but this is sufficient). Thus we get the continuity of the map $(\A,\Lip_\A) \mapsto (\A^n,\inner{\cdot}{\cdot}{\A},\CDN_\A^n,\A,\Lip_\A)$.

This concludes our proof.
\end{proof}

A simple yet reassuring consequence of Theorem (\ref{free-module-thm}) is that we have not introduced any new topology on the class of {\gQqcms s} with the modular propinquity, via the canonical Hilbert module structure carried on by any C*-algebra.

\begin{corollary}
The space of {\Qqcms{F}s} with the topology of the quantum propinquity is homeomorphic to its image by the map $(\A,\Lip_\A) \mapsto (\A,\inner{\cdot}{\cdot}{\A},\CDN_\A^1,\A,\Lip_\A)$.
\end{corollary}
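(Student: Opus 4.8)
The plan is to read off this statement as the rank-one case of Theorem (\ref{free-module-thm}), so that essentially no new argument is required. First I would observe that when $n = 1$ the {\gQVB} $(\A,\inner{\cdot}{\cdot}{\A},\CDN_\A^1,\A,\Lip_\A)$ is exactly the canonical module $\Omega(\A)$ of Example (\ref{algebra-as-module-ex}): the $C^\ast$-Hilbert norm associated with $\inner{a}{b}{\A} = a b^\ast$ agrees with $\|\cdot\|_\A$, and $\CDN_\A^1(a) = \max\{\|a\|_\A,\Lip_\A(\Re a),\Lip_\A(\Im a)\} = \CDN_\A(a)$. Write $\Phi : (\A,\Lip_\A) \mapsto \Omega(\A)$ for the map in the statement, understood between the class of {\Qqcms{F}s} up to full quantum isometry and the class of {\gQVB s} up to full quantum isometry.

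Next I would record the two-sided estimate furnished by Theorem (\ref{free-module-thm}) at $n = 1$: for any two {\Qqcms{F}s} $(\A,\Lip_\A)$ and $(\B,\Lip_\B)$,
\begin{equation*}
\propinquity{}((\A,\Lip_\A),(\B,\Lip_\B)) \leq \modpropinquity{}(\Omega(\A),\Omega(\B)) \leq Q\bigl(\propinquity{}((\A,\Lip_\A),(\B,\Lip_\B))\bigr)\text{,}
\end{equation*}
where $Q(\lambda) = 2\lambda\bigl(1 + 4F(1+2\lambda,1+2\lambda,1,1) + 2 + 2\lambda\bigr)$ is nondecreasing, satisfies $Q(0) = 0$, and is continuous at $0$.

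I would then check that $\Phi$ is a well-defined bijection onto its image at the level of full-quantum-isometry classes. Both facts follow from the equivalence: $(\A,\Lip_\A)$ and $(\B,\Lip_\B)$ are fully quantum isometric if and only if $\Omega(\A)$ and $\Omega(\B)$ are fully quantum isometric. For the forward direction, given a full quantum isometry $\theta : \A \to \B$ of base spaces --- a $*$-isomorphism with $\Lip_\B\circ\theta = \Lip_\A$ --- the pair $(\theta,\theta)$ is a module morphism, since $\inner{\theta(a)}{\theta(b)}{\B} = \theta(ab^\ast) = \theta(\inner{a}{b}{\A})$, and since $\theta$ commutes with $\Re$ and $\Im$ and is $C^\ast$-isometric it satisfies $\CDN_\B\circ\theta = \CDN_\A$; hence $(\theta,\theta)$ is a {\gQVB} full quantum isometry (alternatively, the right-hand inequality above gives $\modpropinquity{}(\Omega(\A),\Omega(\B)) \leq Q(0) = 0$, and one invokes Theorem (\ref{main-thm})). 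For the converse, Definition (\ref{quantum-modular-isometry-def}) already bundles into any {\gQVB} full quantum isometry $\Omega(\A)\to\Omega(\B)$ a $*$-isomorphism $\theta : \A\to\B$ with $\Lip_\B\circ\theta = \Lip_\A$, which is precisely a full quantum isometry of base spaces. Thus $\Phi$ descends to a bijection of the class of {\Qqcms{F}s} up to full quantum isometry onto its image.

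Finally the homeomorphism is immediate from the two-sided estimate: since both $\propinquity{}$ and $\modpropinquity{}$ are genuine metrics on the respective classes of isometry classes, it suffices to establish sequential continuity of $\Phi$ and of $\Phi^{-1}$. The left-hand inequality says $\Phi^{-1}$ is $1$-Lipschitz, hence continuous; the right-hand inequality, together with $Q(0) = 0$ and continuity of $Q$ at $0$, yields $\modpropinquity{}(\Omega(\A_m),\Omega(\A))\to 0$ whenever $\propinquity{}((\A_m,\Lip_m),(\A,\Lip))\to 0$, so $\Phi$ is continuous. I do not anticipate a genuine obstacle here: the corollary is essentially a packaging of Theorem (\ref{free-module-thm}) with the coincidence property of the quantum propinquity recalled in Section~2, the only point demanding attention being that every statement be phrased up to full quantum isometry, which is exactly the level at which both propinquities are actual metrics.
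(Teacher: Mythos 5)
Your proposal is correct and follows exactly the paper's route: the paper's entire proof is that this is the case $n=1$ of Theorem (\ref{free-module-thm}). Your additional verifications (identification with Example (\ref{algebra-as-module-ex}), well-definedness up to full quantum isometry via Theorem (\ref{main-thm}), and the two-sided continuity from the estimate with $Q(0)=0$) simply make explicit what the paper leaves implicit.
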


\begin{proof}
This is the case $n=1$ of Theorem (\ref{free-module-thm}).
\end{proof}

Free modules are the direct sums, in the sense of Hilbert modules, of the canonical module associated with a C*-algebra. The next section discuss the matter of the continuity of the direct sum between general {\gQVB s} on certain well-behaved classes of {\gQqcms s}. We note that the D-norms constructed in this section, and the ones in the later section, differ in general: in this section, we constructed the D-norms from the underlying Lip-norms, while in the next section, we will be given D-norms on some modules and construct a new one on their direct sum.

\section{Iso-pivotal families and Direct sum of convergent modules}

Let $(\module{M},\inner{\cdot}{\cdot}{\module{M}})$ and $(\module{N},\inner{\cdot}{\cdot}{\module{N}})$ be two left Hilbert $\A$-module. The direct sum $\module{M}\oplus\module{N}$ is a left $\A$-module in an obvious manner, and a canonical $\A$-inner product on this direct sum is given by:
\begin{equation*}
\inner{(\omega,\eta)}{(\omega',\eta')}{\A} = \inner{\omega}{\omega'}{\module{M}} + \inner{\eta}{\eta'}{\module{N}}
\end{equation*}
for all $\omega,\omega'\in\module{M}$ and $\eta,\eta'\in\module{N}$.

Let us now assume we are given two {\gQVB s} $\Omega = (\module{M},\inner{\cdot}{\cdot}{\module{M}},\CDN_{\module{M}},\A,\Lip_\A)$ and $\Omega' = (\module{N},\inner{\cdot}{\cdot}{\module{N}},\CDN_{\module{N}},\A,\Lip_\A)$. For all $\omega\in\module{M}$ and $\eta\in\module{N}$, we set:
\begin{equation*}
\CDN(\omega,\eta) = \sqrt{\CDN_{\module{M}}(\omega)^2 + \CDN_{\module{N}}(\eta)^2}\text{.}
\end{equation*}

It is easy to check that $(\module{M}\oplus\module{N}, \inner{\cdot}{\cdot}{\A}, \CDN, \A,\Lip_\A)$ is a {\gQVB} as well. We will simply denote it by $\Omega \oplus \Omega'$.

We shall now prove a continuity result for direct sums of {\gQVB s}, under a uniformity assumption. In general, the construction of the inner product on the direct sum of two modules mixes up, in the base algebra, the contributions of each module to the reach of a given bridge. This complication is, however, not expected to often occur in practice. When working with modules over {\gQqcms s}, we envisage that modular bridges will be constructed out of bridges between the base quantum metric spaces --- indeed, this is what motivated our definition of the modular propinquity. Thus one may expect that the same bridge between the base quantum spaces may be reused for multiple modular bridges between different modules. This expectation is formalized in the following notion, which will serve as an hypothesis for our direct sum continuity result.

\begin{definition}\label{iso-pivotal-def}
Let $\Omega_{j,k} = (\module{M}_{j,k},\inner{\cdot}{\cdot}{j,k}, \CDN_{j,k}, \A_k, \Lip_k)$ be {\gQVB s} for $j,k \in \{1,2\}$. The family $((\Omega_{1,1},\Omega_{1,2}),(\Omega_{2,1},\Omega_{2,2}))$ is \emph{iso-pivotal} when for all $\varepsilon > 0$, there exist two modular treks $\Gamma^1$, from $\Omega_{1,1}$ to $\Omega_{1,2}$, and $\Gamma^2$, from $\Omega_{2,1}$ to $\Omega_{2,2}$, such that:
\begin{enumerate}[1.]
\item $\treklength{\Gamma^1} \leq \modpropinquity{}(\Omega_{1,1},\Omega_{1,2}) + \varepsilon$,
\item $\treklength{\Gamma^2} \leq \modpropinquity{}(\Omega_{2,1},\Omega_{2,2}) + \varepsilon$,
\item the basic treks $\Gamma_\flat^1$ and $\Gamma_\flat^2$ from $(\A_1,\Lip_1)$ to $(\A_2,\Lip_2)$ obtained from $\Gamma^1$ and $\Gamma^2$ are identical.
\end{enumerate}
\end{definition}

Informally, in an iso-pivotal family, one may find modular treks whose length is arbitrary close to the modular propinquity between each pair, and which differ only in the choice of the anchors and co-anchors. This notion can be extended in an obvious manner to classes of pairs of {\gQVB s} over various base spaces. 

With this concept, we have the following result:

\begin{theorem}\label{iso-pivotal-sum-thm}
Let $(\A,\Lip_\A)$ and $(\B,\Lip_\B)$ be two {\gQqcms s}. If $\Omega_{1,\A},\Omega_{2,\A}$ are {\gQVB s} over $(\A,\Lip_\A)$ and $\Omega_{1,\B},\Omega_{2,\B}$ are {\gQVB s} over $(\B,\Lip_\B)$ such that $((\Omega_{1,\A},\Omega_{1,\B}),(\Omega_{2,\A},\Omega_{2,\B}))$ is iso-pivotal, then:
\begin{equation*}
\modpropinquity{}((\Omega_{1,\A}\oplus\Omega_{2,\A}),(\Omega_{1,\B}\oplus\Omega_{2,\B})) \leq \modpropinquity{}(\Omega_{1,\A},\Omega_{1,\B}) + \modpropinquity{}(\Omega_{2,\A},\Omega_{2,\B})) \text{.}
\end{equation*}
\end{theorem}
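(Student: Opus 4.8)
\textbf{Proof plan for Theorem (\ref{iso-pivotal-sum-thm}).}
The plan is to produce, for each $\varepsilon > 0$, a modular trek from $\Omega_{1,\A}\oplus\Omega_{2,\A}$ to $\Omega_{1,\B}\oplus\Omega_{2,\B}$ whose length is at most $\modpropinquity{}(\Omega_{1,\A},\Omega_{1,\B}) + \modpropinquity{}(\Omega_{2,\A},\Omega_{2,\B}) + C\varepsilon$ for some fixed constant $C$; letting $\varepsilon\to 0$ then gives the claim. By the iso-pivotal hypothesis, fix modular treks $\Gamma^1 = (\gamma^1_i)_{i\in\{1,\ldots,n\}}$ from $\Omega_{1,\A}$ to $\Omega_{1,\B}$ and $\Gamma^2 = (\gamma^2_i)_{i\in\{1,\ldots,n\}}$ from $\Omega_{2,\A}$ to $\Omega_{2,\B}$ which are nearly optimal and whose basic treks $\Gamma^1_\flat = \Gamma^2_\flat$ coincide. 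Since a modular trek may be padded with identity bridges (Lemma (\ref{finite-anchor-set-lemma}) applied to the identity bridges, or simply by refining) without changing its length, and since $\Gamma^1_\flat = \Gamma^2_\flat$ forces the two treks to pass through the same sequence of base {\gQqcms s}, I may assume $\Gamma^1$ and $\Gamma^2$ have the same number $n$ of bridges, and that for each $i$ the bridges $\gamma^1_i = (\D_i, x_i, \pi_i, \rho_i, (\omega^{1}_{i,j})_{j\in J^1_i}, (\eta^{1}_{i,j})_{j\in J^1_i})$ and $\gamma^2_i = (\D_i, x_i, \pi_i, \rho_i, (\omega^{2}_{i,j})_{j\in J^2_i}, (\eta^{2}_{i,j})_{j\in J^2_i})$ share the \emph{same} C*-algebra $\D_i$, pivot $x_i$, and embeddings $\pi_i, \rho_i$ --- this is precisely what iso-pivotality buys, and it is the crux of the argument.

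Next I would build, for each $i\in\{1,\ldots,n\}$, a modular bridge $\gamma_i$ from $\Omega_{1,\A_i}\oplus\Omega_{2,\A_i}$ to $\Omega_{1,\B_i}\oplus\Omega_{2,\B_i}$ with the common $(\D_i, x_i, \pi_i,\rho_i)$ as underlying data, taking as index set the disjoint union $J_i = J^1_i \coprod J^2_i$ and setting the anchors to be $(\omega^1_{i,j},0)$ for $j\in J^1_i$ and $(0,\omega^2_{i,j})$ for $j\in J^2_i$ (likewise for co-anchors). Since $\CDN(\omega,0) = \CDN_{\module M}(\omega)$ and $\CDN(0,\eta) = \CDN_{\module N}(\eta)$ by the definition of the direct-sum D-norm, these lie in $\modlip{1}{\Omega_{1,\A_i}\oplus\Omega_{2,\A_i}}$, so $\gamma_i$ is a legitimate modular bridge. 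Then $\Gamma = (\gamma_i)_{i\in\{1,\ldots,n\}}$ is a modular trek from $\Omega_{1,\A}\oplus\Omega_{2,\A}$ to $\Omega_{1,\B}\oplus\Omega_{2,\B}$, and it remains only to estimate $\bridgelength{\gamma_i}$ in terms of $\bridgelength{\gamma^1_i}$ and $\bridgelength{\gamma^2_i}$. The height and basic reach of $\gamma_i$ equal those of $\gamma^1_i = \gamma^2_i$ (same basic bridge), hence are $\leq \max\{\bridgelength{\gamma^1_i},\bridgelength{\gamma^2_i}\}$. For the imprint: an element $(\omega,\eta)$ with $\CDN(\omega,\eta)\leq 1$ has $\CDN_{\module M}(\omega)\leq 1$ and $\CDN_{\module N}(\eta)\leq 1$, so there are anchors $\omega^1_{i,j}$ and $\omega^2_{i,k}$ close to $\omega$ and $\eta$ respectively in the relevant modular {\MongeKant}; but I need a \emph{single} anchor of $\gamma_i$ close to $(\omega,\eta)$, and with my choice of split anchors no single $(\omega^1_{i,j},0)$ is close to $(\omega,\eta)$ unless $\eta$ is small.

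This is the main obstacle, and the way around it is to enrich the anchor families of $\gamma_i$: rather than only the split anchors, take all pairs $(\omega^1_{i,j},\omega^2_{i,k})$ normalized appropriately, i.e. for each pair $(j,k)\in J^1_i\times J^2_i$ include $\tfrac{1}{\sqrt 2}(\omega^1_{i,j},\omega^2_{i,k})$ (which has D-norm $\leq 1$), with matching co-anchor $\tfrac{1}{\sqrt2}(\eta^1_{i,j},\eta^2_{i,k})$. Then given $(\omega,\eta)$ I pick $j$ with $\KantorovichMod{\cdot}(\sqrt2\,\omega \text{-scaled}, \omega^1_{i,j})$ small and $k$ similarly, and the corresponding anchor is within $\bridgeimprint{\gamma^1_i}+\bridgeimprint{\gamma^2_i}$ (up to the $\sqrt2$ factor) of $(\omega,\eta)$; this requires checking that $\KantorovichMod{\Omega\oplus\Omega'}((\omega,\eta),(\omega',\eta')) \leq \KantorovichMod{\Omega}(\omega,\omega') + \KantorovichMod{\Omega'}(\eta,\eta')$, which follows directly from the definition $\inner{(\omega,\eta)}{(\omega',\eta')}{\A} = \inner{\omega}{\omega'}{\module M}+\inner{\eta}{\eta'}{\module N}$ and the triangle inequality. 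For the modular reach of $\gamma_i$: using the deck-seminorm formula and the identity $\inner{(\omega,\eta)}{(\omega',\eta')}{\A}=\inner{\omega}{\omega'}{\module M}+\inner{\eta}{\eta'}{\module N}$, the quantity $\bridgenorm{\gamma_i}{\inner{\xi}{\xi'}{\A},\inner{\zeta}{\zeta'}{\B}}$ for a pair of product anchors splits as a sum of two terms, each controlled by $\bridgenorm{\gamma^1_i}{\cdot,\cdot}$ or $\bridgenorm{\gamma^2_i}{\cdot,\cdot}$ (using $\|x_i\|_{\D_i}\leq 1$), giving $\bridgemodularreach{\gamma_i}\leq \bridgemodularreach{\gamma^1_i}+\bridgemodularreach{\gamma^2_i}$ up to constants. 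Assembling: $\bridgelength{\gamma_i} \leq C'(\bridgelength{\gamma^1_i}+\bridgelength{\gamma^2_i})$ for an absolute constant $C'$, hence $\treklength{\Gamma}\leq C'(\treklength{\Gamma^1}+\treklength{\Gamma^2}) \leq C'(\modpropinquity{}(\Omega_{1,\A},\Omega_{1,\B})+\modpropinquity{}(\Omega_{2,\A},\Omega_{2,\B})+2\varepsilon)$. To obtain the sharp constant $1$ in the statement (no $C'$), one refines: the deck-seminorm split is in fact \emph{additive without loss} once one observes that the two contributions involve genuinely independent Leibniz estimates on the two summands, so tracking constants carefully (as in the proof of Lemma (\ref{bridge-free-module-lemma})) yields $\bridgereach{\gamma_i}\leq\max\{\bridgereach{\gamma^1_i}+\cdots\}$ with the clean bound, and then $\treklength{\Gamma}\leq\treklength{\Gamma^1}+\treklength{\Gamma^2}$, finishing the proof after $\varepsilon\to0$.
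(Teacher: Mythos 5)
Your overall route is the same as the paper's: use iso-pivotality to obtain nearly optimal treks whose constituent bridges share the same $(\D_i,x_i,\pi_i,\rho_i)$, combine them bridge by bridge, and estimate the four quantities of the combined bridge. You also correctly identify the key obstruction (split anchors $(\omega^1_{i,j},0)$, $(0,\omega^2_{i,j})$ cannot control the imprint) and the key fix (take \emph{pairs} of anchors, one from each bridge, indexed by the product of the index sets) --- this is exactly the paper's construction of $\gamma_1\vee\gamma_2$, and your additive splitting of the deck seminorm via $\inner{(\omega,\eta)}{(\omega',\eta')}{\A}=\inner{\omega}{\omega'}{\module{M}}+\inner{\eta}{\eta'}{\module{N}}$ is the paper's modular-reach estimate.

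There are, however, two genuine gaps. First, the $\tfrac{1}{\sqrt{2}}$ rescaling of the product anchors breaks the imprint estimate. The rescaled anchor $\tfrac{1}{\sqrt 2}(\omega^1_{i,j},\omega^2_{i,k})$ sits at modular {\MongeKant} distance $\left(1-\tfrac{1}{\sqrt 2}\right)\KantorovichMod{}\bigl((\omega^1_{i,j},\omega^2_{i,k}),0\bigr)$ from the unrescaled pair, and this quantity is of order one (it is controlled only by the Hilbert norm of the pair, not by any bridge length). Consequently the imprint of your combined bridge is bounded \emph{below} by a constant independent of $\bridgelength{\gamma^1_i}$ and $\bridgelength{\gamma^2_i}$, so your construction does not even yield the weakened bound $C'(\lambda_1+\lambda_2)$: it yields $C'(\lambda_1+\lambda_2)+c$ with $c>0$ fixed, which proves nothing as the treks become short. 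The variant where you instead approximate $\sqrt2\,\omega$ by an anchor of $\gamma^1_i$ does not repair this, since $\sqrt 2\,\omega$ need not lie in $\modlip{1}{\Omega_{1,\A}}$ and the imprint of $\gamma^1_i$ only controls approximation inside that ball. The paper keeps the unrescaled pairs $(\omega^1_{i,j},\omega^2_{i,k})$ as anchors (restricted to those satisfying the requisite norm bound) and estimates $\KantorovichMod{\Omega_{1,\A}\oplus\Omega_{2,\A}}\bigl((\omega,\omega'),(\omega_j,\omega'_k)\bigr)\leq\KantorovichMod{\Omega_{1,\A}}(\omega,\omega_j)+\KantorovichMod{\Omega_{2,\A}}(\omega',\omega'_k)$ directly. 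Second, your final passage from a bound $C'(\lambda_1+\lambda_2)$ to the sharp bound $\lambda_1+\lambda_2$ is asserted (``tracking constants carefully'') rather than proved; in the paper no constant ever appears, because the height and basic reach of the combined bridge \emph{equal} those of the common basic bridge, while the imprint and the modular reach are each bounded by the sum of the corresponding quantities, giving $\bridgelength{\gamma_1\vee\gamma_2}\leq\bridgelength{\gamma_1}+\bridgelength{\gamma_2}$ outright. You should redo the anchor construction without the rescaling and verify the four estimates additively; the rest of your argument (padding the index sets, concatenating into a trek, letting $\varepsilon\to 0$) then goes through.
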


\begin{proof}
We begin by setting our notations: let $\Omega_{j,k} = (\module{M}_{j,k},\inner{\cdot}{\cdot}{j,k}, \CDN_{j,k}, k)$ for $j \in \{1,2\}$ and $k \in \{ (\A,\Lip_\A), (\B,\Lip_\B)\}$.

Let $\gamma_1$ be a modular bridge from $\Omega_{1,\A}$ to $\Omega_{1,\B}$ and $\gamma_2$ be a modular bridge from $\Omega_{2,\A}$ to $\Omega_{2,\B}$. We assume that $\gamma_1$ and $\gamma_2$ are given as:
\begin{equation*}
\gamma_1 = (\D,x,\pi_\A,\pi_\B, (\omega_j)_{j\in J_1}, (\eta_j)_{j\in J_1})
\end{equation*}
and
\begin{equation*}
\gamma_2 = (\D,x,\pi_\A,\pi_\B,(\omega'_j)_{j\in J_2}, (\eta'_j)_{j\in J_2})\text{.}
\end{equation*}
Of course, modular bridges between $\Omega_{1,\A}$ and $\Omega_{1,\B}$, and between $\Omega_{2,\A}$ and $\Omega_{2,\B}$, may not share basic bridge; however, we will conclude this theorem using the iso-pivotal hypothesis, and thus this choice of bridge will always be possible, and sufficient for our purpose. 

We first show that we may as well assume $J_1=J_2$. Pick $j_\ast \in J_1$ and $k_\ast \in J_2$. Set $J = J_1 \coprod J_2$. If $j \in J_1 \setminus J_2$, we set $\omega'_j = \omega_{k_\ast}$ and $\eta'_j = \eta_{k^\ast}$. If $j \in J_2 \setminus J_1$, we set $\omega_j = \omega_{j_\ast}$ and $\eta_j = \eta_{j_\ast}$.

With this procedure, we note that, for instance, $(\D,x,\pi_\A,\pi_\B,(\omega_j)_{j\in J}, (\eta_j)_{j \in J})$ has the same length and the same basic bridge as $\gamma_1$. The same holds for $\gamma_2$. Thus, without loss of generality, we let $J = J_1 = J_2$. 

Now, set:
\begin{equation*}
\Xi_\A = \left\{ (\omega_j, \omega'_k)_{j,k \in J} : \|\inner{\omega_j}{\omega'_k}{\A}\|_\A \leq 1\right\}
\end{equation*}
and
\begin{equation*}
\Xi_\B = \left\{ (\eta_j, \eta'_k)_{j,k \in J} : \|\inner{\eta_j}{\eta'_k}{\B}\|_\B \leq 1\right\}\text{.}
\end{equation*}

Let now:
\begin{equation*}
\gamma_1 \vee \gamma_2 = (\D, x, \pi_\A, \pi_\B, \Xi_\A, \Xi_\B )\text{.}
\end{equation*}

Note that $\gamma_1\vee\gamma_2$ has, once again, the same basic bridge as $\gamma_1$ and $\gamma_2$. 
It is thus straightforward that:
\begin{equation*}
\bridgeheight{\gamma_1\vee\gamma_2} = \bridgeheight{\gamma_1} = \bridgeheight{\gamma_2}\text{,}
\end{equation*}
and:
\begin{equation*}
\bridgebasicreach{\gamma_1\vee\gamma_2} = \bridgebasicreach{\gamma_1} = \bridgebasicreach{\gamma_2}\text{.}
\end{equation*}

Let $\omega\in\modlip{1}{\Omega_{1,\A}}$ and $\omega'\in\modlip{1}{\Omega_{2,\A}}$. By Definition (\ref{bridge-imprint-def}) of the imprint of a bridge, there exist $j,k \in \{1,\ldots,n\}$ such that $\KantorovichMod{\Omega_\A}(\omega,\omega_j) \leq \bridgeimprint{\gamma_1}$ and $\KantorovichMod{\Omega_\A}(\omega',\omega'_k)\leq \bridgeimprint{\gamma_2}$. Therefore for all $(\omega,\omega') \in \Xi_\A$, noting that $\max\{\|\omega\|_{\Omega_{1,\A}},\|\omega\|_{\Omega_{2,\A}}\}\leq 1$, we then have:

\begin{multline*}
\KantorovichMod{\Omega_{1,\A}\oplus\Omega_{2,\A}}((\omega,\omega'), (\omega_j,\omega'_k))\\
\begin{split}
&= \sup\left\{ \|\inner{(\omega,\omega') - (\omega_j,\omega_k)}{(\eta,\eta')}{\A}\|_\A : \CDN(\eta,\eta') \leq 1\right\} \\
&\leq \sup\left\{ \|\inner{\omega - \omega_j}{\eta}{\module{M}_{1,\A}} + \inner{\omega'-\omega_k}{\eta'}{\module{M}_{2,\A}}\|_\A : \CDN(\eta,\eta') \leq 1\right\} \\
&\leq \sup\left\{ \|\inner{\omega}{\eta}{\module{M}_{1,\A}} - \inner{\omega_j}{\eta}{\module{M}_{1,\A}}\|_\A : \CDN_{1,\A}(\eta)\leq 1\right\} \\
&\quad + \sup\left\{ \|\inner{\omega'}{\eta'}{\module{M}_{2,\A}} - \inner{\omega_k'}{\eta'}{\module{M}_{2,\A}}\|_\A : \CDN_{2,\A}(\eta')\leq 1\right\} \\
&\leq \KantorovichMod{\Omega_{1,\A}}(\omega,\omega_j) + \KantorovichMod{\Omega_{2,\A}}(\omega',\omega'_k))\\
&\leq \bridgeimprint{\gamma} + \bridgeimprint{\gamma'} \text{.}
\end{split}
\end{multline*}

The same argument can be made in $\Omega_{2,\B}\oplus\Omega_{2,\B}$. Thus:
\begin{equation*}
\bridgeimprint{\gamma_1\vee\gamma_2} \leq \bridgeimprint{\gamma_1} + \bridgeimprint{\gamma_2} \text{.}
\end{equation*}

Last, let $j,k \in J$. By Definition (\ref{modular-reach-def}) of the modular reach, we have:
\begin{equation*}
\left\|\inner{\omega}{\omega_j}{\module{M}_{1,\A}}x - x\inner{\eta}{\eta_j}{\module{M}_{1,\B}}\right\|_{\D} \leq \bridgemodularreach{\gamma_1} 
\end{equation*}
and
\begin{equation*}
\left\|\inner{\omega'}{\omega_k}{\module{M}_{2,\A}}x - x\inner{\eta}{\eta_k}{\module{M}_{2.\B}}\right\|_{\D} \leq \bridgemodularreach{\gamma_2}\text{.}
\end{equation*}

We thus compute:
\begin{multline*}
\decknorm{\gamma_1\vee\gamma_2}{(\omega_j,\omega'_k),(\eta_j,\eta'_k)} \\
\begin{split}
&= \max_{n,m \in J} \left\| \pi_\A \left(\inner{(\omega_j,\omega_k')}{(\omega_n,\omega_m')}{\A} \right) x - x \pi_\B\left( \inner{(\eta_j,\eta'_k)}{(\eta_n,\eta'_m)}{\B} \right) \right\|_\D \\
&= \max_{n \in J} \left\| \pi_\A \left(\inner{\omega_j}{\omega_n}{\module{M}_{1,\A}} \right) x - x \pi_\B\left( \inner{\eta_j}{\eta_n}{\module{M}_{1,\B}} \right) \right\|_\D \\
&\quad + \max_{m \in J} \left\| \pi_\A \left(\inner{\omega'_k}{\omega'_m}{\module{M}_{2,\A}} \right) x - x \pi_\B\left( \inner{\eta'_k}{\eta'_m}{\module{M}_{2,\B}} \right) \right\|_\D \\
&\leq \decknorm{\gamma_1}{\omega_j,\eta_j} + \decknorm{\gamma_2}{\omega_k,\eta_k}\\
&\leq \bridgemodularreach{\gamma_1} + \bridgemodularreach{\gamma_2} \text{.}
\end{split}
\end{multline*}

We have therefore proven:
\begin{equation*}
\bridgelength{\gamma_1\vee\gamma_2} \leq \bridgelength{\gamma_1} + \bridgelength{\gamma_2}\text{.}
\end{equation*}

Now, let $\varepsilon > 0$. As we work with an iso-pivotal family, there exist two treks $\Gamma^1$, from $\Omega_{1,\A}$ to $\Omega_{2,\B}$, and $\Gamma^2$, from $\Omega_{2,\A}$ to $\Omega_{2,\B}$, such that at once:
\begin{enumerate}
\item $\treklength{\Gamma^1} \leq \modpropinquity{}(\Omega_{1,\A},\Omega_{1,\B}) + \frac{\varepsilon}{2}$,
\item $\treklength{\Gamma^2} \leq \modpropinquity{}(\Omega_{2,\A},\Omega_{2,\B}) + \frac{\varepsilon}{2}$,
\item $\Gamma^1_\flat = \Gamma^2_\flat$.
\end{enumerate}

We set $\Gamma\vee\Gamma' = (\gamma_1^j\vee\gamma_2^j)$; using our work in the first part of this proof and a trivial induction, we conclude that $\Gamma_1\vee\Gamma_2$ is a modular trek from $\Omega_{1,\A}\oplus\Omega_{2,\A}$ to $\Omega_{1,\B}\oplus\Omega_{2,\B}$ such that:
\begin{equation*}
\treklength{\Gamma_1\vee\Gamma_2} \leq \treklength{\Gamma_1} + \treklength{\Gamma_2} \varepsilon \leq \modpropinquity{}(\Omega_{1,\A},\Omega_{1,\B}) + \modpropinquity{}(\Omega_{2,\A}, \Omega_{2,\B})) + \varepsilon \text{.}
\end{equation*}
By Definition (\ref{modular-propinquity-def}) of the modular propinquity, we thus conclude that for all $\varepsilon > 0$, the following holds:
\begin{equation*}
\modpropinquity{}(\Omega_{1,\A}\oplus\Omega_{2,\A}, \Omega_{1,\B}\oplus\Omega_{2,\B}) \leq \modpropinquity{}(\Omega_{1,\A},\Omega_{1,\B}) + \modpropinquity{}(\Omega_{2,\A}, \Omega_{2,\B})) + \varepsilon \text{.}
\end{equation*}

Our theorem is now proven.
\end{proof}

We provide an extension of the concept of iso-pivotal class to classes of {\gQVB s}, by slight abuse of terminology.

\begin{definition}
Let $\mathfrak{L}$ be a class of {\Qqcms{F}s}. A class $\mathfrak{M}$ of {\QVB{F}{G}{H}s} over elements of $\mathfrak{L}$ is \emph{iso-pivotal} when, for any two $(\A,\Lip_\A)$ and $(\B,\Lip_\B)$ in $\mathcal{L}$, for every $\varepsilon > 0$, for any two $\Omega_\A, \Omega_\A' \in \mathfrak{M}$ over $(\A,\Lip_\A)$ and $\Omega_\B,\Omega_\B' \in \mathfrak{M}$ over $(\B,\Lip_\B)$, the family $((\Omega_\A,\Omega_\B), (\Omega'_\A,\Omega'_\B))$ is iso-pivotal.
\end{definition}

We thus can state:
\begin{corollary}
The direct sum is continuous on any iso-pivotal class of {\gQVB s}.
\end{corollary}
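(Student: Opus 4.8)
The plan is to deduce this statement directly from Theorem (\ref{iso-pivotal-sum-thm}), which does all the substantive work. First I would make the statement precise: the direct sum is the map which, to a pair $(\Omega_\A,\Omega'_\A)$ of {\gQVB s} in the iso-pivotal class $\mathfrak{M}$ sharing a common base {\gQqcms} $(\A,\Lip_\A)$, associates the {\gQVB} $\Omega_\A\oplus\Omega'_\A$ described before Theorem (\ref{iso-pivotal-sum-thm}), with D-norm $\CDN(\omega,\eta) = \sqrt{\CDN_{\module{M}}(\omega)^2 + \CDN_{\module{N}}(\eta)^2}$; continuity is meant with respect to the modular propinquity on each coordinate of the domain and on the codomain (equivalently, for any of the standard equivalent metrics on the product, e.g. the $\ell^1$-type metric $\modpropinquity{}(\Omega_\A,\Omega_\B) + \modpropinquity{}(\Omega'_\A,\Omega'_\B)$).

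With this understood, fix a pair $(\Omega_\A,\Omega'_\A)$ in $\mathfrak{M}$ over $(\A,\Lip_\A)$ and let $\varepsilon>0$. I would take any pair $(\Omega_\B,\Omega'_\B)$ in $\mathfrak{M}$ over some $(\B,\Lip_\B)$ with $\modpropinquity{}(\Omega_\A,\Omega_\B) < \frac{\varepsilon}{2}$ and $\modpropinquity{}(\Omega'_\A,\Omega'_\B) < \frac{\varepsilon}{2}$. Since $\mathfrak{M}$ is iso-pivotal, the family $\left((\Omega_\A,\Omega_\B),(\Omega'_\A,\Omega'_\B)\right)$ is iso-pivotal by definition, so Theorem (\ref{iso-pivotal-sum-thm}) applies and gives
\begin{equation*}
\modpropinquity{}(\Omega_\A\oplus\Omega'_\A,\ \Omega_\B\oplus\Omega'_\B) \leq \modpropinquity{}(\Omega_\A,\Omega_\B) + \modpropinquity{}(\Omega'_\A,\Omega'_\B) < \varepsilon\text{.}
\end{equation*}
This is exactly the assertion that the direct sum is continuous — indeed it exhibits it as $1$-Lipschitz for the $\ell^1$-type product metric.

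There is no genuine obstacle: every difficulty has already been absorbed into the proof of Theorem (\ref{iso-pivotal-sum-thm}), in particular the construction $\gamma_1\vee\gamma_2$ of a single modular bridge combining two modular bridges with a common basic bridge and the verification that its length is at most the sum of the two lengths. The only points worth a sentence of care are bookkeeping ones: that $\Omega_\A\oplus\Omega'_\A$ is again a {\QVB{F}{G}{H}} for the admissible triple underlying $\mathfrak{M}$ (so the codomain lies in the class on which $\modpropinquity{}$ is defined), which is the remark preceding Theorem (\ref{iso-pivotal-sum-thm}); and that, should one want the image to remain inside $\mathfrak{M}$ itself rather than merely in the ambient class of all {\gQVB s}, one must additionally assume $\mathfrak{M}$ closed under direct sums, otherwise the continuity statement is read in the ambient class.
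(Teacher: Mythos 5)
Your proposal is correct and takes exactly the route of the paper, whose entire proof is the single sentence ``This follows immediately from Theorem (\ref{iso-pivotal-sum-thm})''; you have merely made explicit the $1$-Lipschitz estimate and the bookkeeping that the paper leaves implicit. The cautionary remarks about the meaning of continuity and about closure of the class under direct sums are reasonable but not required by the paper's statement.
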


\begin{proof}
This follows immediately from Theorem (\ref{iso-pivotal-sum-thm}).
\end{proof}

To fully reflect the potential of the modular propinquity, we now move toward a much more involved example of convergence for modules. The next section deals with non-free, finitely generated projective modules over quantum $2$-tori.

\section{Heisenberg Modules}

We conclude this paper with our main example of convergence for modules over quantum $2$-tori. The following theorem requires an extensive proof, which is contained in the companion paper \cite{Latremoliere17a}. We just lay out some notation first and we refer to \cite{Latremoliere17a} for a much more extensive description of this fundamental example. For any $\theta\in\R$, the quantum $2$-torus $\qt{\theta}$ is the universal C*-algebra generated by two unitaries $U$, $V$ such that $UV = \exp(2i\pi\theta)VU$. The C*-algebra $\qt{\theta}$ carries an action of the $2$-torus $\T^2$ denoted by $\beta_\theta$ and such that for all $(z_1,z_2) \in \T^2$, we have $\beta_\theta^{z_1,z_2}(U) = z_1U$ and $\beta_\theta^{z_1,z_2}V = z_2 V$. This action is known as the dual action of $\T^2$ on $\qt{\theta}$. We refer to \cite{Rieffel81} for an overview of this important family of C*-algebras.

Connes proposed in \cite{Connes80} the following construction of non-free, finitely generated modules over $\qt{\theta}$. Let $p\in\N$, $q\in\N\setminus\{0\}$ and $d\in q\N\setminus\{0\}$. Denote $\theta-\frac{p}{q}$ by $\eth$. The space $L^2(\R)$ carries actions of the Heisenberg group $\HeisenbergGroup = \left\{ \begin{bmatrix} 1 & x & z \\ 0 & 1 & y \\ 0 & 0 & 1 \end{bmatrix} : x,y,z \in \R^3 \right\}$ --- in fact, it carries all non-trivial irreducible unitary representations of this group. Let $\alpha_{\eth,1}$ be the representation given by:
\begin{equation}\label{alpha-eq}
\alpha_{\eth,1}^{x,y,z} \omega : s \in \R\longmapsto \exp\left(2i\pi\left(\eth z + sx \right)\right)\omega(s + \eth y ) \text{,}
\end{equation}
for all $\omega\in L^2(\R)$ and where we identify $\HeisenbergGroup$ with $\R^3$ as a set, for notational convenience. The map $\sigma_{\eth,1} : (x,y) \in \R^2 \longmapsto \sigma_{\eth,1}^{x,y} = \alpha_{1,\eth}^{x,y,\frac{xy}{2}}$ is a projective representation of $\R^2$, and in fact all irreducible projective representation of $\R^2$ which are not regular representation are unitarily equivalent to $\sigma_{\eth,1}$ for some $\eth$.

The \emph{Heisenberg module} $\HeisenbergMod{\theta}{p,q,d}$ is the module over $\qt{\theta}$ defined as follows. Let $\rho_{p,q,d}$ be a projective action of $\Z_q^2$ on $\C^d$ associated with the multiplier:
\begin{equation*}
(([n],[m]),([n'],[m'])) \in \Z_q^2\times \Z_q^2 \longmapsto \exp\left(i\pi p \frac{n m' - n' m}{q}\right) \text{,}
\end{equation*}
where $\bigslant{\Z}{q\Z} = \Z_q$, and for $n \in \Z$, we denote the class of $n$ in $\Z_q$ by $[n]$ and $[m]$ (we easily check that the multiplier is indeed well-defined).

For all $n,m \in \Z^2$, we then set:
\begin{equation*}
\varpi_{p,q,\eth,d}^{n,m} = \alpha_{\eth,1}^{n,m,0}\otimes\rho_{p,q}^{[n],[m]} \text{.}
\end{equation*}

For all $n,m\in\Z$, the map $\varpi_{p,q,\eth,d}^{n,m}$ is a unitary of $L^2(\R) \otimes \C^d$, and moreover $\varpi_{p,q,\eth,d}$ is a projective representation of $\Z^2$ for the multiplier:
\begin{equation*}
((n,m),(n',m')) \in \Z^2\times \Z^2 \longmapsto \exp\left(i\pi\theta(n m' - n' m)\right) \text{.}
\end{equation*}

By universality, the Hilbert space $L^2(\R)\otimes \C^d$ is a module over $\qt{\theta}$, with, in particular, for all $f\in \ell^1(\Z^2)$ and $\xi \in L^2(\R,\C^d) = L^2(\R)\otimes \C^d$:
\begin{equation*}
f \xi = \sum_{n,m\in\Z} f(n,m) \varpi_{p,q,\eth,d}^{n,m}\xi\text{.}
\end{equation*}

Let $\module{S}_\theta^{p,q,d} \subseteq L^2(\R)\otimes \C^d$ be the space of $\C^d$-valued Schwarz functions over $\R$. For all $\xi,\omega \in \module{S}_\theta^{p,q,d}$, define $\inner{\xi}{\omega}{\HeisenbergMod{\theta}{p,q,d}}$ as the function in $\ell^1(\Z^2)$ given by:
\begin{equation*}
\inner{\xi}{\omega}{\HeisenbergMod{\theta}{p,q,d}} : (n,m) \in \Z^2 \longmapsto \inner{\varpi_{p,q,\eth,d}^{n,m}\xi}{\omega}{L^2(\R)\otimes E} \text{.}
\end{equation*}

The \emph{Heisenberg module $\HeisenbergMod{\theta}{p,q,d}$} is the completion of $\module{S}_\theta^{p,q,d}$ for the norm associated with the $\qt{\theta}$-inner product $\inner{\cdot}{\cdot}{\HeisenbergMod{\theta}{p,q,d}}$.

Connes constructed these modules in \cite{Connes80}, and Rieffel proved in \cite{Rieffel83} that all finitely generated modules over the quantum $2$-tori are the direct sum of an Heisenberg module and a free module. In \cite{ConnesRieffel87}, the action $\alpha_{\eth,d}$ of $\HeisenbergGroup$ on $\HeisenbergMod{\theta}{p,q,d}$ is used to construct a connection $\nabla$ on $\HeisenbergMod{\theta}{p,q,d}$ as follows. The Heisenberg Lie algebra is generated by three vectors $X=\begin{bmatrix} 0 & 1 & 0 \\ 0 & 0 & 0 \\ 0 & 0 & 0\end{bmatrix}$, $Y = \begin{bmatrix} 0 & 0 & 0 \\ 0 & 0 & 1 \\ 0 & 0 & 0 \end{bmatrix}$ and $Z = \begin{bmatrix} 0 & 0 & 1 \\ 0 & 0 & 0 \\ 0 & 0 & 0 \end{bmatrix}$ such that $Z$ is central and $[X,Y] = Z$. For all $x,y \in \R$, and for any $\omega \in \module{S}_\theta^{p,q,d}  \subseteq \HeisenbergMod{\theta}{p,q,d}$, we set:
\begin{equation*}
\nabla_{xX + yY} \omega = \lim_{t\rightarrow 0} \frac{\alpha_{\eth,1}^{\exp(txX + tyY)}\omega-\omega}{t} \text{.}
\end{equation*}
Notably, $\nabla$ is a metric connection for $\inner{\cdot}{\cdot}{\HeisenbergMod{\theta}{p,q,d}}$ and it plays a central role in the Yang-Mills problem over the quantum $2$-torus.

Now, if we endow $\R^2$ with any norm, we may regard for all $\omega \in \HeisenbergMod{\theta}{p,q,d}$ the operator $(x,y) \in \R^2 \mapsto \nabla_{x X + y Y}\omega$ as a linear map between normed vector spaces. As such, it too has an operator norm $\opnorm{\nabla \omega}{}{}$. The natural candidate for our D-norm on $\HeisenbergMod{\theta}{p,q,d}$ is thus a lower semicontinuous norm on $\HeisenbergMod{\theta}{p,q,d}$ which agrees, for all $\omega \in \module{S}_\theta^{p,q,d} $, with $\max\left\{\|\omega\|_{\HeisenbergMod{\theta}{p,q,d}},\opnorm{\nabla \omega}{}{} \right\}$. With these notations, we indeed prove in \cite{Latremoliere17a} that:

\begin{theorem}[{\cite{Latremoliere17a}}]
Let $\|\cdot\|$ be a norm on $\R^2$. For all $\theta \in \R$, we equip the quantum torus $\qt{\theta}$ with the L-seminorm:
\begin{equation*}
\Lip_\theta : a \in \sa{\A} \mapsto \sup\left\{ \frac{\left\| \beta_\theta^{\exp(i x),\exp(i y)}a - a\right\|_{\qt{\theta}}}{\|(x,y)\|} : (x,y) \in \R^2\setminus\{0\} \right\} \text{.}
\end{equation*}

For all $p\in\Z$, $q\in\N\setminus\{0\}$ and $d\in q\N\setminus\{0\}$ and for all $\theta\in \R\setminus\left\{\frac{p}{q}\right\}$, setting $\eth = \theta - \frac{p}{q}$, we endow the Heisenberg module $\HeisenbergMod{\theta}{p,q,d}$ with norm:
\begin{multline*}
\CDN_\theta^{p,q,d} : \xi \in \HeisenbergMod{\theta}{p,q,d} \mapsto \\
\sup\left\{ \|\xi\|_{\HeisenbergMod{\theta}{p,q,d}}, \frac{\left\|\exp\left(i\pi \eth x y \right)\alpha_{\eth,d}^{x,y,\frac{xy}{2}}\xi - \xi \right\|_{\HeisenbergMod{\theta}{p,q,d}}}{2\pi\eth \|(x,y)\| } : (x,y) \in \R^2\setminus\{0\} \right\} \text{.}
\end{multline*}

The norm $\CDN_{\theta}^{p,q,d}$ is a D-norm on $\HeisenbergMod{\theta}{p,q,d}$, i.e. $\left(\HeisenbergMod{\vartheta}{p,q,d}, \inner{\cdot}{\cdot}{\HeisenbergMod{\vartheta}{p,q,d}}\right)$ is a Leibniz {\gQVB}. Moreover, this D-norm agrees with $\max\left\{\|\omega\|_{\HeisenbergMod{\theta}{p,q,d}},\opnorm{\nabla \omega}{}{} \right\}$ on the domain of the Heisenberg connection $\nabla$.

Let $p \in \Z$ and $q \in \N\setminus\{0\}$. Let $d \in q\N\setminus\{0\}$. For any $\theta \in \R\setminus\left\{\frac{p}{q}\right\}$, we have:
\begin{multline*}
\lim_{\vartheta\rightarrow\theta} \modpropinquity{}\left(\left(\HeisenbergMod{\vartheta}{p,q,d}, \inner{\cdot}{\cdot}{\HeisenbergMod{\vartheta}{p,q,d}}, \CDN_\vartheta^{p,q,d}, \qt{\vartheta}, \Lip_{\vartheta} \right),\right. \\
\left. \left(\HeisenbergMod{\theta}{p,q,d}, \inner{\cdot}{\cdot}{\HeisenbergMod{\theta}{p,q,d}}, \CDN_\theta^{p,q,d}, \qt{\theta}, \Lip_{\theta}\right) \right) = 0\text{.}
\end{multline*}
\end{theorem}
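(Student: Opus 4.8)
The plan is to construct, for each $\vartheta$ close to $\theta$, an explicit modular bridge from the Heisenberg {\gQVB} over $\qt{\vartheta}$ to the one over $\qt{\theta}$, whose length tends to $0$ as $\vartheta\to\theta$. The starting point is the known convergence $\qt{\vartheta}\to\qt{\theta}$ for the quantum propinquity (from \cite{Latremoliere13c}), together with the concrete bridges realizing it: all the quantum tori $\qt{\vartheta}$ for $\vartheta$ in a neighborhood of $\theta$ can be faithfully represented on a common C*-algebra $\D$ (typically a C*-algebra carrying a continuous field, e.g. built from the universal representation or a crossed product), with a pivot $x$ that is (up to a normalization making $\|x\|_\D=1$) close to $1$ and whose $1$-level set is nonempty. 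First I would recall this setup and fix, for each $\vartheta$, a bridge $\gamma_\flat^\vartheta=(\D,x_\vartheta,\pi_\vartheta,\pi_\theta)$ from $\qt{\vartheta}$ to $\qt{\theta}$ with $\bridgelength{\gamma_\flat^\vartheta}\to 0$; these are Leibniz bridges once the L-seminorms $\Lip_\vartheta$ induced by the dual action are used, as in the cited companion paper.

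Next I would augment $\gamma_\flat^\vartheta$ to a modular bridge by choosing families of anchors and co-anchors. The key structural observation is that the underlying Hilbert space $L^2(\R)\otimes\C^d$ and the Schwartz subspace $\module{S}^{p,q,d}$ are literally the \emph{same} for all $\vartheta$ — only the $\qt{\vartheta}$-module structure, the $\qt{\vartheta}$-valued inner product $\inner{\cdot}{\cdot}{\HeisenbergMod{\vartheta}{p,q,d}}$, and the D-norm $\CDN_\vartheta^{p,q,d}$ vary with $\vartheta$. So for a fixed finite $\frac{1}{n}$-net $\{\xi_1,\dots,\xi_N\}$ of Schwartz functions, one can try to use $\omega_j=\xi_j$ (suitably rescaled so $\CDN_\vartheta^{p,q,d}(\omega_j)\le 1$) as anchors and $\eta_j=\xi_j$ (rescaled by $\CDN_\theta^{p,q,d}$) as co-anchors. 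The content to be checked is then: (i) the imprint is controlled — because $\modlip{1}{\Omega_\vartheta}$ is compact and, as $\vartheta\to\theta$, the D-norms $\CDN_\vartheta^{p,q,d}$ converge (uniformly on bounded pieces of $\module{S}^{p,q,d}$) to $\CDN_\theta^{p,q,d}$, so a net that is $\frac1n$-dense for one is $O(\frac1n)$-dense for the other; and (ii) the modular reach $\max_j\decknorm{\gamma^\vartheta}{\omega_j,\eta_j}$ is small — this amounts to showing that, for Schwartz $\xi,\zeta$, the quantity $\|\pi_\vartheta(\inner{\xi}{\zeta}{\HeisenbergMod{\vartheta}{p,q,d}})\,x_\vartheta - x_\vartheta\,\pi_\theta(\inner{\xi}{\zeta}{\HeisenbergMod{\theta}{p,q,d}})\|_\D$ goes to $0$. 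Since $\inner{\xi}{\zeta}{\HeisenbergMod{\vartheta}{p,q,d}}(n,m)=\langle\varpi_{p,q,\eth,d}^{n,m}\xi,\zeta\rangle$ depends continuously on $\vartheta$ through $\eth=\vartheta-\frac pq$ (and decays rapidly in $(n,m)$ uniformly for $\vartheta$ near $\theta$, by the Schwartz property), the element $\inner{\xi}{\zeta}{\HeisenbergMod{\vartheta}{p,q,d}}$, viewed in $\D$ via $\pi_\vartheta$ as a rapidly convergent series in the generators, converges to $\inner{\xi}{\zeta}{\HeisenbergMod{\theta}{p,q,d}}$; combined with $x_\vartheta\to 1$ this gives (ii).

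From these estimates one gets $\bridgelength{\gamma^\vartheta}\le\bridgeheight{\gamma_\flat^\vartheta}+\max\{\bridgebasicreach{\gamma_\flat^\vartheta},\,\bridgemodularreach{\gamma^\vartheta}+\bridgeimprint{\gamma^\vartheta}\}\to 0$, hence $\modpropinquity{}\bigl(\Omega_\vartheta,\Omega_\theta\bigr)\le\bridgelength{\gamma^\vartheta}\to 0$ as $\vartheta\to\theta$, which is the claim. One cautionary point: the naive rescaling of a fixed Schwartz function may fail to have D-norm exactly $\le 1$ for both $\vartheta$ and $\theta$ simultaneously, and more seriously the anchor $\xi_j$ need not have its D-norm and its Hilbert norm match up across the two modules; this is handled exactly as in Lemma~\ref{bridge-free-module-lemma} — rather than reusing the same Schwartz function, pick $\eta_j$ from the target set $\targetsetbridge{\gamma^\vartheta}{\omega_j}{1}$ (nonempty by Proposition~\ref{compact-targetset-prop}), which automatically has the right D-norm bound and small deck seminorm.

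The step I expect to be the main obstacle is controlling the modular reach uniformly, i.e. proving that $\inner{\xi}{\zeta}{\HeisenbergMod{\vartheta}{p,q,d}}\to\inner{\xi}{\zeta}{\HeisenbergMod{\theta}{p,q,d}}$ \emph{in $\D$} (not merely coefficientwise): one must show the $\ell^1(\Z^2)$-tails of $\inner{\xi}{\zeta}{\HeisenbergMod{\vartheta}{p,q,d}}$ are small uniformly in $\vartheta$ near $\theta$, which requires quantitative Schwartz estimates on $\langle\varpi_{p,q,\eth,d}^{n,m}\xi,\zeta\rangle$ that are uniform in $\eth$ — doable because $\varpi$ is a twisted translation–modulation and $\xi,\zeta$ are Schwartz, but it is the place where real analytic work is needed. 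This is, unsurprisingly, precisely the technical heart carried out in the companion paper \cite{Latremoliere17a}, along with the (also non-trivial) verification that $\CDN_\vartheta^{p,q,d}$ is a genuine D-norm, i.e. that its unit ball is norm-compact — which uses an Arzelà–Ascoli-type argument on $\R$ analogous to Example~\ref{Riemannian-ex}.
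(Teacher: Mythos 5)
The paper does not prove this theorem at all: it is imported verbatim from the companion paper \cite{Latremoliere17a}, and the text explicitly says the extensive proof is contained there. So there is nothing in this paper to compare your argument against line by line; I can only assess your sketch as a standalone strategy.

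As a strategy outline it is sensible and matches what one would expect: start from the bridges realizing $\qt{\vartheta}\to\qt{\theta}$ for the quantum propinquity, augment them with anchors drawn from the common Schwartz space, and control height, basic reach, modular reach and imprint separately. You also correctly identify where the real work lies. But as a proof it has genuine gaps beyond the deferred analysis. First, the theorem's opening claims — that $\CDN_\vartheta^{p,q,d}$ is a D-norm (norm-compact unit ball, inner and modular Leibniz inequalities) and that it coincides with $\max\{\|\cdot\|,\opnorm{\nabla\cdot}{}{}\}$ on the domain of the connection — are a substantial part of the statement and are not addressed at all; without them the objects you are putting into modular bridges are not known to be {\gQVB s}, so the propinquity is not even defined. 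Second, your fallback device for producing co-anchors is circular: the modular target set $\targetsetbridge{\gamma^\vartheta}{\omega_j}{1}$ of Definition (\ref{modular-targetset-def}) is defined only relative to a modular bridge whose co-anchor family is already specified, so you cannot select the co-anchors from it. The non-circular version of this trick is the one in Lemma (\ref{bridge-free-module-lemma}): one applies target sets of the \emph{basic} bridge to the real and imaginary parts of the coefficients of the inner products and then rescales, and an analogous by-hand construction is needed here. Third, the assertion that the pivot satisfies $x_\vartheta\to 1$ is not a feature of the quantum-tori bridges of \cite{Latremoliere13c}; what one actually controls is the height and the bridge seminorm, and the estimate on $\decknorm{\gamma^\vartheta}{\omega_j,\eta_j}$ must be run through the pivot as in Proposition (\ref{bridge-reach-prop}) rather than by sending $x_\vartheta$ to the identity. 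Finally, the convergence $\inner{\xi}{\zeta}{\HeisenbergMod{\vartheta}{p,q,d}}\to\inner{\xi}{\zeta}{\HeisenbergMod{\theta}{p,q,d}}$ cannot be stated "in $\D$" since the two inner products live in different C*-algebras; the correct quantity is the bridge seminorm $\bridgenorm{\gamma^\vartheta}{\cdot,\cdot}$ of the pair, which is what must be shown to vanish uniformly over the anchor set, and whose control via uniform-in-$\eth$ Schwartz estimates is indeed the technical heart of \cite{Latremoliere17a}.
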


\bibliographystyle{amsplain}
\bibliography{../thesis}

\providecommand{\bysame}{\leavevmode\hbox to3em{\hrulefill}\thinspace}
\providecommand{\MR}{\relax\ifhmode\unskip\space\fi MR }
\providecommand{\MRhref}[2]{%
  \href{http://www.ams.org/mathscinet-getitem?mr=#1}{#2}
}
\providecommand{\href}[2]{#2}
\begin{thebibliography}{10}

\bibitem{Latremoliere15d}
{K}. {A}guilar and {F}. {L}atr{\'e}moli{\`e}re, \emph{Quantum ultrametrics on
  af algebras and the {G}romov--{H}ausdorff propinquity}, Studia Mathematica
  \textbf{231} (2015), no.~2, 149--194, ArXiv: 1511.07114.

\bibitem{burago01}
{D}. {B}urago, {Y}. {B}urago, and {S}. {I}vanov, \emph{A course in metric
  geometry}, Graduate Texts in Mathematics, vol.~33, American Mathematical
  Society, 2001.

\bibitem{Rieffel15b}
{M}. {C}hrist and {M.}~{A.} {R}ieffel, \emph{Nilpotent group
  {$C^\ast$-algebras}-algebras as compact quantum metric spaces}, Submitted
  (2015), 22 pages, ArXiv: 1508.00980.

\bibitem{Connes80}
{A}. {C}onnes, \emph{{C*}--alg{\`e}bres et g{\'e}om{\'e}trie differentielle},
  {C}. {R}. de l'academie des Sciences de Paris (1980), no.~series A-B, 290.

\bibitem{Connes89}
A.~{C}onnes, \emph{Compact metric spaces, {F}redholm modules and
  hyperfiniteness}, Ergodic Theory and Dynamical Systems \textbf{9} (1989),
  no.~2, 207--220.

\bibitem{Connes}
\bysame, \emph{Noncommutative geometry}, Academic Press, San Diego, 1994.

\bibitem{ConnesRieffel87}
A.~{C}onnes and M.~A. {R}ieffel, \emph{Yang-mills for noncommutative two-tori},
  Contemporary Math \textbf{62} (1987), no.~Operator algebras and mathematical
  physics (Iowa City, Iowa, 1985), 237--266.

\bibitem{Gromov81}
M.~{G}romov, \emph{Groups of polynomial growth and expanding maps},
  Publications math{\'e}matiques de l' {I. H. E. S.} \textbf{53} (1981),
  53--78.

\bibitem{Gromov}
\bysame, \emph{Metric structures for {R}iemannian and non-{R}iemannian spaces},
  Progress in Mathematics, Birkh{\"a}user, 1999.

\bibitem{Hausdorff}
{F}. {H}ausdorff, \emph{{G}rundz{\"u}ge der {M}engenlehre}, Verlag Von Veit und
  Comp., 1914.

\bibitem{Kadison91}
{R}. {K}adison and {J}. {R}ingrose, \emph{Fundamentals of the operator algebras
  {III}}, AMS, 1991.

\bibitem{Kadison97}
\bysame, \emph{Fundamentals of the theory of operator algebras {I}}, Graduate
  Studies in Mathematics, vol.~15, AMS, 1997.

\bibitem{Kantorovich40}
{L}.~{V}. {K}antorovich, \emph{On one effective method of solving certain
  classes of extremal problems}, Dokl. Akad. Nauk. USSR \textbf{28} (1940),
  212--215.

\bibitem{Kantorovich58}
{L}.~{V}. {K}antorovich and {G}.~{Sh}. {R}ubinstein, \emph{On the space of
  completely additive functions}, Vestnik Leningrad Univ., Ser. Mat. Mekh. i
  Astron. \textbf{13} (1958), no.~7, 52--59, In Russian.

\bibitem{kerr02}
D.~{K}err, \emph{Matricial quantum {G}romov-{H}ausdorff distance}, J. Funct.
  Anal. \textbf{205} (2003), no.~1, 132--167, math.OA/0207282.

\bibitem{Latremoliere05}
{F}. {L}atr{\'e}moli{\`e}re, \emph{Approximation of the quantum tori by finite
  quantum tori for the quantum {G}romov-{H}ausdorff distance}, Journal of
  Funct. Anal. \textbf{223} (2005), 365--395, math.OA/0310214.

\bibitem{Latremoliere05b}
\bysame, \emph{Bounded-lipschitz distances on the state space of a
  {C*}-algebra}, Tawainese Journal of Mathematics \textbf{11} (2007), no.~2,
  447--469, math.OA/0510340.

\bibitem{Latremoliere12b}
\bysame, \emph{Quantum locally compact metric spaces}, Journal of Functional
  Analysis \textbf{264} (2013), no.~1, 362--402, ArXiv: 1208.2398.

\bibitem{Latremoliere13c}
\bysame, \emph{Convergence of fuzzy tori and quantum tori for the quantum
  {G}romov--{H}ausdorff {P}ropinquity: an explicit approach.}, M{\"u}nster
  Journal of Mathematics \textbf{8} (2015), no.~1, 57--98, ArXiv:
  math/1312.0069.

\bibitem{Latremoliere15c}
\bysame, \emph{Curved noncommutative tori as {L}eibniz compact quantum metric
  spaces}, Journal of Math. Phys. \textbf{56} (2015), no.~12, 123503, 16 pages,
  ArXiv: 1507.08771.

\bibitem{Latremoliere13b}
\bysame, \emph{The dual {G}romov--{H}ausdorff {P}ropinquity}, Journal de
  Math{\'e}matiques Pures et Appliqu{\'e}es \textbf{103} (2015), no.~2,
  303--351, ArXiv: 1311.0104.

\bibitem{Latremoliere15b}
\bysame, \emph{Quantum metric spaces and the {G}romov-{H}ausdorff propinquity},
  Accepted in Contemp. Math. (2015), 88 pages, ArXiv: 150604341.

\bibitem{Latremoliere15}
\bysame, \emph{A compactness theorem for the dual {G}romov-{H}ausdorff
  propinquity}, Accepted in Indiana University Journal of Mathematics (2016),
  40 Pages, ArXiv: 1501.06121.

\bibitem{Latremoliere16b}
\bysame, \emph{Equivalence of quantum metrics with a common domain}, Journal of
  Mathematical Analysis and Applications \textbf{443} (2016), 1179--1195,
  ArXiv: 1604.00755.

\bibitem{Latremoliere16c}
{F}. {L}atr{\'e}moli{\`e}re, \emph{The modular {G}romov--{H}ausdorff
  propinquity}, Submitted (2016), 67 pages, ArXiv: 1608.04881.

\bibitem{Latremoliere13}
{F}. {L}atr{\'e}moli{\`e}re, \emph{The {Q}uantum {G}romov-{H}ausdorff
  {P}ropinquity}, Trans. Amer. Math. Soc. \textbf{368} (2016), no.~1, 365--411,
  electronically published on May 22, 2015,
  http://dx.doi.org/10.1090/tran/6334, ArXiv: 1302.4058.

\bibitem{Latremoliere17a}
{F.} {L}atr{\'e}moli{\`e}re, \emph{Convergence of {H}eisenberg modules over
  quantum two-tori for the modular {G}romov-{H}ausdorff propinquity}, Submitted
  (2017), 70 pages.

\bibitem{Latremoliere14}
{F}. {L}atr{\'e}moli{\`e}re, \emph{The triangle inequality and the dual
  {G}romov-{H}ausdorff propinquity}, Indiana University Journal of Mathematics
  \textbf{66} (2017), no.~1, 297--313, ArXiv: 1404.6633.

\bibitem{Latremoliere16}
{F}. {L}atr{\'e}moli{\`e}re and {J}. {P}acker, \emph{Noncommutative solenoids
  and the gromov-hausdorff propinquity}, Accepted in Proc. AMS. (2016), 14
  pages, ArXiv: 1601.02707.

\bibitem{li03}
H.~{L}i, \emph{{$C^\ast$}-algebraic quantum {G}romov-{H}ausdorff distance},
  (2003), ArXiv: math.OA/0312003.

\bibitem{li05}
\bysame, \emph{{$\theta$}-deformations as compact quantum metric spaces}, Comm.
  Math. Phys. \textbf{1} (2005), 213--238, ArXiv: math/OA: 0311500.

\bibitem{Ozawa05}
{N}. {O}zawa and M.~A. {R}ieffel, \emph{Hyperbolic group {$C\sp\ast$}-algebras
  and free product {$C\sp\ast$}-algebras as compact quantum metric spaces},
  Canad. J. Math. \textbf{57} (2005), 1056--1079, ArXiv: math/0302310.

\bibitem{Rieffel74}
M.~A. {R}ieffel, \emph{Induced representations of {C*}-algebras}, Advances in
  Math. \textbf{13} (1974), 176--257.

\bibitem{Rieffel81}
\bysame, \emph{{C*}-algebras associated with irrational rotations}, Pacific
  Journal of Mathematics \textbf{93} (1981), 415--429.

\bibitem{Rieffel83}
\bysame, \emph{The cancellation theorem for the projective modules over
  irrational rotation {$C^\ast$}-algebras}, Proc. London Math. Soc. \textbf{47}
  (1983), 285--302.

\bibitem{Rieffel98a}
\bysame, \emph{Metrics on states from actions of compact groups}, Documenta
  Mathematica \textbf{3} (1998), 215--229, math.OA/9807084.

\bibitem{Rieffel99}
\bysame, \emph{Metrics on state spaces}, Documenta Math. \textbf{4} (1999),
  559--600, math.OA/9906151.

\bibitem{Rieffel02}
\bysame, \emph{Group {$C\sp\ast$}-algebras as compact quantum metric spaces},
  Documenta Mathematica \textbf{7} (2002), 605--651, ArXiv: math/0205195.

\bibitem{Rieffel01}
\bysame, \emph{Matrix algebras converge to the sphere for quantum
  {G}romov--{H}ausdorff distance}, Mem. Amer. Math. Soc. \textbf{168} (2004),
  no.~796, 67--91, math.OA/0108005.

\bibitem{Rieffel09}
\bysame, \emph{Distances between matrix alegbras that converge to coadjoint
  orbits}, Proc. Sympos. Pure Math. \textbf{81} (2010), 173--180, ArXiv:
  0910.1968.

\bibitem{Rieffel10c}
\bysame, \emph{{L}eibniz seminorms for "matrix algebras converge to the
  sphere"}, Clay Math. Proc. \textbf{11} (2010), 543--578, ArXiv: 0707.3229.

\bibitem{Rieffel11}
\bysame, \emph{Leibniz seminorms and best approximation from
  {$C^\ast$}-subalgebras}, Sci China Math \textbf{54} (2011), no.~11,
  2259--2274, ArXiv: 1008.3773.

\bibitem{Rieffel15}
\bysame, \emph{Matricial bridges for "matrix algebras converge to the sphere"},
  Submitted (2015), 31 pages, ArXiv: 1502.00329.

\bibitem{Rieffel00}
\bysame, \emph{{G}romov-{H}ausdorff distance for quantum metric spaces}, Mem.
  Amer. Math. Soc. \textbf{168} (March 2004), no.~796, math.OA/0011063.

\end{thebibliography}
\vfill

\end{document}